\theoremstyle{plain}
\newtheorem{theorem}{Theorem}[section]
\newtheorem{lemma}[theorem]{Lemma}
\newtheorem{proposition}[theorem]{Proposition}
\newtheorem{corollary}[theorem]{Corollary}
\newtheorem{complement}[theorem]{Complement}
\theoremstyle{definition}
\newtheorem{definition}[theorem]{Definition}
\newtheorem{remark}[theorem]{Remark}
\newtheorem*{note*}{Note}
\newtheorem*{claim*}{Claim}
\newtheorem*{question*}{Question}
\newcommand{\id}{\operatorname{id}}
\newcommand{\DDD}{{\mathcal{D}yn} }
\renewcommand{\Re}{\operatorname{Re}}
\renewcommand{\Im}{\operatorname{Im}}
\renewcommand{\cal}[1]{{\mathcal{#1}}}
\renewcommand{\emptyset}{\varnothing}
\newcommand{\on}{\operatorname}
\newcommand{\ov}{\overline}
\newcommand{\C}{\mathbf{C}}
\newcommand{\D}{\mathbf{D}}
\renewcommand{\H}{\mathbf{H}}
\newcommand{\N}{\mathbf{N}}
\newcommand{\R}{\mathbf{R}}
\newcommand{\Z}{\mathbf{Z}}
\newcommand{\wt}{\widetilde}
\newcommand{\wh}{\widehat}
\newcommand{\dom}{\operatorname{Dom}}
\newcommand{\att}{\mathrm{att}}
\newcommand{\rep}{\mathrm{rep}}
\newcommand{\Per}{\mathrm{Per}}
\newcommand{\tends}{\longrightarrow}
\newcommand{\e}{\texorpdfstring}
\newcommand{\h}{\mathbf{h}}
\definecolor{arnaudcolor}{rgb}{0.5 0 1}
\definecolor{pascalecolor}{rgb}{0 0.7 0}
\begin{document}
\title{Some invariant classes for parabolic renormalization in the multicritical case}

\author[1]{A.\ Chéritat}
\author[1]{P.\ Roesch}
\affil[1]{\small Université de Toulouse, INSA Toulouse, CNRS, Institut de Mathématiques de Toulouse, UMR5219}

\date{\today}
\maketitle

\begin{abstract} 
Parabolic renormalization associates to a holomorphic map $f$ with a parabolic fixed point another holomorphic map with a parabolic fixed point. This procedure is essential for understanding the phenomenon of parabolic enrichment, which occurs when one perturbs $f$ appropriately.
Shishikura defined in \cite{Shi} (see also \cite{LY}) a class of maps that is stable under this parabolic renormalization operator.
These maps have only one critical point in their immediate basins.
We extend here this result to the more general classes of maps conjugated on their immediate basins to finite Blaschke products.
For these classes, we introduce an analogue of Milnor's mapping schemes \cite{MilnorH} and describe the action of parabolic renormalization on the scheme.
This shall be a starting point to the fine study of perturbation of these bigger classes of maps.
\end{abstract}

\setcounter{tocdepth}{2}
\tableofcontents

\section{Introduction}

\subsection{Background and motivation}\label{sub:bkgd}

In one-dimensional holomorphic dynamical systems, parabolic implosion is central to the understanding of dynamical enrichment: it explains, in the setting of perturbation of a map with a parabolic point, the existence of geometric limits, i.e.\ the fact that high iterates of perturbed maps converge when the perturbation tends to $0$, to so-called Lavaurs maps, that are not in the original dynamical systems.
This describes enrichment of the Julia sets and lots of consequences have been drawn from this, which we will not try to list here.

One of the tools of parabolic implosion is \emph{parabolic renormalization}.
To a map $f$ with a parabolic point it associates another map $Rf$ with a parabolic point, obtained from its Fatou coordinates, see Section~\ref{sub:ren}.
It allows at least two things: firstly to control the geometric limits, secondly, by iterating renormalization, to catch finer geometric limits. 

In order to control iterates of renormalization, one seeks for properties that are preserved, in other words, for invariant classes.
For instance, Shishikura proved\footnote{In \cite{Shi1}, this follows from the proposition and the two lemmas of Section~4.5.} the following result, where $\DDD_2$ is the set of holomorphic maps with $f(0)=0$, $f'(0)=1$, $f''(0)\neq 0$ (i.e.\ having a parabolic fixed point at $0$ with only one attracting axis) and for which, on its immediate parabolic basin $B^*$, the restriction $f: B^*\to B^*$ is proper and has only one critical point of local degree $2$ (i.e.\ conjugated on $B^*$ to the degree $2$ Blaschke product $G(z)=\frac{3z^2+1}{z^2+3}$).

\begin{theorem}[Shishikura]\label{thm:SLY}
    \[R(\DDD_2)\subset\DDD_2\]
\end{theorem}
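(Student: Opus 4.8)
\emph{Proof sketch.} Write $c_f$ for the unique critical point of $f$ in $B^*$; the plan is to build $Rf$ explicitly from the Fatou coordinates of $f$ and then to push the hypothesis on $c_f$ through them. Fix an attracting petal $P_{\att}\subset B^*$ and a repelling petal $P_{\rep}$ at $0$, with coordinates $\Phi_{\att},\Phi_{\rep}$ conjugating $f$ to $z\mapsto z+1$, and extend $\Phi_{\att}$ to all of $B^*$ by $\Phi_{\att}(z):=\Phi_{\att}(f^{n}z)-n$ for $n$ large. Since $c_f$ is the only critical point of $f$ in the ($f$-invariant) domain $B^*$ and is not periodic, one checks that $\Phi_{\att}\colon B^*\to\C$ is a branched covering of infinite degree whose critical points are exactly $\bigcup_{n\ge0}f^{-n}(c_f)$, each of local degree $2$, with critical values $\{\Phi_{\att}(c_f)-n:n\ge0\}$. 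The transition map $\EE_f:=\Phi_{\att}\circ\Phi_{\rep}^{-1}$, extended forward along the dynamics, commutes with $z\mapsto z+1$ and is asymptotic to a translation near the parabolic end; passing to the \'Ecalle--Voronin cylinder and applying the standard normalisations yields $Rf$, a germ at $0$ with $Rf(0)=0$, $(Rf)'(0)=1$. That this parabolic point is again \emph{simple}, $(Rf)''(0)\ne0$, is a general feature of parabolic renormalisation that I take for granted.

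The key structural point is that \emph{$Rf$ has a single critical value}, and this is where the hypothesis on $c_f$ is used. By the chain rule the critical points of $\EE_f$ are of at most two kinds: the $\Phi_{\rep}^{-1}$-preimages of critical points of $\Phi_{\att}$, with critical values in $\{\Phi_{\att}(c_f)-n:n\ge0\}$; and the points at which the forward extension of $\Phi_{\rep}^{-1}$ runs through $c_f$, with critical values in $\{\Phi_{\att}(c_f)+m:m\ge1\}$. In all cases the critical values lie in $\Phi_{\att}(c_f)+\Z$, so on the cylinder $\EE_f$ has exactly one critical value; hence $Rf$ has a unique critical value $v_{Rf}$, the image of $\Phi_{\att}(c_f)$, and away from coincidences its critical points have local degree $2$.

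It remains to identify the immediate basin $B^*_{Rf}$ and to check that $Rf\colon B^*_{Rf}\to B^*_{Rf}$ is proper, that $v_{Rf}\in B^*_{Rf}$, and that $v_{Rf}$ has there a single preimage, necessarily a critical point of local degree $2$; granting this, $B^*_{Rf}$ is simply connected and Riemann--Hurwitz forces $Rf|_{B^*_{Rf}}$ to have degree $2$, so it is conjugate to $G$ and $Rf\in\DDD_2$. I would describe the preimage of $B^*_{Rf}$ under the cylinder projection as a $(z\mapsto z+1)$-invariant domain $\wt B$ on which $\EE_f$ acts, with $W:=\Phi_{\rep}^{-1}(\wt B)$ an $f$-invariant subdomain of $B^*$ adjacent to the parabolic point, so that $Rf$ on $B^*_{Rf}$ is modelled by $f$ on $W$; since $f|_{B^*}$ is conjugate on $B^*$ to the degree $2$ Blaschke product $G$ on $\D$, the geometry of $B^*$, of $\Phi_{\att}$ and of the orbit of $c_f$ is then explicitly under control. \textbf{The main obstacle lies here.} Parabolic renormalisation is built from a return map on the \'Ecalle--Voronin cylinder whose combinatorics is not controlled a priori, and the work is to prove that this return is compatible with the Blaschke structure of $B^*$: that $v_{Rf}$ returns into the parabolic petal (so $v_{Rf}\in B^*_{Rf}$), that $\wt B$ carries no extra sheets (so $B^*_{Rf}$ absorbs no further critical point and the one it contains has local degree exactly $2$), and that $Rf|_{B^*_{Rf}}$ is proper. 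This is the content of the proposition and two lemmas of Section~4.5 of \cite{Shi1}, and it is the part the present work must rework --- and enlarge --- for Blaschke products of higher degree.
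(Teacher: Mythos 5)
Your construction of $Rf$ via the horn map and the computation showing that all critical values of the lifted horn map lie in $\Phi_{\att}(c_f)+\Z$, hence that $Rf$ has a single critical value, are sound and correspond to Propositions~\ref{prop:icph} and~\ref{prop:rfft} of this paper. But there is a genuine gap: everything that makes the theorem nontrivial --- that $v_{Rf}$ lies in the immediate basin $B^*_{Rf}$, that $B^*_{Rf}$ is simply connected, and that $Rf:B^*_{Rf}\to B^*_{Rf}$ is proper with exactly one critical point, of local degree $2$ --- is precisely what you label ``the main obstacle'' and defer to Section~4.5 of \cite{Shi1} without argument. Since Theorem~\ref{thm:SLY} \emph{is} the content of that proposition and those two lemmas, your proposal reduces the statement to itself plus a citation. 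The mechanism that closes this gap here is never invoked in your sketch: $Rf:\dom Rf\to\wh\C$ has finitely many critical values and asymptotic values only at $0$ and $\infty$ (Proposition~\ref{prop:rfft}); by Lemma~\ref{lem:bb} and Corollary~\ref{cor:svbas} the restriction to $B^*_{Rf}$ then has no asymptotic values, hence is a branched covering (Corollary~\ref{cor:ramcov}, Proposition~\ref{prop:rfrc}); $\dom Rf$, hence $B^*_{Rf}$, is simply connected (Proposition~\ref{prop:drfsc}, Corollary~\ref{cor:sc}); and a branched covering of a disk with a unique critical value and no asymptotic value is cover-equivalent to $z\mapsto z^d$ (Lemma~\ref{lem:rcdocv1}), which delivers properness, finite degree and uniqueness of the critical point all at once. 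Bear in mind that $Rf$ has infinitely many critical points in $\dom Rf$, so properness of the restriction cannot be taken as a starting point, and asymptotic values are exactly the danger your plan (properness plus ``single preimage of $v_{Rf}$'' plus Riemann--Hurwitz) would have to exclude but does not address.

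A second, concrete error: you ``take for granted'' that $(Rf)''(0)\neq 0$ as a general feature of parabolic renormalization. It is not: for maps with more critical points in the immediate basin, $Rf$ may well have several attracting axes (the type E of this paper, cf.\ Diagram~\eqref{eq:deg3diagram}). For $f\in\DDD_2$ it is a conclusion, not an input, and its proof uses the unique critical value: $Rf$ is not the identity near $0$ (Remark~\ref{rem:atLeastOneAxis}), the immediate basin of every attracting axis must contain a critical value of the restriction (the Fatou-type argument of Corollary~\ref{cor:nbax}), these immediate basins are pairwise disjoint, and there is only one critical value, whence exactly one attracting axis. Two smaller omissions in the same spirit: the local degree $2$ of the critical point of $Rf$ in $B^*_{Rf}$ follows from the non-periodicity of the critical orbit of $f$ via the product formula of Proposition~\ref{prop:icph}, and the conjugacy of the resulting proper degree-$2$ self-map of $B^*_{Rf}$ to the Blaschke product $G$ requires the two-petal parabolic point on the boundary circle, i.e.\ Lemma~\ref{lem:bppp}.
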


The book \cite{LY} takes this a step further by proving invariance by renormalization of a subclass of $\DDD_2$, called $\mathbf{P}^0$.
It consists of maps $f$ defined on Jordan domains and equivalent\footnote{I.e.\ such that there is $\lambda\in \C^*$ and a conformal mapping $\phi$ fixing $0$ from $\dom f$ to $\dom RG$, such that $\lambda f = RG \circ \phi$.} to the renormalization $RG$ of $G(z)=\frac{3z^2+1}{z^2+3}$ (or equivalently to the renormalization $Rf_0$ of the Cauliflower map $f_0(z)=z+z^2$).

\bigskip

The present work finds its motivation too in the study of particular slices of the parameter space of cubic polynomials.
One of the early studies of this parameter space is by 
Branner and Hubbard in \cite{BH}.
This space has been intensively studied since then.
The study of $1$-dimensional complex slice $\Per_1(0)$ of cubic maps which have a critical fixed point was started by Milnor in 1991 (this was only published in 2009 in \cite{MilnorCubic}).
One of the authors of the present article studied it in \cite{Roesch1}.
This has been extended by holomorphic motions to  slices with an attracting fixed point with multiplier $\lambda$: $\Per_1(\lambda)$, by Petersen and Tan Lei in \cite{PetersenTan}. For $\vert \lambda\vert=1$ these slices have been studied by several people, including: Blokh, Oversteegen and Timorin in \cite{blokh} and in  more details by Zhang in \cite{Runze} for the parabolic cases and  Yang and Zhang in  \cite{RunzeSiegel} for the Brujno cases.

We would like to try to understand what happens if we iterate parabolic renormalization on and on, starting from a polynomial in $\Per_1(e^{2\pi ip/q})$.
By analogy with Theorem~\ref{thm:SLY}, is it possible to find one or several invariant classes, analogue to $\DDD_2$?
Note that the class of finite-type maps as defined by A.~Epstein provides such an invariant class, but we seek both for a wider class and a finer description.

We start with a set of observations, of mathematical nature and of experimental computer pictures of the slices.
For $\lambda\in\C^*$, let us parametrize $\Per_1(\lambda)$ using the same convention as in \cite{Zakeri}:
for each $c\in\C^*$ we denote $P_c$ the unique cubic polynomial such that:
\begin{itemize}
  \item $P_c$ fixes $0$ with multiplier $\lambda$,
  \item the finite critical points of $P_c$ are $1$ and $c$.
\end{itemize}
Figure~\ref{fig:Per1Slice} shows the bifurcation locus in this parametrization in the case $p/q=2/5$, together with some color-coded information.

The parabolic point has at most two cycles of attracting axes because there are at most two finite critical points.
Let us classify the polynomials $P_c\in \Per_1(e^{2\pi ip/q})$ using an analogue of Milnor's classification:
\begin{itemize}
  \item \textbf{Type E.} (Exceptional) $P_c$ has two cycles of petals at $0$.
\end{itemize}
This happens only for finitely many parameters in $\Per_1(e^{2\pi ip/q})$.
For the maps that are not of type E,
there is at least one critical point in the cycle of immediate basins and the classification is:
\begin{itemize}
  \item \textbf{Type A.} (Adjacent) The two critical points belong to the immediate basin of the same axis (this includes the case where the two critical points coincide). 
  \item \textbf{Type B.} (Bitransitive) The two critical points belong to the immediate basins of  different axes.
  \item \textbf{Type C.} (Capture) Both critical points belong to the parabolic basin of $0$ but only one critical point belongs to an immediate basin.
  \item \textbf{Type D.} (Disjoint) Only one critical point belongs to the parabolic basin of $0$.
\end{itemize}

Type A, B and C form open subsets of this parameter space.
\begin{definition}\label{def:pearl}
We call \emph{pearl necklace} of $\Per_1(e^{2\pi ip/q})$ the set of components of type A and B.
\end{definition}

\begin{figure}[htbp]
    \centering
    \includegraphics[width=\textwidth]{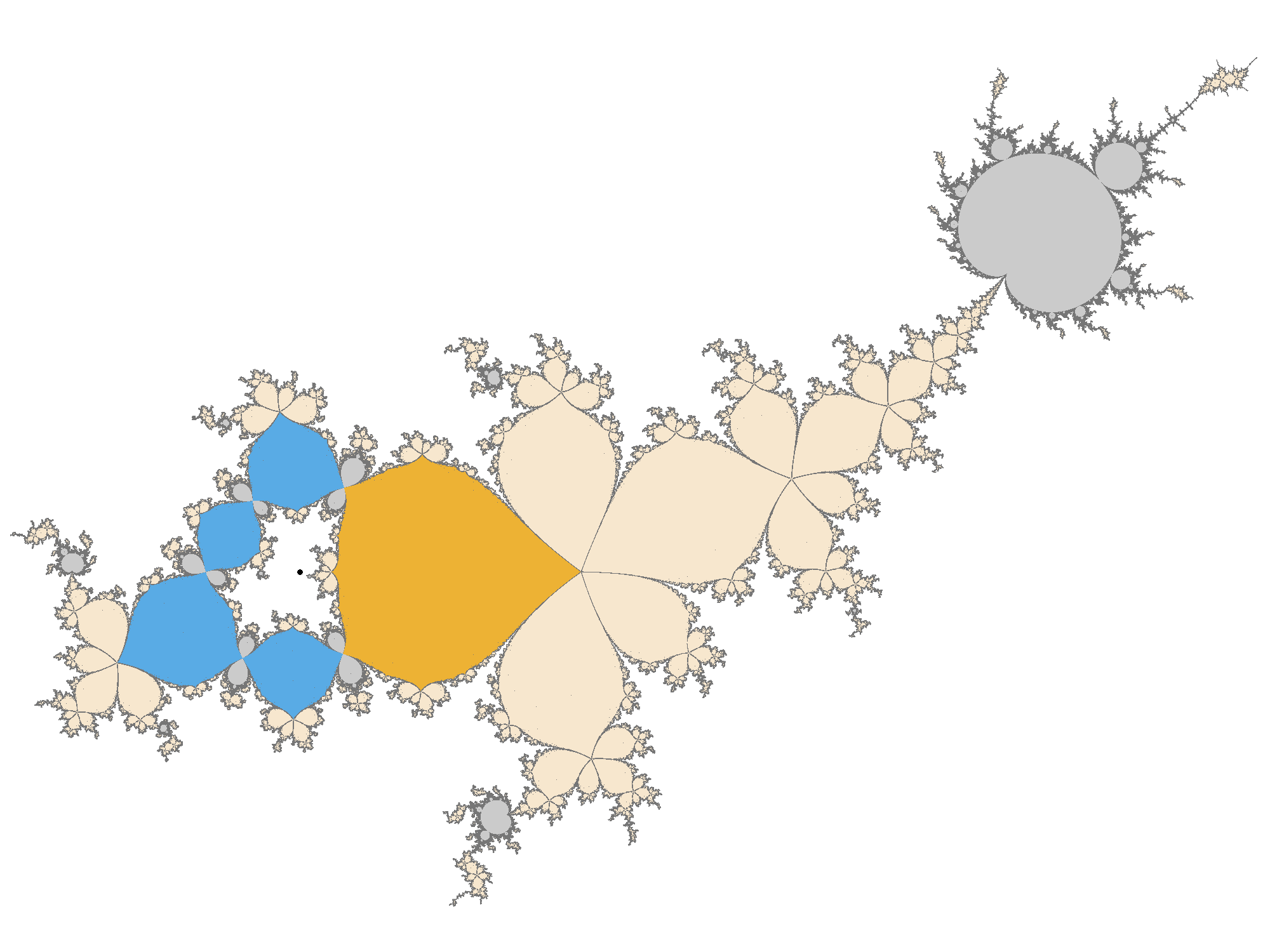}
    \caption{Slice $\Per_1(e^{2\pi i 2/5})$ of the cubic polynomials.
    We took Zakeri's parametrization \cite{Zakeri}: $P_c = \lambda z(1-\frac{1+c}{2c}z+\frac{1}{3c}z^2)$, for which $0$ is fixed of multiplier $\lambda$ and the set of finite critical points is $\{1,c\}$. 
    In blue the bitransitive case, in amber the adjacent case, in champagne the capture case and in different shades of white and grey the disjoint case: white when one critical point escapes to infinity, dark grey for the bifurcation locus, light grey for the rest. A tiny black dot marks the parameter $c=0$, which does not belong to the parameter space $\C^*$ of the slice. The 5 exceptional parameters are the contact points of the pearls (blue or amber components). A deeper description of the slices $\Per_1(e^{2\pi i p/q})$ can be found in \cite{runze3}.}
    \label{fig:Per1Slice}
\end{figure}

The pearl necklace is the initial motivation of the work that lead to the present article.
In this regard, the distinction between classes C and D is less important so we call C/D the union of these two classes.
The pearls are known to come in $q$ components indexed by the number of attracting axes one must skip to go from the immediate basin containing one of the critical points to the one containing the other. See for instance~\cite{blokh} and~\cite{Runze}. 
Only one pearl has type A. The pearls are simply connected and we call their union a necklace because their closures form a chain, i.e.\ if $q\geq 3$ the closure of two different pearls either intersect at one point if their index differ from $1$ and do not intersect otherwise.

In Figure~\ref{fig:Per1Slice} the colours code an indication of where the different classes are.

We are interested in parabolic enrichment for maps in the slice $\Per_1(e^{2\pi ip/q})$.
It has the interesting effect of leading to parabolic enrichment not only of the Julia sets but also of the bifurcation locus.
To simplify, we focus on the classification A,B, C/D, E.
Consider one of the two finite continued fraction developments of $p/q$: $[a_0,\cdots,a_m] = a_0+1/(a_1+1/\cdots)$.
Consider another rational number $p'/q'$ and let \[\theta_n = [a_0,\cdots,a_m+n+\frac{p'}{q'}].\]
Then $\theta_n\tends p/q$ as $n\to+\infty$, from the right or from the left according to which of the two expansions of $p/q$ has been chosen.

\medskip
\noindent{\bf Question}: What is the limit of the pearl necklace of $\Per_1(e^{2\pi i\theta_n})$, as $n\to +\infty?$
\medskip

The expected limit is expressed in terms of an appropriate parabolic renormalization operator, denoted $R_{p'/q',\pm}$  with $\pm=+$ or $\pm=-$ according to whether $\theta_n>p/q$ or $\theta_n<p/q$ (this depends on the parity of $m$).
The cases $\pm=+$ and $\pm=-$ are respectively called \emph{upper} and \emph{lower} parabolic renormalization.
This operator is so that $R_{p'/q',\pm} P_c$ has multiplier $e^{2\pi ip'/q'}$ (instead of $1$ for $R P_c$) at its fixed point $0$.
Our expectation is that the limit of the pearl necklace is related to the pearl necklace of the family $c\in\C^*-E \mapsto g_c := R_{p'/q',\pm} P_c$, which we define below.
Here $E$ denotes the set of $c\in\C^*$ so that $P_c$ is of type E, in which case $R_{p'/q',\pm} P_c$ is not defined.
The map $g_c$ has a parabolic point at $0$, of multiplier $e^{2\pi i p'/q'}$ and at most two cycles of attracting axes.
One way to define the pearl necklace for the family $c\mapsto g_c$ is as follows:

\begin{definition}\label{def:pnRf}
We call pearl necklace for the family $g_c$ 
the set of $c$ for which $g_c$ has only one cycle of attracting axes and such that the $q'$-th iterate $g_c^{q'}$ has degree bigger than $2$ on the immediate basin $B^*(A)$ of any attracting axis $A$ (this is independent of the axis).
\end{definition}

More precisely the results that we will prove imply in particular that $g_c$ is proper on its immediate basin, and that $g_c^{q'}|_{B^*(A)}$ is conjugate to either a degree 2 Blaschke product (we say that it has composition type $2$), a degree 3 Blaschke product (composition type $3$), or the composition of two degree 2 Blaschke products (composition type $2\circ 2$).
This distinction is actually better adapted to the renormalization operator than the A, B, \ldots, E classification, see Appendix~\ref{app:comp}.
More precisely we still want to keep an exceptional class, where the map has more than one cycle of petals, and call it type E too, and reserve the denomination composition type $3$, $2\circ 2$ and $2$ to the case where there is only one cycle of petals.
We expect the pearl necklace of the family $c\mapsto g_c$ to be the limit, in some sense, of the sequence of the pearl necklaces of the $\theta_n$.

\begin{figure}[htbp]
\centering
\noindent\includegraphics[width=\textwidth]{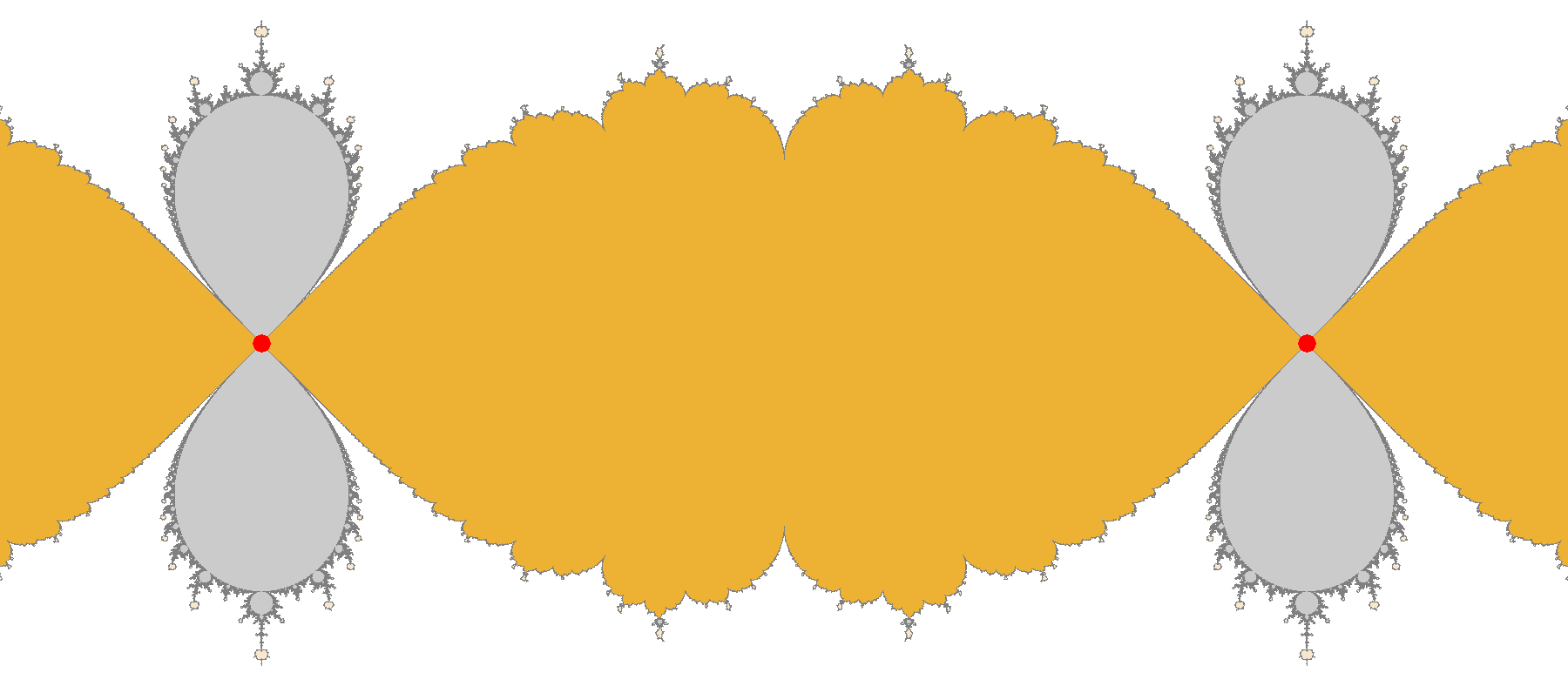}
\centering
\noindent\includegraphics[width=\textwidth]{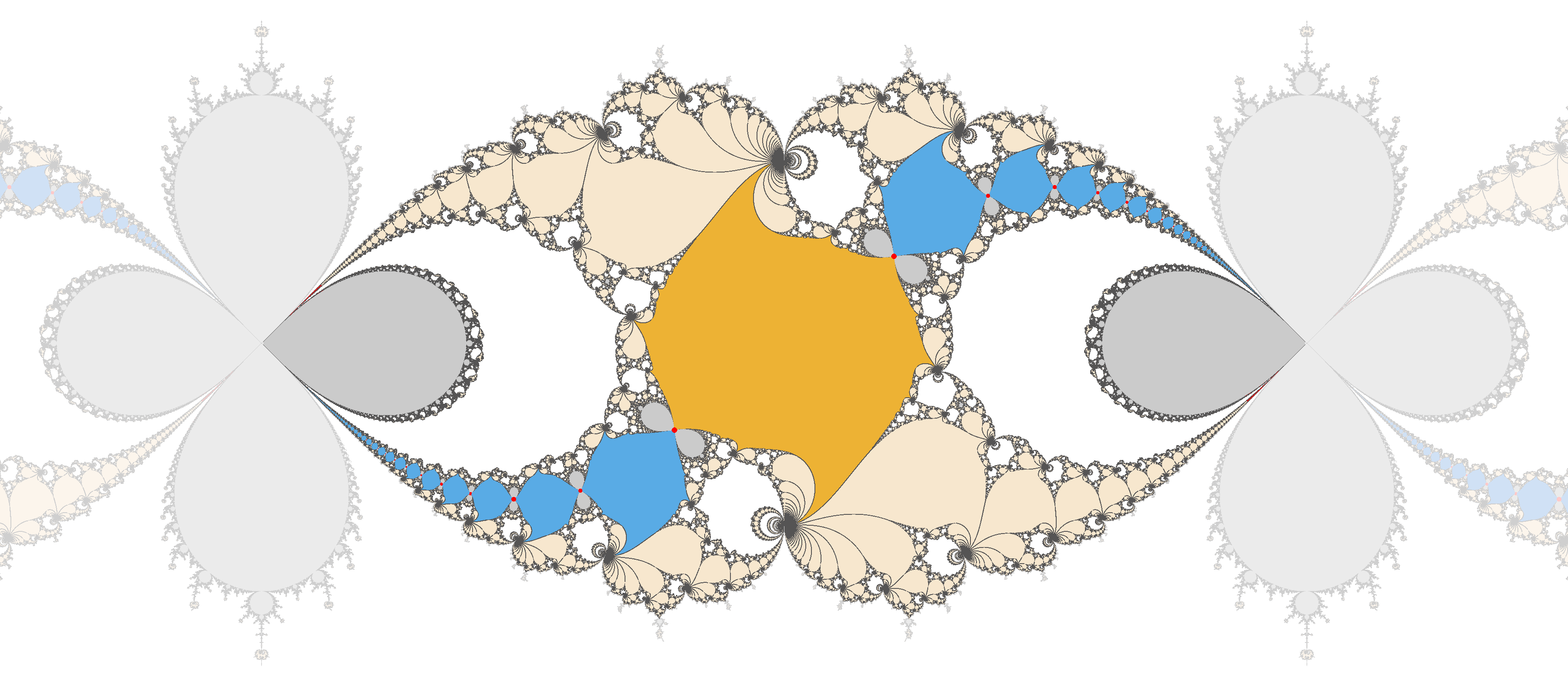}
\caption{$\Per_1(1)$ in log-coordinates, an enrichment and its infinite pearl necklace.}
\label{fig:per1implo}
\end{figure}

The top part of Figure~\ref{fig:per1implo} shows $\Per_1(1)$ in coordinates $\ell=\log(c)/i$, centred on parameter $\ell=0$ (i.e.\ $c=1$, for which $P_c(z)=z-z^2+z^3/3$ has only one finite critical point) and coloured according to the behaviour of $P_c$.
The white part is where one critical point escapes.
The image is periodic by $z\mapsto z+2\pi$.
In $c$-parameter, the necklace has a unique pearl, which is of type A, and is attached to itself at the parameter $c=-1$, which corresponds in the top image to the parameters $\ell \equiv \pi \bmod 2\pi$ indicated by red dots.
The tiny champagne parts are C-type components.
Light grey shows hyperbolic components and dark grey the boundary of the white part, which is also the bifurcation locus. 
\\
The bottom part of Figure~\ref{fig:per1implo} shows an enriched parameter space for $\Per_1(1)$, i.e.\ the image in coordinate $\ell=\log c$ of the parameter space for
\[ c\in\C^*-E\mapsto g_c := R_{p'/q',\pm} P_c
.\]
In the picture, we have chosen $p'/q' = 0/1$.
Inside the A-type component of the family $c\mapsto P_c$, we see a lot of new components.
The white ones are where one critical point escapes under the map $g_c$: Runze Zhang studied these white components in \cite{RunzeImplo}.
It is also where one critical value of $g_c$ escapes the domain of definition of $g_c$ under finitely many iterations of $g_c$.
In red, parameters for which $g_c$ has two cycles of attracting axes (in other cases, it has only one cycle of attracting axes).
In black, we show the bifurcation locus of the family $g_c$.
In grey, parameters for which one critical value of $g_c$ tends to an attracting cycle of $g_c$.
The other colours correspond to both critical values of $g_c$ being attracted to its parabolic point at $0$ and are chosen according to the type of $g_c$ on its basin: amber for type $3$, blue for type $2\circ 2$ and champagne for type $2$.

\medskip

Among other things, we prove here that, under renormalization, a type $2$ map can only give a type $2$ map (a fact already proved by Shishikura), a type $2\circ 2$ can give types $2\circ 2$, $2$ and E and a type $3$ map can give any of the three types or E. This is summed up in the following diagram.

\begin{equation}\label{eq:deg3diagram}
\begin{tikzcd}
  E & 3 \arrow[l] \arrow[loop right, looseness=6, out=25, in=-25] \arrow[dl] \arrow[d, crossing over]\\
  2\circ 2 \arrow[loop left, looseness=5] \arrow[r] \arrow[u] & 2 \arrow[loop right, looseness=6, out=25, in=-25]
\end{tikzcd}
\end{equation}

\begin{figure}
\begin{center}
\begin{tikzpicture}
\node at (0,0) {\includegraphics[width=12cm]{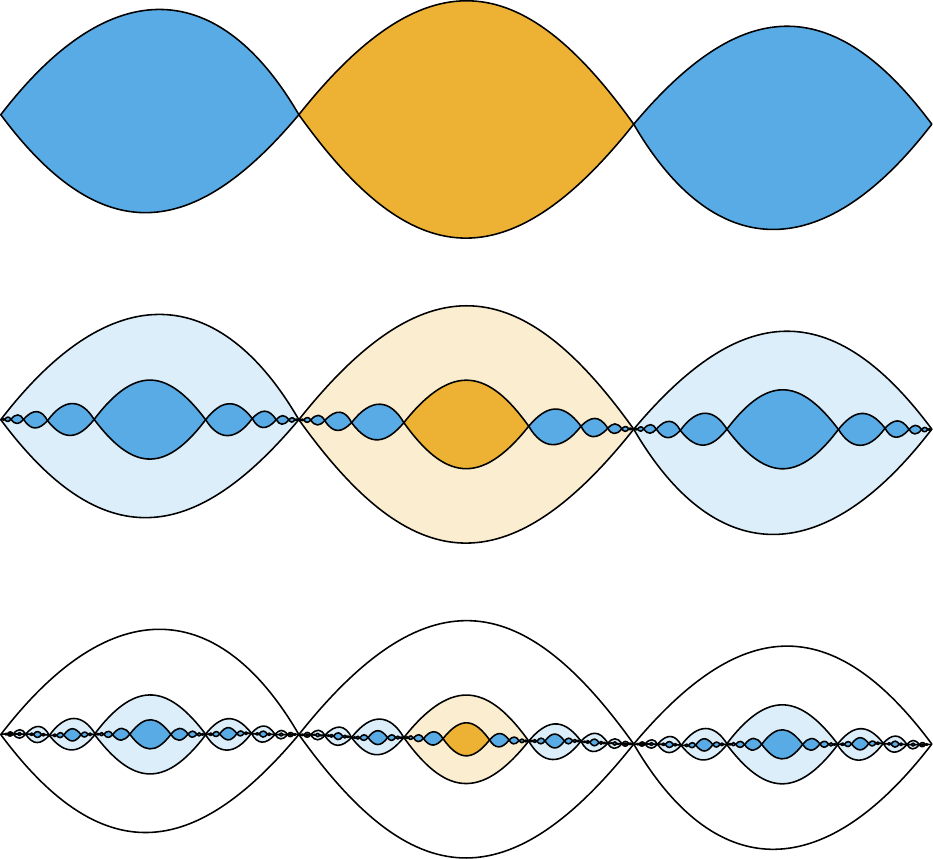}};
\end{tikzpicture}
\end{center}
\caption{\small Schematic representation of the pearl necklaces associated to successive enrichments in the parameter plane. The top row represents the pearl necklace of the family $c\mapsto P_c$, in the space $\Per_1(\exp(2\pi i 1/3))$, drawn in the $\log(c)/i$ coordinate, i.e.\ as on Figure~\ref{fig:per1implo}. 
The second row shows the pearl necklace of the family $c\mapsto R_{0/1} P_c$.
It is necessarily contained in the pearls of the family $c\mapsto P_c$.
In this simplified diagram we omit the contact points between the regions and their sub-region, though are an interesting aspect of the process.
The third row shows the pearl necklace of the family $c\mapsto R_{0/1} R_{0/1} P_c$.
The amber pearls represent the maps that are of composition type $3$ while the blue ones of composition type $2\circ 2$.
Type E parameters (two immediate basin components) are at the contact points between pearls.
The rest is of type $2$ (iff.\ there is only one critical point in the immediate basin).
}
\label{fig:iteratedParamEnrich}
\end{figure}

One may iterate parabolic renormalization and study the bifurcation locus and analogue of pearl necklace for $R_{p''/q'',s''} (R_{p'/q',s'} P_c)$, with $s',s''\in \{+,-\}$.
Figure~\ref{fig:iteratedParamEnrich} gives a schematic illustration.
By the composition type diagram above, these successive pearl necklaces are nested.
This suggest an interesting question: what is the limit (intersection) of these nested pearl necklaces when one iterates renormalization more and more?
Can we use this to create interesting non-Brjuno parameters $\theta$ for which the bifurcation locus in the slice $\Per_1(e^{2\pi i\theta})$ and the associated polynomials exhibit interesting/unusual behaviours?

In this article, we in particular prove and generalise Diagram~\ref{eq:deg3diagram} to maps with more critical points in their immediate basin.

\FloatBarrier

\subsection{Summary of results}\label{sub:content}

We will define classes of maps, denoted $\DDD_\tau$, associated to a composition type $\tau$ (see Definition~\ref{def:BiGrl}).
The composition type is a generalisation of the types we denoted $2$, $2\circ 2$ and $3$ in Section~\ref{sub:bkgd}.
A composition type is denoted $d_n\circ\cdots \circ d_1$ and is just a notation for the sequence $(d_1,\ldots, d_n)$.
A map in $\DDD_\tau$ is a holomorphic map $f$ defined on an open subset of $\C$ containing $0$, with a parabolic point at the origin with exactly one cycle of attracting axes, such that, denoting $r/s$ the rotation number of $f$ in irreducible terms, $f^s$ is conjugated on the immediate basin of any of its attracting axes (this is independent of the axis) to a Blaschke product, which is the composition of Blaschke products of degree $d_1$, $d_2$, \ldots, $d_n$.

We will define a notion of \emph{descendant} of composition types (Definition~\ref{def:desc}), and deduce from Theorems~\ref{thm:invGrl}, \ref{thm:main:grl} and~\ref{thm:main:grl:2}, that a map $f\in \DDD_\tau$ whose parabolic renormalization $R_{p/q,\pm}f$ has only one cycle of attracting axes, belongs to $\DDD_{\tau'}$ for a descendant $\tau'$ of $\tau$.
Notably, there is only finitely many critical points in the immediate basin of $R_{p/q,\pm} f$.
The definition of descendant is stated through another notion, we call \emph{gleaning} (see Definition~\ref{def:glean}), related to how critical points are distributed in the various \emph{virtual basins}, defined in Section~\ref{sub:grl} and denoted $V_n$.
A composition type is its own descendant.

The virtual basins $V_n$ are subsets of the immediate basin of $f$, indexed by $n\in \Z$, and were defined in the work of Douady and Lavaurs as specific connected components of the interior of the filled-in Julia set  (a.k.a.\ filled-in Julia-Lavaurs sets) of geometric limits (a.k.a.\ Lavaurs maps), see for instance Lemma~3.3.4 page~17 of \cite{lavaurs}.
We take here another approach: we define virtual basins as the direct image of the immediate basin of $Rf$ by the extended repelling Fatou parametrization $\psi_\rep$.

A key point of the proof of the main theorem is to justify that, though $Rf$ has infinitely many critical points, its restriction to the immediate basin is proper and has only finitely many critical points.
In \cite{Shi}, to prove this, Shishikura used the fact that $Rf$ has only one critical value but this is not the case any more here.
A substantial part of the work here focuses on proving that $f$ is proper from $V_n$ to $V_{n+1}$ and that $Rf:B^*_{Rf} \to B^*_{Rf}$ is equivalent to $f^{q-p}:V_p\to V_q$ for some $p<q\in\Z$.

One thing to note is that, with the level of generality in which we work, we for instance allow for maps whose immediate basins may have non locally connected boundary (see Appendix~\ref{app:non-tame} and figure~\ref{fig:zez}). 
(Actually we allow for maps whose definition domains boundaries are not locally connected either.)
As a consequence, we cannot use the techniques of \cite{LY} and deduce that the $V_n$ have locally connected boundary.
As such it may happen that $V_n$ is not contained, for any $n\in\Z$, in a repelling petal (or in an attracting petal, or in neither), a phenomenon related to the fact that the immediate basin of an attracting axis of $Rf$ may spiral arbitrarily far in one direction (or in the other, or back and forth).
See Figure~\ref{fig:large} for a sketch and Figure~\ref{fig:zez4} for an actual example.

\begin{figure}[htbp]
\centering
\includegraphics[width=10cm]{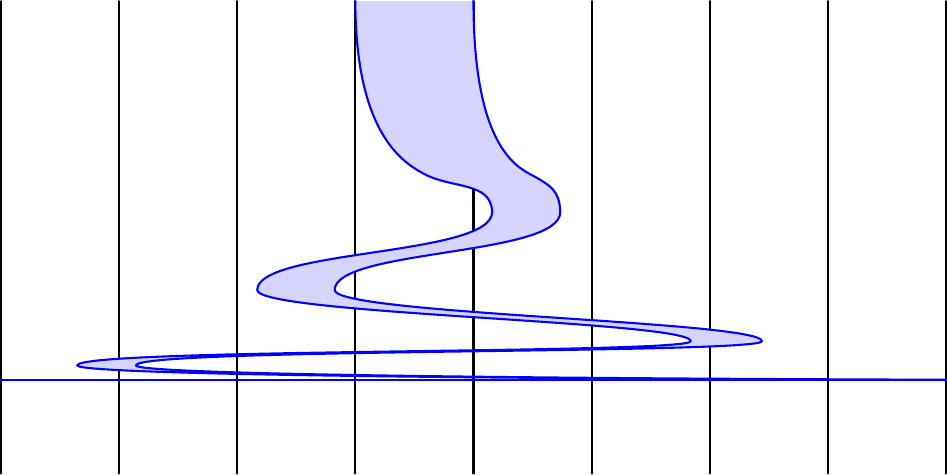}
\caption{A non-realistic sketch of the lift $U$ by $E:z\mapsto e^{2\pi iz}$ of the immediate basin of some $Rf$, such that $U$ would have a real part that is not bounded from below, nor above.}
\label{fig:large}
\end{figure}

\subsection{Acknowledgements}

This research was carried-out in Toulouse University.
Many discussions with Runze Zhang about enrichment in the parabolic slices and with Dimitri Le Meur about invariant classes for parabolic renormalization, helped in the maturation of the ideas of the present article.
The thesis of Le Meur included a study of when two horn maps are cover-equivalent, leading to the alternative proof proposed here of the fact that $Rf$ has finite type for $f\in \DDD_\tau$.

\section{Definitions and main result}\label{sec:def}

To simplify, we first treat the case when both $f$ and $Rf$ have rotation number $0$, i.e.\ $r/s=0/1$ and $p/q=0/1$ using the notations of Section~\ref{sub:content}.
We also choose to work with upper renormalization.
This will be the case through Sections~\ref{sec:def}, \ref{sec:tools} and~\ref{sec:pf-inv}.
The general case is treated in Section~\ref{sec:grl}.

\medskip

In order to state the main result, we need to introduce several definitions.

\subsection{Parabolic fixed points}\label{sub:ext}

We assume the classical theory of Leau and Fatou of parabolic fixed points known\footnote{See for instance Chapter~10 in \cite{Milnorbook}.}
and this section is here to fix the notations.

\medskip

For any function $f$ we denote by $\dom(f)$ its domain of definition. 

\begin{definition}\label{def:F}
Denote by $\cal F$ the class of $\C$-valued holomorphic maps $f$ defined on an open neighbourhood $\dom(f)\subset\C$ of $0\in \C$ such that $f(0)=0$ and $f'(0)=1$.
We also denote
\[ \cal E = \{ f\in\cal F\,\mid\, f''(0)=0\}. \]
All these maps have parabolic fixed point at the origin; $\cal E$ stands for \emph{exceptional} and refers to maps having more than one attracting axis.
\end{definition}

If $f\in\cal F$ and $f\neq\on{id}$ near $0$ then power series expansion of $f$ at $0$ takes the form $f(z) = z(1+ cz^{q}+\ldots)$ for some $c\in\C^*$.
We denote its basin of attraction as
\[ B = B(0) = \{ z\in \dom(f) \mid f^n(z)\to 0 \hbox{ and  } \forall n\in \N,\, f^n(z)\neq 0 \} .\]
Recall that any point in $B$ has an orbit that tends to $0$ tangentially to one of the $q$ half lines defined by $cz^{q} \in \R_{<0}$ and called \emph{attracting axes}.
Local orbits under $f^{-1}$ tend to $0$ tangentially to one of the $q$ \emph{repelling axes}, defined by $cz^q \in \R_{>0}$.

\begin{lemma}\label{lem:nonc}
  Let $f\in \cal F$. We assume that $f\neq\id$ near $0$ (otherwise $B$ is not defined).
  Then $B\neq \C^*$.
\end{lemma}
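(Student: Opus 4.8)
The plan is to argue by contradiction, so suppose $B=\C^*$. Since $f^0=\id$ we have $B\subset\dom(f)$, hence $\dom(f)\supset\C^*$; being open and containing $0$, it must equal $\C$, so $f$ is entire and every iterate $f^n$ is holomorphic on all of $\C$, in particular on every closed disc $\{\abs z\le\varepsilon\}$.

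The heart of the matter is to show that $f^n\to 0$ locally uniformly on $B=\C^*$. By the Leau--Fatou theory one can fix attracting petals $P_1,\dots,P_q$ (one per attracting axis) with $f(P_i)\subset P_i$ and such that every orbit converging to $0$ without hitting it eventually lands, and stays, in one of them; write $P_+=P_1\cup\dots\cup P_q$, so that $f(P_+)\subset P_+$. Given a compact $K\subset B$, the sets $f^{-n}(P_+)$ form an increasing (since $P_+\subset f^{-1}(P_+)$) open cover of $B$, so $K\subset f^{-N}(P_+)$ for some $N$, i.e.\ $f^N(K)$ is a compact subset of $P_+$. On $P_+$ the attracting Fatou coordinate conjugates $f$ to the unit translation, which forces $f^m\to 0$ uniformly on compact subsets of $P_+$, hence $f^m\to 0$ uniformly on $K$. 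Applying this to $K=\{\abs z=\varepsilon\}$ for a small $\varepsilon>0$ and invoking the maximum modulus principle on $\{\abs z\le\varepsilon\}$ (legitimate since $f^m$ is holomorphic there), we obtain $\sup_{\abs z\le\varepsilon}\abs{f^m(z)}\to 0$ as $m\to\infty$.

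This is then contradicted by the repelling dynamics at $0$. Since $f'(0)=1\neq 0$, $f$ admits a local holomorphic inverse $g$ fixing $0$, and a repelling petal $P_-$ of $f$ is an attracting petal for $g$; in particular $P_-$ contains points arbitrarily close to $0$, $g(P_-)\subset P_-$, and $g^k\to 0$ on $P_-$. Choose $w_0\in P_-$ with $0<\abs{w_0}<\varepsilon$ and set $w_{k+1}=g(w_k)$; then $w_k\in P_-$, $w_k\to 0$ and $f^k(w_k)=w_0$ for every $k$. For $k$ large enough $\abs{w_k}\le\varepsilon$, so $\sup_{\abs z\le\varepsilon}\abs{f^k(z)}\ge\abs{f^k(w_k)}=\abs{w_0}>0$, contradicting the limit just established. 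Throughout, the hypothesis $f\neq\id$ near $0$ is exactly what provides a genuine Leau--Fatou flower, with both attracting and repelling petals.

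I expect the main obstacle to be the passage, in the second paragraph, from the pointwise Leau--Fatou convergence to \emph{locally uniform} convergence on the whole basin $B$: one must choose the petals so that $P_+$ is forward invariant and captures every orbit converging to $0$, and then run correctly the compactness argument with the increasing open cover $\{f^{-n}(P_+)\}_n$. The remaining ingredients --- the maximum modulus principle and the normal forms of $f$ on attracting and repelling petals --- are routine.
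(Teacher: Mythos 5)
Your proof is correct, and it diverges from the paper's only in the final step. The first half --- establishing that $f^n\to 0$ locally uniformly on $B=\C^*$ via forward-invariant attracting petals that absorb every orbit in $B$, together with the increasing open cover $\{f^{-n}(P_+)\}_{n}$ and the Fatou coordinate on $P_+$ --- is precisely the content that the paper's one-line proof leaves implicit when it asserts that $f^n$ converges uniformly to $0$ on the unit circle; your compactness argument fills that in correctly (and the circle $\{|z|=\varepsilon\}$ is indeed a compact subset of $\C^*$, so the capture argument applies). Where you differ is the contradiction itself: the paper applies the Cauchy integral formula on the circle to get $(f^n)'(0)\to 0$, incompatible with $(f^n)'(0)=f'(0)^n=1$ --- a one-line finish using only the normalization $f'(0)=1$. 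You instead pass to the closed disc by the maximum modulus principle and contradict the uniform smallness of $f^k$ there with a backward orbit $w_k=g^k(w_0)$ in a repelling petal, for which $f^k(w_k)=w_0$ is a fixed nonzero point of modulus less than $\varepsilon$; this exploits the repelling half of the Leau--Fatou flower (so it uses the hypothesis $f\neq\id$ a second time) and is a bit longer, but equally rigorous, and arguably more dynamical in flavour. One minor remark: the inclusion $B\subset\dom(f)$ holds by the very definition of $B$ in the paper, so no appeal to $f^0=\id$ is needed; the conclusion $\dom(f)=\C$ and entirety of the iterates is then exactly as you state.
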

\begin{proof}
  Otherwise $f^n$ would converge uniformly to $0$ on the unit circle, which implies by the Cauchy integral formula that $(f^n)'(0)\tends 0$.
  But $f'(0)=1$.
\end{proof}

The classical theory provides $2q$ domains, one for each axis, that cover a punctured neighbourhood of $0$, more precisely $q$ \emph{attracting petals} $P_\att$, together with $q$ holomorphic conjugacies $\phi_\att$, called \emph{attracting Fatou coordinates}, defined on $P_\att$ satisfying $\phi_\att\circ f = \phi_\att+1$, and $q$ \emph{repelling  petals} $P_\rep$, together with $q$ holomorphic conjugacies $\phi_\rep$, called \emph{repelling Fatou coordinate}, defined on $P_\rep$ satisfying $\phi_\rep\circ f^{-1}=\phi_\rep+1$.
In each attracting petal, orbits tend to $0$ tangentially to its associated attracting axis, Similarly in each repelling petal, backward orbits tend to $0$ tangentially to its associated repelling axis.

Let \[ B(P_\att) = \{ z \in \dom(f) \mid f^n(z) \hbox{ eventually belongs to } P_\att \}. \]
It is also equal to the set of points whose orbit tends to $0$ tangentially to the corresponding attracting axis, so if we denote $\Delta_\att$ the axis we can also denote $B(\Delta_\att)=B(P_\att)$.
The set $B(0)$ is the disjoint union of those $q$ basins.

One can uniquely extend $\phi_\att$ to the whole basin of the corresponding petal as a holomorphic map still satisfying the semi conjugacy relation $\phi_\att\circ f = \phi_\att+1$.
Similarly, one can uniquely extend $\psi_\rep=\phi_\rep^{-1}$ on the following domain:
\[ \Xi_\rep:\{z+n \,\mid\, z\in \H_{<r}, \ n\in \N, f^n(\psi_\rep(z)) \hbox{ is defined}\} \]
where $\H_{<r} =\{z\in\C\mid \Re{z}<r\}$, into a holomorphice map satisfying
\begin{equation}\label{eq:psirepsc}
  \psi_\rep (z+1) = f\circ\psi_\rep(z)\text{ for every }z\in\Xi_\rep\text{ such that }z+1\in\Xi_\rep.
\end{equation}
The extended maps, like the orignal maps, are unique up to composition with translation on the left for $\phi_\att$ and on the right for $\psi_\rep$. 
Such extensions are central in the work of Douady-Lavaurs on parabolic enrichment (see for instance \cite{lavaurs,DouadyImplosion}) and the subsequent research in the field.

\begin{center}
\it
From now on $\phi_\att$ and $\psi_\rep$ refer to extended attracting and repelling Fatou coordinates.
\end{center}

The \emph{immediate basin} of an attracting petal $P_\att$ is the connected component of its basin $B(P_\att)$ that contains $P_\att$ and is denoted $B^*(P_\att)$.
It only depends on the attracting axis so we can denote it $B^*(\Delta_\att)$.
If $f\in\cal F$ and $f''(0)\neq 0$, there is only one attracting axis, and in this case we denote its immediate basin $B^*$, or $B^*_f$ if there are more than one map $f$.

\subsection{Parabolic renormalization}\label{sub:ren}

Recall that we work with the case where $f$ and the map $Rf$, which we are about to define, have rotation number $0/1$ and with the choice of upper renormalization.
We will thus denote $R=R_{p/q,\pm}=R_{0/1,+}$.

\medskip

For a map $f\in\cal F\setminus \cal E$, i.e.\ such that $f''(0)\neq 0$, we define in this section its parabolic renormalization. 

Let $\phi_\att$ and $\psi_\rep$ denote the extensions defined in Section~\ref{sub:ext}.
In a neighbourhood of $+i\infty$ the following composition $$h_f=\phi_\att\circ\psi_\rep$$ is well defined and commutes with the translation by $1$ and is called the lifted \emph{horn map}.
Hence it can be viewed as a map of $\C/\Z$ defined in a neighbourhood of  $+i\infty$ which is called the horn map.
By abuse of notation we denote it also by $h_f$.

For $\sigma\in\C$, let $T_\sigma (z)=z+\sigma$.
This translation is well-defined on the cylinder $\C/\Z$ and called $T_\sigma$  by abuse of notation.
Let $E(z) = e^{2\pi i z}$ and denote $\pi : \C\to\C/\Z$ the canonical projection.
It induces a conformal isomorphism $\ov E:\C/\Z\to\C^*$ via the commuting diagram
\[
\begin{tikzcd}
\C \ar[rd, "E"] \ar[d, "\pi"'] &  \\
\C/\Z \ar[r,"\ov E"'] & \C^*
\end{tikzcd}
\]
and sends the upper end of the cylinder $\C/\Z$ to $0$.
Now projecting $T_\sigma \circ h_f$ via $\ov E$, we get a map
\[\ell_\sigma := \ov E \circ\, T_\sigma\circ h_f\circ \ov E^{-1}\]
which is defined on a neighbourhood of $0$.
Since Écalle \cite{Ec} and Voronin \cite{Vo} we know
$0$ is a removable singularity and, more precisely, $\ell_\sigma$ can be extended to a holomorphic function satisfying $\ell_\sigma(0)=0$ and $\ell_\sigma'(0)\neq 0$.
\[
\begin{tikzcd}
\C/\Z  \ar[r, "T_\sigma\circ h_f"] \ar[d, "\ov{E}"'] & \C/\Z  \ar[d, "\ov{E}"]\\
\C^* \ar[r," \ell_\sigma"'] &\C^* & 
\end{tikzcd}
\]
Let $\dom^*(\ell_\sigma)$ be the connected component containing $0$ of $\dom {\ell_\sigma}$ (it is independent of $\sigma$).
Now there exists some $\sigma_0$, that depends on $f$, such that $\ell_{\sigma_0}'(0)=1$, and we make the following

\begin{definition}\label{def:ren}
The \emph{parabolic renormalization} $Rf$ of $f\in \cal F\setminus\cal E$ is the restriction
\[ Rf = \ell_{\sigma_0}|_{\dom^*(\ell_{\sigma_0})}.\]
\end{definition}

Note that actually this is the upper parabolic renormalization and that there is also a lower parabolic renormalization, whose treatment is analogous, where $E=e^{2\pi iz}$ is replaced by $E(z)=e^{-2\pi iz}$ in the formulas, so as to send the lower end of the cylinder to $0$ instead of the upper end.

Let us stress that $Rf$ is only defined up to conjugacy by a scaling $z\mapsto \lambda z$, $\lambda \in\C^*$.
Indeed, we did not normalize the repelling and the attracting Fatou coordinates, which are only unique up to addition of constants, and the condition $\ell_\sigma'(0)=1$ imposes only one relation on these two constants.

Let us give a few immediate classical properties:

\begin{proposition}\label{prop:icph} A point $u\in\dom h_f$ is a critical point of $h_f$ if and olny if $\exists n\in\N$ such that 
$w:=\psi_\rep(u-n)$ is precritical for $f$, i.e.\ there exists $n\in\N$ such that $f^n(w)\in \on{Crit}(f)$.
The set of critical values of $Rf$ is contained in the set of $z=E(\sigma_0+\phi_\att(v))$ where $v$ varies in the set of critical values of the restriction $f:B^*_f\to B^*_f$.
The set of critical points of $Rf$ is the set $E(\on{Crit}(h_f)\cap \Omega)$.
It is also the set of $z = E(u)\in E(\Omega)$ such that $w=\psi_{\rep}(u)$ is defined and pre-critical for $f$.
In this case, the local degree of $Rf$ at such $z$ is given by the product\footnote{This infinite product is well-defined because for $n$ negative enough, $\psi_\rep(u+n)$ is in a repelling petal of $f$ and for $n$ positive enough, $\psi_\rep(u+n)$ is in a attracting petal, so the local degree is $1$ but for a finite set of values of $n\in\Z$.} of local degrees of $f$ at the points $\psi_\rep(u+k)$ for all $k\in\Z$.
\end{proposition}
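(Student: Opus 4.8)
The plan is to unwind the definitions of $h_f$, $Rf$, and the extended Fatou coordinates, translating each assertion about $Rf$ into one about $h_f$ via the conformal change of coordinates $\ov E$, and each assertion about $h_f$ into one about $f$ via the factorization $h_f = \phi_\att \circ \psi_\rep$ on the appropriate domain $\Omega$ (a neighbourhood of $+i\infty$ made $\Z$-invariant, where the composition is defined). For the first assertion, I would argue that since $\phi_\att$ is a local biholomorphism wherever it is defined (it conjugates $f$ to a translation, and its extension to the whole basin is obtained by pulling back along iterates of $f$, which is a local biholomorphism away from $\on{Crit}(f)$), a point $u$ is critical for $h_f = \phi_\att \circ \psi_\rep$ iff it is critical for $\psi_\rep$, or $\psi_\rep(u)$ lands in a precritical point of $\phi_\att$. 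Now $\psi_\rep$ itself is a local biholomorphism except where its extension was obtained by pulling back a germ of $\psi_\rep^{(0)}$ through $f^n$ for some $n$, i.e.\ using relation \eqref{eq:psirepsc}: $\psi_\rep(z) = f^n(\psi_\rep^{(0)}(z-n))$, so $\psi_\rep$ fails to be a local biholomorphism exactly at those $u$ where $f^k$ is critical at $\psi_\rep(u - n + k)$ for some $0 \le k < n$; and $\phi_\att \circ \psi_\rep$ is critical at $u$ iff some forward iterate of $w := \psi_\rep(u-n)$ (reading $\phi_\att(f^m(w)) = \phi_\att(w) + m$) hits $\on{Crit}(f)$. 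Combining, $u$ is critical for $h_f$ iff for some $n \in \N$ the point $w = \psi_\rep(u-n)$ is precritical for $f$. This is exactly the statement, since the forward-orbit clause absorbs the finitely many intermediate criticalities.

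For the critical values of $Rf = \ell_{\sigma_0}|_{\dom^*}$, I would use that $Rf = \ov E \circ T_{\sigma_0} \circ h_f \circ \ov E^{-1}$, so critical values of $Rf$ are the $\ov E$-images of critical values of $T_{\sigma_0} \circ h_f$, i.e.\ of $\sigma_0 + (\text{critical values of } h_f)$. A critical value of $h_f$ at a critical point $u$ is $h_f(u) = \phi_\att(\psi_\rep(u))$; setting $w = \psi_\rep(u-n)$ precritical, say $f^m(w) = v_0 \in \on{Crit}(f) \cap B^*_f$ with critical value $v = f(v_0)$, one gets $h_f(u) = \phi_\att(\psi_\rep(u)) = \phi_\att(w) + n = \phi_\att(v) + (\text{integer})$, and since $\phi_\att$ descends modulo $\Z$ this shows the critical value of $Rf$ is $E(\sigma_0 + \phi_\att(v))$ for $v$ ranging over critical values of $f : B^*_f \to B^*_f$, which gives the claimed inclusion. (The inclusion, not equality, reflects that not every critical point $u$ of $h_f$ need lie in $\Omega$, and that $w$ must land in the immediate basin for $\phi_\att$ to be the relevant extended coordinate.)

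For the critical points of $Rf$: again by the conjugacy, $\on{Crit}(Rf) = E(\on{Crit}(T_{\sigma_0} \circ h_f) \cap \Omega) = E(\on{Crit}(h_f) \cap \Omega)$ since translation is a biholomorphism; and by the first assertion this is the set of $z = E(u)$ with $u \in \Omega$ such that $w = \psi_\rep(u)$ is defined and precritical for $f$ (here, because $u$ itself lies in $\Omega$ near $+i\infty$, one can take $n = 0$, i.e.\ work with $\psi_\rep(u)$ directly rather than a backward iterate). Finally, for the local degree: the local degree of $h_f$ at $u$ is the product of the local degree of $\psi_\rep$ at $u$ and the local degree of $\phi_\att$ at $\psi_\rep(u)$; writing $\psi_\rep$ near $u$ as a composite $f^N \circ (\text{local biholo})$ for $N$ large enough that $\psi_\rep(u - N')$ sits in a repelling petal for all $N' \ge N$, and $\phi_\att$ near $\psi_\rep(u)$ as a composite $(\text{local biholo}) \circ f^{M}$ for $M$ large enough that $f^{M}(\psi_\rep(u))$ sits in an attracting petal, one computes both local degrees as products of local degrees of $f$ along the relevant finite pieces of the $f$-orbit of $\psi_\rep(u)$; reindexing by $k \in \Z$ via $\psi_\rep(u+k)$ and noting that all but finitely many factors are $1$ (the footnote's point) yields the infinite product formula.

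The main obstacle I anticipate is bookkeeping the domains and the extension procedure carefully enough: one must be precise about which determination of $\phi_\att$ (the extended one, attached to the immediate basin) and which pull-back branch of $\psi_\rep$ is in play, and must verify that the local-biholomorphism-plus-iterate factorizations are valid on genuine neighbourhoods — in particular that the "finitely many bad $n$" claim holds, which is where one invokes that $\psi_\rep(u+k)$ is eventually in a repelling petal as $k \to -\infty$ and eventually in an attracting petal as $k \to +\infty$. None of this is deep, but it is the part where an imprecise argument would go wrong.
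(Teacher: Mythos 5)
The paper itself offers no proof of Proposition~\ref{prop:icph}: it is introduced as a list of ``immediate classical properties'' and left to the reader, so there is no official argument to compare against. Your unwinding of $h_f=\phi_\att\circ\psi_\rep$ through the extension relations $\phi_\att\circ f=\phi_\att+1$ and $\psi_\rep\circ T_1=f\circ\psi_\rep$, followed by transport through $\ov E$ and $T_{\sigma_0}$, is exactly the intended argument, and the chain-rule bookkeeping for the local degree (local degree of $\psi_\rep$ at $u$ times local degree of $\phi_\att$ at $\psi_\rep(u)$, each a finite product of local degrees of $f$ along the orbit $\psi_\rep(u+k)$, all but finitely many factors equal to $1$) is correct.

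Two spots need tightening. First, your opening claim that $\phi_\att$ ``is a local biholomorphism wherever it is defined'' is false for the extended coordinate: since $\phi_\att=T_{-m}\circ\phi_\att\circ f^m$ on $f^{-m}(P_\att)$, it is critical exactly at the points whose forward orbit meets $\on{Crit}(f)$ --- which is the fact you actually use a line later, so only the sentence, not the argument, needs correcting. Second, your parenthetical justification of the last description of $\on{Crit}(Rf)$ (``because $u$ itself lies in $\Omega$ near $+i\infty$, one can take $n=0$'') is wrong: critical points of $h_f$ in $\Omega$ are not near $+i\infty$, and for a \emph{fixed} representative $u\in\Omega$ with $E(u)\in\on{Crit}(Rf)$ the point $\psi_\rep(u)$ need not be precritical, since the criticality may come entirely from $\psi_\rep$, i.e.\ from $\psi_\rep(u-n)\in\on{Crit}(f)$ for some $n>0$ while the forward orbit of $\psi_\rep(u)$ avoids $\on{Crit}(f)$. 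The correct reduction uses $T_1\Omega=\Omega$ and $E\circ T_{-n}=E$: replace the representative $u$ by $u-n$, whose image $\psi_\rep(u-n)$ is precritical; conversely, any representative $u'\in\Omega$ with $\psi_\rep(u')$ precritical is critical for $h_f$ by your first assertion with $n=0$. Relatedly, when you write $f^m(w)=v_0\in\on{Crit}(f)\cap B^*_f$, the membership in $B^*_f$ deserves a word: it follows from Lemma~\ref{lem:prutibsf} and the $T_1$-invariance of $\Omega$, and it is what makes $v=f(v_0)$ a critical value of the restriction $f:B^*_f\to B^*_f$ rather than merely of $f$.
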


\begin{remark}
If $f''(0)=0$, i.e.\ if there are more than one attracting axis, there is more than one parabolic renormalizations that we could associate to $f$. Here we choose not to define parabolic renormalization in this case.
\end{remark}

In section~\ref{sub:ren} we defined \[\Omega = E^{-1}(\dom Rf).\]
Note that $\Omega$ is also the connected component of $\dom h_f$ that contains an upper half plane.

\begin{lemma}\label{lem:prutibsf}
  $\psi_\rep(\Omega) \subset B^*_f$
\end{lemma}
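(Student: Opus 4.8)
The plan is to show that for any $u\in\Omega$, the point $w=\psi_\rep(u)$ lies in the immediate basin $B^*_f$ of the unique attracting axis of $f$. First I would recall the key structural facts about $\Omega$: it is the connected component of $\dom h_f$ containing an upper half-plane $\H_{>M}$ for some large $M$, it is invariant under $T_1$, and on $\Omega$ the horn map $h_f=\phi_\att\circ\psi_\rep$ is defined, meaning that $\psi_\rep(u)$ is defined and its forward orbit under $f$ eventually lands in the attracting petal $P_\att$. In particular, for every $u\in\Omega$ we have $\psi_\rep(u)\in B(P_\att)=B(0)$, so the orbit of $w$ tends to $0$ tangentially to the attracting axis; thus $w$ lies in the (possibly non-immediate) basin $B^*(\Delta_\att)$'s ambient basin $B$.

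The substance is upgrading ``$w\in B$'' to ``$w\in B^*_f$'', i.e.\ that $w$ lies in the connected component of $B$ containing $P_\att$. For this I would use connectedness of $\Omega$ together with the functional equation $\psi_\rep(u+1)=f\circ\psi_\rep(u)$ from \eqref{eq:psirepsc}. Since $\Omega$ is open and connected, $\psi_\rep(\Omega)$ is a connected subset of $\C$. Now take $u$ in the upper half-plane $\H_{>M}\subset\Omega$ with $\Re u$ large negative — more carefully, since $\Omega\supset\H_{>M}$ and $\Omega$ is $T_1$-invariant, $\Omega$ contains points $u$ with $\Im u$ as large as we like and $\Re u$ arbitrary; for such $u$ with $\Im u$ large, $\psi_\rep(u)$ lies in the repelling petal $P_\rep$ if we also arrange (by subtracting a large integer) that we have not yet iterated, but more to the point, $h_f=\phi_\att\circ\psi_\rep$ is defined near $+i\infty$ precisely because there $\psi_\rep$ takes values in a region where a large iterate of $f$ maps into $P_\att$. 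The cleanest route: near $+i\infty$, $\psi_\rep(u)$ lies in a repelling petal $P_\rep$ of $f$; but $P_\rep$ accumulates only at $0$ and, being connected and contained in the basin of the unique attracting axis (as every point of a repelling petal near $0$ is swept into $P_\att$ by iteration — this is where having a \emph{single} attracting axis, $f\notin\cal E$, is used), $P_\rep\subset B^*_f$. Hence $\psi_\rep(u)\in B^*_f$ for $u$ near $+i\infty$, so $\psi_\rep(\Omega)$ meets the connected open set $B^*_f$.

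Finally, I would argue $\psi_\rep(\Omega)\subset B^*_f$ globally: $\psi_\rep(\Omega)$ is connected, it is contained in $B$, and $B^*_f$ is a connected component of $B$ (hence open and closed in $B$) which $\psi_\rep(\Omega)$ intersects; therefore $\psi_\rep(\Omega)\subset B^*_f$. The main obstacle I anticipate is the first reduction — namely confirming rigorously that points of $\Omega$ near $+i\infty$ are sent by $\psi_\rep$ into a repelling petal, and that such a petal lies in $B^*_f$ rather than merely in $B$; this hinges on the fact that $f\notin\cal E$ so there is a unique attracting axis, making the local picture near $0$ (petals, basin) connected up through $P_\att$. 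Everything else is the soft ``connected set inside a basin meeting a basin-component lies in that component'' argument.
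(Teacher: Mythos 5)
Your overall structure matches the paper's proof: first $\psi_\rep(\Omega)\subset B$ because $h_f=\phi_\att\circ\psi_\rep$ is defined on $\Omega$, then show that the image of a neighbourhood of $+i\infty$ lands in $B^*_f$, then conclude by connectedness of $\psi_\rep(\Omega)$ since $B^*_f$ is a connected component of $B$. The first and last steps are fine. The gap is in the middle step, and it is exactly the point you flagged as your ``main obstacle'': your claim that $P_\rep\subset B^*_f$ because ``every point of a repelling petal near $0$ is swept into $P_\att$ by iteration'' is false, even when $f\notin\cal E$. A repelling petal is never contained in the parabolic basin: it contains points on and near the repelling axis, whose forward orbits move away from $0$ and need not converge to it at all (for $z+z^2$, the positive real axis lies in the repelling petal and escapes to $\infty$); indeed the Julia set accumulates at the parabolic point inside the repelling petals. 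Having a single attracting axis does not change this. So the inclusion $\psi_\rep(u)\in P_\rep$ for $\Im u$ large (which is itself only true for the part of the half-plane with $\Re u$ small, before applying the extension relation) cannot give membership in $B^*_f$.

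The correct replacement, and what the paper uses, is the \emph{sepal}: the image by $\psi_\rep$ of a sufficiently high upper half-plane is a sepal, i.e.\ a region on which both the repelling and the attracting Fatou coordinates are defined --- this is precisely why $h_f$ is defined near $+i\infty$ --- and such a sepal is contained in the parabolic basin and meets an attracting petal $P_\att$. Once you know $\psi_\rep(\Omega)$ meets $P_\att\subset B^*_f$, your soft argument (connected subset of $B$ meeting the component $B^*_f$) finishes the proof exactly as in the paper. So the route is the same; only the identification of the set reached near $+i\infty$ must be corrected from ``repelling petal'' to ``sepal''.
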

\begin{proof}
From the definition of $\dom h_f$, it follows that 
$\psi_\rep(\Omega)\subset \psi_\rep(\dom h_f) \subset B_f$.
The image by $\psi_\rep$ of a high enough upper half plane is a sepal for the parabolic point of $f$, in particular it meets some attracting petal of $f$.
So the connected set $\psi_\rep(\Omega)$ is contained in the component of $B_f$ that contains the petal.
\end{proof}

\begin{corollary}\label{cor:cccuh}
  The set $\Omega$ is the connected component of $\psi_\rep^{-1}(B^*_f)$ containing an upper half plane.
\end{corollary}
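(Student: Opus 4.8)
The plan is to combine Lemma~\ref{lem:prutibsf} with the characterisation of $\Omega$ recalled just before the corollary, namely that $\Omega$ is the connected component of $\dom h_f$ containing an upper half plane. By Lemma~\ref{lem:prutibsf} we have $\psi_\rep(\Omega)\subset B^*_f$, hence $\Omega\subset\psi_\rep^{-1}(B^*_f)$; since $\Omega$ is connected and contains an upper half plane, it is contained in exactly one connected component $\Omega'$ of $\psi_\rep^{-1}(B^*_f)$, and that component contains an upper half plane. So it remains to prove the reverse inclusion $\Omega'\subset\Omega$, i.e.\ that this component is not strictly larger than $\Omega$.

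The key point is that $\psi_\rep^{-1}(B^*_f)\subset\dom h_f$. Indeed, $h_f=\phi_\att\circ\psi_\rep$, and $\phi_\att$ (the extended attracting Fatou coordinate) is defined on all of $B^*_f$ — in fact on the whole basin $B(P_\att)$, by the extension discussed in Section~\ref{sub:ext}. Therefore wherever $\psi_\rep$ is defined and lands in $B^*_f$, the composition $h_f$ is defined; that is, $\psi_\rep^{-1}(B^*_f)\subset\dom h_f$. (One should be slightly careful about what "$\dom h_f$" means here: it is the natural domain of $\phi_\att\circ\psi_\rep$, i.e.\ $\psi_\rep^{-1}(\dom\phi_\att)$ intersected with $\dom\psi_\rep$; since $B^*_f\subset\dom\phi_\att$, the claimed inclusion is immediate.) Consequently $\Omega'$, being connected, contained in $\dom h_f$, and containing an upper half plane, is contained in the connected component of $\dom h_f$ containing an upper half plane, which is $\Omega$. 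Combined with $\Omega\subset\Omega'$ this gives $\Omega=\Omega'$, proving the corollary.

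I expect the main (minor) obstacle to be purely bookkeeping about domains: making sure that "$\dom h_f$" is being used consistently — i.e.\ that the extended $\phi_\att$ really is defined on the \emph{immediate} basin $B^*_f$ (true, since $B^*_f\subset B(P_\att)$ and $\phi_\att$ extends to all of $B(P_\att)$), and that $\Omega$ was indeed defined as a component of this natural domain of the composition. No analytic input beyond Lemma~\ref{lem:prutibsf} and the extension properties recalled in Section~\ref{sub:ext} is needed; the argument is a soft connectedness/maximality argument. One could alternatively phrase the whole thing as: $\psi_\rep^{-1}(B^*_f)$ and $\dom h_f$ share the "upper half plane" component because on that component $\psi_\rep$ maps into $B^*_f$ (Lemma~\ref{lem:prutibsf}) and conversely $h_f$ is defined precisely where $\psi_\rep$ maps into $\dom\phi_\att\supset B^*_f$.
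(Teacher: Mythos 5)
Your argument is correct and is exactly the (implicit) reasoning the paper intends: since $\phi_\att$ extends to the whole basin, $\psi_\rep^{-1}(B^*_f)\subset\dom h_f$, while Lemma~\ref{lem:prutibsf} gives $\Omega\subset\psi_\rep^{-1}(B^*_f)$, so the two components containing an upper half plane coincide. The paper states the corollary without proof, and your connectedness/maximality argument is the intended justification.
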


\subsection{Renormalization for  Blaschke products}\label{sub:rcb}

Parabolic renormalization as defined in Section~\ref{sub:ren} does not apply directly to  Blaschke products having two attracting axes. We define for them in this section a specific version adapted to our purposes.

\begin{definition}\label{def:Grond} We call $\cal G$, the set of finite Blaschke products $G$ whose extension $F$ has a 
parabolic fixed point at $z=1$, with two attracting petals.
\end{definition} 

In such a situation, $F$ necessarily has two repelling axes at $z=1$, one pointing up and one pointing down.
There are four associated extended horn maps. To define $RG$ we select one as follows. 
Let $P_\rep$ be a repelling petal for the axis pointing up.
The repelling Fatou coordinate $\phi_\rep:P_\rep\to\C$ is chosen so that $\phi_\rep(1/\overline z) = \overline{\phi_\rep(z)}$, or equivalently so that the unit circle is mapped to the real line.
As before, the map $\psi_\rep=\phi_\rep^{-1}$ can be extended on $\C$ with $\psi_\rep^{-1}(\D)=\H$.
Let $P_\att$ be an attracting petal contained in the unit disk and $\phi_\att$ an attracting Fatou coordinate on this petal. 
Extend $\phi_\att$ on the whole basin $\D$.
Then the composition $h_G := \phi_\att\circ\psi_\rep$ of the extended maps has a domain of definition equal to the upper half plane $\H$.

We then define a map $RG:\D\to\C$ by setting $RG(0)=0$ and for $z\in\D^*$,
\[RG (z) = \ov E \circ\, T_\sigma\circ h_G\circ \ov E^{-1}(z),\] 
for the $\sigma \in\C$, unique modulo $\Z$, such that $RG'(0) = 1$.

\subsection{Blaschke models}\label{sub:blmod}

It is known that Blaschke products provide model maps for the restriction of parabolic maps to their immediate basin, under some conditions which
we describe here.

Consider a map $f\in\cal F$ with an immediate basin $B^*$ such that:
\begin{itemize}
  \item $B^*$ is simply connected,
  \item $f: B^*\to B^*$ is proper.
\end{itemize}
By the Riemann mapping theorem there exists a conformal map $\phi:B^* \to\D$.
The map $\phi\circ f\circ \phi^{-1}$ is a proper holomorphic self map of $\D$, and such a map is necessarily a finite Blaschke product $G$.

The map $G$ is the restriction to $\D$ of a rational map $F:\wh\C \to \wh\C$ that commutes with $z\mapsto 1/\bar z$.
Moreover:
\begin{lemma}\label{lem:bppp}
  The map $F$ has a parabolic fixed point on $\partial \D$, with two attracting axes.
\end{lemma}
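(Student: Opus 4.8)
The map $\phi\circ f\circ\phi^{-1}=G$ conjugates, on $\D$, the dynamics of $f$ on $B^*$ to that of the finite Blaschke product $G$. Since $f$ has a parabolic fixed point at $0\in\partial B^*$ (in the sense that orbits in $B^*$ converge to $0$ tangentially to the unique attracting axis), the Carathéodory boundary behaviour of the Riemann map should transport this to a boundary fixed point of $G$ on $\partial\D$; I would first establish that $G$ extends to a rational map $F$ of $\wh\C$, which it does automatically by the Schwarz reflection $F(z)=1/\overline{F(1/\overline z)}$, and that $F$ has a fixed point $\zeta\in\partial\D$ which is the ``image'' of the parabolic point. Concretely, the plan is to show $\phi$ extends continuously to the prime end carrying the attracting axis and sends $0$ to a point $\zeta\in\partial\D$, using the fact that $f$ near $0$ has attracting and repelling petals whose images under $\phi$ are petals for $G$ (sub-invariant Jordan-arc neighbourhoods in $\D$ landing at $\zeta$). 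Then $G(\zeta)=\zeta$, and $F$ fixes $\zeta$.

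\textbf{Key steps.} (1) Show $G$ cannot be an elliptic or hyperbolic automorphism of $\D$: $\deg G=\deg(f|_{B^*})\ge 2$ since a proper self-map of $\D$ of degree $1$ is an automorphism, which would make $f|_{B^*}$ an automorphism of $B^*$, contradicting that $B^*$ is a parabolic basin (an automorphism of a parabolic basin cannot have all orbits converging to a boundary point — or, more elementarily, the multiplier computation of Lemma~\ref{lem:nonc}-type: the attracting petal would have to be all of $B^*$). So $\deg F\ge 2$. (2) Identify the parabolic fixed point: at $\zeta$, the multiplier $F'(\zeta)$ is a positive real number (boundary Julia–Wolff–Carathéodory), and I claim it equals $1$. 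Indeed $F'(\zeta)\ge 1$ is automatic for a boundary fixed point with all of $\D$ attracted; if $F'(\zeta)>1$ the point would be repelling and $\D$ would be contained in its basin in a way incompatible with orbits \emph{converging} to $\zeta$ (a repelling boundary fixed point attracts no interior orbit). Hence $F'(\zeta)=1$, i.e.\ $\zeta$ is parabolic. (3) Count attracting axes: the conformal conjugacy $\phi$ carries the single attracting petal of $f$ at $0$ to an attracting petal of $F$ at $\zeta$ inside $\D$, and by Schwarz reflection there is a symmetric one inside $1/\overline{\D}$, so $F$ has exactly two attracting axes at $\zeta$ (equivalently, the parabolic multiplicity is such that two of the $2k$ Leau–Fatou petals point into $\D$ and two into its complement; since only one points into $\D$ — because $B^*$ is a \emph{single} immediate basin and $\phi$ is injective — the standard alternation of attracting/repelling sepals forces exactly one attracting and one repelling axis pointing into $\D$, hence two attracting axes total, one in $\D$ and its mirror image). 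I would phrase this last point via: the number of attracting directions of $F$ at $\zeta$ pointing into $\D$ equals the number of accesses to $0$ from $B^*$ along which $f$-orbits converge, which is $1$; the reflection symmetry then gives the total of $2$.

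\textbf{Main obstacle.} The delicate point is Step (2)–(3): rigorously transporting the local Leau–Fatou picture of $f$ at $0$ through the Riemann map $\phi$, which a priori need not extend continuously to $\partial B^*$ if that boundary is wild (the excerpt explicitly warns $\partial B^*$ may fail to be locally connected). The fix is to avoid global boundary regularity and argue purely locally: the attracting and repelling petals of $f$ at $0$ are nice Jordan domains on which $\phi$ restricts to a conformal map onto Jordan subdomains of $\D$ sharing a boundary arc with $\partial\D$; one shows these image petals accumulate at a single point $\zeta\in\partial\D$ (because $f^n\to 0$ uniformly on the attracting petal, so $G^n=\phi f^n\phi^{-1}$ has images shrinking to a point of $\partial\D$ by normality / the fact that the only limit is a boundary fixed point), and this pins down $\zeta$ and gives a genuine parabolic petal for $F$ at $\zeta$ inside $\D$. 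Once a parabolic petal inside $\D$ is exhibited, $F'(\zeta)=1$ and the axis count follow from the classical local theory applied to $F$ directly. I expect the write-up to lean on Corollary~\ref{cor:cccuh}-style bookkeeping and on the classical theory of Leau–Fatou (cited as Chapter~10 of \cite{Milnorbook}) for the petal structure of $F$.
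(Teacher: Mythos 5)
Your reduction to a boundary fixed point $\zeta$ of $F$ to which all $G$-orbits of $\D$ converge (Denjoy--Wolff) is fine, but the heart of the lemma is exactly what your Steps (2)--(3) leave open. First, the Julia--Wolff inequality goes the other way: at the Denjoy--Wolff point the angular derivative satisfies $0<F'(\zeta)\le 1$, so the case that must be excluded is $F'(\zeta)<1$ (a hyperbolic boundary fixed point, which \emph{does} attract every interior orbit), not $F'(\zeta)>1$; your argument only rules out the latter, hence does not show that $\zeta$ is parabolic. Second, even granting $F'(\zeta)=1$, the assertion that $\phi$ carries the attracting petal of $f$ to a genuine Leau--Fatou attracting petal of $F$ pointing \emph{into} $\D$ is precisely what needs proof: $\phi(P_\att)$ is merely a forward-invariant domain in $\D$ whose orbits tend to $\zeta$, and this is equally compatible with the configuration where $F$ has a single attracting axis tangent to $\partial\D$ at $\zeta$ (such Blaschke products exist -- this is why Definition~\ref{def:Grond} explicitly demands two petals). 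Convergence of all orbits of $\D$ to $\zeta$ holds in the hyperbolic case, in the one-axis parabolic case and in the two-axis parabolic case alike, so no argument using only ``orbits in $\D$ converge to a boundary fixed point'' can decide between them; your access-counting step presupposes the local two-axis picture it is meant to establish.

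The missing ingredient is a quantitative one distinguishing these three boundary types, and it is what the paper's proof (Appendix~\ref{app:pflembpp}, following Orsay Exposé~IX with a modified ending) supplies: since the orbit $z_n\in B^*$ converges tangentially to the attracting axis and $B^*$ contains a sector around that axis, one gets $d_{B^*}(z_{n+1},z_n)\to 0$, hence $d_\D(w_{n+1},w_n)\to 0$ for $w_n=\phi(z_n)$. This zero-step property first forces $F'(\zeta)=1$ (via the estimate $|w_{n+1}-w_n|/(1-|w_n|^2)\to 0$ compared with $(w_{n+1}-w_n)/(w_n-\zeta)\to F'(\zeta)-1$), and then rules out the single-axis case, because for a one-petal parabolic Blaschke product an orbit in half-plane coordinates satisfies $u_n=n+b\log n+\tau+\cal O(1/n)$ with $b\in\R$, so $\Im u_n$ stays bounded and the hyperbolic steps stay bounded below, contradicting the zero-step property. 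Your outline never invokes any such rate information coming from the petal geometry inside $B^*$, so the route as written cannot close. (Minor further issues: repelling petals of $f$ are not contained in $B^*$, so $\phi$ does not act on them, and the image of the attracting petal need not share a boundary arc with $\partial\D$, given that $\partial B^*$ may fail to be locally connected.)
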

\noindent In other words, $G\in \cal G$ where $\cal G$ is defined in Definition~\ref{def:Grond}.

Lemma~\ref{lem:bppp} is proved in particular cases in \cite{Orsay2}, Exposé~IX, Section~II.1, Proposition~4.c and \cite{LY} Section~2.4, Theorem~2.9, and we believe the proofs adapt.
Just to be safe, we include in Appendix~\ref{app:pflembpp} a proof of our case, which is basically the one of \cite{Orsay2} with the final argument modified.

By the Denjoy-Wolff theorem\footnote{See the proof of Lemma~\ref{lem:bppp} for an alternative proof.}, the disk $\D$ is contained the basin of one attracting axis, and its reflection w.r.t.\ the unit circle in the basin of the other attracting axis.
The basin of the two axes are open and disjoint.
It follows that $\D$ is the whole basin of one axis, and its reflection the whole basin of the other, and that $J(F)$ is the unit circle.
The map $F$ has no other non-repelling cycles.

\medskip

To define our invariant classes we first introduce some classes of Blaschke products.

\begin{definition}\label{def:BiGrl}
  Given a finite sequence of integers $d_1$, \ldots, $d_n$ with:
  \begin{itemize}
    \item $n\geq 1$,
    \item $\forall k$, $d_k\geq 2$,
  \end{itemize}
  we denote $\cal B_{d_n\circ \cdots \circ d_1}$ the set of maps of the form
  \[G = G_n \circ \cdots \circ G_1\]
  such that:
  \begin{itemize}
    \item $G_k$ is a degree $d_k$ Blaschke product,
    \item the extension $F$ of $G$ has on the unit circle a parabolic point with two attracting axes.
  \end{itemize}
  We call $d_n\circ \cdots \circ d_1$ the \emph{composition type} of $\cal B_{d_n\circ \cdots \circ d_1}$.
\end{definition}

\begin{remark}
  A word of caution: different sequences $d_i$ with the same value of their product do not necessarily yield disjoint sets $\cal B_{d_n\circ \cdots \circ d_1}$.
  In other words, a given Blaschke product may decompose into several ways.
  Obviously $\cal B_{d_n\circ \cdots \circ d_1} \subset \cal B_{d_n\times\cdots\times d_1}$ but there are also more subtle intersections, like $z^6 = (z^3)^2 = (z^2)^3$.
\end{remark}

\begin{remark}
  We could have imposed that the extension of each $G_n$ has at $z=1$ a parabolic point with 2 attracting axes, which ensures this is the case too for $G$, and which is always realizable up to möbius changes of variables. However, if $n>1$, the dynamical system associated to a single $G_k$ is not really relevant.
\end{remark}

\subsection{Composition type}\label{sub:dd}

We use the classes of Blaschke products introduced in Section~\ref{sub:blmod} to define classes of maps for which we will investigate invariance properties under the parabolic renormalization operator (defined in Section~\ref{sub:ren}).

\begin{definition}\label{def:dynGrl}
  Denote by $\tau = d_n\circ \cdots \circ d_1$ (this is a purely symbolic notation for the $n$-uplet $(d_1,\ldots,d_n)$) and call it a \emph{composition type}.
  We define $\DDD_\tau$ as the set of maps $f\in \cal F$ such that $f''(0)\neq 0$ (i.e.\ $f\notin \cal E$, i.e.\ it has only one attracting axis) and the restriction $f:B^*\to B^*$ is conjugated to an element of  $\cal B_\tau$.
  For a map $f\in \cal E$ with $f\neq\id$ near $0$, given any attracting axis $A$ and associated immediate basin $B^*(A)$,
  if $f:B^*(A)\to B^*(A)$ is conjugated to an element of  $\cal B_\tau$ then we say that $f$ has type $\tau$ on $B^*(A)$.
\end{definition}

In particular, for such maps, the immediate basin is simply connected.

\medskip

We are going to define a notion of \emph{descendant}. For this we start with the following notion, which we call \emph{gleaning} and present, first through an analogy, then formally in Definition~\ref{def:glean}.
Consider $m$ bowls numbered from $i=1$ to $i=m$, containing each $c_i\geq 1$ pebbles.
A person fills $n$ purses one by one with pebbles taken from the bowls.
For each purse, the pebbles are to be taken from a single bowl, and at least one must be taken.
The bowls may be visited in any order, and there is no necessity to take every pebble in a bowl, nor to visit every bowl.
The purses are numbered from $1$ to $n$ in the order in which they were filled.
In the end, the purse number $i$ contains  $c'_i\geq 1$ pebbles, all taken from a bowl whose index we denote $\beta(i)$.

\begin{definition}\label{def:glean}
  Consider a finite sequence $(c_1,\ldots,c_m)$ of positive integers with $m\geq 1$.
  A \emph{gleaning} of $(c_1,\ldots,c_m)$ is any $(c'_1, \ldots,c'_n)$ for which
  \begin{enumerate}
      \item $n\geq 1$,
      \item $\forall i\in \{1,\ldots,n\}$, $c'_i\geq 1$, 
      \item $\exists\beta : \{1,\ldots,n\} \to \{1,\cdots,m\}$
      such that $\forall j$,
      \[\sum_{i\in\beta^{-1}(\{j\})} c'_i \leq c_j.\]
  \end{enumerate}
\end{definition}

\begin{figure}[ht]
    \centering
    \includegraphics[scale=0.75]{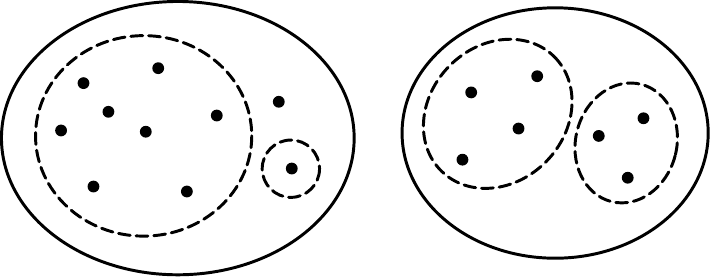}
    \caption{$(1,3,4,8)$ as a gleaning of $(10,7)$. The black dots are the pebbles, the bowls are draw in solid lines and the purses in dashed lines.}
    \label{fig:glanage-1}
\end{figure}

For instance, start from $(c_1,c_2)=(10,7)$. Then $(c'_1,\ldots,c'_4)=(1,3,4,8)$ is a gleaning, where the 8 pebbles are necessarily taken from the bowl with 10, then the 4 and the 3 from the other one, and the last 1 from the initial bowl.
See Figure~\ref{fig:glanage-1}.
So are $(10,7)$, $(1,\ldots,1)$ too provided there are at least 1 and at most 17 entries.
On the other hand, $(4,8,4)$ cannot be a gleaning of $(10,7)$.

Note that any permutation of a gleaning is a gleaning, and that the set of gleanings of a given finite sequence $s$ is identical if we apply any permutation to the entries of $s$.
Also, the gleaning of a gleaning of $s$ is a gleaning of $s$.

\begin{definition}\label{def:glean2}
  Consider a finite sequence $(c_1,\ldots,c_m)$ of positive integers with $m\geq 1$.
  A finite collection of gleanings $c'[1],\ldots,c'[p]$ of $(c_1,\ldots,c_m)$ is called \emph{disjoint} if 
  $\forall j$,
      \[\sum_{k=1}^p \sum_{i\in\beta^{-1}(\{j\})} c'_i[k] \leq c_j.\]
  The idea is that each gleaning of the collection is supposed to take different pebbles.
\end{definition}

\medskip

The bowls represent the disks $\D$ in a composition of Blaschke products, the pebbles are their critical points and the purses will be virtual parabolic basins.
Recall that a Blaschke product of degree $d$ has $d-1$ critical points (we count with multiplicity).

\begin{definition}\label{def:desc}
  A \emph{descendant} of the composition type $d_m\circ \cdots \circ d_1$ is any composition type $d'_n\circ \cdots \circ d'_1$ such that $(d'_1-1,\ldots,d'_n-1)$ is a gleaning of $(d_1-1,\ldots,d_m-1)$.
\end{definition}

\begin{definition}\label{def:desc2}
  A finite collection of descendants of $d_m\circ \cdots \circ d_1$ is called \emph{disjoint} if the corresponding collection of gleanings of $(d_1-1,\ldots,d_m-1)$ is disjoint.
\end{definition}

Note that, unlike the terminology may suggest, $d_m\circ \cdots \circ d_1$ is always one of its own descendants. Note also the order reversal in the writings between $d_m\circ \cdots \circ d_1$ and $(d_1-1,\ldots,d_m-1)$.

For instance descendants of the composition type $3$ are $2\circ 2$, $3$ and $2$.
This is because gleanings of $(2)$ are $(1,1)$, $(2)$ and $(1)$.
Descendants of $2\circ 2$ are $2\circ 2$ and $2$.
The only descendant of $2$ is $2$.

As for gleanings, any permutation of a descendant is a descendant and the set of descendant of a given finite sequence $s$ is unchanged if we permute the entries of the sequence $s$.
Also, a descendant of a descendant of $s$ is a descendant of $s$.

Here is a table of the descendants of the composition type $4$ together with the corresponding gleanings:
\[
\begin{array}{cc}
 \text{Gleaning of (3) } \  &  \text{Descendant of 4} \\
   (3)   &  4\\
      (2)   &3  \\
         (1)   & 2 \\
        (1,2)    & 3\circ 2  \\
        (2,1)    & 2\circ 3  \\
(1,1)    & 2\circ 2 \\
(1,1,1)    & 2\circ 2\circ 2 
\end{array}
\]

\subsection{Main result}

We are now ready to state the main result of this article, which we prove in Section~\ref{sec:pf-inv}.

\begin{theorem}\label{thm:invGrl}
Let $\tau = d_m\circ \cdots \circ d_1$ be a composition type.
If $f\in \DDD_\tau$ then $Rf$ has at least one attracting axis and there exists a map associating to each attracting axis $A$ of $Rf$ a descendant $\tau'[A]$ of $\tau$ such that $Rf$ has type $\tau'[A]$ on the immediate basins of $A$ and such that the set of $\tau'[A]$ for $A$ varying in the attracting axes of $Rf$ forms a disjoint collection (of descendants of $\tau$).
\end{theorem}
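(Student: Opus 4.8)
The plan is to reduce the statement to the already-advertised structural facts about the renormalized map, namely that $Rf$ is proper on its immediate basins with only finitely many critical points there, and that $Rf$ restricted to such a basin is equivalent (in the Blaschke-model sense of Section~\ref{sub:blmod}) to an iterate of $f$ between virtual basins $V_p\to V_q$. Granting these, the heart of the matter is a \emph{counting and bookkeeping} argument: one must match the critical points of the Blaschke model of $Rf$ on each axis with critical points of the Blaschke model(s) of $f$ on $B^*_f$, and organize the matching so that it realizes a disjoint collection of gleanings of $(d_1-1,\dots,d_m-1)$.

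First I would fix, for $f\in\DDD_\tau$, the conformal conjugacy $\phi\colon B^*_f\to\D$ putting $f|_{B^*_f}$ into Blaschke form $G=G_m\circ\cdots\circ G_1$, and record the $d_i-1$ critical points sitting ``at stage $i$'' of the composition (i.e.\ in the $i$-th copy of $\D$). By Proposition~\ref{prop:icph}, the critical points of the horn map $h_f$, hence of $Rf=\ell_{\sigma_0}$, are exactly the $E(u)$ with $\psi_\rep(u)$ precritical for $f$, and the local degree of $Rf$ at such a point is the (finite, by the footnote to that proposition) product over $k\in\Z$ of local degrees of $f$ at $\psi_\rep(u+k)$. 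So a critical point of $Rf$ of local degree $d$ ``consumes'' a multiset of precritical points of $f$ whose total ramification is $d-1$; pushing each such precritical point forward by iterating $f$ until it lands on $\on{Crit}(f)\cap B^*_f$ (note precritical points contributing to ramification in the immediate basin of $Rf$ must, via Lemma~\ref{lem:prutibsf} and Corollary~\ref{cor:cccuh}, have their forward orbit meet $B^*_f$), one assigns to it a genuine critical point of $f|_{B^*_f}$, hence a ``stage'' $\beta\in\{1,\dots,m\}$ and one unit of the $d_i-1$ budget. Distinct critical points of $Rf$ (and distinct local-degree contributions at a single critical point) correspond to points on distinct grand orbits of $f$, so the units consumed are disjoint: this is precisely the inequality $\sum_{i\in\beta^{-1}(j)}c'_i\le c_j$ in Definition~\ref{def:glean}, and summing over all attracting axes $A$ of $Rf$ gives the disjointness of Definition~\ref{def:glean2}. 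It remains to see that on a single axis the $c'_i$ arise as the ramification numbers of a bona fide composition $G'_{n}\circ\cdots\circ G'_1$: this comes from the model statement $Rf|_{B^*(A)}\simeq f^{q-p}|_{V_p\to V_q}$ together with the fact that $f^{q-p}=f\circ\cdots\circ f$ is a composition of $q-p$ proper maps $V_k\to V_{k+1}$ each of some degree $d'_k$, so one reads off the composition type $\tau'[A]=d'_{q-p}\circ\cdots\circ d'_{p+1}$ directly (dropping the degree-$1$ factors, or rather noting there are none among the relevant ones since each $V_k\to V_{k+1}$ is proper of degree $\ge 1$ and the factors of degree exactly $1$ contribute trivially). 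That $Rf$ has at least one attracting axis, i.e.\ that $\on{Crit}(Rf|_{B^*_{Rf}})\neq\emptyset$ so the model is a Blaschke product of degree $\ge 2$, follows because $B^*_{Rf}$ contains a critical point: $\psi_\rep$ maps the immediate basin of $Rf$ into $B^*_f$ (Lemma~\ref{lem:prutibsf}), which contains the critical points of $f$.

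The main obstacle I anticipate is \emph{not} the combinatorics but the verification of the two structural inputs in the required generality — in particular properness of $f\colon V_n\to V_{n+1}$ and the equivalence $Rf|_{B^*_{Rf}}\sim f^{q-p}|_{V_p\to V_q}$ — because, as the authors emphasize, the immediate basins here need not have locally connected boundary, so one cannot cite \cite{LY} and must instead work directly with the extended Fatou coordinates and the definition of $V_n=\psi_\rep(B^*_{Rf})$. Once properness and the degree identity $\deg(f\colon V_n\to V_{n+1})$ are in hand, the only remaining subtlety is to check that a critical point of $f$ lying in $B^*_f$ whose orbit leaves $B^*_f$ cannot contribute ramification to $Rf$ on $B^*_{Rf}$ and, conversely, that every critical point of $Rf$ in $B^*_{Rf}$ is accounted for by a critical point of $f$ in $B^*_f$ — both of which follow from Corollary~\ref{cor:cccuh} identifying $\Omega$ with the component of $\psi_\rep^{-1}(B^*_f)$ containing an upper half-plane, after transporting $\Omega$ to $\dom(Rf)$ by $E$. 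I would therefore structure the write-up as: (i) the precritical-point/local-degree dictionary from Proposition~\ref{prop:icph}; (ii) the grand-orbit disjointness giving the gleaning inequalities; (iii) invoking the model theorem on each $V_p\to V_q$ to read off $\tau'[A]$ and its composition structure; (iv) assembling (ii) and (iii) into the disjoint collection of descendants, and noting at least one axis exists.
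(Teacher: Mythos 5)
There is a genuine gap, and it sits exactly where you declare the work finished. In step (iii) you read off the composition type of $Rf$ on $B^*(A)$ from the coarse factorization $f^{q-p}=f\circ\cdots\circ f:V_p\to V_q$, each factor $f:V_k\to V_{k+1}$ being proper of some degree $d'_k$. But when $m>1$ this sequence of degrees need \emph{not} give a descendant of $\tau$: a single $V_k$ may contain critical points of $f|_{B^*_f}$ which, under the conjugacy $\phi$ to $G=G_m\circ\cdots\circ G_1$, originate from \emph{different} factors $G_i$ (e.g.\ the critical point of $G_1$ together with a $G_1$-preimage of the critical point of $G_2$). Then the factor $f:V_k\to V_{k+1}$ merges pebbles from different bowls into one purse, which is forbidden by Definition~\ref{def:glean}. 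Concretely, for $\tau=2\circ 2$ this coarse reading can produce a degree-$3$ factor, and $3$ is not a descendant of $2\circ 2$. The paper avoids this by refining the decomposition: it introduces the intermediate sets $W_{k/m}$ (with $W_n=\phi(V_n)$ and $W_{(k+1)/m}=G_{i+1}(W_{k/m})$), proves via Lemma~\ref{lem:topo2} that each $G_{i+1}:W_{k/m}\to W_{(k+1)/m}$ is proper between simply connected sets, and establishes that $Rf|_{B^*_{Rf}}$ is cover-equivalent to the composition of these restrictions (Proposition~\ref{prop:ce2}); since each Blaschke factor in that composition is a restriction of a single $G_i$, its ramification units all come from one bowl, and disjointness of the $W_{t/m}$ (across $t$ and across axes) yields exactly the gleaning inequalities. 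Your first paragraph's grand-orbit bookkeeping assigns ramification units to stages, but it is never connected to the factorization you claim for $Rf$, so it does not repair this.

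Two further points. First, the ``two structural inputs'' you grant — properness of $f:V_n\to V_{n+1}$, simple connectivity of the $V_n$ and of $\dom Rf$, the branched-covering/finite-type property of $Rf$ (Propositions~\ref{prop:rfft}, \ref{prop:rfrc}, \ref{prop:drfsc}), and the equivalence $Rf|_{B^*_{Rf}}\simeq f^{q-p}:V_p\to V_q$ (which needs the injectivity Lemma~\ref{lem:inj2} and the path-lifting Lemma~\ref{lem:akiv}) — are the bulk of the theorem's proof, not advertised facts one may cite; a proof of Theorem~\ref{thm:invGrl} must supply them. Second, your argument for the existence of an attracting axis is not right as phrased: one cannot speak of $B^*_{Rf}$ before knowing $Rf$ is not the identity near $0$. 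The correct order is: $Rf$ has a critical point (Proposition~\ref{prop:icph} plus surjectivity of $\psi_\rep:\Omega\to B^*_f$, Corollary~\ref{cor:psisurj}), hence is not the identity near $0$ and has attracting axes (Remark~\ref{rem:atLeastOneAxis}); that each immediate basin then contains a critical point is the separate Fatou-type argument of Corollary~\ref{cor:nbax}, not a consequence of $\psi_\rep$ mapping into $B^*_f$.
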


Recall that the composition type is not necessarily unique.
If $f$ has several types, the statement above ensures for each of these type the existence of a choice of descendant composition types satisfying the conclusion.

\begin{corollary}[Invariance]\label{crr:inv}
Let $\tau = d_m\circ \cdots \circ d_1$ be a composition type.
If $f\in \DDD_\tau$ and moreover $Rf\notin\cal E$ then $Rf\in \DDD_{\tau'}$ for some descendant $\tau'$ of $\tau$.
\end{corollary}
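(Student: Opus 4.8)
\textbf{Proof plan for Theorem~\ref{thm:invGrl}.}

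The plan is to relate the action of $Rf$ on the immediate basins of its attracting axes to the action of $f$ on the \emph{virtual basins} $V_n \subset B^*_f$, which are the images under $\psi_\rep$ of the immediate basins of $Rf$ pulled up to the cylinder. First I would set up the virtual basins: given an attracting axis $A$ of $Rf$ with immediate basin $B^*_{Rf}(A)$, lift it through $E$ to an open subset of $\Omega$, apply $\psi_\rep$, and use Lemma~\ref{lem:prutibsf} and Corollary~\ref{cor:cccuh} to see that the result lands in $B^*_f$. Using the semiconjugacy $\psi_\rep(z+1)=f\circ\psi_\rep(z)$ and $h_f = \phi_\att\circ\psi_\rep$ together with the functional equation defining $Rf$ (conjugate via $\ov E$ and $T_\sigma$), I would show that $f$ maps $V_n$ into $V_{n+1}$ and that, after identifying indices periodically (the cylinder picture: $\Omega$ is invariant under $z\mapsto z+1$, and the immediate basin of $A$ corresponds to a $\Z$-orbit of virtual basins), the first-return-type map of $f$ from some $V_p$ to $V_q$ is equivalent to $Rf:B^*_{Rf}(A)\to B^*_{Rf}(A)$. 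The properness and finiteness-of-critical-points statements promised in Section~\ref{sub:content} must be established here: the key is that $\psi_\rep$ pushes attracting petals forward and repelling petals backward, so along each $\Z$-orbit of virtual basins only finitely many carry critical points, and $f:V_n\to V_{n+1}$ is proper.

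Next I would compute the composition type of $Rf$ on $B^*_{Rf}(A)$. Since $f:B^*_f\to B^*_f$ is conjugate to $G = G_m\circ\cdots\circ G_1 \in \cal B_\tau$, the critical points of $f|_{B^*_f}$ correspond to the critical points of the $G_k$'s, and there are $d_k - 1$ of them coming from the $k$-th factor (counted with multiplicity). The equivalence of $Rf|_{B^*_{Rf}(A)}$ with a return map $f^{q-p}:V_p\to V_q$, combined with Lemma~\ref{lem:bppp} and the Blaschke-model discussion of Section~\ref{sub:blmod} (the return map is proper on a simply connected domain, hence conjugate to a Blaschke product whose extension has a parabolic point with two attracting axes), shows $Rf$ has a well-defined composition type $\tau'[A] = d'_{n}\circ\cdots\circ d'_1$ on $B^*_{Rf}(A)$. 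The factorization of this return map as a composition of the $G_k$-pieces encountered along the orbit $V_p\to V_{p+1}\to\cdots\to V_q$ gives the $d'_i$, and the number of critical points $d'_i - 1$ harvested for the $i$-th factor, all coming from a single factor $G_{\beta(i)}$ of the original composition, is exactly the combinatorial content of a gleaning: the function $\beta$ of Definition~\ref{def:glean} records which original Blaschke factor each new factor's critical points came from, and $\sum_{i\in\beta^{-1}(j)} (d'_i-1) \le d_j - 1$ because distinct virtual basins along distinct axes use distinct critical points (Proposition~\ref{prop:icph}: a precritical point of $f$ lies over at most one critical point, and the local degree of $Rf$ at a critical point is the product of local degrees of $f$ along the fiber). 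The disjointness across different axes $A$ is then precisely Definition~\ref{def:glean2}/\ref{def:desc2}: the virtual basins attached to distinct immediate basins of $Rf$ are disjoint, so their critical points are disjoint, which forces $\sum_{A}\sum_{i\in\beta_A^{-1}(j)}(d'_i[A]-1)\le d_j - 1$.

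Finally, I would check that $Rf$ has at least one attracting axis: since $f\in\DDD_\tau$ has a critical point in $B^*_f$, Proposition~\ref{prop:icph} produces a critical point of $h_f$ in $\Omega$, hence a critical point of $Rf$, which is not an affine map, so $Rf$ is not of the excluded trivial form and its parabolic point at $0$ has at least one attracting axis (and the associated immediate basin is nonempty). The main obstacle I anticipate is the analytic heart flagged in the introduction: proving that $f:V_n\to V_{n+1}$ is proper and that $Rf|_{B^*_{Rf}(A)}$ is genuinely \emph{equivalent} (in the sense of a conformal conjugacy up to scaling) to $f^{q-p}:V_p\to V_q$ --- in particular that $Rf$ restricted to its immediate basin is proper with only finitely many critical points, despite $Rf$ as a global map having infinitely many. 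Shishikura's argument for $\DDD_2$ used that $Rf$ has a single critical value; here one must instead argue directly with the geometry of the $V_n$ (which, as the introduction warns, need not sit inside a single repelling or attracting petal and may spiral), controlling the boundary behaviour of $\psi_\rep$ on $\partial\Omega$ well enough to get properness. Once properness and the equivalence are in hand, the composition-type bookkeeping and the gleaning/disjointness conclusions follow by the combinatorial argument above, and Corollary~\ref{crr:inv} is then immediate: if $Rf\notin\cal E$ it has a unique attracting axis, so the single descendant $\tau'$ it receives gives $Rf\in\DDD_{\tau'}$.
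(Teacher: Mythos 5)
Your route is essentially the paper's: you obtain the corollary exactly as the paper does, by noting that $Rf\notin\mathcal{E}$ forces a unique attracting axis and then applying Theorem~\ref{thm:invGrl}, and your sketched proof of that theorem (virtual basins $V_n=\psi_\rep(U_n)$ obtained by lifting $B^*_{Rf}$ through $E$, properness of $f:V_n\to V_{n+1}$, equivalence of $Rf:B^*_{Rf}\to B^*_{Rf}$ with a return map $f^{q-p}:V_p\to V_q$, the gleaning bookkeeping, and disjointness across axes) follows the paper's own outline in Section~\ref{se:outline}. The analytic steps you flag as obstacles rather than prove are precisely what the paper supplies in Propositions~\ref{prop:rfft} and~\ref{prop:rfrc} (finite-type/branched-covering property of $Rf$, via Shishikura's argument or Epstein--Le Meur), Proposition~\ref{prop:drfsc}, and Propositions~\ref{prop:simpleconnexe} and~\ref{prop:proper} via the path-lifting Lemma~\ref{lem:akiv}, so your plan is consistent with, rather than divergent from, the paper's argument.
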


The invariance of $\DDD_2$ (Theorem~\ref{thm:SLY}) is a particular case of Theorem~\ref{thm:invGrl} since the only disjoint descendant collection of the composition type $2$ is composed of only one descendant, equal to the composition type $2$ (this prevents the existence of more than one attracting axis for $Rf$).

\medskip

Here are the finite sequences (up to permutations) that are \emph{gleanings} of $(4)$ organized by the elementary operations : suppression of $1$ (the horizontal arrows) or simple breaking (decomposition of an integer into a sum of two positive integers). Gleanings of $(4)$ include $(4)$. See also Figure~\ref{fig:glanage-2}.
\[
\begin{tikzcd}[column sep=small]
 (4)\arrow[d]\arrow[dr]&&&\\
 (2,2)\arrow[dr] &(3,1)  \arrow[d] \arrow[r ] & (3) \arrow[d]  &    \\
  & (2,1,1)  \arrow[d] \arrow[r ] & (2,1) \arrow[d] \arrow[r ] &   (2) \arrow[d] \\
   &(1,1,1,1) \arrow[r] &   (1,1,1) \arrow[r] &  (1,1) \arrow[r ] & (1)
\end{tikzcd}
\]

\begin{figure}[ht]
    \centering
    \includegraphics[scale=0.75]{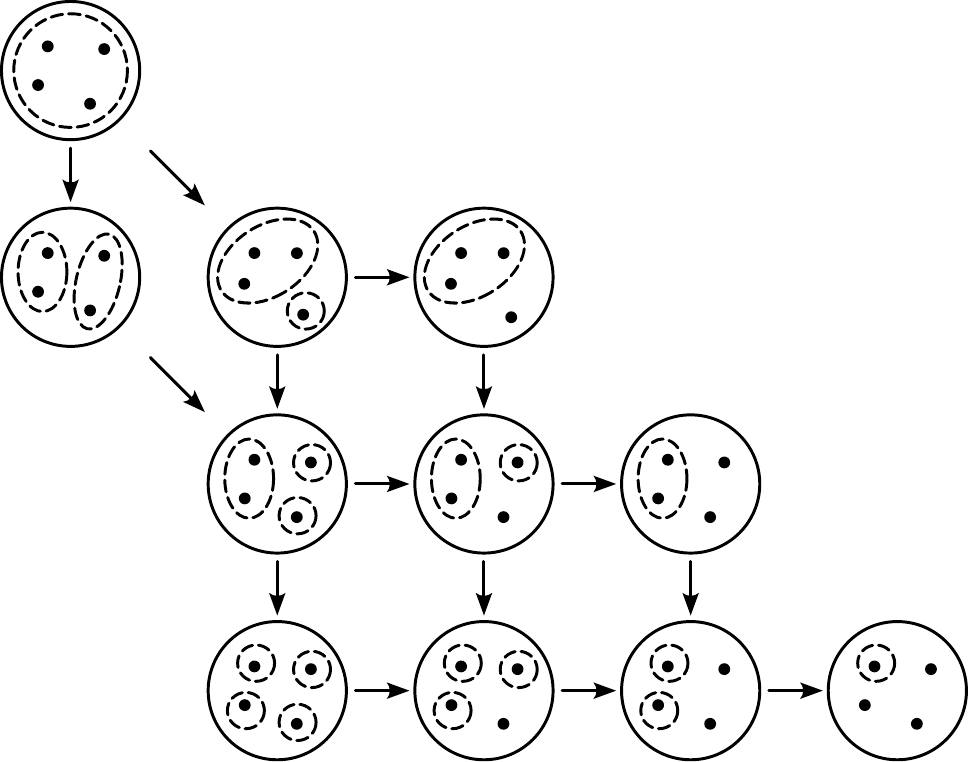}
    \caption{All the gleanings up to permutation, depicted width dashed contours, of a single bowl with 4 pebble.}
    \label{fig:glanage-2}
\end{figure}

As a consequence, here are the corresponding \emph{descendants} of $5$, still up to permutation. 

\[
\begin{tikzcd}[column sep=small]
 5\arrow[d]\arrow[dr]&&&\\
 3\circ 3\arrow[dr] & 2\circ 4  \arrow[d] \arrow[r ] & 4 \arrow[d]  &    \\
  & 2\circ 2\circ 3  \arrow[d] \arrow[r ] & 2\circ 3 \arrow[d] \arrow[r ] &   3 \arrow[d] \\
   &2\circ 2\circ 2\circ 2 \arrow[r] &   2\circ 2\circ 2 \arrow[r] &  2\circ 2 \arrow[r ] & 2
\end{tikzcd}
\] 

\medskip

Note that in this article, we do not address the question of whether all descendants are realizable, i.e.: \emph{given a descendant $\tau'$ of $\tau = d_m\circ \cdots\circ d_1$, does there exist a map $f\in\DDD_{\tau'}$ such that $Rf\in\DDD_{\tau}$?}

\begin{remark}
  We could use a finer notion.
  Indeed, we do not distinguish a critical point of multiplicity $m>1$ from $m$ critical points of multiplicity one: they are counted as $m$ pebbles in both cases. However, in the gleaning process that happens in the proof of Theorem~\ref{thm:invGrl}, these $m$ pebbles should never be separated.
  The appropriate image should be that those pebbles have an (integer) weight, and for instance a degree $4$ Blaschke product with a double critical point and two simple ones would be denoted not as type~$(3)$ but as type $(2+1+1)$.
  We could also keep track of the identity of each pebble.
  However to keep things simple we choose not to go to these levels of fineness.
\end{remark}

\section{Tools}\label{sec:tools}

This section describes known results about branched coverings and simply connected domains that we will use to prove the main theorem.
More precisely, one of the main difficulties is to prove that the immediate basin $B^*_{Rf}$ of $Rf$ is simply connected, that $Rf$ is proper on this set, and to recognize the branched covering that $Rf$ induces on $B^*_{Rf}$.

\subsection{Singular values and branched coverings}

Here we allow a Riemann surface to be non-connected (a better name would be a complex manifold of dimension $1$).
A path $\gamma:[a,b)\to X$, $a<b\in\R\cup\{+\infty\}$ is said to \emph{leave every compact subset} if for all compact $K\subset X$, $\gamma(t)\notin K$ for all $t$ close enough to $b$, i.e.\ it leaves and never comes back.
In modern terms, if and only if $\gamma(t)$ tends to infinity in the one-point (Alexandrov) compactification of $X$.

We recall the following basic definitions and a well-known, yet very important, theorem.

\begin{definition}
    Let $X,Y$ be Riemann surfaces.
    Let $f : X\to Y$ be holomorphic.
    A \emph{critical value} of $f$ is the image by $f$ of a critical point.
    An \emph{asymptotic value} of $f$ is any $y\in Y$ such that there exists a path $\gamma: [a,b)\to X$ such that,\footnote{Up to a reparametrization we can assume that $[a,b)=[0,1)$.} as $t\to b$, $\gamma$ leaves every compact subset of $X$ and $f(\gamma(t))\to y$.
    A \emph{regular value} of $f$ is any $y\in Y$ for which there exists an open neighbourhood $V$ that is well-covered\footnote{I.e. the restriction $f: f^{-1}(V) \to V$ is a covering.} by $f$ over $V$.
    And a \emph{singular value} is any $y$ that is not a regular value.
    We denote by $cv(f)$, $av(f)$, $sv(f)$ the sets of critical, asymptotic and singular values of $f$.
\end{definition}

\begin{remark}\label{rem:cov}
The restriction $f:X-f^{-1}(sv) \to Y-sv$, where $sv=sv(f)$, is a covering and actually $Y-sv$ is the biggest open subset of $Y$ over which $f$ can be a covering.  
\end{remark}

The following result appears first in \cite{EL}, after Theorem~4.5 on page~621, where they indicate that it can be derived from works of Iversen and Nevanlinna. A direct proof from first principles can be found in Section~2 of \cite{Rempe}.
\begin{theorem}\label{thm:sv}
    The set of singular values is the closure of the union of the set of critical values and the set of asymptotic values:
    \[sv(f) = \overline{av(f)\cup cv(f)} = \overline{av(f)} \cup \overline{cv(f)}.\]
\end{theorem}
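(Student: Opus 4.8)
The plan is to prove the two inclusions between $sv(f)$ and $\overline{cv(f)}\cup\overline{av(f)}$ separately; the remaining identity $\overline{av(f)\cup cv(f)}=\overline{av(f)}\cup\overline{cv(f)}$ is just the elementary fact that closure commutes with finite unions, so it needs no argument.

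For $\overline{av(f)}\cup\overline{cv(f)}\subseteq sv(f)$: the set of regular values is open (if $V$ is well-covered then so is every smaller neighbourhood of every point of $V$), hence $sv(f)$ is closed and it is enough to check $av(f)\cup cv(f)\subseteq sv(f)$. If $y$ is a critical value and $V\ni y$ were well-covered, then $f\colon f^{-1}(V)\to V$ would be a covering, hence a local homeomorphism, which is impossible since a critical point over $y$ lies in $f^{-1}(V)$. If $y$ is an asymptotic value, say $f(\gamma(t))\to y$ with $\gamma\colon[0,1)\to X$ leaving every compact subset of $X$, and $V\ni y$ were well-covered, then for $t$ close to $1$ the connected set $\gamma([t_1,1))$ lies in a single sheet $W$ of $f^{-1}(V)$ on which $f$ is a homeomorphism onto $V$; then $\gamma(t)=(f|_W)^{-1}(f(\gamma(t)))$ converges in $W\subset X$, contradicting that $\gamma$ leaves every compact subset of $X$.

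For the reverse inclusion, let $y\notin\overline{cv(f)}\cup\overline{av(f)}$. Fix a coordinate disc $V\ni y$ with $V\cap(cv(f)\cup av(f))=\varnothing$ and set $A:=f^{-1}(V)$. Since $A$ contains no critical point, $f|_A\colon A\to V$ is a local homeomorphism, and the key claim is that it has the path lifting property. Given $\gamma\colon[0,1]\to V$ and a point of $A$ over $\gamma(0)$, let $\tilde\gamma$ be the maximal lift, defined on $[0,t_0)$, and suppose $t_0\le 1$. A standard argument shows that $\tilde\gamma(t)$ leaves every compact subset of $A$ as $t\to t_0$: any limit point of $\tilde\gamma(t)$ in $A$ could be used, via the local homeomorphism and uniqueness of local lifts, to extend the lift past $t_0$. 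Now there are two cases. If $\tilde\gamma(t)$ has no limit point in $X$ at all, then it leaves every compact subset of $X$, while $f(\tilde\gamma(t))=\gamma(t)\to\gamma(t_0)\in V$; hence $\gamma(t_0)\in av(f)$, contradicting $V\cap av(f)=\varnothing$. Otherwise $\tilde\gamma(t)$ has a limit point $x_0\in X$; since $\tilde\gamma$ escapes every compact subset of $A$ we have $x_0\notin A$, yet by continuity of $f$ on all of $X$ we get $f(x_0)=\gamma(t_0)\in V$, forcing $x_0\in A$, a contradiction. Therefore $t_0>1$, so the lift is defined on all of $[0,1]$.

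Finally, a local homeomorphism onto a simply connected, locally path connected space (here the disc $V$) which has the path lifting property is a covering map — classical covering space theory, or one may invoke the first‑principles treatment in \cite{Rempe}; the degenerate case $A=\varnothing$ is trivially well-covered. Hence $f\colon A\to V$ is a covering, i.e.\ $V$ is well-covered and $y\notin sv(f)$. Together with the first inclusion this proves Theorem~\ref{thm:sv}. The one delicate point is the analysis of the endpoint of the maximal lift: one must carefully separate the situation where $\tilde\gamma$ escapes to infinity in $X$ (which produces a genuine asymptotic value of $f\colon X\to Y$) from the situation where $\tilde\gamma$ merely accumulates on $\partial A$ inside $X$ (which is impossible, precisely because $f$ is globally defined and continuous on $X$ and would send any such accumulation point back into $V$). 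Everything else is routine.
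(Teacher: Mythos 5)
Your proof is correct, and the first thing to say is that the paper itself does not prove Theorem~\ref{thm:sv}: it quotes it as known, pointing to \cite{EL} and to the first-principles argument in Section~2 of \cite{Rempe}. What you wrote is essentially that standard direct proof, so your proposal supplies the argument the paper outsources. On the easy inclusion, your reasoning is fine (regular values form an open set since well-coveredness passes to smaller open subsets, a critical point kills local injectivity, and an escaping path with convergent image would have to converge inside a single sheet); the only imprecision is that for an arbitrary well-covered $V$ the preimage $f^{-1}(V)$ need not split into sheets mapping homeomorphically onto $V$ — you should first pass to an evenly covered (or simply connected) sub-neighbourhood of $y$, which is legitimate by the same restriction property you already used. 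The hard inclusion is where the content lies, and your analysis of the endpoint of the maximal lift is exactly the crux and is airtight: either the lift escapes every compact subset of $X$, producing an asymptotic value inside $V$ (excluded by the choice of $V$), or it accumulates at some $x_0\in X$, which is impossible because $f(x_0)=\gamma(t_0)\in V$ forces $x_0\in f^{-1}(V)$ while the lift escapes every compact subset of $f^{-1}(V)$; uniqueness of lifts through a local homeomorphism into a Hausdorff surface justifies the extension step you invoke, and your remark that the empty preimage counts as well-covered matches the convention the paper clearly intends (cf.\ its example $z\mapsto z:\D\to\C$). The one ingredient you do not prove — a local homeomorphism onto a disc with the path lifting property is a covering — is genuinely classical (it is Theorem~4.19 in Forster's \emph{Lectures on Riemann Surfaces}, where even simple connectivity is not needed, only local path connectedness of the base) and is precisely where the cited treatment in \cite{Rempe} spends its effort; so your proof is a legitimate, essentially self-contained substitute for the paper's citation, at the cost of leaning on that covering criterion.
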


Unlike the critical values, the notion of asymptotic and singular value depend on the set $Y$: for instance the injection $z\mapsto z:\D\to\C$ has for set of asymptotic values the unit circle, and the same singular values, while $z\mapsto z: \D\to \D$ has no singular values and no asymptotic values.
The following proposition relates the critical and asymptotic values of a map and its restrictions.
The standard restriction in the domain is denoted as usual: $f|_U$.
For the restriction to a subset $V$ of the range, we need that $f$ takes values in $V$, and then denote $f|^V$ the restriction.
For the restriction both in the domain and range, that we denote $f|_U^V$, we require that $f(U)\subset V$.

\begin{proposition}\label{prop:av}
  Let $f:X\to Y$ be a holomorphic map between Riemann surfaces.
  \begin{enumerate}
    \item For open $V\subset Y$, such that $f(X)\subset V$,
  \[av(f|^V) = av(f)\cap V.\]
    \item\label{item:prop:av:2} For open $U\subset X$,
    \[av(f|_U) \subset av(f) \cup f(\partial U).\]
    \item For open subsets $U\subset X$ and $V\subset Y$ such that $f(U)\subset V$,
    \[ av(f|^V_U) \subset V\cap(av(f) \cup f(\partial U)).\]
  \end{enumerate}
\end{proposition}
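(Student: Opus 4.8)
\textbf{Proof proposal for Proposition~\ref{prop:av}.}

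The plan is to prove the three items in order, deducing (3) from (1) and (2). I would first establish item (2), which is the heart of the matter; item (1) is a quick warm-up and item (3) is a formal combination.

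For item (1): suppose $y\in av(f|^V)$. There is a path $\gamma:[0,1)\to X$ leaving every compact subset of $X$ with $f(\gamma(t))\to y$; since $f|^V$ has the same domain $X$ as $f$, the very same path witnesses $y\in av(f)$, and of course $y\in V$ because $f|^V$ takes values in $V$. Conversely, if $y\in av(f)\cap V$, the witnessing path for $f$ also witnesses it for $f|^V$, since $f(\gamma(t))\to y\in V$ and the domain is unchanged. So $av(f|^V)=av(f)\cap V$.

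For item (2): let $y\in av(f|_U)$, witnessed by a path $\gamma:[0,1)\to U$ that leaves every compact subset of $U$, with $f(\gamma(t))\to y$ as $t\to 1$. The point is to analyze what ``leaving every compact subset of $U$'' means back in $X$. There are two cases. Either $\gamma$ also leaves every compact subset of $X$ — then $y\in av(f)$ directly. Or it does not: then there is a compact $K\subset X$ and a sequence $t_n\to 1$ with $\gamma(t_n)\in K$, so after passing to a subsequence $\gamma(t_n)\to x_0\in K\subset X$. Since $\gamma(t_n)\in U$ for all $n$, we get $x_0\in\ov U$. But $x_0\notin U$: indeed, if $x_0$ were in the open set $U$, then $\{x_0\}\cup\{\gamma(t_n):n\geq N\}$ would, for $N$ large, be contained in a compact subset of $U$ that $\gamma$ returns to infinitely often, contradicting that $\gamma$ leaves every compact subset of $U$. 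Hence $x_0\in\ov U\setminus U=\partial U$, and by continuity $f(\gamma(t_n))\to f(x_0)$; since also $f(\gamma(t))\to y$, we conclude $y=f(x_0)\in f(\partial U)$. Combining the two cases, $av(f|_U)\subset av(f)\cup f(\partial U)$.

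For item (3): apply item (2) to the map $f|_U:U\to Y$ to get $av(f|_U)\subset av(f)\cup f(\partial U)$; then apply item (1) to the map $f|_U$ restricted in range to $V$ (noting $f(U)\subset V$), giving $av(f|_U^V)=av((f|_U)|^V)=av(f|_U)\cap V\subset V\cap\bigl(av(f)\cup f(\partial U)\bigr)$, as claimed.

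The main obstacle, such as it is, is item~(2): one must argue carefully that a path leaving every compact of $U$ but not of $X$ must accumulate on $\partial U$, which hinges on the elementary but slightly delicate observation that a convergent subsequence inside an open set, together with its limit, forms a compact subset of that open set. Everything else is bookkeeping about which domain and range each restriction has.
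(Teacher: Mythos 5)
Your proposal is correct and follows essentially the same route as the paper: the same observation for item (1) that the witnessing paths and limits coincide, the same two-case analysis in item (2) (path leaves every compact of $X$, or accumulates at a point of $\ov U$ which must lie on $\partial U$, with $y=f(x_0)$ by continuity), and item (3) obtained formally from (1) and (2). Your explicit justification that the accumulation point cannot lie in $U$ simply spells out a step the paper leaves implicit.
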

The boundary of $U$ is taken relative to $X$.
\begin{proof}
1.\  
We want to compare $av(f|^V)$ and $av(f)$: in both cases, they are the limits as $t\to 1$ of $f|^V(\gamma(t))=f(\gamma(t))$ for the same set of paths $\gamma:[0,1)\to X$: those that leave every compact subset of $X$.
The only difference is that in one case we consider the possible limits in the space $Y$ and in the other case, in the subset $V$.
But $f|^V(\gamma)$ converges  in $V$ iff ($f(\gamma)$  converges in $Y$ and the limit is in $V$).  The result follows. 

2.\ 
Let $y\in av(f|_U) $, then
there exists a path    $\gamma: [0,1)\to U$ such that, as $t\to 1$, $\gamma$ leaves  every compact subset of $U$ and $f(\gamma(t))\to y$.
Either,  $\gamma$ leaves  every compact subset of $X$ then  $y\in av(f) $, or there exists a compact set $K\subset X$ and a sequence $t_n$ such that $\gamma(t_n)\in K$. Up to passing to a subsequence we may assume that $\gamma(t_n)$ converges to some $x\in K$. Since $\gamma(t_n)\in U$, $x\in \ov U$.
Since  $\gamma$ leaves  every compact subset of $U$,    $x\in \partial U\cap K$, so that $y\in f(\partial U)$.

Point 3.\ follows directly from 1.\ and 2.
\end{proof}

\begin{lemma}\label{lem:bb}
  Let $X$ be a Riemann surface.
  Consider an analytic map $g:\dom(g)\subset X \to X$ having a parabolic fixed point tangent to the identity and let $B^*$ be the immediate basin of one of its petals. Then $g(\partial B^*)$ cannot intersect $B^*$.
\end{lemma}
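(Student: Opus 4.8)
The statement is a local‑to‑global fact about immediate basins, and the natural strategy is to argue by contradiction using the semiconjugacy of $g$ to translation by $1$ on an attracting Fatou coordinate. Suppose there were a point $x\in\partial B^*$ with $g(x)\in B^*$. The plan is to produce from this a point of $B^*$ whose image under $g$, hence whose forward orbit, is not defined or does not stay in $B^*$ — contradicting that $B^*$ is invariant and contained in the basin. The key geometric input is that $B^*$ is open and $g$ is continuous, so $g(x)\in B^*$ forces a whole neighbourhood of $x$ in $\dom(g)$ to be mapped into $B^*$; since $x\in\partial B^*$, that neighbourhood meets both $B^*$ and its complement.

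First I would set up the attracting Fatou coordinate $\phi_\att$, extended to the full basin of the petal as in Section~\ref{sub:ext}, so that $\phi_\att\circ g=\phi_\att+1$ holds on all of $B(P_\att)\supset B^*$. The immediate basin $B^*$ is the connected component of $B(P_\att)$ containing $P_\att$; in particular $g(B^*)\subset B^*$ and $g\colon B^*\to B^*$, and $\phi_\att$ is a well‑defined holomorphic function on $B^*$. Now pick $x\in\partial B^*$ with $g(x)\in B^*$. Choose a small connected open neighbourhood $W$ of $x$ in $\dom(g)\subset X$ with $g(W)\subset B^*$ (possible since $g$ is continuous and $B^*$ open). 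Then $\phi_\att\circ g$ is defined and holomorphic on $W$. On the nonempty open set $W\cap B^*$ we have $\phi_\att\circ g=\phi_\att+1$; since $W$ is connected and both sides are holomorphic on $W$, this forces $\phi_\att$ itself to extend holomorphically across $x$ to all of $W$, via $\psi:=\phi_\att\circ g-1$.

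Next I would derive the contradiction from this extension. The point $x\notin B^*$ lies in $\partial B^*$, so any neighbourhood of $x$ meets $X\setminus\ov{B^*}$ or at least $\partial B^*$; in fact, since $B^*$ is a component of the open set $B(P_\att)$, shrinking $W$ we may assume $W\cap B(P_\att)=W\cap B^*$, i.e. $W\setminus B^*$ is disjoint from the basin of the petal. Take any point $y\in W\setminus B^*$ (exists as $x\in\partial B^*$). Then $g(y)\in g(W)\subset B^*$, so $g(y)$ lies in the basin $B(P_\att)$; but then the orbit of $g(y)$ eventually enters $P_\att$, so $y$ itself has an orbit eventually entering $P_\att$, i.e. $y\in B(P_\att)$, and being connected to $B^*$ through $g$… — here the cleaner finish is: $y\in B(P_\att)$ while $y\in W$, contradicting $W\cap B(P_\att)=W\cap B^*$ and $y\notin B^*$. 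Actually the very cleanest version avoids the extension entirely: $g(x)\in B^*\subset B(P_\att)$ gives $x\in B(P_\att)$; but $x\in\partial B^*$, and $B^*$ is a connected component of the \emph{open} set $B(P_\att)$, so $\partial B^*\cap B(P_\att)=\emptyset$ — contradiction. I would present this short argument as the main line and keep the Fatou‑coordinate remark as motivation.

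**Main obstacle.** The only subtlety is the claim $\partial B^*\cap B(P_\att)=\emptyset$, i.e. that a connected component of an open subset of a Riemann surface contains its boundary points in the complement of that open set; this is a standard point‑set topology fact (components of open sets in locally connected spaces are open, hence clopen in the open set). The other thing to be slightly careful about is that $g$ is only defined on $\dom(g)$, so $g(x)$ must be assumed defined for the statement to make sense — which is implicit in writing ``$g(\partial B^*)$ cannot intersect $B^*$'': the assertion is vacuous at points of $\partial B^*$ outside $\dom(g)$. With these points nailed down the proof is a few lines; I expect no real difficulty, the lemma being essentially a restatement of the invariance of $B^*$ together with openness of the basin.
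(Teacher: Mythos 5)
Your final argument is correct and is essentially the paper's proof: the paper takes a connected neighbourhood $V$ of the boundary point with $g(V)\subset B^*$ and notes that $B^*\cup V$ is a connected subset of the basin containing the petal, hence lies in $B^*$, while you observe directly that the boundary point itself belongs to the open basin $B(P_\att)$ and that $\partial B^*\cap B(P_\att)=\emptyset$ because $B^*$ is a connected component of that open set --- the same contradiction with openness of $B^*$. The Fatou-coordinate preamble is unnecessary, as you yourself note, and the clean finish you give is complete.
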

\begin{proof}
    Otherwise if $z_0\in \dom (f)\cap \partial B^*$ and $g(z_0)\in B^*$ then for some connected open neighbourhood $V$ of $z_0$, $g(V)\subset B^*$. Then $B^*\cup V$ is a connected subset of the basin containing the petal, so is contained in $B^*$, so $z_0 \in B^*$. But $B^*$ is open and $z_0\in \partial B^*$ cannot be in $B^*$. This is a contradiction.
\end{proof}

\begin{corollary}\label{cor:svbas}
  Under the conditions of Lemma~\ref{lem:bb},
  the set of asymptotic values of the restriction $g$ to $B^*$ in the domain and range, is a subset of $B^*\cap av(g)$.
\end{corollary}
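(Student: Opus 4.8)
The plan is to combine Corollary~\ref{cor:svbas}'s predecessor, Lemma~\ref{lem:bb}, with Proposition~\ref{prop:av}. Write $g' = g|_{B^*}^{B^*}$ for the restriction of $g$ to $B^*$ in both domain and range (this makes sense since $g(B^*)\subset B^*$, as $B^*$ is the immediate basin of a petal, hence forward invariant). By part~3 of Proposition~\ref{prop:av} applied with $X = Y$, $U = V = B^*$, we get
\[
  av(g') \subset B^* \cap \bigl(av(g) \cup g(\partial B^*)\bigr).
\]
So it suffices to show that the $g(\partial B^*)$ term contributes nothing once intersected with $B^*$, i.e.\ that $B^*\cap g(\partial B^*) = \varnothing$. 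But this is precisely the content of Lemma~\ref{lem:bb}: $g(\partial B^*)$ cannot intersect $B^*$. Therefore $B^*\cap(av(g)\cup g(\partial B^*)) = B^*\cap av(g)$, which gives $av(g')\subset B^*\cap av(g)$ as claimed.

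I should double-check the hypothesis bookkeeping: Lemma~\ref{lem:bb} requires $g$ to have a parabolic fixed point tangent to the identity and $B^*$ to be the immediate basin of one of its petals — exactly the standing hypotheses of Corollary~\ref{cor:svbas}. Proposition~\ref{prop:av} requires $f(U)\subset V$, which here is $g(B^*)\subset B^*$; this forward invariance of the immediate basin is standard (any point of $B^*$ has orbit tending to $0$ tangent to the axis, hence so does its image, and $B^*$ being a connected component of the basin containing the petal, the connected set $g(B^*\cup P_{\mathrm{att}})$, meeting $P_{\mathrm{att}}$, lands in $B^*$). One subtlety: $\partial U$ in Proposition~\ref{prop:av} is taken relative to $X$, and likewise $\partial B^*$ in Lemma~\ref{lem:bb} is relative to $X$, so the two notions of boundary agree and no adjustment is needed. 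Also, $g(\partial B^*)$ in Lemma~\ref{lem:bb} implicitly only concerns points of $\partial B^*$ lying in $\dom(g)$, which is consistent with how $f(\partial U)$ enters Proposition~\ref{prop:av}.

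There is essentially no obstacle here: this is a short deduction, and the only thing to be careful about is that we are tracking asymptotic values of the \emph{doubly} restricted map $g|_{B^*}^{B^*}$, so that both the domain restriction (which injects the $g(\partial B^*)$ term) and the range restriction (which intersects with $B^*$) are in play, and it is exactly their combination that kills the boundary contribution. The proof is therefore three lines: invoke Proposition~\ref{prop:av}(3), invoke Lemma~\ref{lem:bb} to drop $g(\partial B^*)$, done.

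\begin{proof}
Since $B^*$ is the immediate basin of a petal of $g$, it is forward invariant: $g(B^*)\subset B^*$. Hence the restriction $g|_{B^*}^{B^*}$ is defined, and Proposition~\ref{prop:av}, part~3, applied with $U=V=B^*$, gives
\[
  av\bigl(g|_{B^*}^{B^*}\bigr) \subset B^* \cap \bigl(av(g)\cup g(\partial B^*)\bigr).
\]
By Lemma~\ref{lem:bb}, $g(\partial B^*)\cap B^* = \varnothing$, so $B^*\cap\bigl(av(g)\cup g(\partial B^*)\bigr) = B^*\cap av(g)$. Therefore $av\bigl(g|_{B^*}^{B^*}\bigr)\subset B^*\cap av(g)$.
\end{proof}
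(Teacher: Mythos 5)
Your proof is correct and follows exactly the paper's argument: apply Proposition~\ref{prop:av}(3) with $U=V=B^*$ and then use Lemma~\ref{lem:bb} to discard the $B^*\cap g(\partial B^*)$ term. The extra check of forward invariance $g(B^*)\subset B^*$ is a harmless (and reasonable) addition that the paper leaves implicit.
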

\begin{proof}
  By the third point of Proposition~\ref{prop:av},
  $av(g|_{B^*}^{B^*})\subset B^*\cap(av(g) \cup g(\partial B^*)) = (B^*\cap av(g)) \cup (B^*\cap g(\partial B^*))$. By Lemma~\ref{lem:bb}, $B^*\cap g(\partial B^*)=\emptyset$.
\end{proof}

We will have to deal with branched covering of infinite degree, notion which  is not standardized. We use here the notion of \emph{branched coverings} given by Shishikura in~\cite{Shi1}, Section~4.5, which is adapted to our context and we give a characterization.

\begin{definition}\label{def:covereq}[Equivalence and Cover-equivalence]
    (In this definition all maps are assumed holomorphic.)
    We say that two maps $f:X\to Y$ and $g:X'\to Y'$ are \emph{equivalent} if there exist two bijections $\phi: X\to X'$ and $\psi:Y\to Y'$ such that $\psi \circ f=g\circ\phi$.
    \[
    \begin{tikzcd}
    X \ar[r, "\phi"] \ar[d, "f"] & X' \ar[d, "g"] \\
    Y \ar[r,"\psi"] & Y'
    \end{tikzcd}
    \]
    We say that two maps $f:X\to Y$ and $g:X'\to Y$ are \emph{cover-equivalent} (even if they are not coverings) if there exists a bijection $\phi: X\to X'$ such that $f=g\circ\phi$.
    \[\begin{tikzcd}[column sep=tiny]
    X \arrow[rr , "\phi"] \arrow[dr, "f"'] & & X' \arrow[dl, "g"]\\
    & Y &
    \end{tikzcd}\]
\end{definition}

\begin{remark}
  If $f$ or $g$ are not surjective, then we may consider them as maps to smaller target sets.
  However one should be aware that, with our definition, the fact that $f$ and $g$ are equivalent depends on the chosen target sets, since we require the targets to be isomorphic.
\end{remark}

The next lemma is classical too, we find convenient to use the following version, which is\footnote{In \cite{CE}, the case $d=0$ is wrong: if $f$ is constant on $W$ then the set $W$ does not have to be isomorphic to $\D$. But this case is not used neither in \cite{CE} nor here.} Lemma~5.3 from \cite{CE}:
\begin{lemma}\label{lem:rcdocv1}
  Let $f:X\to Y$ be holomorphic. Let $V\subset Y$ be an open subset isomorphic to $\D$, $y\in V$ and $\psi:V\to \D$ an isomorphism such that $\psi(y)=0$.
  Assume that $V\cap sv(f) \subset \{y\}$.
  Let $U$ be a connected component of $f^{-1}(V)$. Then either $f$ is constant on $U$ or the restriction $\psi\circ f : U\to \D$ is cover-equivalent to one of the following models:
  \begin{itemize}
      \item $\rho_k : \D\to \D$, $z\mapsto z^k$, $k\geq 1$,
      \item $\rho_k^* : \D^*\to \D$, $z\mapsto z^k$, $k\geq 1$,
      \item $\rho_\infty : \H_{-}\to \D$, $z\mapsto \exp(z)$ where $\H_-$ is the left half-plane.
  \end{itemize}
\end{lemma}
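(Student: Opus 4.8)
### Proof proposal for Lemma~\ref{lem:rcdocv1}

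The plan is to reduce the statement to the structure of covering maps over the punctured disk $\D^* = V \setminus \{y\}$ (via $\psi$), and then classify the connected covers of $\D^*$. First I would observe that, since $V \cap sv(f) \subset \{y\}$, the restriction $f : f^{-1}(V \setminus \{y\}) \to V \setminus \{y\}$ is a covering map by Remark~\ref{rem:cov}. Pulling back by the isomorphism $\psi$ we may as well assume $V = \D$, $y = 0$, and study a connected component $U$ of $f^{-1}(\D)$ together with the holomorphic map $g := \psi\circ f : U \to \D$. If $g$ is constant, we are in the first alternative. Otherwise $g$ is open, hence $g(U)$ is an open subset of $\D$ containing $0$ or not; in either case $g^{-1}(\D^*) = U \setminus g^{-1}(0)$ and the restriction $g : g^{-1}(\D^*) \to \D^*$ is a covering. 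Note $g^{-1}(0)$ is a discrete subset of $U$.

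Next I would analyse $W := g^{-1}(\D^*) = U \setminus g^{-1}(0)$. A connected component $W_0$ of $W$ gives, by restriction, a connected covering of $\D^*$. Since $\pi_1(\D^*) \cong \Z$, the connected coverings of $\D^*$ are, up to isomorphism over $\D^*$: the trivial one $\D^* \to \D^*$ ($z \mapsto z$), the finite ones $\D^* \to \D^*$, $z \mapsto z^k$ (corresponding to the subgroup $k\Z$), and the universal cover $\H_- \to \D^*$, $z \mapsto \exp(z)$ (corresponding to the trivial subgroup). So each component $W_0$ of $W$ is, over $\D^*$, cover-equivalent to $\rho_k^*$ for some $k \geq 1$ or to $\rho_\infty$. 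The key remaining task is to understand how $U$ is obtained from its subset $W$ by adding back the fiber $g^{-1}(0)$, and in particular to show $W$ is connected and the puncture (if any) is filled by a single point or not at all.

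Concretely: if $g$ never takes the value $0$ on $U$, then $U = W$, $W$ is connected (being equal to the connected set $U$), and we land directly in the $\rho_k^*$ or $\rho_\infty$ cases. If $g$ takes the value $0$ at some point $a \in U$, I would show $g^{-1}(0) = \{a\}$ is a single point and $W = U \setminus \{a\}$ is connected: indeed near $a$, $g$ looks like $z \mapsto z^m$ in local coordinates (with $m = $ local degree), so $U \setminus \{a\}$ is locally connected near $a$ and the deleted neighbourhood maps to a punctured disk; combined with the covering property of $g$ over $\D^*$ this forces $W$ connected and, filling in the puncture, forces $g$ on $U$ to be cover-equivalent to the one-point compactification $\rho_k : \D \to \D$, $z \mapsto z^k$ (the degree $k$ being the one of the covering $W \to \D^*$, which must then be finite since $\H_- \cup \{a\}$ is not a disk; equivalently the puncture of the universal cover is not fillable). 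The one subtlety is ruling out that $g^{-1}(0)$ has more than one point while $U$ stays connected: this is where I would use that the covering $g : W \to \D^*$ has a well-defined (possibly infinite) degree equal to the cardinality of a fiber, and that each point of $g^{-1}(0)$ contributes a ramification filling a distinct puncture, so connectedness of $U$ forces exactly one such point.

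The main obstacle I expect is the bookkeeping in the last step — precisely matching the abstract covering classification of $\D^*$ with the concrete "add back the fiber over $0$" operation, and handling uniformly the three targets $\D$, $\D^*$, $\H_-$ (the three cases differ by whether $W$ is all of $U$, or $U$ minus one point, and whether the $\D^*$-cover is finite or infinite). Everything else is standard covering space theory plus the local normal form of holomorphic maps. Since this is stated as Lemma~5.3 of \cite{CE}, I would in the write-up simply cite that reference and only sketch the covering-theory argument above, noting that the correction about the $d=0$ case (mentioned in the footnote) is exactly the "$g$ constant" alternative we have isolated at the outset.
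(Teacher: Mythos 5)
The paper does not actually prove Lemma~\ref{lem:rcdocv1}: it imports it as Lemma~5.3 of \cite{CE} (the footnote only corrects the constant case there), so there is no in-paper argument to compare yours with; your covering-theoretic sketch is the standard proof and its strategy is sound. After reducing to $g=\psi\circ f:U\to\D$, the restriction of $g$ over $\D^*$ is indeed a covering (Remark~\ref{rem:cov}, restricted over the open set $V\setminus\{y\}$, and components of $U\setminus g^{-1}(0)$ are full components of $f^{-1}(V\setminus\{y\})$), and $\pi_1(\D^*)\cong\Z$ classifies the connected covers as $\rho_k^*$, $k\geq 1$, or $\rho_\infty$. The only step you leave loose is the one you flag: why a zero $a$ of $g$ in $U$ excludes $\rho_\infty$ and why connectedness of $U$ allows at most one zero. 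The clean way to nail it: for small $r$, let $D_a$ be the connected component of $g^{-1}(\{|w|<r\})\cap U$ containing $a$; then $g:D_a\to\{|w|<r\}$ is proper of degree the local degree $m_a$, $D_a\setminus\{a\}$ is an annulus, and it is open and closed in $g^{-1}(\{0<|w|<r\})\cap U$. In either model the preimage of $\{0<|w|<r\}$ is connected, so $D_a\setminus\{a\}$ equals the whole preimage of $\{0<|w|<r\}$ inside its component $W_0$ of $U\setminus g^{-1}(0)$; this rules out $\rho_\infty$ (finite degree $m_a$ and an annulus, versus infinite degree and a simply connected half-plane) and shows two distinct zeros cannot attach to the same $W_0$. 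Writing $U$ as the disjoint union of the open sets obtained by adjoining to each component of $U\setminus g^{-1}(0)$ the zeros whose punctured neighbourhoods it contains, connectedness of $U$ leaves exactly one component and at most one zero; extending the identification $\phi_0:W_0\to\D^*$ across $a$ by the removable singularity theorem (with $\phi_0(a)=0$, forced by $g=\rho_{m_a}^*\circ\phi_0$) gives the cover-equivalence with $\rho_{m_a}$. With that paragraph made explicit your proof is complete, and it is surely the argument intended in \cite{CE}.
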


Under these assumptions, if $U$ contains a critical point then $f$ is non-constant on $U$ and the model is some $\rho_k$ with $k\geq 2$.

We complete this lemma by the following known fact:
\begin{complement}\label{cpl:av}
  In the conditions of the previous lemma, if $f$ is non-constant on $U$ and the model is $\rho_k^*$ or $\rho_\infty$, then $y$ is an asymptotic value of $f$.
\end{complement}
\begin{proof}
  First note that since for $\rho_k^*$ and $\rho_\infty$ there is no preimage of $0$, the set $U$ contain no preimage of $y$.
  By Lemma~\ref{lem:rcdocv1}, $\psi\circ f =\rho \circ \phi$ for some isomorphism $\phi: U\to \dom(\rho) = \D^*$ or $\H_-$.

  If $\rho=\rho_k^*$, consider the path $t\in[0,1)\mapsto \gamma(t) = \phi^{-1}(1-t)$.
  If $\rho=\rho_\infty$, consider the path $\gamma(t) = \phi^{-1}(-t) : [1,+\infty) \to X$.
  In both cases, $f\circ\gamma(t)\to y$ as $t\to 1$, respectively $t\to \infty$. 
  The path $\gamma$ leaves every compact of $X$ for otherwise any accumulation point $x$ as $t\to 1$ would be a preimage of $y$, and $U\cup\{x\}$ would be connected but $U$ is a connected component of $f^{-1}(V)$ so $x\in U$, so in the model there is a preimage of $0$, which is a contradiction.
  Since $\gamma$ leaves every compact of $X$ and $f\circ\gamma$ tends to $y$, the point $y$ is an asymptotic value.
\end{proof}

\begin{definition}\label{def:brc}
  A map $f:X\to Y$ is called a \emph{branched covering} if every point $y\in Y$ has a neighbourhood $V$ isomorphic to $\D$ such that for every connected component $U$ of $f^{-1}(V)$, the restriction $f: U\to V$ is equivalent to $\rho_d:z\mapsto z^d : \D\to\D$ for some $d\geq 1$ (that is allowed to depend on $U$) via isomorphisms $\phi:U\to\D$ and $\psi: V\to \D$ such that $\psi(y)=0$ (in the case $d=1$ this means $f$ is an isomorphism from $U$ to $V$).
\end{definition}

\begin{remark}
  This definition is not formulated exactly as in \cite{Shi1} but it is equivalent.
\end{remark}

A holomorphic function is called \emph{nowhere constant} if there is no point near which it is constant. For instance, a branched covering is nowhere constant.

\begin{corollary}\label{cor:ramcov}
  A holomorphic map $f:X\to Y$ is a branched covering if and only if $f$ is nowhere constant, has no asymptotic value and its set of critical values has no accumulation point in $Y$.
\end{corollary}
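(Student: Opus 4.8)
\textbf{Proof plan for Corollary~\ref{cor:ramcov}.}
The strategy is to prove the two implications separately, using Theorem~\ref{thm:sv} to relate singular values to critical and asymptotic values, and Lemma~\ref{lem:rcdocv1} together with Complement~\ref{cpl:av} to analyze the local structure of $f$ over a disk neighbourhood of a regular-looking point.

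First I would establish the easy direction: if $f$ is a branched covering, then it is nowhere constant (immediate from the definition, since each local model $\rho_d$ is nowhere constant), it has no asymptotic values, and $cv(f)$ has no accumulation point in $Y$. For the latter two, fix $y\in Y$ and a neighbourhood $V\cong\D$ as in Definition~\ref{def:brc}. Any critical value in $V$ must be $y$ itself, since on each component $U$ the map is equivalent to $z\mapsto z^d$ with the only critical value being the center; hence $cv(f)\cap V\subseteq\{y\}$, so $cv(f)$ is discrete, with no accumulation point in $Y$. For asymptotic values: if $y$ were an asymptotic value, there would be a path $\gamma$ leaving every compact of $X$ with $f\circ\gamma\to y$; eventually $\gamma$ enters $f^{-1}(V)$ and, since it leaves every compact, it must leave every compact of each component $U$ it meets and pass through infinitely many components or escape within one — but in the model $\rho_d:\D\to\D$ every path on which $f$ converges to $0$ converges in $\D$, contradicting that $\gamma$ leaves every compact of $X$ (any accumulation point would lie over $y$ and inside $U$). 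So $av(f)=\emptyset$.

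For the converse, assume $f$ is nowhere constant, $av(f)=\emptyset$, and $cv(f)$ has no accumulation point in $Y$. By Theorem~\ref{thm:sv}, $sv(f)=\overline{av(f)}\cup\overline{cv(f)}=\overline{cv(f)}=cv(f)$ (the last equality because $cv(f)$ is closed, having no accumulation points in $Y$), so $sv(f)$ is a discrete closed subset of $Y$. Given $y\in Y$, choose a neighbourhood $V\cong\D$ with $V\cap sv(f)\subseteq\{y\}$, and an isomorphism $\psi:V\to\D$ sending $y$ to $0$. Now apply Lemma~\ref{lem:rcdocv1} to each connected component $U$ of $f^{-1}(V)$: since $f$ is nowhere constant it is non-constant on $U$, so $\psi\circ f:U\to\D$ is cover-equivalent to $\rho_k$, $\rho_k^*$, or $\rho_\infty$. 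By Complement~\ref{cpl:av}, the latter two would force $y\in av(f)=\emptyset$, a contradiction; hence the model is $\rho_k:\D\to\D$ for some $k\geq 1$. Cover-equivalence to $\rho_k$ means there is a bijection (automatically a biholomorphism, being a homeomorphism intertwining holomorphic maps locally modeled on $z\mapsto z^k$) $\phi:U\to\D$ with $\psi\circ f=\rho_k\circ\phi$; this is exactly equivalence to $\rho_d:\D\to\D$ with the required normalization $\psi(y)=0$. Since this holds for every $y$, $f$ is a branched covering.

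The main obstacle is the passage from \emph{cover-equivalence} (a mere bijection $\phi$ with $f=g\circ\phi$) in Lemma~\ref{lem:rcdocv1} to the \emph{equivalence} with holomorphic isomorphisms demanded by Definition~\ref{def:brc}: one must check that the bijection $\phi:U\to\D$ produced is in fact biholomorphic. This follows because away from the single preimage of $0$ the equality $\psi\circ f=\rho_k\circ\phi$ exhibits $\phi$ locally as a composition of holomorphic maps (locally $\rho_k$ is invertible), so $\phi$ is holomorphic there, and holomorphicity extends across the isolated point by Riemann's removability theorem since $\phi$ is bounded; the same argument applied to $\phi^{-1}$ shows it is an isomorphism. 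A secondary point requiring care is that ``no accumulation point of $cv(f)$ in $Y$'' is precisely what makes $cv(f)$ (hence $sv(f)$) closed in $Y$, which is needed both to drop the closure in Theorem~\ref{thm:sv} and to find the neighbourhoods $V$; this is a routine topological observation but worth stating explicitly.
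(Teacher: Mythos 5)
Your argument is correct and takes essentially the same route as the paper: the forward direction read off from Definition~\ref{def:brc}, and the converse by combining Theorem~\ref{thm:sv}, Lemma~\ref{lem:rcdocv1} and Complement~\ref{cpl:av} to rule out the models $\rho_k^*$ and $\rho_\infty$. The only caveat concerns the ``main obstacle'' you raise, which in fact is not one: in Definition~\ref{def:covereq} all maps, including the bijection $\phi$, are assumed holomorphic (so $\phi$ is automatically an isomorphism), and your proposed patch would not work for a genuinely arbitrary set-theoretic bijection, since $\psi\circ f=\rho_k\circ\phi$ alone does not force continuity of $\phi$ (one could permute points within the fibres of $\rho_k$).
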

\begin{proof}
  The direct implication is easy, yet we detail a proof here.
  Assume $f$ is a branched covering and consider any $y\in Y$ and a neighbourhood $V$ of $y$ as in Definition~\ref{def:brc}.
  From the definition we get that there is no critical value of $f$ in $V-\{y\}$ so the set of critical values has no accumulation point.
  Consider a path $\gamma:[0,1)\to X$ such that $f\circ\gamma(t)$ tends to $y$ as $t\to 1$.
  Then $\exists t_0<1$ such that $t>t_0$ $\implies$ $f\circ\gamma(t)\in V$.
  For such $t$, $\gamma(t)\in f^{-1}(V)$ and by connectedness, $\gamma((t_0,1)) \subset U$ for some connected component $U$ of $f^{-1}(V)$.
  The equivalence to $z^d$ then implies that $\gamma$ stays in a compact subset of $U$ for $t$ big enough.
  In particular, $y$ is not an asymptotic value.
  
  The converse implication is more elaborate.
  Assume $f$ is nowhere constant, has no asymptotic value and that its set of critical value has no accumulation point.
  For any $y\in Y$, there thus a neighbourhood $V$ of $y$ isomorphic to $\D$ such that $V-\{y\}$ contains no critical value.
  Let $\psi:V\to\D$ be an isomorphism sending $y$ to $0$.
  By Theorem~\ref{thm:sv}, $f$ has no singular value in $V-\{y\}$.
  Let $U$ be a connected component of $f^{-1}(V)$.
  By Lemma~\ref{lem:rcdocv1}, $\psi\circ f:U\to D$ is cover-equivalent to one of the models $\rho=\rho_k$, $\rho_k^*$ or $\rho_\infty$, i.e.\ $\psi\circ f|_U = \rho\circ\phi$ for some isomorphism $\phi:U\to \D$.
  If the model were $\rho_k^*$ or $\rho_\infty$, by Complement~\ref{cpl:av}, $y$ would be an asymptotic value of $f$, contradicting our assumptions.  
  So $\rho=\rho_k$.
  Finally, $k\neq 0$, otherwise $f$ would constant on an open set.
\end{proof}

The conclusion of the corollary above can serve as an equivalent definition of branched covering.

Note that the nowhere constant assumption is there to rule out the case where $X$ would have a compact connected component on which $f$ is constant. If $X$ has a non-compact connected component on which $f$ is constant, then $f$ has an asymptotic value.

The \emph{degree} of a branched covering $f:X\to Y$ over a \emph{connected} Riemann surface $Y$ is the cardinality of the fibre of any $y\in Y-cv(f)$, which is independent of $y$.

\begin{lemma}\label{lem:proper-is-bc}
  A nowhere constant proper holomorphic map $f:X\to Y$ is a branched covering
  and it has finite degree over every connected component of $Y$.  
\end{lemma}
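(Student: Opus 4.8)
The plan is to use Corollary~\ref{cor:ramcov} to reduce the statement to verifying that a nowhere constant proper map has no asymptotic value and that its set of critical values has no accumulation point in $Y$. Both should follow from properness via a compactness argument. First I would recall the relevant facts: if $f:X\to Y$ is proper, then it is closed (preimages of compact sets are compact, and in locally compact Hausdorff spaces proper maps are closed), and in particular $f(X)$ is closed in $Y$; moreover $f$ is surjective on each connected component of $Y$ that it meets, since $f(X)$ is also open by the open mapping theorem (a nowhere constant holomorphic map between Riemann surfaces is open).

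Next I would rule out asymptotic values. Suppose $\gamma:[0,1)\to X$ leaves every compact subset of $X$ and $f\circ\gamma(t)\to y\in Y$ as $t\to 1$. Choose a compact neighbourhood $K$ of $y$ in $Y$. For $t$ close to $1$, $f(\gamma(t))\in K$, so $\gamma(t)\in f^{-1}(K)$, which is compact by properness. This contradicts the hypothesis that $\gamma$ leaves every compact subset of $X$. Hence $av(f)=\emptyset$.

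Then I would show $cv(f)$ has no accumulation point in $Y$. Suppose $y_n\in cv(f)$ with $y_n\to y\in Y$, and pick critical points $x_n\in X$ with $f(x_n)=y_n$. Taking a compact neighbourhood $K$ of $y$, for large $n$ we have $y_n\in K$, hence $x_n\in f^{-1}(K)$, which is compact; so a subsequence of $(x_n)$ converges to some $x\in f^{-1}(K)\subset X$. Since being a critical point is a closed condition ($f'$ vanishing, in local coordinates) and $f$ is nowhere constant, $x$ is a critical point with an isolated occurrence; but then the critical points $x_n$ accumulate at $x$, forcing $f'$ to vanish identically near $x$, contradicting nowhere constancy. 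Hence $cv(f)$ is discrete in $Y$. Together with $av(f)=\emptyset$ and nowhere constancy, Corollary~\ref{cor:ramcov} gives that $f$ is a branched covering.

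Finally, for the degree: fix a connected component $Y_0$ of $Y$ that $f$ meets. By the above, $f$ restricted to the union of components of $X$ mapping into $Y_0$ is a branched covering onto $Y_0$ (still proper), and $Y_0-cv(f)$ is connected (removing a discrete set from a connected Riemann surface keeps it connected) so the fibre cardinality over $Y_0-cv(f)$ is a well-defined constant; properness makes each such fibre compact and discrete, hence finite. The main obstacle is the mild care needed in the asymptotic-value and accumulation arguments to correctly invoke properness (closedness/compactness of preimages of compact sets) rather than just continuity; once that is set up, everything is routine.
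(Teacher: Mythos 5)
Your proposal is correct and follows essentially the same route as the paper: it invokes the criterion of Corollary~\ref{cor:ramcov}, rules out asymptotic values and accumulation of critical values via compactness of preimages of compact neighbourhoods (with the identity-theorem/nowhere-constant contradiction), and gets finiteness of the degree because fibres are compact and without accumulation points. The extra details you give (closedness, surjectivity onto components, constancy of fibre cardinality) are fine but not needed beyond what the paper's argument already uses.
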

\begin{proof}
  We use the criterion of Corollary~\ref{cor:ramcov}.
  There cannot be an asymptotic value otherwise a compact neighbourhood of it would have a preimage that is non-compact as it contains a path leaving every compact subset of $X$, leading to a contradiction.
  Any accumulation point of the set of critical values would have a compact neighbourhood whose preimage, compact by properness, contains infinitely many critical points, so $f$ is constant on some connected component of $X$, a contradiction.
  
  If $Y$ is connected then the number of preimages of any point is finite since this preimage is compact and without accumulation point, otherwise $f$ would be constant on a connected component of $f$.
\end{proof}

\begin{lemma}\label{lem:noavcomp}
  Consider two nowhere constant holomorphic maps $f:X\to Y$ and $g:Y\to Z$.
  Assume $f$ and $g$ have no asymptotic values.
  Then $g\circ f$ has no asymptotic values.
\end{lemma}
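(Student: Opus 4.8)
The plan is to prove the contrapositive at the level of paths: if $g\circ f$ has an asymptotic value $z\in Z$, then either $f$ or $g$ has an asymptotic value. Suppose $\gamma:[0,1)\to X$ leaves every compact subset of $X$ and $g(f(\gamma(t)))\to z$ as $t\to 1$. Set $\delta = f\circ\gamma :[0,1)\to Y$, so $g(\delta(t))\to z$. The dichotomy is whether $\delta$ leaves every compact subset of $Y$ or not.

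First, if $\delta$ leaves every compact subset of $Y$, then $z$ is an asymptotic value of $g$ by definition (witnessed by the path $\delta$), contradicting the hypothesis on $g$. So we may assume $\delta$ does not leave every compact subset of $Y$: there is a compact $K\subset Y$ and a sequence $t_n\to 1$ with $\delta(t_n)=f(\gamma(t_n))\in K$. Passing to a subsequence, $\delta(t_n)\to y_0\in K$. The aim is to produce, near such a return point, a path in $X$ that leaves every compact subset of $X$ while its image under $f$ converges to $y_0$; this will show $y_0\in av(f)$, again a contradiction. The key local input is that $f$ is nowhere constant: by the open mapping theorem a nowhere constant holomorphic map is open, and near any point of $X$ it is a local homeomorphism onto an open set (away from the discrete critical set, or after a local $z\mapsto z^d$ normalization). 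Concretely, pick a coordinate disk $V$ around $y_0$ in $Y$ with $\overline V$ compact, isomorphic to $\D$, and small enough that $sv(f)\cap V\subset\{y_0\}$ (possible since $av(f)=\emptyset$ and $cv(f)$, having no accumulation point near $y_0$ by the earlier discussion — or directly, since otherwise the relevant local structure argument still goes through; alternatively shrink so $V$ meets $cv(f)$ only possibly at $y_0$). Apply Lemma~\ref{lem:rcdocv1}: for the connected component $U$ of $f^{-1}(V)$ that contains a tail of $\gamma$ (it does, since $\gamma(t_n)$ eventually has $f(\gamma(t_n))$ near $y_0$, and by connectedness a connected sub-arc lies in one component), $\psi\circ f|_U$ is cover-equivalent to $\rho_k$, $\rho_k^*$, or $\rho_\infty$. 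If the model is $\rho_k^*$ or $\rho_\infty$, then by Complement~\ref{cpl:av} the point $y_0$ is an asymptotic value of $f$, contradiction. If the model is $\rho_k$, then $U$ is isomorphic to $\D$ via $\phi$, and under this isomorphism $\gamma$ restricted to the tail becomes a path in $\D$ whose image under $z\mapsto z^k$ tends to $0$, hence $\phi(\gamma(t))\to 0$, i.e. $\gamma$ stays in a compact subset of $X$ (its closure in $U$, which is compact in $X$ because $\phi^{-1}$ of a small closed disk is). This contradicts the assumption that $\gamma$ leaves every compact subset of $X$.

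Putting the cases together: every possibility leads to a contradiction with the hypotheses, so $g\circ f$ can have no asymptotic value. The main obstacle — and the step requiring care — is the middle one: controlling the behaviour of $\gamma$ near a ``return point'' $y_0\in Y$, where one must rule out that $\gamma$ escapes in $X$ while $f\circ\gamma$ stays bounded, and this is exactly where nowhere-constancy of $f$ (via Lemma~\ref{lem:rcdocv1} and Complement~\ref{cpl:av}) is essential; without it $f$ could have a non-compact connected component on which it is constant, producing a spurious asymptotic value. One should also be slightly careful that the component $U$ containing the tail of $\gamma$ is well-defined, which follows by taking $t$ close enough to $1$ that $f(\gamma(t))\in V$ and invoking connectedness of the arc $\gamma((t_0,1))$.
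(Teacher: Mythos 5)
Your first dichotomy (either $f\circ\gamma$ leaves every compact subset of $Y$, giving an asymptotic value of $g$, or it has an accumulation point $y_0$) is exactly the paper's, but your handling of the second case has a genuine gap. From the hypothesis you only get \emph{subsequential} convergence $f(\gamma(t_n))\to y_0$; your argument then needs a whole tail of $\gamma$ to lie in a single component $U$ of $f^{-1}(V)$, i.e.\ it needs $f(\gamma(t))\in V$ for \emph{all} $t$ close to $1$, and this is not justified: between the times $t_n$ the path $f\circ\gamma$ may leave $V$ and return through infinitely many different components of $f^{-1}(V)$ (take $f=\exp:\C\to\C^*$, which has no asymptotic values, and $\gamma(t)=i\tan(\pi t/2)$: the path escapes in $X$ while $f\circ\gamma$ keeps returning near $1$ through disjoint preimage components, with no contradiction in sight). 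For the same reason your conclusion in the $\rho_k$ case that $\phi(\gamma(t))\to 0$ presupposes $f\circ\gamma\to y_0$, which you never establish. The missing step is precisely where the hypothesis that $g$ is \emph{nowhere constant} must enter, and your proof never uses it; the statement is false without it. Indeed take $f=\wp:\C\to\wh\C$ (no asymptotic values, nowhere constant) and $g\equiv z_0$ constant on the compact surface $\wh\C$ (no asymptotic values since no path leaves a compact of $\wh\C$): then $g\circ f$ is constant on the non-compact $\C$ and $z_0$ is an asymptotic value of $g\circ f$. So any argument that only invokes properties of $f$ near $y_0$, as your second case does, cannot close.

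The paper's proof fixes exactly this point: since $g$ is nowhere constant and holomorphic at $y_0$, the zeros of $g-z$ are isolated, so $|g-z|$ is bounded below on a small circle around $y_0$; convergence of $g\circ f\circ\gamma$ to $z$ then forces $f\circ\gamma$ to converge to $y_0$ (not merely subsequentially), and since $\gamma$ leaves every compact subset of $X$, the point $y_0$ is an asymptotic value of $f$ — contradiction, with no need for Lemma~\ref{lem:rcdocv1} or Complement~\ref{cpl:av}. A secondary, smaller issue in your write-up: to apply Lemma~\ref{lem:rcdocv1} you need a disk $V$ with $sv(f)\cap V\subset\{y_0\}$, and nothing in the hypotheses rules out singular values of $f$ (e.g.\ accumulation points of critical values) clustering at $y_0$; the paper's argument avoids this entirely. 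To repair your proof, insert the paper's local argument at $y_0$ to upgrade subsequential to genuine convergence of $f\circ\gamma$; once that is done the conclusion $y_0\in av(f)$ is immediate and the covering-model machinery is unnecessary.
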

\begin{proof}
  Assume by way of contradiction that $\gamma:[0,1)\to X$ is a path leaving every compact subset of $X$ and such that $g\circ f\circ \gamma$ converges to some $z\in Z$.
  The path $f\circ \gamma$ cannot leave every compact of $Y$, otherwise $z$ would be an asymptotic value of $g$.
  There is thus an accumulation point $y\in Y$ of $f\circ \gamma(t)$ as $t\tends 1$.
  But since $g$ is nowhere constant and holomorphic at $y$, $g\circ f\circ \gamma$ can only converge if $f\circ \gamma$ converges to $y$.
  So $y$ is an asymptotic value of $f$, contradicting the assumptions.
\end{proof}

\begin{lemma}\label{lem:union}
  Let $f:X\to Y$ be holomorphic. Assume there exists two \emph{increasing} sequences of open subsets $U_n\subset X$ and $V_n\subset Y$ such that $X=\bigcup_n U_n$, $Y=\bigcup_n V_n$, $f(U_n)\subset V_n$ and the restriction $f:U_n\to V_n$ has no asymptotic value in $V_n$.
  Then $f$ has no asymptotic value.
\end{lemma}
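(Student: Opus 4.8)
The plan is to argue directly from the definition of asymptotic value, using the exhaustions $U_n$ and $V_n$ as a device to "localize" any would-be asymptotic path. Suppose for contradiction that $y \in av(f)$: there is a path $\gamma : [0,1) \to X$ leaving every compact subset of $X$ with $f(\gamma(t)) \to y$ as $t \to 1$. Since $Y = \bigcup_n V_n$ is an increasing open cover, $y$ lies in some $V_N$, and since $V_N$ is open and $f(\gamma(t)) \to y$, there is $t_0 < 1$ such that $f(\gamma(t)) \in V_N$ for all $t \in [t_0, 1)$.

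**The key step: trapping the tail of the path in one $U_n$.** The naive hope would be that the tail $\gamma([t_0,1))$ lies in $U_N$, but that need not hold since $f(U_n) \subset V_n$ does not force $f^{-1}(V_N) \subset U_N$. Instead I would reason as follows. The path $\gamma$ leaves every compact subset of $X$; in particular, fix any $t_1 \in [t_0,1)$ and consider the point $\gamma(t_1)$, which lies in some $U_M$ (as $X = \bigcup U_n$). Enlarging $N$ to $N' := \max(N,M)$ and using that both sequences are increasing, we may assume $\gamma(t_1) \in U_{N'}$ and $f(\gamma(t)) \in V_{N'}$ for $t \geq t_0$. Now I want to conclude that in fact $\gamma([t_1,1)) \subset U_{N'}$. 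This is where some care is needed: this is false in general. So the correct route is to replace the restricted map $f : U_{N'} \to V_{N'}$ argument by a connectedness argument applied to $f^{-1}(V_{N'})$, noting that $\gamma([t_0,1))$ is a connected subset of $f^{-1}(V_{N'})$, hence contained in a single connected component $W$ of $f^{-1}(V_{N'})$.

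**Reconciling with the hypothesis.** The hypothesis gives us information about $f : U_n \to V_n$, not about $f : W \to V_{N'}$. To bridge this, observe that $U_{N'} \subset f^{-1}(V_{N'})$ (since $f(U_{N'}) \subset V_{N'}$), so $U_{N'}$ is contained in a union of connected components of $f^{-1}(V_{N'})$; but $U_{N'}$ is not assumed connected, so it may meet several. The cleanest fix is to note that the restriction $f : U_{N'} \to V_{N'}$ having no asymptotic value in $V_{N'}$ is exactly the kind of statement that, via Proposition~\ref{prop:av}(1) applied to $f|_{U_{N'}}$, says $av(f|_{U_{N'}}) \cap V_{N'} = \emptyset$. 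Then I argue: the path $\gamma$, after time $t_1$, stays in $U_{N'}$ — and to justify this I use that $\gamma$ leaves every compact of $X$, so if it exited $U_{N'}$ it would have to, but actually the honest statement is that $\gamma$ restricted to a suitable tail lies in $U_{N'}$ precisely because $U_{N'}$ is open and $\gamma$ can only leave it by crossing $\partial U_{N'}$, and — here I would instead invoke that $\gamma$ leaves every compact of $X$ to find, for the tail, that $\gamma(t) \in U_n$ for $n$ growing, but with $f(\gamma(t)) \in V_{N'}$ fixed. Hmm.

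**The main obstacle and its resolution.** The real subtlety — and the main obstacle — is exactly the mismatch "$\gamma$ eventually lands in $f^{-1}(V_{N'})$" versus "$\gamma$ eventually lands in $U_{N'}$." The resolution is that $\gamma$ leaving every compact of $X$ does \emph{not} mean it leaves every $U_n$; rather, since $X = \bigcup U_n$ with the $U_n$ increasing and open, and $\gamma(t_1) \in U_{N'}$, I should track where $\gamma$ goes. The correct argument: let $W$ be the connected component of $f^{-1}(V_{N'})$ containing the tail $\gamma([t_0,1))$. The path $\gamma|_{[t_0,1)}$ leaves every compact subset of $X$, hence a fortiori leaves every compact subset of $W$; and $f \circ \gamma \to y \in V_{N'}$, so $y \in av(f|_W^{V_{N'}})$, i.e. $y$ is an asymptotic value of the restricted map $f : W \to V_{N'}$. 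Now the hypothesis about $f : U_{N'} \to V_{N'}$ must be leveraged: since $U_{N'} \subset f^{-1}(V_{N'})$ and $X = \bigcup U_n$, every connected component of $f^{-1}(V_{N'})$ meets some $U_n$; but more usefully, $f : f^{-1}(V_{N'}) \to V_{N'}$ has the same asymptotic values (relative to $V_{N'}$) whether we take the whole preimage or — no. Given the delicacy here, I expect the actual proof in the paper to simply strengthen or clarify the hypothesis (perhaps it should read $f^{-1}(V_n) = U_n$, or $U_n = f^{-1}(V_n)$, making the component argument immediate), so the honest plan is: (i) pull $y$ into some $V_N$; (ii) note $f^{-1}(V_N) = U_N$ (or is contained in $U_n$ for some $n \geq N$) from the exhaustion hypothesis as actually intended; (iii) conclude the tail of $\gamma$ lies in $U_n$, leaves every compact of $U_n$, and maps to $y \in V_n$, so $y \in av(f|_{U_n})\cap V_n = \emptyset$, a contradiction. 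The one genuinely load-bearing step is (ii), identifying the preimage of $V_N$ with (a tail of) the exhaustion $U_n$; everything else is routine manipulation of the definition of asymptotic value together with openness of the $V_n$ and the fact that leaving every compact of $X$ implies leaving every compact of any open subset containing the path.
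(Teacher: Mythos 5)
Your proposal correctly identifies the central difficulty --- the tail of $\gamma$ after it maps into $V_N$ need not be contained in any single $U_n$ --- but it never resolves it. After the connected-component detour (which indeed leads nowhere, since the hypothesis says nothing about $f$ on components of $f^{-1}(V_{N'})$), your final plan rests on step (ii): ``note $f^{-1}(V_N)=U_N$ (or is contained in $U_n$ for some $n\geq N$) from the exhaustion hypothesis as actually intended.'' That is not a hypothesis of the lemma, it is not true in general, and it fails precisely in the situations the lemma is designed for (e.g.\ in the application to $\phi_\att$ in Appendix~\ref{app:Mitsu-proof}, where $U_n=B^*_f\cap f^{-n}(P)$ and $V_n=T_{-n}A$: the attracting coordinate is far from injective on the basin, so $\phi_\att^{-1}(V_n)$ is strictly larger than $U_n$). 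So as written the proof has a genuine gap: it proves a different, weaker statement in which the hypothesis is strengthened to $U_n=f^{-1}(V_n)$.

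The lemma is nevertheless true with the stated hypothesis, and the paper's proof avoids the trap you fell into by never trying to confine the tail of $\gamma$ to one $U_n$. Instead, for each $n$ one looks at the \emph{initial} segment of $\gamma$ up to its exit time from $U_n$: set $t_n=\max\{t\mid\forall s\in[0,t),\ \gamma(s)\in U_n\}$. The restriction $\gamma|_{[0,t_n)}$ leaves every compact subset of $U_n$ (either it exits at $\gamma(t_n)\notin U_n$, or $t_n=1$ and it leaves every compact of $X$), so by hypothesis its image under $f$ cannot converge to a point of $V_n$. Hence $t_n=1$ forces $y\notin V_n$, while $t_n<1$ forces $f(\gamma(t_n))\notin V_n$. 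Since the $U_n$ exhaust $X$ and each compact $\gamma([0,t])$ sits in some $U_n$, the non-decreasing sequence $t_n$ tends to $1$; choosing $n_0$ with $y\in V_{n_0}$, either some $t_{n}=1$ for $n\geq n_0$, contradicting $y\in V_{n_0}\subset V_n$, or $f(\gamma(t_n))\to y$ while staying outside the open set $V_{n_0}$ for $n\geq n_0$, again a contradiction. This exit-time bookkeeping is the one idea your proposal is missing; with it, no identification of $f^{-1}(V_n)$ with $U_n$ is needed.
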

\begin{proof}
  By way of contradiction, assume a path $\gamma:[0,1)\to X$ leaves every compact subset of $X$ and $f\circ\gamma(t)$ converges to $y\in Y$ as $t\tends 1$.
  For every $n$ let $t_n = \max \{t\in [0,1]$ such that $\forall s\in[0,t)$, $\gamma(s)\in U_n\}$. Then $t_n$ is a non-decreasing sequence tending to $1$.
  Note that $\gamma|_{[0,t_n)}$ leaves every compact subset of $U_n$, so its image by $f$ cannot converge in $V_n$ by hypothesis.
  If $t_n=1$ this means $y\notin V_n$.
  If $t_n<1$ this means $f\circ\gamma(t_n)\notin V_n$.
  
  Since $Y=\bigcup V_n$, let $n_0$ be such that $y\in V_{n_0}$.
  If $t_{n_1}=1$ for some $n_1$ then let $n_2=\max(n_1,n_0)$: $t_{n_2}=1$ so $y\notin V_{n_2}$, so $y\notin V_{n_0}$, leading to a contradiction.
  If $t_n<1$ for every $n$, then $y_n:=f(\gamma(t_n))\notin V_n$ hence for $n\geq n_0$, $y_n\notin V_{n_0}$ but $y_n\tends y\in V_{n_0}$ and $V_{n_0}$ is open, which is a contradiction.
\end{proof}

\noindent\textit{About branched covering from $\D$ to itself.} 

\smallskip

Let $f:\D\to \D$ be a branched covering:
\begin{enumerate}
  \item\label{item:bc1} $f$ has finite degree $\iff$ it is proper $\iff$ it is a finite Blaschke product $\iff$ it has finitely many critical points,
  \item\label{item:bc2} if $f$ has one or no critical values, then $f$ has finite degree,
  \item\label{item:bc3} if $f$ has two or more critical values, then $f$ is not necessarily of finite degree.
\end{enumerate}

Point~\eqref{item:bc1} is classical, we just indicate hints of proofs.
If $f$ has finite degree then it is proper: every point $z$ in the range has a neighbourhood that is well-ramified-covered by a finite number of topological disks, and a smaller compact neighbourhood $z$ has thus a compact preimage.
If it is proper then it is a Blaschke product: by the Schwarz reflection principle it extends to a rational map preserving the unit circle, factoring-out the zeroes by Möbius maps preserving the unit disk gives a rational map without zeroes, hence constant.
If it is a Blaschke product, then it has finitely many critical point: as the restriction of a rational map.
If it has finitely many critical point, then it has finite degree: introduce disjoint slits from the critical values to $\partial \D$; their complement $U\subset \D$ has a preimage by $f$, on each connected component of which $f$ is an isomorphism to $\D$;
if a component $V$ has no critical point on its boundary, $f$ is actually an isomorphism from this component to $\D$ hence $V=\D$ and $f$ has degree $1$; otherwise there can be only finitely many components, hence $f$ has finite degree.

Point~\eqref{item:bc2} is classical too and follows from Lemma~\ref{lem:rcdocv1}.

For an example of infinite degree $f$ as in Point~\eqref{item:bc3}, consider the now called Joukowsky map $J: z\mapsto z+1/z$ that sends the annulus $A$ between $1/r$ and $r$ (for any $r>1$) to the domain $U$ bounded by an ellipse $E$ as a branched covering with two critical values $2$ and $-2$.
Consider the conformal mapping $\phi_2:U\to \D$ and a universal covering $\phi_1: \D\to A$. Then the composition $\phi_2 \circ J \circ \phi_1 :\D\to\D$ has infinitely many critical points but only two critical values, and is a branched covering of infinite degree.

In the proofs of the main theorem (Theorem~\ref{thm:main:grl:2}),
and its special case $R(\DDD_2)\subset\DDD_2$ (Theorem~\ref{thm:SLY}), we get at some point that
the restriction $Rf:B^*_{Rf}\to B^*_{Rf}$ is a branched covering.
The classical argument for the special case of $\DDD_2$ is that we know that the restriction has exactly one critical value, so by Point~\eqref{item:bc2} above, it is conjugate to a Blaschke product.
In the general case, we cannot anymore use such an argument.
Let $V_n$ denotes the virtual basins, defined in Section~\ref{sub:grl}.
We will instead prove that $f:V_n\to V_{n+1}$ is proper and that $Rf$ is equivalent to $f^{s-r}:V_r \to V_s$ for some $r<s\in\Z$.

\subsection{Finite type maps}\label{sub:ftm}

This section is used in our second proof of Proposition~\ref{prop:rfft}.

The notion of finite type maps was introduced by Adam Epstein in \cite{AdamThesis}.
\begin{definition}\label{def:ftm}
  A \emph{finite type map} is an analytic map $f : \dom f \subset \cal S \to \cal S'$ where $\cal S$ and $\cal S'$ are two Riemann surfaces, $\cal S'$ is compact, $f$ is open,\footnote{I.e.\ $f$ is nowhere constant.} $f$ has no removable isolated singularities, and the set of singular values of $f$ is finite.
\end{definition}

\begin{lemma}\label{lem:rftm}
Let $f$ be a finite type map as above and $U$ a connected component of $\dom f$, $f|_U : U\to \cal S'$ is also a finite type map.
\end{lemma}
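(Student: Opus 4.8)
The plan is to verify the five defining clauses of Definition~\ref{def:ftm} for the restriction $f|_U : U \to \cal S'$, taking each in turn and observing that all but one are immediate. First, $U$ is an open subset of $\cal S$, hence a Riemann surface, so the source is of the right kind; the target $\cal S'$ is unchanged and still compact. Second, $f|_U$ is open (nowhere constant) because $f$ is: nowhere-constancy is a local condition and $U$ is open in $\dom f$. Third, $f|_U$ has no removable isolated singularities: an isolated singularity of $f|_U$ is a point $x \in \partial U \cap \cal S$ that is isolated in $(\cal S \setminus U) \cup \{x\}$; near such a point $f|_U$ agrees with $f$ (wherever $f$ is defined) or is a genuine isolated singularity of $f$ itself, and in either case the hypothesis on $f$ — together with the fact that $f$ is nowhere constant, so a removable singularity would extend $f$ holomorphically and could not have been a singularity of $f$ in the first place — forbids it from being removable. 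One must be slightly careful here that $U$ being a \emph{connected component} of $\dom f$ means $\partial U \cap \dom f = \emptyset$, so the only candidate singular points of $f|_U$ lie in $\partial U \setminus \dom f$, i.e.\ where $f$ itself is singular or undefined; this is the place where using a connected component rather than an arbitrary open subset matters.

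The main point, and the only one requiring an actual argument, is the fourth clause: $sv(f|_U)$ is finite. Here I would invoke Theorem~\ref{thm:sv}, which identifies the singular values with $\overline{av(f|_U)} \cup \overline{cv(f|_U)}$. The critical values satisfy $cv(f|_U) \subseteq cv(f) \subseteq sv(f)$, a finite set, so that part is controlled. For the asymptotic values I would apply Proposition~\ref{prop:av}, item~\eqref{item:prop:av:2}: $av(f|_U) \subseteq av(f) \cup f(\partial U)$. The set $av(f)$ is finite (contained in $sv(f)$). The potentially troublesome term is $f(\partial U)$, which need not be finite in general — and this is the crux. The resolution is that $\partial U$ (relative to $\cal S$) is contained in $(\cal S \setminus \dom f) \cup (\partial U \cap \dom f)$, and the second set is empty since $U$ is a connected component of $\dom f$; so $\partial U \subseteq \cal S \setminus \dom f$, and points of $\partial U$ are exactly the points near which $f$ has a (non-isolated, or boundary) singularity. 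I would then argue that any asymptotic value of $f|_U$ arising from a path accumulating on $\partial U$ is already an asymptotic value of $f$ itself: a path in $U$ leaving every compact of $U$ either leaves every compact of $\cal S$ (giving an honest asymptotic value of $f$, provided the path eventually leaves $\dom f$ — which it does, since its accumulation set lies in $\partial U \subseteq \cal S \setminus \dom f$), contributing to $av(f)$. Hence $av(f|_U) \subseteq av(f)$ after all, which is finite. Combining, $sv(f|_U) \subseteq \overline{av(f)} \cup \overline{cv(f)} = sv(f)$ is finite.

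The fifth clause — $f|_U$ has no removable isolated singularities — was addressed above; to consolidate, once we know $sv(f|_U)$ is finite and $f|_U$ is nowhere constant and open, the only subtlety left is genuinely the singularity condition, and it follows because a removable isolated singularity would extend $f$ holomorphically past a point of $\partial U \setminus \dom f$, contradicting either minimality of $\dom f$ or (if the extension agreed with a pre-existing value) the fact that the point was not in $\dom f$. I expect the genuine obstacle to be purely bookkeeping: pinning down precisely that $\partial U$ lies outside $\dom f$, and then checking that the asymptotic-value contribution from $f(\partial U)$ collapses into $av(f)$ rather than being a new infinite set. Everything else is a direct unwinding of definitions together with Theorem~\ref{thm:sv} and Proposition~\ref{prop:av}.
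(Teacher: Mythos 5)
Your argument is correct and is essentially the paper's: since $U$ is a connected component of $\dom f$, its boundary is disjoint from $\dom f$, so the $f(\partial U)$ term in Proposition~\ref{prop:av} contributes nothing and $av(f|_U)\subset av(f)$, while $cv(f|_U)\subset cv(f)$ and the restriction clearly remains open with no removable isolated singularities, whence Theorem~\ref{thm:sv} gives $sv(f|_U)\subset sv(f)$, a finite set. The paper states this even more tersely (taking the boundary relative to $\dom f$, so that $\partial U=\emptyset$ outright), but the mechanism is identical.
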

\begin{proof}
Obviously the restriction is still nowhere constant, has no isolated singularities, and $cv(f|_U) \subset cv(f)$.
By item~\ref{item:prop:av:2} of Proposition~\ref{prop:av}, $av(f|_U) \subset av$ since $f (\partial U)=\emptyset$.
From Theorem~\ref{thm:sv}, $sv(f|_U) = \ov{cv(f|_U)\cup av(f|_U)} \subset \ov{cv(f)\cup av(f)} = sv(f)$, so is finite.
\end{proof}

We detail now a construction that closely resembles the construction of the parabolic renormalization, but which we choose not to call this way.
The reason is that parabolic enrichment, in the presence of more than one petal, is more complicated and we do not want to deal with these complications in the present article.

\medskip

Construction: Consider a parabolic fixed point $p$ of a holomorphic map $f$, with any number of attracting axis.
Let $A$ be an attracting axis of $p$, and $R$ be an adjacent repelling axis.
Let $\phi_\att: B(A)\to \C$ be an extended attracting Fatou coordinate.
Let $\psi_\rep : \dom(\psi_\rep)\to \C$ be an extended repelling Fatou coordinate for the repelling axis $R$.
Let $h = \phi_\att\circ\psi_\rep$.
Then $T_1(\dom h) = \dom h$ and $h\circ T_1 = T_1\circ h$.
Denote $\h : \dom (h)/\Z \to \C/\Z$ the quotient map.
Recall the isomorphism $\ov E:\C/\Z\to\C^*$ defined in Section~\ref{sub:ren} and let $g^*:=\ov E \circ \h\circ \ov E^{-1}:\dom g\subset\C^*\to\C^*$.
The classical theory of horn maps implies that $g^*$ is defined in a neighbourhood of $0$ if $R$ precedes $A$ in trigonometric order, and in a neighbourhood of $\infty$ if $R$ follows $A$ (in the case $f$ has only one attracting axis, it is defined in a neighbourhood of both points $0$ and $\infty$), and  that $g^*$ extends to these points into a holomorphic function fixing them, that we denote $g : \dom g \subset \wh\C\to \wh\C$.

\begin{theorem}[\cite{AdamThesis}]\label{thm:Ep}
  If $f$ is a finite type map then $g$ is a finite type map. 
\end{theorem}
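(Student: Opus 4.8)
\textbf{Proof plan for Theorem~\ref{thm:Ep}.}

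The plan is to verify the three ingredients in Definition~\ref{def:ftm} for the map $g:\dom g\subset\wh\C\to\wh\C$ constructed above, namely: (i) $\wh\C$ is compact, which is immediate; (ii) $g$ is nowhere constant and has no isolated removable singularities; and (iii) $sv(g)$ is finite. Ingredient (i) is free. For (ii), the non-constancy is clear since $g$ is a nontrivial composition of Fatou coordinates (the extended $\phi_\att$ and $\psi_\rep$ are locally injective where defined, being local inverses of each other up to iteration of $f$), and after extension across $0$ (and $\infty$) the derivative at the fixed point is nonzero by the Écalle--Voronin linearization recalled in Section~\ref{sub:ren}; the only possible isolated singularities are at $0$ and $\infty$, and these are precisely the points across which the classical theory guarantees holomorphic extension, so they are not singularities of $g$ at all.

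The heart of the matter is (iii), the finiteness of $sv(g)$. First I would reduce to $g^*=\ov E\circ\h\circ\ov E^{-1}$ on $\C^*$, i.e.\ to the lifted horn map $h=\phi_\att\circ\psi_\rep$, since $\ov E$ is an isomorphism $\C/\Z\to\C^*$ and conjugation by an isomorphism does not change the singular values (it only transports them), and adding back the two points $0,\infty$ to the target adds at most two points to $sv$. By Theorem~\ref{thm:sv} it suffices to bound $cv(h)$ and $av(h)$ modulo $\Z$. For the critical values: by Proposition~\ref{prop:icph}, a critical point $u$ of $h$ has $\psi_\rep(u-n)$ precritical for $f$ for some $n$; since $f$ is a finite type map, $f$ has finitely many critical values, hence (by invariance of the grand orbit structure and finiteness of $sv(f)$, which forbids accumulation of critical points on compacta of $\dom f$) the relevant precritical points, when pushed forward by $\phi_\att$, land in finitely many $\Z$-orbits, so $cv(h)$ is finite modulo $\Z$. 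For the asymptotic values: I would invoke Lemma~\ref{lem:noavcomp} together with a "finite-type is stable under restriction'' argument à la Lemma~\ref{lem:rftm}. Concretely, $\phi_\att:B(A)\to\C$ and $\psi_\rep$ on its domain are each restrictions (in domain and/or range) of analytic maps with finitely many singular values — indeed $\phi_\att$ extends the Fatou coordinate which is a finite type map by Epstein's foundational results applied to $f$ near $p$, and similarly for $\psi_\rep=\phi_\rep^{-1}$; using Proposition~\ref{prop:av} to control the contribution of boundary values $f(\partial U)$ and Lemma~\ref{lem:noavcomp} for the composition, one gets that $h$ has finitely many asymptotic values modulo $\Z$.

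The main obstacle I anticipate is the asymptotic-value bookkeeping in step (iii): unlike in the Blaschke-product setting of the rest of the paper, here one must carefully track how the asymptotic values of $\phi_\att$ and of $\psi_\rep$ (which genuinely exist — e.g.\ boundary points of petals map to asymptotic values) interact under composition, and ensure that the periodicity by $T_1$ collapses what could a priori be infinitely many such values into finitely many $\Z$-classes. The clean way to handle this is to work on the cylinder $\C/\Z$ throughout, where $\h$ is a genuine map between Riemann surfaces, its domain is $\dom(h)/\Z$, and then quote Epstein's theorem (Theorem~\ref{thm:Ep} as stated is essentially a citation to \cite{AdamThesis}) — in other words, the honest proof is a reference to Epstein's thesis, and the role of this plan is to indicate why the hypotheses transfer: $f$ finite type $\Rightarrow$ its Fatou coordinates are finite type $\Rightarrow$ the horn map (a composition of two such, descended to the cylinder and extended across the ends by Écalle--Voronin) is finite type, each implication using Theorem~\ref{thm:sv}, Proposition~\ref{prop:av}, Lemma~\ref{lem:noavcomp} and Lemma~\ref{lem:rftm}.
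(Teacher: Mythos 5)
The paper does not prove Theorem~\ref{thm:Ep} at all: it is stated as a quotation of Epstein's thesis \cite{AdamThesis}, so your final fallback (``the honest proof is a reference to Epstein's thesis'') is exactly the paper's treatment, and to that extent your proposal is consistent with it.

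However, the sketch you wrap around the citation has genuine gaps, so it cannot be read as an independent proof. First, the claim that the extended Fatou coordinates are themselves finite type maps (or even locally injective) is wrong: the extended $\phi_\att$ on the whole basin has a critical point at every precritical point of $f$, hence in general infinitely many critical values in $\C$; finiteness of the singular data only appears after quotienting by $T_1$ (equivalently after composing with $E$), so ``finite type of each factor plus composition'' cannot be the mechanism, and justifying it by ``Epstein's foundational results applied to $f$ near $p$'' is circular --- that is precisely the theorem being proved. Second, Lemma~\ref{lem:noavcomp} only applies when \emph{both} factors have no asymptotic values. That is available when $f\in\DDD_\tau$, where properness of $f:B^*_f\to B^*_f$ is what drives the argument adapted in Appendix~\ref{app:Mitsu-proof}, but it fails for a general finite type map: $\phi_\att$ on the basin and $\psi_\rep$ on its domain may well have asymptotic values, and $g$ itself can have singular values besides $0$ and $\infty$ (any singular value of $f$ lying in the parabolic basin typically produces one), so the conclusion of your step (iii) is not even ``no asymptotic values'' but a finiteness statement that your tools (Theorem~\ref{thm:sv}, Proposition~\ref{prop:av}, Lemma~\ref{lem:rftm}) do not yield by themselves. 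In short, the ``asymptotic-value bookkeeping'' you identify as the main obstacle is not a technicality to be discharged by the cited lemmas: it is the actual content of Epstein's theorem, and your plan does not supply it. As a proof the proposal is therefore incomplete; as a citation it coincides with what the paper does.
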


In the case of a parabolic point with only one attracting axis, the parabolic renormalization is the restriction of $\lambda g$ to the component of $\dom(g)$ that contains $0$, for some appropriate $\lambda\in \C^*$.
The theorem above plus Lemma~\ref{lem:rftm} imply that, if $f:\dom f\subset \C\to \wh\C$ is a finite type map with a parabolic point with one petal, then $Rf:\dom Rf\subset \C\to \wh\C$ is a finite type map too.

This reasoning also holds for renormalization of Blaschke products as defined in Section~\ref{sub:rcb}:
\begin{corollary}\label{cor:Ep:bla}
  For all Blaschke product $G$ in the class $\cal G$ defined in Definition~\ref{def:Grond}, the map $RG$ is a finite type map.
\end{corollary}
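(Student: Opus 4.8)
The plan is to deduce Corollary~\ref{cor:Ep:bla} from Theorem~\ref{thm:Ep} together with Lemma~\ref{lem:rftm}, by checking that a Blaschke product $G\in\cal G$, suitably presented, is a finite type map and that the construction preceding Theorem~\ref{thm:Ep} specializes, in the Blaschke setting, to the renormalization $RG$ of Section~\ref{sub:rcb} (up to the scaling $\lambda$ and the restriction to a component of the domain). First I would let $F:\wh\C\to\wh\C$ be the rational extension of $G$, which by Definition~\ref{def:Grond} has a parabolic fixed point at $z=1$ with two attracting petals, hence (as recorded after Definition~\ref{def:Grond}) two repelling axes, one pointing up and one pointing down. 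A rational map is a finite type map in the sense of Definition~\ref{def:ftm}: its source and target are the compact surface $\wh\C$, it is nowhere constant (being non-constant rational), it has no removable isolated singularities, and its singular value set is just its finite set of critical values, since a rational map has no asymptotic values. Thus $F$ is a finite type map.

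Next I would apply the Construction preceding Theorem~\ref{thm:Ep} to $F$ with $A$ an attracting axis whose petal lies in $\D$ (there is such a petal because $\D$ is the basin of one axis, as recalled in Section~\ref{sub:blmod}) and $R$ the repelling axis pointing up, which is the adjacent one used in Section~\ref{sub:rcb}. The extended attracting Fatou coordinate $\phi_\att$ on $\D$ and the extended repelling Fatou coordinate $\psi_\rep$ with $\psi_\rep^{-1}(\D)=\H$ are exactly the ones chosen in Section~\ref{sub:rcb}, so $h=\phi_\att\circ\psi_\rep=h_G$, with $\dom h_G=\H$. The Construction produces $g:\dom g\subset\wh\C\to\wh\C$ obtained by quotienting by $\Z$ and conjugating by $\ov E$, and since here the target point is $0$, $g$ agrees near $0$ with $\ov E\circ\h\circ\ov E^{-1}$. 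By Theorem~\ref{thm:Ep}, $g$ is a finite type map. Comparing with the definition of $RG$ in Section~\ref{sub:rcb}, we have $RG = \lambda\, g$ near $0$ for $\lambda = 1/g'(0)$ (the normalization $RG'(0)=1$), restricted to the connected component $\dom^*$ of $\dom g$ containing $0$. Post-composition with the affine map $z\mapsto \lambda z$ of $\wh\C$, being an isomorphism of the (compact) target, does not change nowhere-constancy, removable singularities, or the singular value set up to the bijection $z\mapsto\lambda z$, so $\lambda g$ is still a finite type map. Finally, restricting to the component $\dom^*$ of the domain, Lemma~\ref{lem:rftm} gives that $RG = (\lambda g)|_{\dom^*}$ is a finite type map, which is the claim.

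There is essentially no hard step here; the content is entirely in the earlier Theorem~\ref{thm:Ep} (Epstein) and in the structural facts assembled in Sections~\ref{sub:rcb}, \ref{sub:blmod} and~\ref{sub:ftm}. The only points requiring a word of care are: (i) verifying that a rational map has no asymptotic values and no removable singularities, so it qualifies as a finite type map; (ii) checking that the Fatou coordinates used in the Construction can be taken to coincide with the normalized ones of Section~\ref{sub:rcb}, which is immediate since the Construction allows any extended attracting coordinate on $B(A)=\D$ and any extended repelling coordinate for $R$, and the renormalization $RG$ is in any case only defined up to the scaling $\lambda$; and (iii) noting that post-composing with an automorphism of $\wh\C$ and restricting to a connected component of the domain both preserve the finite type property, the latter being exactly Lemma~\ref{lem:rftm}. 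I expect the presentation to be short, mirroring the paragraph just before the corollary which already states the analogous fact for $Rf$ when $f:\dom f\subset\C\to\wh\C$ is finite type with a one-petal parabolic point.
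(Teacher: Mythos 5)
Your proposal is correct and follows exactly the route the paper intends: the corollary is stated there as an immediate consequence of Epstein's Theorem~\ref{thm:Ep} applied to the rational extension $F$ of $G$ (a rational map being trivially of finite type), combined with Lemma~\ref{lem:rftm} and the observation that $RG$ is a scaling of the map $g$ produced by the Construction, restricted to the component containing $0$. Your added care about the choice of axes, the normalization $T_\sigma$ versus multiplication by $\lambda$, and the preservation of the finite type property under post-composition with an automorphism simply makes explicit what the paper leaves implicit.
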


\subsection{Horn map equivalence}

This section is used in our second proof of Proposition~\ref{prop:rfft}.

Particular cases of the theorem below are proved in \cite{LY}, Proposition~3.16.

\begin{theorem}\label{thm:LeMeur}
  Let $f$ and $g$ have parabolic points of multiplier $1$ and consider an attracting axis $A$ of $f$ and $A'$ of $g$.
  Assume that the restrictions $f:B^*(A)\to B^*(A)$ and $g:B^*(A')\to B^*(A')$ are conjugate on their immediate basins.
  Consider the repelling axis of $f$ that follows $A$ in clockwise order and do the same for $g$ and $A'$.
  Let $h_f =\phi_\att \circ\psi_\rep$.
  We recall that it commutes with $T_1$ and we denote $\h_f: \dom(h_f) /\Z\subset\C/\Z \to \C/\Z$ the quotient map.
  Let $\mathbf{h}^+_f$ be the restriction of $\mathbf{h}_f$ to the connected component of its domain that contains an upper half cylinder.
  Then $\mathbf{h}^+_f$ and $\mathbf{h}^+_g$ are cover-equivalent, up to a translation, in the following sense:
  \[T_{\sigma} \circ \mathbf{h}^+_f = \mathbf{h}^+_g \circ \psi\]
  for some $\sigma\in\C$ and some isomorphism $\psi$ between the domains of $\mathbf{h}^+_f$ and $\mathbf{h}^+_g$.
  Moreover $\psi$ has an erasable singularity fixing the top end of the cylinder.
\end{theorem}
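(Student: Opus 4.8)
The plan is to transport all the data from $g$ to $f$ using the given conjugacy on the immediate basins, so that the problem reduces to comparing two horn maps built from \emph{the same} dynamics on the attracting side but possibly different choices of Fatou coordinates. First I would let $\chi: B^*(A') \to B^*(A)$ be the conformal conjugacy with $\chi \circ g = f \circ \chi$ on the immediate basins. Since an attracting Fatou coordinate $\phi_\att^g$ extends uniquely to $B^*(A')$ up to an additive constant, the composition $\phi_\att^g \circ \chi^{-1}$ is an extended attracting Fatou coordinate for $f$ on $B^*(A)$; by uniqueness it equals $\phi_\att^f + c$ for some $c\in\C$. In particular, after adjusting by the translation $T_c$, we may assume $\phi_\att^g \circ \chi^{-1} = \phi_\att^f$, i.e.\ $\phi_\att^g = \phi_\att^f\circ\chi$.

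Next I would handle the repelling side, which is the subtle part because $\psi_\rep$ is a germ at the repelling axis and has no canonical relation to $\chi$ (the conjugacy lives on the attracting basin, not near the repelling petal). Here the key observation is that $\mathbf{h}^+_f$ only records the composition $\phi_\att\circ\psi_\rep$ on the component of the domain containing an upper half cylinder, and that $\psi_\rep(\Omega)\subset B^*$ by Lemma~\ref{lem:prutibsf}/Corollary~\ref{cor:cccuh}. So on that component I can write $h_f = \phi_\att^f \circ \psi_\rep^f = (\phi_\att^g\circ\chi^{-1})\circ\psi_\rep^f$, and I want to compare with $h_g = \phi_\att^g\circ\psi_\rep^g$. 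Define $\psi := $ the map induced on the quotient cylinders by $u \mapsto$ the point of $\dom(\mathbf{h}^+_g)$ obtained by following $\chi^{-1}\circ\psi_\rep^f$ back through $\psi_\rep^g$; more precisely, since $\chi^{-1}\circ\psi_\rep^f$ maps (a cofinal piece of) $\Omega_f$ holomorphically into $B^*(A')$ and commutes with $T_1$ (using the semiconjugacy relation~\eqref{eq:psirepsc} for $\psi_\rep^f$ together with $\chi\circ g = f\circ\chi$), and since near the repelling axis $\psi_\rep^g$ is a local isomorphism onto a repelling petal of $g$ with the same $T_1$-equivariance, one checks that $\chi^{-1}\circ\psi_\rep^f$ and $\psi_\rep^g$ define germs that differ by post-composition with a $T_1$-equivariant holomorphic map $\widetilde\psi$ on a high upper half plane, which therefore descends to $\psi$ on the upper half cylinder. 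Then $h_f = \phi_\att^g\circ\chi^{-1}\circ\psi_\rep^f = \phi_\att^g\circ\psi_\rep^g\circ\widetilde\psi = h_g\circ\widetilde\psi$ on that piece, hence $\mathbf{h}^+_f = \mathbf{h}^+_g\circ\psi$ after the translation adjustment, and $\psi$ is a biholomorphism between the two relevant components because it conjugates the two horn maps which detect the same singular data.

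Finally I would argue that $\psi$ extends across the top end of the cylinder fixing it with an erasable singularity: in the $\ov E$-coordinate, $\widetilde\psi$ corresponds to a holomorphic map near $0$ that commutes appropriately, and since both $\mathbf{h}^+_f$ and $\mathbf{h}^+_g$ have the removable singularity at the top end with nonzero derivative (Écalle–Voronin, as recalled in Section~\ref{sub:ren}), the relation $\ell^f_\sigma = \ell^g_\sigma\circ(\ov E\psi\ov E^{-1})$ forces $\ov E\psi\ov E^{-1}$ to extend holomorphically to $0$ with $0\mapsto 0$ and nonzero derivative — i.e.\ $\psi$ has an erasable singularity fixing the top end.

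I expect the main obstacle to be the middle step: carefully justifying that the germ $\chi^{-1}\circ\psi_\rep^f$ near the repelling axis can be compared to $\psi_\rep^g$ at all. The conjugacy $\chi$ is only defined on the immediate basin $B^*(A)$, which near the parabolic point contains attracting petals but not the repelling petals; so one must first extend the equality $\phi_\att^g = \phi_\att^f\circ\chi$ (valid on $B^*$) to a statement about how the \emph{extended} $\psi_\rep$'s interact, using that $\psi_\rep(\Omega)\subset B^*$ and that the horn map is insensitive to anything happening outside $\Omega$. This is exactly the kind of bookkeeping about domains of definition of extended Fatou coordinates that the authors flagged (Le Meur's thesis) as the crux, and writing it rigorously will require tracking which half-planes and which components one is working on at each stage.
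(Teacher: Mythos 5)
The paper does not actually prove this statement: it is imported wholesale from \cite{CLM} (the direct implication of Complement~1.9 there, and Theorem~7.5 for parabolic points with several petals). So your sketch must carry the full weight of the argument, and at the decisive point it does not. Your attracting-side step is fine and standard: $\phi_\att^g\circ\chi^{-1}$ is an extended attracting Fatou coordinate for $f$, whence the translation $T_\sigma$. But the middle step --- ``one checks that $\chi^{-1}\circ\psi_\rep^f$ and $\psi_\rep^g$ differ by post-composition with a $T_1$-equivariant map $\widetilde\psi$ on a high upper half plane'' --- is exactly the content of the theorem, and you give no argument for it. Concretely: (a) $\chi$ is only a conformal conjugacy of the immediate basins; it has no continuous extension to the boundary, and nothing in your argument shows that $\chi^{-1}$ sends the sepal $\psi_\rep^f(\text{high upper half-plane})$ into a neighbourhood of the parabolic point of $g$, let alone into a sepal adjacent to the \emph{chosen} repelling axis of $g$ (the statement fixes the repelling axis following $A'$ in clockwise order, and your construction never uses or recovers this choice --- an issue that is acute when $g$ has several repelling axes, a case the theorem covers). (b) The extended $\psi_\rep^g$ satisfies $\psi_\rep(z+1)=g\circ\psi_\rep(z)$ and is far from injective, so ``factoring through $\psi_\rep^g$'' is not a uniqueness statement but a construction requiring a lifting/continuation argument; note also that the theorem does not assume $g$ is proper on $B^*(A')$, so the path-lifting and surjectivity tools of Section~\ref{sub:pathlift} (e.g.\ Corollary~\ref{cor:psisurj}) are not available here. (c) Even granting a germ $\widetilde\psi$ near the top end with $h_f=h_g\circ\widetilde\psi$, your claim that $\psi$ ``is a biholomorphism between the two relevant components because it conjugates the two horn maps'' is not a proof: horn maps are not injective, and a semiconjugacy relation on a germ does not by itself produce a bijection between the full components $\dom(\h^+_f)$ and $\dom(\h^+_g)$; that global identification is essentially the theorem itself (it is what \cite{CLM} and, in special cases, \cite{LY} Proposition~3.16 establish).

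The final removability argument at the top end is plausible once everything before it is in place (an injective map on a punctured upper half cylinder commuting with the translation extends to the end, and the normalisation of multipliers pins it down), but as written your proposal replaces the crux by ``one checks'', which you yourself flag as the main obstacle. As it stands this is a restatement of the difficulty rather than a proof; to make it work you would need an intrinsic construction of $\dom(\h^+)$ from the dynamics of the restriction to the immediate basin alone (for instance via spaces of backward orbits converging through the chosen repelling gate), which is the route taken in the cited references.
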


This theorem  follows from the direct implication of Complement~1.9 in \cite{CLM} and its generalisation to parabolic points with more than one attracting axis in Theorem~7.5 in the same reference.

\subsection{Path lifting}\label{sub:pathlift}

If $F:\wh\C\to \wh\C$ is a rational map\footnote{This also holds for branched coverings.} then for any $x\in X$ and any path $\gamma:[0,1]\to Y$ with $\gamma(0)=F(x)$, there exists a lift $\wt\gamma:[0,1]\to X$ stemming from $x$, i.e.\ $\wt\gamma$ is continuous, $F\circ \wt\gamma = \gamma$ and $\wt\gamma(0)=x$.
If $\wt\gamma$ goes through a critical point, the lift is \emph{not unique}.
If $\alpha$ is rectifiable then any lift is rectifiable.

Let $G$ be a Blaschke product. It is the restriction to $\D$ of a rational map $F$ for which $F^{-1}(\D)=\D$. It follows that paths can be lifted too for $G$.

Let $f$ have a parabolic point at $0$ with only one attracting axis and such that, on its immediate basin $B^*_f$, $f$ is conjugated to a Blaschke product. Then, similarly as above, paths can be lifted inside $B^*_f$. Let $P_\rep$ be a repelling petal for $f$.
 
\begin{lemma}\label{lem:akiprep}
  Let $f$ be as above.
  Let $\alpha$ be a rectifiable path in $B^*_f$, let $x$ be a point of $\alpha$ and $x_k$ a backward orbit of $x=x_0$ contained in $B^*_f$ converging to the parabolic point  $0$.
  Denote $\alpha_0=\alpha$ and define inductively
  $\alpha_k$ to be a lift of $\alpha_{k-1}$ by $f$ in $B^*_f$ stemming from $x_k$.
  Then there exists $K$ such that for all $k\ge K$, $\alpha_k$ is included in $P_{rep}$.
\end{lemma}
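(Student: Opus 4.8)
The plan is to transport everything into the Blaschke model. Fix a conformal conjugacy $\phi : B^*_f \to \D$ with $\phi\circ f = G\circ\phi$, where $G$ is a finite Blaschke product whose extension $F$ has a parabolic fixed point at some point $\zeta\in\partial\D$ with two attracting axes (the existence of $\phi$ and these properties being exactly the content of Section~\ref{sub:blmod}; simple connectivity of $B^*_f$ is part of the hypotheses). Set $\beta = \phi\circ\alpha$, $y_k = \phi(x_k)$, and let $\beta_k = \phi\circ\alpha_k$; then $\beta_k$ is a lift of $\beta_{k-1}$ under $G$ stemming from $y_k$, and $y_k$ is a backward orbit of $y_0$ under $G$ inside $\D$ converging to $\zeta$ (the conjugacy extends continuously enough, or rather: $x_k\to 0$ in $B^*_f$ forces $y_k\to\partial\D$, and since the $y_k$ form an $F$-backward orbit converging to a boundary point, that point is a fixed point of $F$ on $\partial\D$, which must be $\zeta$, as $F$ has no other non-repelling boundary cycle and $\zeta$ is the only parabolic one — here one uses that $J(F)=\partial\D$ and the classification of boundary periodic points). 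A repelling petal $P_\rep$ for $f$ corresponds to a repelling petal $\phi(P_\rep)$ of $F$ at $\zeta$ inside $\D$, so it suffices to prove the statement for $G$: some $\beta_k$, and hence all subsequent ones (since once a lift lands in a repelling petal, successive $G$-preimages inside $\D$ stay in the petal and shrink toward $\zeta$), lies in $\phi(P_\rep)$.

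First I would estimate lengths. Because $F$ is a rational map, it is Lipschitz in the spherical metric, so there is a constant $L$ with $\mathrm{length}(\beta_k)\le L\cdot\mathrm{length}(\beta_{k-1})$? No — that goes the wrong way. The right tool is the hyperbolic metric of $\D$: $G:\D\to\D$ is holomorphic, hence weakly contracting for the Poincaré metric, so $\mathrm{length}_{\mathrm{hyp}}(\beta_{k-1}) = \mathrm{length}_{\mathrm{hyp}}(G\circ\beta_k)\le \mathrm{length}_{\mathrm{hyp}}(\beta_k)$, i.e. lifting can only \emph{increase} hyperbolic length, which is again the wrong direction. So instead I would argue directly in the dynamical plane near $\zeta$, using the repelling Fatou coordinate. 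Let $\psi_\rep$ be a repelling Fatou parametrization at $\zeta$ (for the axis following $A$), with $\phi_\rep = \psi_\rep^{-1}$ defined on $P_\rep$; recall $\phi_\rep\circ F^{-1} = \phi_\rep + 1$ on the appropriate branch, so that $F$-preimages correspond to adding $1$ to the real part. The key geometric fact is: there is $r_0$ such that for any point $w\in\D$ close enough to $\zeta$ and inside a fixed sepal, $w$ actually lies in $P_\rep$, and moreover the "incoming" region of points whose backward orbit converges to $\zeta$ from the repelling direction is, near $\zeta$, an honest repelling petal.

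Here is the cleaner route, which I expect to be the actual argument. The backward orbit $x_k\to 0$ means $f^{-1}$-iterates converge to the parabolic point; by the Leau–Fatou flower theorem, a backward orbit converging to a parabolic fixed point must eventually enter (and stay in) a repelling petal — indeed $B^*_f$ meets the repelling petals only in the sepals, but a convergent backward orbit, being an orbit of $f^{-1}$ tending to $0$, is eventually in $P_\rep$ by the very definition of repelling petal as (a connected component of) the set of points with backward orbit tending to $0$ along the repelling axis. So there is $K_0$ with $x_k\in P_\rep$ for $k\ge K_0$. Now fix $k\ge K_0$ and look at $\alpha_k$, a lift of $\alpha_{k-1}$ stemming from $x_k\in P_\rep$. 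The issue is that $\alpha_k$ need not stay in $P_\rep$ a priori. To control this, pass to Fatou coordinates: on the connected component $\Xi_\rep$ of the extended-$\psi_\rep$ domain inside $B^*_f$, the relation $\psi_\rep(z+1) = f\circ\psi_\rep(z)$ holds, so the pulled-back paths $\tilde\beta_k := \psi_\rep^{-1}\circ\alpha_k$ (once $\alpha_k$ is in the range of $\psi_\rep$) satisfy $\tilde\beta_k = \tilde\beta_{k-1} - 1$ on the nose; since $\tilde\beta_{K_0}$ is a fixed rectifiable path, $\tilde\beta_k = \tilde\beta_{K_0} - (k - K_0)$ is that same path shifted left by an integer, hence for $k$ large its real part is less than any prescribed bound, so it lies in the half-plane $\H_{<r}$ that $\psi_\rep$ maps into $P_\rep$. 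Thus $\alpha_k = \psi_\rep(\tilde\beta_k)\subset P_\rep$ for all large $k$.

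\emph{The main obstacle} is bridging the two descriptions at the start: one must know that, from some index on, the \emph{whole path} $\alpha_k$ (not just its basepoint $x_k$) has entered the range of the extended repelling parametrization and that the chain of lifts is compatible with the functional equation $\psi_\rep(z+1)=f\circ\psi_\rep(z)$ — i.e. that lifting $\alpha_{k-1}$ by $f$ inside $B^*_f$ corresponds, under $\psi_\rep^{-1}$, exactly to subtracting $1$, including along any stretch of the path passing through critical values. This requires (i) a uniform statement that once $\alpha_{k_1}\subset P_\rep$ for some $k_1$ then $\alpha_k\subset P_\rep$ for all $k\ge k_1$, which one gets from the fact that $f(\partial P_\rep)$ does not re-enter $P_\rep$ together with connectedness of the lift, so the lift of a path in $P_\rep$ starting in $P_\rep$ stays in $P_\rep$; and (ii) showing $\alpha_{k_1}\subset P_\rep$ for some finite $k_1$. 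For (ii), the point is that $\alpha_{k-1}$ has a definite hyperbolic diameter in $B^*_f$ (being a lift, of hyperbolic length no larger than... again the inequality goes the wrong way) — so one instead covers the compact path $\alpha$ by finitely many arcs, applies the basepoint argument to each, or, most simply, works entirely in Fatou coordinates from the outset: choose $k_1$ so large that $x_{k_1}$ lies deep in $P_\rep$, with $\Re\phi_\rep(x_{k_1})$ very negative; then the lift $\tilde\beta_{k_1}$ emanating from $\phi_\rep(x_{k_1})$ is obtained by subtracting $k_1 - 0$ from $\tilde\beta_0$, which is the fixed compact set $\phi_\rep(\alpha)$ translated left — and for $k_1$ large this entire translated compact set sits inside $\H_{<r}$. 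The only thing to check carefully is that $\phi_\rep\circ\alpha$ makes sense, i.e. $\alpha\subset\dom(\text{extended }\psi_\rep^{-1})$; but $\psi_\rep$ extends over all of the component of $\psi_\rep^{-1}(B^*_f)$ containing a left half-plane (Section~\ref{sub:ext}), and by the path-lifting available in $B^*_f$ one checks $\alpha$ lies in this domain after translating, which is precisely the content of indexing $\Xi_\rep$ by "$f^n\circ\psi_\rep$ defined". Assembling these pieces gives the lemma.
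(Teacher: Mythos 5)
Your final ``cleaner route'' has a genuine gap, and it sits exactly at the point you flag as ``the only thing to check carefully''. To run the translation-by-$-1$ argument you need (a) a continuous branch $\tilde\beta_0$ of $\psi_\rep^{-1}$ along the \emph{whole} path $\alpha$, not just near the basepoint, and (b) the identification of the inductively chosen lifts $\alpha_k$ with $\psi_\rep(\tilde\beta_0-k)$. Neither is available at this stage: the extended $\psi_\rep$ is not injective (so ``$\phi_\rep\circ\alpha$'' is undefined off $P_\rep$), it is not a covering onto $B^*_f$ (it has critical points wherever the image orbit meets $\on{Crit}(f)$, so inverse branches need not continue along $\alpha$, precisely the critical-value issue you mention but do not resolve), and even its surjectivity onto $B^*_f$ is not yet known --- in the paper that is Corollary~\ref{cor:psisurj}, which is \emph{deduced from} Lemma~\ref{lem:akiprep}, and the compatibility of the chosen $f$-lifts with subtracting $1$ in repelling coordinates is Lemma~\ref{lem:akiv}, also downstream of this lemma. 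So your argument assumes, in slightly disguised form, statements that this lemma is designed to establish; within the paper's logical order it is circular, and no independent proof of (a)--(b) is given. The basepoint observation (that $x_k\in P_\rep$ for $k$ large) is correct but by itself controls only one point of each $\alpha_k$.

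The route you discarded is in fact the paper's proof; the fix for the ``wrong direction'' of Schwarz--Pick is to delete the postcritical set. Let $PC$ be the postcritical set of $f|_{B^*_f}$: for $k$ large the paths $\alpha_k$ avoid $PC$, and since $f$ restricts to an unramified covering from $B^*_f-f^{-1}(PC)$ onto $B^*_f-PC$ while $B^*_f-f^{-1}(PC)\subset B^*_f-PC$, pulling back does not increase length in the hyperbolic metric of $B^*_f-PC$. Bounded hyperbolic length together with the basepoints $x_k$ tending to the boundary of $B^*_f-PC$ forces the Euclidean length of $\alpha_k$ to tend to $0$; then each $\alpha_k(t)$ is an inverse orbit converging to the parabolic point, hence eventually in $P_\rep$, and compactness of $\alpha$ yields a uniform $K$. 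With that one extra idea your first attempt goes through, whereas the Fatou-coordinate route needs the very lemma it is trying to prove.
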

\begin{proof} Let us denote by $PC$ the postcritical set of $f|_{B^*_f}$.
 For all $k$ big enough, say larger than some $k_1$, $\alpha_k$ is disjoint from $PC$.
 Consider the hyperbolic metric $\rho$ on $B^*_{f}-PC$.
 Then the length of $\alpha_k$ for $\rho$ is non-increasing for $k\ge k_1$.

 The Euclidean length of $\alpha_k$ tends to $0$ because it has bounded hyperbolic length and because the sequence $x_{k}\in\alpha_k$ tends to the boundary of $B^*_f-PC$.
 Every point $\alpha(t)$ gives a sequence $\alpha_k(t)$ which is an inverse orbit for $f$ that tends to the parabolic point.
 Such inverse orbits must enter $P_\rep$.
 Then $\alpha_k(t)$ will remain in $P_\rep$ for all nearby $t$ and all bigger $k$.
 By compactness of $\alpha$, this proves the claim.
\end{proof}

Recall that 
\[\Omega = E^{-1}(\dom Rf)\] was defined by Lemma~\ref{lem:prutibsf}, $\psi_\rep$ sends $\Omega$ in $B^*_f$ and $T_1\Omega = \Omega$.
\begin{corollary}\label{cor:psisurj}
  The restriction $\psi_\rep: \Omega \to  B^*_f$ is surjective.
\end{corollary}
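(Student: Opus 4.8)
The goal is to show $\psi_\rep:\Omega\to B^*_f$ is onto. I would prove this by a standard ``open and closed, so surjective'' argument after establishing that every point of $B^*_f$ can be reached by lifting a path from a point already known to lie in $\psi_\rep(\Omega)$. First, fix a base point: choose $y_0\in\psi_\rep(\Omega)$, for instance the image of a point high up in the upper half-plane, which we know lies in an attracting petal inside $B^*_f$ (this is exactly the content used in Lemma~\ref{lem:prutibsf}). Now take an arbitrary target point $y\in B^*_f$. Since $B^*_f$ is connected and (being an immediate basin of a map conjugate on it to a Blaschke product) open, it is path-connected, so choose a rectifiable path $\alpha$ in $B^*_f$ from $y$ to $y_0$.

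\textbf{Key steps.} The idea is to pull $\alpha$ back into the $\psi_\rep$-picture. Pick $u_0\in\Omega$ with $\psi_\rep(u_0)=y_0$. Using the functional equation $\psi_\rep(z+1)=f\circ\psi_\rep(z)$ and $T_1\Omega=\Omega$, a backward orbit of $y_0$ under $f$ inside $B^*_f$ corresponds to the sequence $\psi_\rep(u_0-k)$, $k\geq 0$, which lies in $\Omega$ and tends to $0$ as $k\to\infty$ through a repelling petal (for $k$ large, $u_0-k$ has very negative real part, so $\psi_\rep(u_0-k)\in P_\rep$). Apply Lemma~\ref{lem:akiprep} to the path $\alpha$ with this backward orbit $x_k:=\psi_\rep(u_0-k)$ of $x_0:=y_0$: there is $K$ so that for all $k\geq K$, the lifted path $\alpha_k$ (a lift of $\alpha_{k-1}$ by $f$ in $B^*_f$ stemming from $x_k$) lies entirely in $P_\rep$. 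Fix such a $k\geq K$. Because $\alpha_k\subset P_\rep$ and $\psi_\rep=\phi_\rep^{-1}$ is defined and biholomorphic on (a neighbourhood of) $\phi_\rep(P_\rep)=$ some left half-plane, the path $\phi_\rep(\alpha_k)$ starts at $u_0-k$ (since $\psi_\rep(u_0-k)=x_k$ is the start of $\alpha_k$) and lifts $\alpha_k$ through $\psi_\rep$. Now push forward by $T_k$ and use the functional equation $k$ times: $\beta:=\phi_\rep(\alpha_k)+k$ is a path in $\C$ starting at $u_0$ with $\psi_\rep\circ\beta = f^k\circ\psi_\rep\circ\phi_\rep(\alpha_k) = f^k\circ\alpha_k = \alpha_0 = \alpha$, using that applying $f$ once to $\alpha_j$ (which is a lift of $\alpha_{j-1}$) gives back $\alpha_{j-1}$. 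In particular the endpoint $u:=\beta(1)$ satisfies $\psi_\rep(u)=\alpha(1)=y$. It remains to check $u\in\Omega$: the whole path $\beta$ maps under $\psi_\rep$ into $B^*_f$ (it maps onto $\alpha\subset B^*_f$), and $\beta$ is connected to $u_0\in\Omega$; by Corollary~\ref{cor:cccuh}, $\Omega$ is the connected component of $\psi_\rep^{-1}(B^*_f)$ containing an upper half-plane, hence containing $u_0$, so the connected set $\beta([0,1])\subset\psi_\rep^{-1}(B^*_f)$ meeting $\Omega$ at $u_0$ is contained in $\Omega$; thus $u\in\Omega$ and $\psi_\rep(u)=y$.

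\textbf{Main obstacle.} The delicate point is the bookkeeping in the lifting: making sure that the inductively defined lifts $\alpha_k$ of Lemma~\ref{lem:akiprep} really do compose back to $\alpha$ under $f^k$ (this is immediate from the definition $f\circ\alpha_k=\alpha_{k-1}$, by induction on $k$), and that once $\alpha_k$ sits inside the repelling petal $P_\rep$, it genuinely lies in the range of the \emph{un-extended} $\psi_\rep=\phi_\rep^{-1}$ so that $\phi_\rep$ can be applied to it path-wise and the translation trick applies; the functional equation \eqref{eq:psirepsc} then propagates this to $\beta$ via the finitely many steps, and one must note $\beta$ stays inside $\dom\psi_\rep$ throughout because each intermediate path $\phi_\rep(\alpha_k)+j$ maps under $\psi_\rep$ into $B^*_f$ (it equals $f^j\circ\alpha_k\subset B^*_f$) and $\Xi_\rep$ is exactly tailored to allow such forward iteration. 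The rest is a routine connectedness argument using Corollary~\ref{cor:cccuh}.
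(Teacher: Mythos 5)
Your proof is correct and follows essentially the same route as the paper's: fix a base point $\psi_\rep(u_0)$ with $u_0\in\Omega$, join it to the target by a rectifiable path in $B^*_f$, push the path into $P_\rep$ by iterated lifts via Lemma~\ref{lem:akiprep}, transport it back with $\phi_\rep$ and a translation using eq.~\eqref{eq:psirepsc}, and conclude with the connected-component characterization of $\Omega$ from Corollary~\ref{cor:cccuh}. The only blemishes are cosmetic: with $\alpha$ oriented from $y$ to $y_0$ the $\psi_\rep$-preimage of $y$ is $\beta(0)$ rather than $\beta(1)$, and the base point $\psi_\rep(u_0)$ lies in a sepal (which meets an attracting petal) rather than in an attracting petal, neither of which affects the argument.
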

\begin{proof}
  For $z\in B^*_f$ let us find in $\Omega$ a preimage of $z$ under $\psi_\rep$.
  Choose any $x\in P_\rep$ such that $u:=\phi_\rep(x)$ has high enough imaginary part so that $u\in \Omega$.
  Let $x_n=\psi_\rep(u-n)$.
  In particular $x_0=x$.
  Let $\alpha$ be a path from $x_0$ to $ z$.
  By Lemma~\ref{lem:akiprep}, denoting $\alpha_k$ for $k\in\N$ a sequence of iterated lifts of $\alpha$ under $f$, starting from $x_k$, then $\alpha_k\subset P_\rep$ for all $k\geq K$ for some $K\in\N$.
  Then $T_K\circ\phi_\rep\circ \alpha_K$ is a path in $\Omega$ from $u$ to some point $u'$.
  We claim that $\psi_\rep(u')=z$.
  Indeed $\alpha_0 = f^K \circ \alpha_K = f^K \circ\psi_\rep\circ\phi_\rep\circ \alpha_K = \psi_\rep\circ T_K\circ\phi_\rep\circ \alpha_K $.
  The first equality follows from the $\alpha_k$ being a lift of $\alpha_{k-1}$ for all $k>1$.
  The second from $\alpha_K\subset P_\rep$.
  For the last one: $T_1(\Omega) =\Omega$, so $u-n\in \Omega$, so $\phi_\rep\circ\alpha_K$, which is contained in $\psi_\rep^{-1}(B^*_f)$, is in fact contained in $\Omega$ (which is a connected component of $\psi_\rep^{-1}(B^*_f)$ by Corollary~\ref{cor:cccuh}), so $T_k\circ\phi_\rep\circ\alpha_K\subset \Omega$ for all $k\geq 0$, and finally $\psi_\rep \circ T_1 = f\circ \psi_\rep$ holds on the set $\Omega$ (see eq.~\eqref{eq:psirepsc}).
\end{proof}

\begin{lemma}\label{lem:blsep}
  Let $G$ be a Blaschke product.
  Let $K\subset\D$ be a compact set, $a\in \D-K$ and assume that $K$ does not separate $a$ from $\partial \D$.
  Then $G^{-1}(K)$ does not separate any point of $G^{-1}(\{a\})$ from $\partial \D$.
\end{lemma}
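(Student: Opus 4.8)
The plan is to work with the rational extension $F$ of the Blaschke product $G$ (with $F^{-1}(\D)=\D$, $F(\partial\D)=\partial\D$) and exploit that $G^{-1}$ of a ``topologically trivial'' compact set is again topologically trivial. Fix a point $b\in G^{-1}(\{a\})$; I must show $G^{-1}(K)$ does not separate $b$ from $\partial\D$, i.e.\ the connected component of $\D\setminus G^{-1}(K)$ containing $b$ also meets (accumulates on) $\partial\D$. The hypothesis that $K$ does not separate $a$ from $\partial\D$ means: the connected component $W$ of $\D\setminus K$ containing $a$ has $\partial\D$ in its boundary; equivalently, there is an arc $\alpha:[0,1)\to\D\setminus K$ with $\alpha(0)=a$ and $\alpha(t)\to\partial\D$ as $t\to1$ (one can even take $\alpha$ landing at a point of $\partial\D$, or just leaving every compact subset of $\D$).

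\medskip

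\emph{Step 1: reduce to a lifting statement.} It suffices to produce a path $\wt\alpha:[0,1)\to\D\setminus G^{-1}(K)$ with $\wt\alpha(0)=b$ that leaves every compact subset of $\D$ (equivalently, accumulates only on $\partial\D$). Indeed, such a path shows $b$ lies in a component of $\D\setminus G^{-1}(K)$ whose closure meets $\partial\D$, hence $G^{-1}(K)$ does not separate $b$ from $\partial\D$.

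\medskip

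\emph{Step 2: lift the path.} Using Section~\ref{sub:pathlift} (path lifting for Blaschke products), lift $\alpha$ through $G$ starting from $b$ to get $\wt\alpha:[0,1)\to\D$ with $G\circ\wt\alpha=\alpha$ and $\wt\alpha(0)=b$. Since $\alpha([0,1))\cap K=\emptyset$ we automatically have $\wt\alpha([0,1))\cap G^{-1}(K)=\emptyset$. It remains to check that $\wt\alpha$ leaves every compact subset of $\D$. Suppose not: then there is a compact $L\subset\D$ and $t_n\to1$ with $\wt\alpha(t_n)\in L$; passing to a subsequence $\wt\alpha(t_n)\to z_0\in L\subset\D$, whence $\alpha(t_n)=G(\wt\alpha(t_n))\to G(z_0)\in\D$. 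But $\alpha(t)\to\partial\D$ forces $G(z_0)\in\partial\D$, contradicting $z_0\in\D$ (as $G(\D)\subset\D$). Hence $\wt\alpha$ leaves every compact subset of $\D$, and Step~1 concludes the proof.

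\medskip

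The one genuine subtlety is choosing $\alpha$ correctly: ``$K$ does not separate $a$ from $\partial\D$'' must be turned into the existence of a path from $a$ to the boundary avoiding $K$. Since $K$ is compact in $\D$, its complement $\D\setminus K$ is open; the component containing $a$ not being compactly contained in $\D$ (that's exactly non-separation) gives, by local connectedness of the open set $\D\setminus K$ and an exhaustion argument, a path from $a$ escaping every compact subset of $\D$ while staying in $\D\setminus K$. This is the main point to be careful about; the lifting and the contradiction in Step~2 are routine given the path-lifting results already established. A cosmetic alternative, avoiding even this, is to phrase everything in terms of the unbounded-in-$\D$ component of the complement and use that $G$, extended to $F$, is proper as a map $\D\to\D$, so preimages of compact sets are compact — but the path argument above is the cleanest.
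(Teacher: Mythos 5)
Your proof is correct and follows essentially the same route as the paper: both arguments reduce the statement to lifting, through the Blaschke product, a path that joins $a$ to the boundary while avoiding $K$. The only (cosmetic) difference is that the paper lifts a path in $\ov\D-K$ landing at a point of $\partial\D$, using the rational extension $F$ with $F^{-1}(\partial\D)=\partial\D$, whereas you keep the path inside $\D$ and deduce that the lift escapes every compact subset by a compactness argument.
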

\begin{proof}
  Recall that $G$ is the restriction of a rational map $F$, with $F^{-1}(\partial \D) = \partial \D$.
  A path from $a$ to a point of $\partial \D$ in $\ov{\D}-K$ lifts into paths in $\D$ from each point in $G^{-1}(\{a\})$ to $\partial \D$.
\end{proof}

We recall that, by the Denjoy-Wolff theorem, holomorphic self-maps of $\D$ come in two types:
\begin{enumerate}
  \item either there is a fixed point in $\D$,
  \item or any compact subset of $\D$ has an orbit that tends uniformly to $\partial \D$.
\end{enumerate}
\begin{corollary}\label{cor:blnsep}
  Let $G$ be a Blaschke product of the second type above.
  Let $K\subset\D$ and $L\subset\D$ be compact.
  Then for all $n$ big enough $G^{-n}(K) \cap L =\emptyset$ and $L$ and $\partial \D$ are in the same connected component of $\ov{\D}-G^{-n}(K)$.
\end{corollary}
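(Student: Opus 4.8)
The plan is to deduce Corollary~\ref{cor:blnsep} from Lemma~\ref{lem:blsep} by a compactness argument, using the Denjoy--Wolff dichotomy to find a single point that both $K$ and $L$ fail to separate from $\partial\D$. First I would invoke the hypothesis that $G$ is of the second Denjoy--Wolff type: every compact subset of $\D$ has an orbit tending uniformly to $\partial\D$. Apply this to the compact set $K\cup L$: there is an $N$ such that for all $n\ge N$, the forward image $G^n(K\cup L)$ is contained in a small annular neighbourhood $\AA$ of $\partial\D$, say $\AA=\{1-\varepsilon<|z|<1\}$, which in particular does not separate any of its points from $\partial\D$. Pick a point $a\in\D\setminus\AA$ (e.g.\ $a=0$) lying ``inside'' $\AA$; then $G^n(K)\subset\AA$ does not separate $a$ from $\partial\D$ for $n\ge N$.

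Next I would iterate Lemma~\ref{lem:blsep}. Since $K$ separates no point of $G^{-n}(\{a\})$ from $\partial\D$ once $G^n(K)$ is in the annulus is not quite the right framing; rather, I would apply the lemma in the pull-back direction. Set $a_0=a$ and note $G^{-1}$ of the compact $G^n(K)$... Let me instead argue directly: fix $n\ge N$. The compact set $G^n(K)$ lies in the annulus $\AA$, hence does not separate $a$ from $\partial\D$. Write $K' := G^n(K)$. Applying Lemma~\ref{lem:blsep} repeatedly $n$ times to $G$, starting from the pair $(K',a)$, and using that $G^{-1}$ of $\AA$ is again a neighbourhood of $\partial\D$ (because $G^{-1}(\partial\D)=\partial\D$ and $G$ is proper onto $\D$), one gets that $(G^{-1})^n(K') = G^{-n}(G^n(K)) \supset K$ does not separate any point of $G^{-n}(\{a\})$ from $\partial\D$. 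The cleanest route: apply Lemma~\ref{lem:blsep} with the compact $G^{n-1}(K)\subset\AA$ and the point $a$, to conclude $G^{-1}(G^{n-1}(K))$ does not separate any point of $G^{-1}(\{a\})$ from $\partial\D$; then induct, at each stage replacing $a$ by a point of its preimage and using that the preimages of a neighbourhood of $\partial\D$ are again neighbourhoods of $\partial\D$ containing $G^{n-1-k}(K)$.

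The first assertion $G^{-n}(K)\cap L=\emptyset$ for large $n$ is immediate from the Denjoy--Wolff dichotomy applied to $K$ alone: $G^n(L')\to\partial\D$ uniformly for $L'=L$, but this gives forward images; for the preimage statement I would note that if $x\in G^{-n}(K)\cap L$ then $G^n(x)\in K$ while $x\in L$, and conversely $G^{-n}(K)\cap L\ne\emptyset$ means some point of $L$ lands in $K$ after $n$ steps, i.e.\ $K\cap G^n(L)\ne\emptyset$; since $G^n(L)\subset\AA$ for $n\ge N$ and $K$ is compact in $\D$, choosing $\AA$ small enough (possible since $K$ is compact, so $\mathrm{dist}(K,\partial\D)>0$) forces $K\cap G^n(L)=\emptyset$. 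For the separation statement about $L$ and $\partial\D$ versus $G^{-n}(K)$: I would run the same lift-of-paths idea as in Lemma~\ref{lem:blsep} but for $L$. Since $G^n(L)\subset\AA$ does not separate any of its points from $\partial\D$, and $K$ is disjoint from $\AA$, a point $z\in L$ can be joined to $\partial\D$ by a path avoiding... hmm, this needs $G^n(L)$ to avoid $K$, which we have. Then a path from $G^n(z)$ to $\partial\D$ inside $\ov\D\setminus K$ (staying in $\AA$ then jumping to the boundary) lifts back through $G^n$ to a path from $z$ to $\partial\D$ in $\ov\D\setminus G^{-n}(K)$, using $G^{-n}(\partial\D)=\partial\D$ and path lifting for Blaschke products (Section~\ref{sub:pathlift}). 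This shows $z$ and $\partial\D$ lie in the same component of $\ov\D\setminus G^{-n}(K)$, and since this holds for every $z\in L$ and $L$... actually one must also check $L$ is connected or at least that all its points land in the same component — but each is connected to $\partial\D$, so they are all in one component (the one containing $\partial\D$), which is what is claimed.

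The main obstacle I anticipate is bookkeeping the two separate roles of $K$ and $L$ cleanly: the statement packages an intersection claim (handled by uniform convergence and compactness) and a separation claim (handled by path lifting), and one must choose the annular neighbourhood $\AA$ small enough simultaneously to (i) avoid $K$ and (ii) be non-separating, then verify that all iterated preimages of $\AA$ remain neighbourhoods of $\partial\D$ so the inductive application of Lemma~\ref{lem:blsep} or the direct path-lifting argument goes through. Everything else is routine use of Denjoy--Wolff, properness of $G$ onto $\D$, and path lifting for Blaschke products.
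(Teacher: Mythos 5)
Your final argument --- $G^n(L)$ eventually lies in an annulus near $\partial\D$ disjoint from $K$, giving both $L\cap G^{-n}(K)=\emptyset$ and, by lifting a path from $G^n(z)$ to $\partial\D$ in $\ov\D\setminus K$ back through $G^n$, that $G^{-n}(K)$ separates no point of $L$ from $\partial\D$ --- is correct and is essentially the paper's proof, which applies Lemma~\ref{lem:blsep} to the Blaschke product $G^n$ with $a=G^n(z)$ for $z\in L$. The intermediate detour via $K'=G^n(K)$ and $G^{-n}(G^n(K))$ concerns the wrong set (it contains $K$ but is not $G^{-n}(K)$, and its non-separation conclusion is about points of $G^{-n}(\{a\})$ rather than of $L$), so it proves nothing toward the statement and should simply be deleted in favour of the direct path-lifting paragraph.
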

\begin{proof}
  Since $G^n(L)$ tends uniformly to $\partial \D$ it is disjoint from $K$ for all $n$ big enough.
  Equivalently, $L$ is disjoint from $G^{-n}(K)$.
  Moreover there is some $m\in\N$ such that $\forall n\geq m$, $G^n(L)$ lies in the component of $\ov\D-K$ that contains $\partial \D$.
  Since $K$ does not separate any point $a\in G^n(L)$ from $\partial \D$, Lemma~\ref{lem:blsep} implies $G^{-n}(K)$ does not separate any point in $L$ from $\partial \D$.
\end{proof}

\subsection{Simply connected domains and holomorphic maps in \e{$\C$}{C}}

The classical results here are a special case of what Douady called the theory of full maps.

\begin{note*} In this section, we call an open subset of $\C$ \emph{simply connected} if all its connected components are simply connected in the usual sense.
\end{note*}

\begin{lemma}\label{lem:topo}
  Let $g:U\to \C$ be a holomorphic map with $U\subset \C$, $D$ be a Jordan domain with $\overline{D}\subset U$ and $V\subset \C$ be a simply connected domain. If $g(\partial D) \subset V$ then $g(D)\subset V$.
\end{lemma}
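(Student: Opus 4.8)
The plan is to reduce the statement to the maximum principle applied componentwise. First I would fix a point $z_0\in D$ and let $W$ be the connected component of $V$ that contains $g(z_0)$; since $V$ is simply connected in the sense of the note, $W$ is a simply connected domain in the usual sense. The key point is that $g(\overline D)$ is connected (as the continuous image of the connected compact set $\overline D$) and contains $g(z_0)\in W$, while by hypothesis $g(\partial D)\subset V$. I would first check that $g(\partial D)\subset W$: indeed $g(\partial D)$ is connected, meets $W$ (does it?)—here one must be slightly careful, so instead I would argue directly that $g(D)$ is connected, contains $g(z_0)\in W$, and that $g(\overline D)\subset V$ would then force, by connectedness, $g(\overline D)\subset W$, hence in particular $g(\partial D)\subset W$.

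So the real content is: if $D$ is a Jordan domain with $\overline D\subset U$, $W$ is a simply connected domain, and $g(\partial D)\subset W$, then $g(D)\subset W$. To prove this I would use a Riemann map $\varphi:W\to\mathbf{D}$ (or, to avoid invoking conformal maps, a suitable conformal or just harmonic argument). Pick any $a\in\C\setminus W$; since $W$ is simply connected and $a\notin W$, the function $z\mapsto g(z)-a$ is nonvanishing on a neighbourhood of $\overline D$ intersected with $g^{-1}(W)$... but cleaner: consider $h=\varphi\circ g$ defined a priori only where $g$ takes values in $W$. Actually the slick route: for $a\notin W$, the map $1/(g(z)-a)$ is holomorphic on $U$ near $\overline D$ only if $g\ne a$ there, which is what we want to prove, so that is circular.

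Therefore I would instead argue via the Jordan curve and winding numbers, the standard "full map" argument. Let $a\in\C\setminus W$. Because $W$ is simply connected and $a\notin W$, the loop $g|_{\partial D}:\partial D\to W$ is null-homotopic in $\C\setminus\{a\}$, so its winding number about $a$ is $0$. On the other hand, $g:\overline D\to\C$ is holomorphic, so for any $a\notin g(\partial D)$ the number of preimages of $a$ in $D$ (with multiplicity) equals the winding number of $g|_{\partial D}$ about $a$, by the argument principle. Hence if $a\in\C\setminus W$ then $a\notin g(\partial D)$ (since $g(\partial D)\subset W$) and the winding number is $0$, so $a$ has no preimage in $D$; that is, $g(D)\cap(\C\setminus W)=\emptyset$, i.e. $g(D)\subset W\subset V$. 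Running this for the component $W$ of $V$ containing $g(z_0)$ for each choice of $z_0$ gives $g(D)\subset V$, and in fact shows $g(D)$ lies in a single component.

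The main obstacle, such as it is, is purely bookkeeping: making sure the winding-number/argument-principle statement is applied on the Jordan domain $D$ (where $\partial D$ is a genuine Jordan curve, so the argument principle in the form "number of zeros of $g-a$ in $D$ equals winding number of $g(\partial D)$ about $a$" is available with $\overline D\subset U$), and confirming that null-homotopy of a loop in a simply connected open subset of $\C$ not containing $a$ indeed forces winding number zero about $a$ — this is standard. No delicate analysis is needed; the statement is a clean consequence of simple connectivity plus the argument principle.
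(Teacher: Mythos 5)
Your core argument --- the argument principle identifies the number of preimages of $a\in\C\setminus W$ in $D$ with the winding number of $g|_{\partial D}$ about $a$, and that winding number vanishes because the boundary image is contractible inside a simply connected set avoiding $a$ --- is exactly the paper's proof of this lemma. The only blemish is your reduction to a single component: as written it is circular (you invoke $g(\overline D)\subset V$, essentially the conclusion, to place $g(\partial D)$ in the component $W$ containing $g(z_0)$), but the fix is immediate and makes the detour unnecessary: $g(\partial D)$ is the continuous image of the connected set $\partial D$, hence lies in a single component of $V$, so take $W$ to be that component (or simply note that the lemma's $V$ is a domain, hence connected); your winding-number argument then applies verbatim and even shows that $g(D)$ lies in the same component as $g(\partial D)$.
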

\begin{proof}
Assume that there exists a point $z'=g(z)\notin V$ for some $z\in D$. Let $\gamma=g(\partial D)$: since $\gamma \subset V$, we have in particular $z'\notin \gamma$.
By the argument principle, the number of preimages of $z'$ by $g$ in $D$, counted with multiplicity, is equal to the winding number of $\gamma$ around $z'$.
Since $z'=g(z)$ there is at least one preimage so this winding number is at least one.
On the other hand $\gamma\subset V$ which is contractible in $V$ so the winding number is equal to $0$ because the contraction takes place in $V$ so avoids $z'$.
We get a contradiction.
\end{proof}

\begin{corollary}\label{cor:ucgd}
  Let $V\subset \C$ be simply connected and $\gamma$ be a simple closed curve bounding a Jordan domain $D$. Then $D\subset V$.
\end{corollary}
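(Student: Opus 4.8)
The plan is to derive the statement directly from Lemma~\ref{lem:topo} applied to the identity map; accordingly, the corollary is to be read with the (evidently intended) hypothesis $\gamma\subset V$, so that there is something to prove.

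First I would reduce to the case where $V$ is connected. Since $\gamma$ is a connected subset of the open set $V$, and the connected components of $V$ are open, $\gamma$ is contained in a single connected component $V_0$ of $V$; by the convention of this section, $V_0$ is a simply connected domain in the usual sense. It therefore suffices to prove $D\subset V_0$.

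Next I would invoke Lemma~\ref{lem:topo} with $U=\C$, $g=\id_\C:\C\to\C$, the given Jordan domain $D$ (so the requirement $\overline{D}\subset U$ holds trivially), and $V_0$ playing the role of $V$. The hypothesis $g(\partial D)\subset V_0$ of that lemma reads $\gamma=\partial D\subset V_0$, which is precisely what the first step established. Its conclusion $g(D)\subset V_0$ then reads $D\subset V_0\subset V$, which is exactly what we want.

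I do not expect any genuine obstacle here: the only bookkeeping point is passing from the weaker sectional meaning of ``simply connected'' allowed for $V$ to the strict meaning used in Lemma~\ref{lem:topo}, and this is settled by the connectedness of $\gamma$. Alternatively, one could write out a self-contained argument via the argument principle, word for word as in the proof of Lemma~\ref{lem:topo} (the winding number of $\gamma$ about a point $z'\notin V$ is $0$ since $\gamma$ is null-homotopic in $V\subset\C\setminus\{z'\}$, while a point of $D\setminus V$ would force that winding number to be positive), but reusing the lemma is cleaner.
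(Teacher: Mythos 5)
Your proposal is correct and follows exactly the paper's route: apply Lemma~\ref{lem:topo} with $U=\C$ and $g=\id$. The extra bookkeeping about the implicit hypothesis $\gamma\subset V$ and reducing to a connected component of $V$ is fine but does not change the argument.
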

\begin{proof}
Apply the previous lemma to $U=\C$ and $g=\id$.
\end{proof}

\begin{corollary}\label{cor:topo}
    If $U$ and $V$ are simply connected open subsets of $\C$ and $g:U\to \C$ holomorphic then every connected component $W$ of $g^{-1}(V)$ is simply connected.
\end{corollary}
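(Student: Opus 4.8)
The statement to prove is Corollary~\ref{cor:topo}: if $U$ and $V$ are simply connected open subsets of $\C$ and $g:U\to \C$ is holomorphic, then every connected component $W$ of $g^{-1}(V)$ is simply connected. The plan is to reduce the claim to Corollary~\ref{cor:ucgd} (that a Jordan domain bounded by a simple closed curve lying in a simply connected set $V$ is entirely contained in $V$), applied after pushing a candidate-for-non-simply-connected loop forward by $g$.

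\textbf{Key steps.} First I would recall the standard topological characterization: an open connected subset $W$ of $\C$ is simply connected if and only if $\C\setminus W$ is connected, or equivalently if and only if every simple closed curve $\gamma$ contained in $W$ bounds a Jordan domain $D$ with $\overline{D}\subset W$. So I would take an arbitrary simple closed curve $\gamma\subset W$, let $D$ be the bounded Jordan domain it encloses, and aim to show $\overline D\subset W$. Since $U$ is simply connected and $\gamma\subset W\subset U$, Corollary~\ref{cor:ucgd} (or Corollary~\ref{cor:topo}'s hypothesis, rather Lemma~\ref{lem:topo}) already gives $\overline D\subset U$, so $g$ is defined and holomorphic on $\overline D$. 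Now $g(\gamma)\subset V$ because $\gamma\subset W\subset g^{-1}(V)$; applying Lemma~\ref{lem:topo} with this $D$ (a Jordan domain with $\overline D\subset U$) and the simply connected $V$, we conclude $g(D)\subset V$, hence $D\subset g^{-1}(V)$. Since $D$ is connected, meets $W$ (indeed $\gamma\subset \overline D\cap W$ and $D$ is connected to $\gamma$... more carefully: $D\cup W$ is connected because $\partial D=\gamma\subset W$, so $D\cup\gamma\cup W$ is connected and contained in $g^{-1}(V)$), and $W$ is a connected component of $g^{-1}(V)$, we get $D\subset W$. Thus every simple closed curve in $W$ bounds a Jordan domain inside $W$, which is precisely simple connectivity of $W$.

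\textbf{Main obstacle.} The only subtlety I anticipate is making the reduction "no non-trivial loop" versus "every Jordan curve bounds a disk inside" clean enough to invoke Lemma~\ref{lem:topo}, and ensuring $\overline D\subset U$ before applying it (for which one uses that $\gamma\subset U$ and $U$ simply connected, via Corollary~\ref{cor:ucgd}). A mild point is that $W$ is automatically open (preimage of open under continuous) and that connected components of an open subset of $\C$ are open, so there is no issue treating $W$ as a domain. I do not expect any genuine difficulty beyond carefully chaining Corollary~\ref{cor:ucgd} and Lemma~\ref{lem:topo}; the argument is essentially a two-line application once the topological reformulation of simple connectivity is in place.
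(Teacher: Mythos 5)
Your proof is correct and follows essentially the same route as the paper: take a simple closed curve $\gamma\subset W$ bounding a Jordan domain $D$, use Corollary~\ref{cor:ucgd} (plus $\gamma\subset U$) to get $\overline D\subset U$, apply Lemma~\ref{lem:topo} to get $g(D)\subset V$, and conclude $D\subset W$ by the connectedness/component argument, which yields simple connectivity of $W$. The paper's proof is the same chain of steps, only phrased via the criterion that every simple closed curve in $W$ is contractible.
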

\begin{proof} 
We use here that a connected open subset of $\C$ is simply connected if and only if every \emph{simple} closed curve is contractible.
Let $\gamma\subset W$ be a simple closed curve, in particular $g(\gamma)\subset V$ and $\gamma \subset U$.
Let $D\subset \C$ be the Jordan domain bounded by $\gamma$.
Since $U$ simply connected, $D\subset U$ by Corollary~\ref{cor:ucgd}.
By Lemma~\ref{lem:topo}, $g(\overline D)\subset V$ so $\overline D\subset W$.
So $\gamma $ is contractible in $W$. \end{proof}

\begin{corollary}\label{cor:psc}
Let $f: U\to \C$ be  a holomorphic map  with $U\subset \C$ simply connected. 
Then for all $n\in\N$ the domain of $f^n$ is a simply connected open subset of $\C$.
\end{corollary}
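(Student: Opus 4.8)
The plan is to prove Corollary~\ref{cor:psc} by induction on $n$, using Corollary~\ref{cor:topo} as the engine. The base case $n=0$ is trivial (the domain of $f^0=\id$ is $\C$, or one takes $n=1$ where $\dom f^1 = U$ is simply connected by hypothesis). For the inductive step, suppose $\dom(f^{n})$ is a simply connected open subset of $\C$. I would first recall that $\dom(f^{n+1}) = (f)^{-1}\bigl(\dom(f^n)\bigr)$, more precisely $\dom(f^{n+1}) = \{ z\in U \mid f(z)\in \dom(f^n)\}$, which is an open subset of $U\subset\C$ since $f$ is continuous and $\dom(f^n)$ is open.

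Next I would apply Corollary~\ref{cor:topo} with the map $f: U\to\C$ (whose source $U$ is simply connected by hypothesis) and with $V = \dom(f^n)$, which is simply connected by the induction hypothesis. That corollary tells us that every connected component $W$ of $f^{-1}(V) = \dom(f^{n+1})$ is simply connected. Since by our convention (stated in the Note preceding Lemma~\ref{lem:topo}) an open subset of $\C$ is called simply connected precisely when all its connected components are simply connected in the usual sense, this is exactly the claim that $\dom(f^{n+1})$ is a simply connected open subset of $\C$. This closes the induction.

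There is essentially no obstacle here: the only point requiring a word of care is the bookkeeping identity $\dom(f^{n+1}) = f^{-1}(\dom(f^n))$, which should be spelled out so that the hypotheses of Corollary~\ref{cor:topo} — namely that $f$ is holomorphic on a simply connected $U\subset\C$ and that $V\subset\C$ is simply connected — are visibly met. One should also note that $\dom(f^{n+1})$ is genuinely open in $\C$, not merely in $U$, but this is immediate since $U$ is open in $\C$. Thus the proof is a direct, short induction, and I would present it in two or three sentences.

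\begin{proof}
We argue by induction on $n$, the case $n=0$ (resp.\ $n=1$) being immediate since $\dom(f^0)=\C$ (resp.\ $\dom(f^1)=U$) is simply connected.
Assume $\dom(f^n)$ is a simply connected open subset of $\C$.
One has $\dom(f^{n+1}) = \{z\in U \mid f(z)\in \dom(f^n)\} = f^{-1}\bigl(\dom(f^n)\bigr)$, which is an open subset of $U$, hence of $\C$.
Applying Corollary~\ref{cor:topo} to the holomorphic map $f:U\to\C$, with $U$ simply connected by hypothesis and $V=\dom(f^n)$ simply connected by the induction hypothesis, we conclude that every connected component of $f^{-1}(V)=\dom(f^{n+1})$ is simply connected.
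By the convention fixed before Lemma~\ref{lem:topo}, this means $\dom(f^{n+1})$ is a simply connected open subset of $\C$, completing the induction.
\end{proof}
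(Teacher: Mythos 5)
Your proof is correct and is essentially the paper's own argument: the paper's one-line proof ("use $\dom(f^{n+1}) = f^{-1}(\dom(f^n))$") is exactly your induction, with Corollary~\ref{cor:topo} applied to $V=\dom(f^n)$ and the componentwise convention on simple connectedness. Your version merely spells out the bookkeeping explicitly.
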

\begin{proof}
Use $\dom(f^{n+1}) = f^{-1}(\dom (f^n))$.
\end{proof}

\begin{proposition}\label{prop:scparabo}
    Let $f:U\to \C$ be a holomorphic map with $U\subset \C $ simply connected. Assume that $f$ has a parabolic fixed point $p$ in $U$. 
    Then every connected component $W$ of the parabolic basin is simply connected.
\end{proposition}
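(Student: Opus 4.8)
The plan is to reduce the statement about an arbitrary component $W$ of the parabolic basin $B$ to the already-established results about simply connectedness being preserved under preimages (Corollary~\ref{cor:topo}) and under iteration (Corollary~\ref{cor:psc}). Recall $B = \{z\in \dom(f) : f^n(z)\to p,\ \forall n\ f^n(z)\neq p\}$, so $B = \bigcup_{n\geq 0} f^{-n}(P_\att^\circ)$ where we may take $P_\att^\circ$ to be the interior of an attracting petal (or, if $p$ has several axes, the union of the interiors of one attracting petal per axis) — a set we can arrange to be a disjoint union of Jordan domains, hence simply connected in the sense of the Note. The idea is: each $f^{-n}(P_\att^\circ)$ is a simply connected open subset of $\C$ by Corollaries~\ref{cor:psc} and~\ref{cor:topo} (the domain of $f^n$ is simply connected, and the preimage of a simply connected set under a holomorphic map on a simply connected domain is simply connected), and $B$ is an increasing union of these as $n\to\infty$. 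Then I would invoke the fact that an increasing union of simply connected open subsets of $\C$ is simply connected.

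So the key steps, in order, are: (1) fix an attracting petal $P_\att$ for $p$ whose interior $\Pi := \mathring{P}_\att$ is a Jordan domain (the classical Leau--Fatou theory gives this), and note $B = \bigcup_{n\in\N} f^{-n}(\Pi)$ using that any orbit converging to $p$ without hitting $p$ eventually lands in $\Pi$ and stays there; (2) show each $B_n := f^{-n}(\Pi)$ is simply connected: $\dom(f^n)$ is simply connected by Corollary~\ref{cor:psc}, $\Pi$ is simply connected, so by Corollary~\ref{cor:topo} every connected component of $(f^n)^{-1}(\Pi)$ is simply connected, i.e.\ $B_n$ is simply connected; (3) observe $B_n \subset B_{n+1}$ since $\Pi \subset f^{-1}(\Pi)$ (because $f(\Pi)\subset \Pi$ for a genuine petal), so $B = \bigcup_n B_n$ is an increasing union; (4) conclude that any connected component $W$ of $B$ is a nested union $\bigcup_n (W\cap B_n)$, and restrict to the components of the $B_n$ meeting $W$ — these form an increasing chain of simply connected domains whose union is $W$; (5) apply the topological fact that an increasing union of simply connected open subsets of $\C$ is simply connected, e.g.\ via the characterisation (used already in the proof of Corollary~\ref{cor:topo}) that a planar domain is simply connected iff every simple closed curve in it is contractible: any such curve in $W$ is compact, hence contained in one of the $B_n$, hence in one simply connected component of that $B_n$, hence contractible there.

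The main obstacle I expect is step (1)/(3): being careful that the set used to build $B$ as an increasing preimage-union is genuinely simply connected and genuinely forward-invariant. A raw attracting petal from the normalization is a topological disk with $f(\mathring{P}_\att)\subset \mathring{P}_\att$, so $\mathring{P}_\att$ works; if $p$ has several attracting axes one should take one petal per axis and note the union is still ``simply connected'' in the sense of the Note (disjoint Jordan domains), and the components of $B$ attached to distinct axes are distinct, so no harm is done. A secondary, purely topological point to handle cleanly is the ``increasing union of simply connected planar opens is simply connected'' lemma; rather than quoting it, I would prove it in two lines via the simple-closed-curve criterion as above, since a compact curve sits inside some $B_n$ and inside a single component thereof. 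No hard analysis is involved beyond what the quoted corollaries already supply; the work is entirely in assembling them correctly.
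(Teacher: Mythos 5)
Your argument is essentially the paper's own proof: write the basin as the increasing union of the open sets $f^{-n}(\Pi)$ for a forward-invariant, simply connected union of petals $\Pi$, get simple connectivity of each $f^{-n}(\Pi)$ from Corollary~\ref{cor:psc} together with Corollary~\ref{cor:topo}, and conclude by the compactness argument that a simple closed curve in a component $W$ lies in a single (simply connected) component of some $f^{-n}(\Pi)$ contained in $W$. Steps (2)--(5) are correct as written.

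The one point you gloss over is your step (1)/(3) in the case where the multiplier at $p$ is a nontrivial root of unity $e^{2\pi i r/s}$, $s>1$ (which ``parabolic'' allows, and which matters for the paper since the proposition is later applied to $R_{p/q,\pm}f$). There a single attracting petal is \emph{not} forward invariant under $f$ --- the petals are permuted in cycles of length $s$ and are only invariant under $f^s$ --- so the justification ``$f(\mathring P_\att)\subset \mathring P_\att$ by Leau--Fatou'' fails, and with it the inclusion $\Pi\subset f^{-1}(\Pi)$ that makes your union increasing. The paper disposes of this in one line by replacing $f$ with the iterate $f^s$, which is tangent to the identity and has the same parabolic basin; alternatively, you can choose the petals compatibly so that $f$ maps each petal into the next one of its cycle, making the union $\Pi$ of all petals forward invariant. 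With either of these one-line fixes your proof is complete and coincides with the paper's.
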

\begin{proof} 
We may replace $f$ by an iterate and assume that $f$ is tangent to the identity. This does not change the basin of the parabolic point and now it is the disjoint union of the basins of $P$ for a finite collection of pairwise disjoint petals $P$ such that $f(P)\subset P$. 
The open set $P$ is simply connected, so $f^{-n}(P)$ too by Corollary~\ref{cor:psc}.
Now the basin of $P$ is the increasing union of these simply connected \emph{open} sets so is simply connected: any closed curve in the union is contained in one of them by compactness.
\end{proof}

\section{Proof the main Theorem}\label{sec:pf-inv}

In this section we prove Theorem~\ref{thm:invGrl}.
Let $\tau = d_m\circ \cdots \circ d_1$ and $f\in \DDD_\tau$.
The theorem states that $Rf$ has at least one attracting axis and there exists a disjoint collection of descendants of $\tau$, one for each attracting axis $A$ of $Rf$, that we denote $\tau'[A]$, and such that $Rf$ has type $\tau'[A]$ on the immediate basin of $A$.

\subsection{Outline of the proof.}\label{se:outline}
    
Let $A$ be an attracting axis of the parabolic fixed point $0$ of $Rf$ and denote $B^*_{Rf} =  B^*_{Rf} (A)$.
We will prove (Proposition~\ref{prop:rfrc}) that the restriction $Rf : B^*_{Rf} \to B^*_{Rf}$ is a branched covering.
For this we prove that $Rf:\dom{Rf}\to\wh\C$ is a finite-type map with asymptotic values $0$ and $\infty$ and finitely many critical values (Proposition~\ref{prop:rfft}).
We propose two proofs of this.
One uses an adaptation, with slight differences, of Shishikura's proof that, in the special case of $\DDD_2$, $Rf:\dom{Rf}-\{0\}\to\C^*$ is a branched covering with finitely many critical values.
In the other proof, we consider the Blaschke product $G$ to which $f$ is conjugate on $B^*_f$; the map $RG$ is of finite type in the sense of A.~Epstein by \cite{AdamThesis}; a theorem of Le Meur from \cite{CLM} allows us to transfer this to $Rf:\dom{Rf}\to\wh\C$, which is hence of finite type too.

We then introduce domains $V_n$ defined as follows: first we lift $B^*_{Rf}$ by $E:z\mapsto e^{2\pi iz}$ and obtain a sequence of domains $U_n$ with $U_n = U_0+n$.
Then we let $V_n=\psi_{rep}(U_n)\subset B^*_f$.
The map $f$ sends $V_n$ in $V_{n+1}$ and we will prove that actually $V_n$ is simply connected and $f$ proper from $V_n$ to $V_{n+1}$.

If $m>1$ we will also introduce intermediate sets to decompose $f:V_n\to V_{n+1}$. More precisely, by hypothesis, $f:B^*_f\to B^*_f$ is of type $d_m \circ \cdots \circ d_1$, so conjugate by some map $\phi:B^*_f\to \D$ to a composition of Blaschke products $G = G_m \circ \cdots \circ G_1$ of degree $\on{deg} G_i = d_i$.
We introduce sets indexed by rational numbers, $W_{k/m}$ for $k\in\Z$, such that:
\[\forall n\in\Z, W_{n} = \phi(V_n)\text{ and } \forall k\in\Z, W_{\frac{k+1}{m}}=G_{i+1}(W_{\frac{k}{m}})
\]
where $i$ is the remainder of the Euclidean division of $k$ by $m$.
(This is coherent since $G_m\circ\cdots\circ G_1(\phi(V_n)) = G\circ\phi(V_n) = \phi\circ f(V_n) = \phi(V_{n+1})$, i.e.\ $G(W_n)=W_{n+1}$).
We prove that, like for the $V_n$, the $W_{k/m}$ are simply connected and $G_{i+1}$ is proper from $W_{k/m}$ to $W_{(k+1)/m}$.
So $G_{i+1}:W_{k/m} \to W_{(k+1)/m}$ is equivalent to some (other) Blaschke product $\D\to \D$.

Since $f$ has only finitely many critical points in $B^*_f$, only finitely many $V_p$ will contain a critical point of $f$.
Similarly, only finitely many $W_{k/m}$ contain a critical point of $G_i$ (where $i$ depends on $k$ as above).
For the others, $G_{i+1}:W_{k/m} \to W_{(k+1)/m}$ is a bijection.
In Corollary~\ref{cor:portrait critique} we prove that $Rf : B^*_{Rf}\to B^*_{Rf}$ is equivalent to $f^{q-p}:V_p \to V_q$ for $p\in\Z$ small enough and $q\in \Z$ big enough and in particular they have  the same critical portrait.
Then if $m>1$ we prove in Proposition~\ref{prop:ce2} that $Rf$ is equivalent to a composition of $G_{i+1}:W_{k/m} \to W_{(k+1)/m}$ for a finite sequence of consecutive values of $k\in\Z$.

Among the $G_{i+1}:W_{k/m} \to W_{(k+1)/m}$ that appear in this finite sequence, the ones that are not isomorphisms are those such that $W_{k/m}$ contains one or several critical point of $G_{i+1}$.
Let $m'$ be the number of values of  $k$ for which this happens and denote $k_1<\ldots<k_{m'}$ these values.
Let $d'_j$ be the degree of $G_{i+1}:W_{k_j/m} \to W_{(k_j+1)/m}$, so $d'_j-1$ is the number of critical points of $G_{i+1}$, counted with multiplicity, in $W_{k_j/m}$.
Then $(d'_1-1,\ldots,d'_{m'}-1)$ is a gleaning of $(d_1-1,\ldots, d_{m}-1)$.

Finally, the collection of all attracting axes of $Rf$ give a disjoint collection of gleanings because the corresponding $V_n$ families are pairwise disjoint.

\subsection{Covering properties of \e{$Rf$}{Rf}}

We recall that $f\in\DDD_\tau$ with $\tau = d_m \circ \cdots \circ d_1$.

\begin{proposition}\label{prop:rfft}
  The map $Rf : \dom(Rf)\to\wh\C$ is a finite-type map.
  Its asymptotic values are $0$ and $\infty$.
  Its critical values are the elements of $E(T_\sigma \circ \phi_\att(cv))$ where $\sigma$ and $\phi_\att$ were defined in  Section~\ref{sub:ren} and   $cv$ is the set of critical values of $f:B^*_f\to B^*_f$. 
  As a consequence $Rf$ has finitely many critical values,\footnote{On the other hand, bear in mind that $Rf$ has infinitely many critical points in $\dom Rf$, a fact that we will not use.} and at least one.
\end{proposition}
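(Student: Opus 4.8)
The plan is to establish the three assertions of Proposition~\ref{prop:rfft} in turn, building on the structure of $Rf$ as $\ov E \circ T_\sigma \circ h_f \circ \ov E^{-1}$ near $0$. First I would identify the singular values. By Proposition~\ref{prop:icph}, the critical values of $Rf$ are contained in $E(T_\sigma(\phi_\att(v)))$ where $v$ ranges over critical values of $f: B^*_f \to B^*_f$; conversely, each such value of $v$ does produce a critical point of $h_f$ (take $u$ with $\psi_\rep(u-n)$ landing on a precritical point, which exists because $\psi_\rep:\Omega\to B^*_f$ is surjective by Corollary~\ref{cor:psisurj}), so the set of critical values of $Rf$ is \emph{exactly} $E(T_\sigma(\phi_\att(cv)))$. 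Since $f\in\DDD_\tau$ means $f: B^*_f\to B^*_f$ is conjugate to a finite Blaschke product, it has finitely many critical points hence finitely many critical values, and at least one (a Blaschke product of degree $\geq 2$ has a critical point), giving the third assertion and "at least one".

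Next I would pin down the asymptotic values. The candidates are $0$ and $\infty$, the two ends of the cylinder $\C/\Z$. The point $0 = E(+i\infty)$ is an asymptotic value of $Rf$: as $\Im u \to +\infty$, $u\in\Omega$ and $\psi_\rep(u)$ runs into a repelling petal then an attracting petal, so $h_f(u) = \phi_\att(\psi_\rep(u))$ has real part $\to -\infty$ while its imaginary part can be controlled, whence $E(T_\sigma(h_f(u)))\to 0$ along a suitable path leaving every compact of $\dom Rf$ — more carefully, $Rf$ extends holomorphically across $0$ with $Rf(0)=0$, so $0$ is a value taken, and one checks it is genuinely asymptotic (or simply notes it is a singular value via the branch structure at the upper end). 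Similarly $\infty$ is an asymptotic value coming from the lower end of the cylinder: here I would use that $\dom Rf = E(\Omega)$ and trace a path in $\Omega$ going to $-i\infty$ along which $\psi_\rep$ stays in a repelling petal, so $\phi_\att\circ\psi_\rep$ has $\Re \to +\infty$ and $E$ of it $\to \infty$. That there are no \emph{other} asymptotic values is the heart of the matter: I would argue that on $\dom Rf \setminus \{0,\infty\}$, equivalently on $\Omega$ away from the two ends, the map $h_f = \phi_\att \circ \psi_\rep$ is a composition of maps without asymptotic values in the relevant ranges — $\psi_\rep$ restricted appropriately and $\phi_\att$ — and invoke Lemma~\ref{lem:noavcomp}. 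More robustly, one can localise: any asymptotic path in $\dom Rf$ accumulating at a finite nonzero point $y$ would give, after pulling back by $E$ and composing, a path in $B^*_f$ on which $f$-related data converges, producing an asymptotic value of the Blaschke model $G$ on $\D$, but a finite Blaschke product has \emph{no} asymptotic values; the alternative is accumulation at $0$ or $\infty$, the two exceptional ends.

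Finally, finiteness of the singular value set: by Theorem~\ref{thm:sv}, $sv(Rf) = \ov{cv(Rf)} \cup \ov{av(Rf)}$, and we have just shown $cv(Rf)$ is finite and $av(Rf) \subseteq \{0,\infty\}$, so $sv(Rf)$ is finite. Together with the facts that $Rf$ is nowhere constant (it is a nonconstant holomorphic map, $Rf'(0)=1$) and has no removable isolated singularities (the only potential one is at $0$, and it has been removed by construction, giving $Rf(0)=0$), and that $Rf$ takes values in the compact Riemann surface $\wh\C$, Definition~\ref{def:ftm} is satisfied: $Rf:\dom Rf \subset \C \to \wh\C$ is a finite-type map.

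\textbf{Main obstacle.} The delicate point is ruling out spurious asymptotic values — showing $av(Rf)$ contains nothing beyond $0$ and $\infty$. One must either cite Theorem~\ref{thm:Ep} together with Theorem~\ref{thm:LeMeur} (the Le~Meur transfer: $f$ conjugate to $G\in\cal G$ on $B^*_f$ forces $\mathbf h^+_f$ cover-equivalent to $\mathbf h^+_G$, and $RG$ is finite-type by Corollary~\ref{cor:Ep:bla}, so cover-equivalence transports the finite-type property), which is the clean second proof the authors allude to; or else reproduce Shishikura's hands-on argument tracking the location of $\psi_\rep(u+k)$ as $k$ ranges over $\Z$ and controlling where horizontal lines in $\Omega$ go under $h_f$, which is the more computational first proof. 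I would present the Le~Meur route as primary, since all the needed ingredients (Theorems~\ref{thm:Ep}, \ref{thm:LeMeur}, Corollary~\ref{cor:Ep:bla}, Lemma~\ref{lem:rftm}) are already in place.
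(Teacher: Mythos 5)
Your proposal follows essentially the same route as the paper's (second) proof: exact identification of the critical values via Proposition~\ref{prop:icph} together with the surjectivity of $\psi_\rep$ on $\Omega$ (Corollary~\ref{cor:psisurj}), and transfer of the finite-type property and of the asymptotic values $0,\infty$ from $RG$ to $Rf$ via Epstein (Corollary~\ref{cor:Ep:bla}) and Le~Meur's cover-equivalence (Theorem~\ref{thm:LeMeur}), with Shishikura's hands-on argument as the alternative, exactly as in the paper. Only your supplementary hands-on sketch that $0$ and $\infty$ are genuinely asymptotic is imprecise ($E\circ T_\sigma\circ h_f\to 0$ requires $\Im h_f\to+\infty$, not $\Re\to-\infty$, and $\Omega$ is only known to contain an upper half-plane, so the path to $-i\infty$ is not available), but this is harmless since the primary citation route already yields that conclusion.
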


\begin{proof}[First proof]
  We adapt the proof of \cite{Shi} in Appendix~\ref{app:Mitsu-proof}. 
\end{proof}

We find interesting to provide a second proof based on Epstein's work and a cover-equivalence statement proved in \cite{CLM}.

\begin{proof}[Second proof]
By hypothesis,  $f$ is conjugated to  a Blaschke product  $G$  on the immediate basin $B^*_f$.
It follows that the restriction of $f$ to $B^*_f$ has finitely many critical values.
The critical points of $Rf$ are given by Proposition~\ref{prop:icph}: they are the points $z = E(u)\in\dom(Rf)$ such that $w=\psi_{\rep}(u)$ is defined and pre-critical for $f$.
It follows that the corresponding critical value is an element of $E(T_\sigma \circ \phi_\att(cv))$.
It follows that $Rf$ has finitely many critical values.
Actually every element of the latter set is a critical value of $Rf$ because by Corollary~\ref{cor:psisurj}, every critical point of the restriction $f: B^*_f \to B^*_f$ is the image by $\psi_\rep$ of some point in $\Omega$.

The rest of the proposition follows from Le Meur's work, Theorem~\ref{thm:LeMeur} and Epstein's work, Corollary~\ref{cor:Ep:bla}. We detail this below.

For the reader's convenience, we recall here definitions made earlier, and specify some notations.
Denote $\phi_\att[f]$ the extended attracting Fatou coordinate of $f$ and $\psi_\rep[f]$ the extended repelling Fatou parametrization.
The horn map is defined as
\[h_f = \phi_\att[f]\circ\psi_\rep[f] : \dom h_f \subset\C \to \C.\]
It commutes with $T_1$ and we denote $\h_f: \dom h_f /\Z\subset\C/\Z \to \C/\Z$ the quotient map.

Similarly, denote $\phi_\att[G]$ the extended attracting Fatou coordinate on the component $\D$ of the parabolic basin of $G$ (the corresponding attracting axis points towards the centre of $\D$) and $\psi_\rep[G]$ the extended repelling Fatou parametrization associated to the repelling axis that precedes the attracting axis in trigonometric order.
Let
\[h_G = \phi_\att[G]\circ\psi_\rep[G]\]
and $\h_G$ be the quotient map.

Recall that $\dom Rf = \{0\}\cup U$ where $U$ is the connected component of $\ov E(\dom \h_f)$ which contains a punctured neighbourhood of $0$, that $Rf(0)=0$ and that $\forall z\in U$,
\[Rf(z) = \ov E\circ T_{\sigma_f}\circ \h_f\circ\ov E^{-1}(z)\]
where $\sigma_f$ is the complex number, unique modulo $\Z$, such that $(Rf)'(0) = 1$.
In Section~\ref{sub:rcb} we defined the map $RG:\D\to\C$ by $RG(0)=0$ and $\forall z\in\D^*$,
\[RG(z) = \ov E \circ\, T_{\sigma_G}\circ \h_G\circ \ov E^{-1}(z),\]
$\sigma_G$ is the complex number, unique modulo $\Z$, such that $(RG)'(0) = 1$.
 
We now apply Theorem~\ref{thm:LeMeur}: the conjugacy between $f$ on $B^*_f$ and $G$ on $\D$ implies that there exists a conformal isomorphism $\psi: \on\dom \h_f^+ \to \on\dom \h_g^+$ such that $T_{\sigma} \circ \mathbf{h}^+_f = \mathbf{h}^+_g \circ \psi$, i.e.
\[ \mathbf{h}^+_f \circ \psi^{-1} = T_{-\sigma} \circ \mathbf{h}^+_G \]
where $\h^+$ denotes the restriction of $\h$ to the component of its domain that contains an upper half plane.

Note that the definition of $Rf$ and $RG$ reformulate into
\[R^*f = \ov E\circ T_{\sigma_f}\circ \h^+_f\circ\ov E^{-1}\]
\[R^*G = \ov E\circ T_{\sigma_G}\circ \h^+_G\circ\ov E^{-1}\]
with the star indicating that we omit $0$ from the domain.

Let $\phi = \ov E \circ \psi^{-1} \circ \ov E^{-1}$. This map extends into a holomorphic map fixing $0$, which we still denote $\phi$.
We have $R^*f \circ \phi = \ov E^{-1}\circ T_{\sigma_f} \circ \h_f^+ \circ \psi^{-1}\circ \ov E = \ov E^{-1}\circ T_{\sigma_f-\sigma} \circ \h_G^+ \circ \ov E = \ov E^{-1}\circ T_{\sigma_f-\sigma-\sigma_G} \circ \ov E \circ R^*G$.
Denote $\lambda=\ov E(\sigma_f-\sigma-\sigma_G)$.
Then $\ov E^{-1}\circ T_{\sigma_f-\sigma-\sigma_G} \circ \ov E(z)=\lambda z$.
So $R^*f\circ\phi = \lambda \, R^*G$ and by continuous extension, $Rf \circ \phi = \lambda\, RG$, i.e. the following diagram commutes:
\[
\begin{tikzcd}
  \dom RG \arrow[r , "\phi"] \arrow[d, "RG"']  & \dom Rf\arrow[d, "Rf"] \\
  \wh\C \arrow[r,"\times\lambda"] & \wh\C
\end{tikzcd}
\]

By Corollary~\ref{cor:Ep:bla},
$RG$ is a finite type map over $\wh{\C}$ with asymptotic values $0$ and $\infty$.
By the diagram, $Rf$ is a finite type map over $\wh\C$, with asymptotic values $0$ and $\infty$.
\end{proof}

\begin{remark}\label{rem:atLeastOneAxis}
  Since $Rf$ is defined on a connected set and has at least one critical point, it follows that $Rf$ cannot be the identity near $0$: it thus has attracting and repelling axes.
\end{remark}

Let $A$ be an attracting axis of the parabolic fixed point $0$ of $Rf$ and denote
\[B^*_{Rf} =  B^*_{Rf} (A).\]

\begin{proposition}\label{prop:rfrc}
   The restriction $Rf : B^*_{Rf} \to B^*_{Rf}$ is a branched covering.
\end{proposition}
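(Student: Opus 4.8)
The plan is to apply the characterisation of branched coverings from Corollary~\ref{cor:ramcov}: we must show that $Rf|_{B^*_{Rf}}^{B^*_{Rf}}$ is nowhere constant, has no asymptotic value (as a map to $B^*_{Rf}$), and that its set of critical values has no accumulation point in $B^*_{Rf}$. First I would note that $Rf$ is nowhere constant: by Remark~\ref{rem:atLeastOneAxis}, $Rf$ is not the identity near $0$, and being holomorphic on a connected domain it is nowhere locally constant. Hence the restriction to $B^*_{Rf}$ is nowhere constant too.

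Next I would handle the critical values. By Proposition~\ref{prop:rfft}, $Rf:\dom(Rf)\to\wh\C$ has only finitely many critical values, so in particular the set $cv(Rf|_{B^*_{Rf}})$ is finite (it is a subset of $cv(Rf)$), and a finite set has no accumulation point in $B^*_{Rf}$.

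The main point is the absence of asymptotic values for the restriction $g := Rf|_{B^*_{Rf}}^{B^*_{Rf}}$, and this is exactly where the general results assembled in the Tools section pay off. The map $g$ is a restriction, both in domain and range, of the map $Rf:\dom(Rf)\to\wh\C$, which by Proposition~\ref{prop:rfft} has exactly the two asymptotic values $0$ and $\infty$. I would invoke Corollary~\ref{cor:svbas}: since $Rf$ has a parabolic fixed point at $0$ tangent to the identity (Remark~\ref{rem:atLeastOneAxis}) and $B^*_{Rf}$ is the immediate basin of one of its attracting petals, we get
\[ av(g) \subset B^*_{Rf}\cap av(Rf) = B^*_{Rf}\cap\{0,\infty\}. \]
But $0$ is the parabolic fixed point and $\infty\notin\dom(Rf)$, so neither lies in the open set $B^*_{Rf}$; hence $av(g)=\emptyset$.

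With all three conditions of Corollary~\ref{cor:ramcov} verified for $g:B^*_{Rf}\to B^*_{Rf}$, we conclude that $g$ is a branched covering. I expect the only subtle point to be making sure the hypotheses of Corollary~\ref{cor:svbas} and Lemma~\ref{lem:bb} genuinely apply here --- namely that $B^*_{Rf}$ really is the immediate basin of a petal of a map with a parabolic fixed point tangent to the identity, which is clear once we know (Remark~\ref{rem:atLeastOneAxis}) that $Rf$ is not locally the identity at $0$ and hence does have attracting axes with associated immediate basins. Everything else is a direct citation of the preceding lemmas.
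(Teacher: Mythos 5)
Your proof is correct and follows essentially the same route as the paper: it invokes Proposition~\ref{prop:rfft} for the asymptotic values $0,\infty$ and the finiteness of critical values, Corollary~\ref{cor:svbas} to rule out asymptotic values of the restriction to $B^*_{Rf}$, and Corollary~\ref{cor:ramcov} to conclude. Your extra remarks on nowhere-constancy and on the applicability of Lemma~\ref{lem:bb} are minor points the paper leaves implicit.
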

\begin{proof}
Proposition~\ref{prop:rfft} describes the asymptotic and critical values of $Rf:\dom Rf \to \wh \C$.
By corollary~\ref{cor:svbas}, the set of asymptotic values of the restriction  $Rf:B^*_{Rf}\to B^*_{Rf}$ is contained in the set of asymptotic values of $f$. Since $B^*_{Rf}$ does not contain $0$ nor $\infty$, it follows that $Rf:B^*_{Rf}\to B^*_{Rf}$ has no asymptotic values.
We also saw in Proposition~\ref{prop:rfft} that $Rf:\dom Rf\to \hat\C$ has finitely many critical values, so its set of critical values has no accumulation point in $B^*_{Rf}$. 
By Corollary~\ref{cor:ramcov} it is a branched covering.
\end{proof}

The following adaptation of a theorem of Fatou is already known in more generality (see for instance \cite{AdamThesis} or Theorem~4 in \cite{Arnaud}), still we recall the argument in the specific present case.

\begin{corollary}\label{cor:nbax}
  For every attracting axis $A$ of $Rf$ at $0$, its immediate basin $B^*_{Rf}$ contains a critical value of the restriction of $Rf$ to $B^*_{Rf}$.
  In particular there is at least one critical point of $Rf$ in $B^*_{Rf}$.
\end{corollary}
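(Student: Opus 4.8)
The plan is to run the classical Fatou-type argument: an immediate basin of a parabolic attracting axis must contain a singular value of the map, and here the only candidates for singular values are critical values since we have already ruled out asymptotic values in $B^*_{Rf}$.

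More precisely, by Proposition~\ref{prop:rfrc} the restriction $g:=Rf:B^*_{Rf}\to B^*_{Rf}$ is a branched covering. Suppose, for contradiction, that $B^*_{Rf}$ contains no critical value of $g$. Then $g:B^*_{Rf}\to B^*_{Rf}$ is an (unramified) covering of Riemann surfaces between connected surfaces, and since $B^*_{Rf}$ contains the attracting petal $P_\att$, which contains a fixed point (the parabolic point sits on $\partial B^*_{Rf}$, but $P_\att$ maps into itself with a point converging to $0$), the covering has a section on the simply connected petal. Concretely: pick an attracting Fatou coordinate $\phi_\att$ on $P_\att$; the key point is that $g$ restricted to $B^*_{Rf}$ with no critical value in the target is a covering, hence a local isometry for the hyperbolic metric of $B^*_{Rf}$ (which exists since $B^*_{Rf}\neq\C$ by Lemma~\ref{lem:nonc}, being a bounded-away-from $0,\infty$ subset of $\C$, or simply because it omits at least two points). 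But every point $z\in P_\att$ has $g^n(z)\to 0\in\partial B^*_{Rf}$, so the hyperbolic distance $d_{B^*_{Rf}}(g^n(z),g^{n+1}(z))$ cannot be eventually constant in $n$ while the orbit escapes to the boundary — more carefully, if $g$ were a covering it would be a local hyperbolic isometry, so $d_{B^*_{Rf}}(g(z),g(w))=d_{B^*_{Rf}}(z,w)$ for $z,w$ close, contradicting the strict contraction forced by the fact that $g$ maps $B^*_{Rf}$ into a proper-in-the-hyperbolic-sense subset near the parabolic point (the petal shrinks). I would instead phrase the contradiction via the Schwarz–Pick lemma applied to a lift: if $g$ has no critical value in $B^*_{Rf}$ then lifting to the universal cover $\D$ one gets a self-map of $\D$ that is an automorphism, hence $g$ is an automorphism of $B^*_{Rf}$ — but $g$ has a parabolic fixed point on $\partial B^*_{Rf}$ with a basin, and an automorphism of a hyperbolic Riemann surface cannot have such parabolic dynamics on a whole open set of its closure mapping into a petal. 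The cleanest route: since $g:B^*_{Rf}\to B^*_{Rf}$ is a covering with no critical value, and $g$ admits iterates $g^n\to 0$ locally uniformly on $P_\att$, the family $\{g^n\}$ is normal and its limit is constant equal to $0$, so $(g^n)'$ at a point of $P_\att$ tends to $0$; but a covering $B^*_{Rf}\to B^*_{Rf}$ preserves the hyperbolic metric, so $\|Dg^n\|$ in the hyperbolic metric is identically $1$, contradicting $(g^n)'\to 0$ (the Euclidean and hyperbolic norms are comparable on the fixed compact set $\overline{K}\subset B^*_{Rf}$).

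The second sentence then follows immediately: a critical value of the restriction $g:B^*_{Rf}\to B^*_{Rf}$ is by definition $g(c)$ for some critical point $c$ of $g$ lying in $B^*_{Rf}$, so the existence of a critical value in $B^*_{Rf}$ gives a critical point of $Rf$ in $B^*_{Rf}$.

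\textbf{Expected main obstacle.} The delicate point is making the hyperbolic-contraction argument airtight: one must be sure that $B^*_{Rf}$ is genuinely hyperbolic (it is, since it is a proper subdomain of $\C^*$, or by Lemma~\ref{lem:nonc}-type reasoning it omits at least two points) and that ``covering with no critical value in the target'' really does force the hyperbolic metric to be preserved — this uses that a branched covering which happens to be unramified over its whole target \emph{is} a covering map in the topological sense, which is exactly what Definition~\ref{def:brc} gives us once all the local models are $\rho_1$. After that, the contradiction with $(g^n)'\to 0$ on $P_\att$ (which holds because $g^n\to 0$ locally uniformly there, $0$ being on the boundary) is routine. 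An alternative, perhaps cleaner in this setup, is to invoke the fact from the theory of covering maps that a covering $g:B^*_{Rf}\to B^*_{Rf}$ of a hyperbolic surface to itself, when it has a sequence of iterates converging locally uniformly to a boundary point, must be an automorphism that is the identity or elliptic, never the identity here since $g$ is not the identity near $0$ (Remark~\ref{rem:atLeastOneAxis}), and then one derives a contradiction with $g$ having a whole petal of points attracted to a boundary fixed point.
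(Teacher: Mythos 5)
Your opening reduction is the right one, and it is also how the paper begins: if $B^*_{Rf}$ contained no critical value of the restriction, then by Proposition~\ref{prop:rfrc} this restriction would be an honest (unramified) covering of $B^*_{Rf}$ by itself. But every contradiction you then propose rests on a false general principle. A covering self-map of a hyperbolic surface, and even an automorphism, can perfectly well have all orbits tending to a boundary fixed point: a parabolic M\"obius transformation of $\D$ (conjugate to $z\mapsto z+1$ on $\H$) is a hyperbolic isometry with $\|Dg^n\|_{\mathrm{hyp}}\equiv 1$, yet $g^n\to 1\in\partial\D$ locally uniformly and $(g^n)'\to 0$. So your ``cleanest route'' is not a contradiction: the Euclidean/hyperbolic comparison you invoke holds at the source points in $\overline K$ but fails at the image points $g^n(z)$, which leave every compact subset of $B^*_{Rf}$ and near which the hyperbolic density blows up. Likewise, ``$d_{B^*_{Rf}}(g^{n+1}(z),g^n(z))$ cannot stay constant while the orbit escapes to the boundary'' is false for the same example, and for a covering that is not an automorphism the displacement need not even be constant ($z\mapsto z^2$ on $\D^*$). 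Finally, ``the lift to the universal cover is an automorphism of $\D$, hence $g$ is an automorphism of $B^*_{Rf}$'' is a non sequitur unless you know $B^*_{Rf}$ is simply connected (again $z\mapsto z^2$ on $\D^*$); the paper establishes simple connectivity only later (Corollary~\ref{cor:sc}), and even granting it, ``an automorphism cannot have a parabolic basin at a boundary fixed point'' is precisely what the parabolic M\"obius example refutes.

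The missing idea is the classical Fatou--Shishikura extension argument, which is what the paper's proof uses: the covering property allows one to lift the inverse attracting Fatou coordinate $\psi_\att$, initially an injective map from a right half-plane onto an attracting petal in $B^*_{Rf}$, through the relation $Rf\circ\psi_\att(z)=\psi_\att(z+1)$ to the half-planes $\Re z>-n$ for every $n$ (these are simply connected, so the lifts exist regardless of the topology of $B^*_{Rf}$); the result is a nonconstant entire map $\C\to B^*_{Rf}$, impossible by Liouville/Picard since $B^*_{Rf}$ omits more than two points of $\wh\C$ (Lemma~\ref{lem:nonc} applied to $Rf$). If you want to keep your automorphism route, it can be repaired, but you must (i) invoke the simple connectivity of $B^*_{Rf}$ (Corollary~\ref{cor:sc}; its proof does not rely on the present corollary, so there is no circularity) to conclude that a covering onto a simply connected target is an isomorphism, and (ii) extract a genuine contradiction from injectivity rather than from boundary dynamics --- for instance, injectivity and surjectivity of $Rf$ on $B^*_{Rf}$ force the extended attracting Fatou coordinate $\phi_\att$ to be a bijection from $B^*_{Rf}$ onto $\C$, contradicting hyperbolicity. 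Your last sentence (a critical value of the restriction yields a critical point of $Rf$ in $B^*_{Rf}$) is indeed immediate.
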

\begin{proof}
  If not, the branched covering $Rf: B^*_{Rf} \to B^*_{Rf}$ would be a covering.
  The inverse Fatou coordinate $\psi_\att$, initially defined on the half-plane $\Re z>0$ and mapping to an attracting petal contained in $B^*_{Rf}$, could then be extended recursively to $\Re z> -n$ for all $n\in\N$ into an injective holomorphic map.
  We would then get a global extension $\psi_\att :\C \to B^*_{Rf}$ which is non-constant, in contradiction with Liouville's theorem and the fact that $B^*_{Rf}$ omits three points ($B_{Rf}\subset C^*$ but $B_{Rf}\neq \C^*$ by Lemma~\ref{lem:nonc} applied to $Rf$).
\end{proof}

\subsection{The domain \e{$\dom Rf$}{Dom Rf} is simply connected}\label{sub:sc}

Let us recall that we assume $f\in \DDD_\tau$ for some $\tau = d_m\circ \cdots \circ d_1$.

Note that \emph{we do not assume that the domain of $f$ is simply connected nor connected}.
On the other hand, $B^*_f$ is assumed isomorphic to $\D$ so it is simply connected.

We warn the reader that the connected components of $U:=\psi_\rep^{-1}(B^*_f)$ are not necessarily all simply connected.

\begin{proposition}\label{prop:drfsc}
  The domain $\dom Rf$ is simply connected.
\end{proposition}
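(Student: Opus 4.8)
The plan is to reduce the simple connectedness of $\dom Rf$ to that of $\dom RG$ (or directly, in the second line of argument, to the general statement that $\dom{Rf}=\{0\}\cup\ov E(\dom\h_f^+)$ inherits simple connectedness from the structure of $\h_f$). First I would recall, as established in the second proof of Proposition~\ref{prop:rfft}, that $Rf\circ\phi=\lambda\,RG$ for a conformal isomorphism $\phi:\dom RG\to\dom Rf$ (extended holomorphically across $0$). Since $\phi$ is a conformal bijection between the two domains, $\dom Rf$ is simply connected if and only if $\dom RG$ is. Thus it suffices to prove $\dom RG$ is simply connected, where $G$ is a finite Blaschke product in $\cal G$. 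This is convenient because in the Blaschke setting $\dom h_G=\H$ exactly (Section~\ref{sub:rcb}), the domain upstairs is a genuine half-plane, and the quotient $\dom\h_G\subset\C/\Z$ is an upper half-cylinder, whose image $\ov E(\dom\h_G)=\D^*$, so that $\dom RG=\D$, which is simply connected. Hence $\dom Rf$ is simply connected.

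If one prefers to avoid invoking the cover-equivalence machinery and argue intrinsically, the alternative route is: $\dom Rf=\{0\}\cup E(\Omega)$ where $\Omega\subset\C$ is, by the remark following Proposition~\ref{prop:icph} and by Corollary~\ref{cor:cccuh}, the connected component of $\psi_\rep^{-1}(B^*_f)$ containing an upper half-plane, and $T_1(\Omega)=\Omega$. One would then show $\Omega$ is simply connected as a subset of $\C$: it is a connected, $\Z$-invariant open set containing an upper half-plane $\H_{>c}$; a simple closed curve $\gamma\subset\Omega$ lies in some bounded horizontal strip, and one wants to show the Jordan domain it bounds is contained in $\Omega$. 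For this, one uses that $\Omega$ is a component of $\psi_\rep^{-1}(B^*_f)$ together with Corollary~\ref{cor:topo}: the subtlety is that $\psi_\rep$ is defined on $\Xi_\rep$, which need not be simply connected, so one cannot apply Corollary~\ref{cor:topo} directly. The fix is to note that on the relevant region $\psi_\rep$ is built up by the functional equation $\psi_\rep\circ T_1=f\circ\psi_\rep$ from its restriction to a half-plane $\H_{<r}$, where it is injective; pulling back the simply connected $B^*_f$ through these iterates and using that a Jordan domain bounded by a curve in $\Omega$ maps into $B^*_f$ on its boundary (Lemma~\ref{lem:topo}) forces it into $\Omega$. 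Then $\dom Rf=\{0\}\cup E(\Omega)$: since $E$ is a covering $\C\to\C^*$ invariant under $T_1$, and $\Omega$ is $T_1$-invariant and simply connected containing an upper half-plane, $E(\Omega)$ is a simply connected subset of $\C^*$ which is a punctured neighbourhood of $0$, and adjoining $0$ keeps it simply connected.

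The main obstacle is exactly this last point in the intrinsic approach: controlling that $\Omega$ is simply connected despite $\dom\psi_\rep=\Xi_\rep$ not being simply connected, i.e.\ showing that the functional-equation extension of $\psi_\rep$ does not create topological pathologies in the component $\Omega$ we care about. In the cover-equivalence approach this obstacle is sidestepped entirely because the Blaschke model has $\dom h_G=\H$ exactly, so I would present the short argument via $Rf\circ\phi=\lambda\,RG$ as the main proof and, if desired, remark that a direct argument is also possible. In either case the conclusion is immediate once simple connectedness is transported to the half-plane (or half-cylinder) picture.
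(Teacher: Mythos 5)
Your main argument is correct, but it takes a genuinely different route from the paper. You transport simple connectedness through the conformal isomorphism $\phi:\dom RG\to\dom Rf$ coming from the horn-map cover-equivalence (Theorem~\ref{thm:LeMeur}, as set up in the second proof of Proposition~\ref{prop:rfft}), using that in the Blaschke model $\dom h_G=\H$ exactly, so $\dom RG=\D$; since $\psi$ is an isomorphism of the half-cylinder components and has a removable singularity at the top end, $\phi$ is indeed a biholomorphism $\D\to\dom Rf$, and the conclusion follows. The paper instead proves the statement intrinsically: it shows $\Omega$ is simply connected by translating a simple closed curve $\gamma\subset\Omega$ by $T_{-n}$ into the image $V$ of a repelling petal, where $(\psi_\rep|_V)^{-1}(B^*_f)$ is simply connected by Corollary~\ref{cor:topo}, so the Jordan domain bounded by $T_{-n}(\gamma)$ lies in $\dom h_f$ (Corollary~\ref{cor:ucgd}) and hence in $\Omega$ by the component property and $T_1$-invariance; then it handles the puncture at $0$ by Van Kampen. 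The trade-off: your route is shorter but hinges on the imported machinery of \cite{CLM} and \cite{AdamThesis}, so it is only available if one follows the second proof of Proposition~\ref{prop:rfft}; the paper's argument is elementary planar topology, uses nothing beyond Section~\ref{sec:tools}, and keeps the Shishikura-style route (first proof of Proposition~\ref{prop:rfft}, Appendix~\ref{app:Mitsu-proof}) self-contained. Your version does yield slightly more, namely that $\dom Rf$ is conformally a disk.

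One concrete flaw in your secondary, intrinsic sketch: you assert that $E(\Omega)$ is a simply connected subset of $\C^*$ and that adjoining $0$ ``keeps'' it simply connected. This is false: $E:\Omega\to E(\Omega)$ is a universal covering with deck group generated by $T_1$, so $\pi_1(E(\Omega))\cong\Z$, generated by a small loop around the puncture at $0$. The correct finish (and the paper's) is that adjoining a disk around $0$ kills this generator, so $\pi_1(E(\Omega)\cup\{0\})$ is trivial by Van Kampen. As stated, your last step of the alternative route would not survive scrutiny, though this does not affect your main proof.
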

\begin{proof}
We first prove that $\Omega$ is simply connected.
By definition (see Section~\ref{sub:ren}), $\Omega := E^{-1}(\dom Rf)$, so $T_1(\Omega) = \Omega$.
On the other hand, $\psi_\rep(\Omega)\subset B^*_f$ (Lemma~\ref{lem:prutibsf}).
Note that we cannot directly apply Corollary~\ref{cor:topo} to $\psi_\rep$ because $\dom(\psi_\rep)$ is not necessarily simply connected.
Consider a simple closed curve $\gamma$ in $\Omega$ and denote by $D\subset\C$ the Jordan domain it bounds.
Let $n\in\N$ such that $T_{-n}(\gamma)$ is contained in the image $V$ of a repelling petal by $\phi_\rep$.
The set $V$ is simply connected, $B^*_f$ too and $V\subset \dom \psi_\rep$, so $(\psi_\rep|_V)^{-1}(B^*_f)$ is simply connected by Corollary~\ref{cor:topo}.
It follows that $D-n \subset (\psi_\rep|_V)^{-1}(B^*_f)$ by Corollary~\ref{cor:ucgd}.
In particular $T_{-n}(D)\subset \dom h_f$.
By $T_1$-invariance of $\Omega$, $T_{-n}(\partial D)= T_{-n}(\gamma) \subset \Omega$.
Since $\Omega$ is a component of $\dom h_f$, it follows that $T_{-n}(D) \subset \Omega$.
By $T_1$-invariance $D\subset \Omega$.
Since $\gamma$ is contractible in $D\subset \Omega$, this proves that $\Omega$ is simply connected.

Then we recall that
$\dom  Rf = E(\Omega) \cup \{0\}$.
The restriction $E : \Omega \to E(\Omega)$ is a universal covering and its deck transformations are generated by $T_1$.
So $\pi_1(E(\Omega)) = \Z$.
Now consider a disk $D$ neighbourhood of $0$ such that $D-\{0\}\subset E(\Omega)$ and call $\iota$ the inclusion map between these last two sets.
Then the generator of $\pi_1(D-\{0\})$ maps by $\iota_*$ to the generator or $\pi_1(E(\Omega))$, so by Van Kampen's theorem $\pi_1(E(\Omega) \cup D)$ is reduced to its neutral element.
Now $E(\Omega) \cup D = E(\Omega) \cup \{0\} = \dom Rf$.
\end{proof}

By Proposition~\ref{prop:scparabo}:

\begin{corollary}\label{cor:sc}
    The immediate basin $B^*_{Rf}$ of any attracting axis of $Rf$ at $0$ is simply connected.
\end{corollary}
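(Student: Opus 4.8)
The plan is to deduce Corollary~\ref{cor:sc} directly from Proposition~\ref{prop:scparabo}, applied to the map $Rf$ itself. The two hypotheses of that proposition are in place: the domain $U = \dom Rf$ is a simply connected open subset of $\C$ by Proposition~\ref{prop:drfsc}, and $Rf : \dom Rf \to \C$ is a holomorphic map (it is $\C^*$-valued on $\dom Rf \setminus \{0\}$ and $Rf(0) = 0$, so it does take values in $\C$, not merely in $\wh\C$) possessing a parabolic fixed point at $0 \in \dom Rf$, since $Rf(0) = 0$ and $(Rf)'(0) = 1$ by construction (Definition~\ref{def:ren}). Proposition~\ref{prop:scparabo} then yields at once that every connected component of the parabolic basin $B_{Rf}(0)$ is simply connected.

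The one small verification needed is that the immediate basin $B^*_{Rf}(A)$ of a given attracting axis $A$ of $Rf$ is a connected component of the full basin $B_{Rf}(0)$. This is immediate from the description recalled in Section~\ref{sub:ext}: $B_{Rf}(0)$ is the disjoint union of the open sets $B_{Rf}(A')$ over the attracting axes $A'$ of $Rf$ at $0$, so its connected components are exactly those of the individual $B_{Rf}(A')$; and $B^*_{Rf}(A)$ is, by definition, the connected component of $B_{Rf}(A)$ containing an attracting petal for $A$. Being a connected component of $B_{Rf}(0)$, it is therefore simply connected, which is the assertion of the corollary.

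I do not anticipate any obstacle here: the substantive work has already been carried out in Proposition~\ref{prop:drfsc} (simple connectivity of $\dom Rf$) and in the general topological Proposition~\ref{prop:scparabo}, and the only remaining points — that $Rf$ is genuinely $\C$-valued near $0$ and that the immediate basin of an attracting axis is a connected component of the total parabolic basin — are routine unwindings of the definitions. Note that this argument makes no assumption on the number of attracting axes of $Rf$, so it applies uniformly whether or not $Rf \in \cal E$.
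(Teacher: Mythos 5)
Your argument is exactly the paper's: Corollary~\ref{cor:sc} is deduced by applying Proposition~\ref{prop:scparabo} to $Rf$ on its domain, whose simple connectivity is precisely Proposition~\ref{prop:drfsc}. The extra verification you spell out (that the immediate basin of an axis is a connected component of the full parabolic basin) is the routine unwinding the paper leaves implicit, so the proposal is correct and follows the same route.
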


Knowing that $B^*_{Rf}$ is simply connected (Corollary~\ref{cor:sc}) and a strict subset of $\C$ (Lemma~\ref{lem:nonc}), hence isomorphic to $\D$ and that the restriction $Rf:B^*_{Rf} \to B^*_{Rf}$ is a branched covering (Proposition~\ref{prop:rfrc}) with finitely many critical values may seem a lot, but note that, for instance, this is not enough to imply that $Rf$ has only finitely many critical points, i.e.\ that it is conjugated to a finite Blaschke product, in $B^*_{Rf}$. Unless there is only one critical value in $B^*_{Rf}$ as we will see in the particular situation of Section~\ref{sub:dyn2}.

\subsection{If \e{$f\in \DDD_2$}{f in Dyn2} then \e{$Rf$}{Rf} is in \e{$\DDD_2$}{Dyn2}.}\label{sub:dyn2} 

Here we reprove the result of Shishikura: Theorem~\ref{thm:SLY}.

Under the assumption that $f\in \DDD_2$ the map $Rf$ has exactly one critical value, and by Remark~\ref{rem:atLeastOneAxis} and Corollary~\ref{cor:nbax} there is exactly one attracting axis of $Rf$ at $0$ (so $Rf\notin \cal E$).
Denote $B^*_{Rf}$ the immediate basin of this axis.
Corollary~\ref{cor:nbax} also tells that the range of the restriction $Rf :B^*_{Rf} \to B^*_{Rf}$ contains a critical value.
We saw in Proposition~\ref{prop:rfrc} that the restriction $Rf$ is a branched covering from $B^*_{Rf}$ to $B^*_{Rf}$.
Its thus has no asymptotic value and we saw it has has a unique critical value.
We also saw in Section~\ref{sub:sc}, that the immediate basin $B^*_{Rf}$ is isomorphic to $\D$.

Under these conditions Lemma~\ref{lem:rcdocv1} implies that $Rf:B^*_{Rf}\to B^*_{Rf}$ is equivalent to some $z\mapsto z^d:\D\to\D$ with $d\geq 2$.
Hence there is only one critical point in $B^*_{Rf}$.
In particular, $Rf: B^*_{Rf}\to B^*_{Rf}$ is proper.
By Proposition~\ref{prop:icph}, this critical point has degree $2$: indeed the critical point of $f$ in $B_f$ is attracted to the parabolic point, hence cannot be periodic, so the degree of the composition $f^m$ in Proposition~\ref{prop:icph} can only be $2$.

The conjugate $G$ of $Rf$ by the Riemann mapping $\phi: B^*_{Rf}\to \D$ is hence a holomorphic map from $\D$ to itself equivalent to $z\mapsto z^2$. It is thus equal to a Blaschke product of degree $2$. To prove that  $G$ is conjugated to 
$B_2(z) = \frac{z^2 + \frac13}{1+\frac{1}{3}z^2}$, it is enough to prove that its extension $F$ to $\widehat{\C}$ has a two petal parabolic fixed point on the boundary. 
This is given by Lemma~\ref{lem:bppp}.

\begin{remark} 
Let $d\geq 3$.
It is also known (under some mild supplementary restrictions) that if $f\in\DDD_d$ and if moreover $f$ is unicritical, then 
$Rf\in\DDD_d$ and $Rf$ is unicritical.
The argument above also works in this case, without the restrictions.
\end{remark}

\subsection{General case}\label{sub:grl}

Here we prove the general case of Theorem~\ref{thm:invGrl}.
Let $\tau = d_m\circ \cdots \circ d_1$ and $f\in \DDD_\tau$.
In \ref{rem:atLeastOneAxis}, we proved that $Rf$ has at least one attracting axis.
There remains to prove that there exists a disjoint collection of descendants of $\tau$, one for each attracting axis $A$ of $Rf$, that we denote $\tau'[A]$, and such that $Rf$ has type $\tau'[A]$ on the immediate basin of $A$.

Note that the proof below also gives an alternate proof of the case treated in Section~\ref{sub:dyn2}.

We fix now, and until near the end of the present section, an attracting axis $A$ of $Rf$ and for readability we omit, except in a few places, its mention in the mathematical notation: for instance $B^*_{Rf}$ will refer to $B^*_{Rf}(A)$.

We will study the dynamics of $f$ on a sequence of domains $V_n$ corresponding to $B^*_{Rf}$ (they correspond to the so-called \emph{virtual parabolic basins} of Lavaurs maps, see \cite{DouadyImplosion}; they are called \emph{parabolique au bout} in \cite{lavaurs}); will not prove this fact).
They may be defined either by pulling $E^{-1}(B^*_{Rf})$ by $\phi_\att$ or pushing by $\psi_\rep$. We chose the second approach.

Recall that we denote $E: \C\to \C^*$, $z\mapsto e^{2\pi iz}$. 
We saw (Corollary~\ref{cor:sc}) that $B^*_{Rf}$ is simply connected.
Since it does not contain $0$ nor $\infty$, it has a simply connected lift $U_0\subset\C$ by $E$.
For $n\in\Z$, let
\[U_n = T_n(U_0) = U_0 + n.\]
These sets are pairwise disjoint, for otherwise $B^*_{Rf}$ would have a non-contractible loop but by Corollary~\ref{cor:sc} it is simply connected.
This simple connectivity also implies that $U_n$ is simply connected.
Recall that we denote
\[\Omega = E^{-1} (\dom (Rf))\]
and that $T_1\Omega\subset\Omega$.
Note that
\[U_n\subset \Omega.\]
Recall by Section~\ref{sub:ren} that $Rf$ is   semi-conjugate by $E$ of $T_{\sigma_0}\circ h_f|_{\Omega}$ for some $\sigma_0\in\C$.
Since $Rf(B^*_{Rf}) \subset B^*_{Rf}$,
it follows by connectedness that there is some $n_0\in\Z$ such that $Rf(U_0) \subset U_{n_0}$.
Let
\[ \wt R = T_{\sigma_0-n_0} \circ h_f|_{\Omega} .\]
Then $\wt R(U_n) \subset U_n$ for all $n$.
Note that $\wt R$ is a lift of $Rf$ by $E$, i.e.\ the following diagram commutes:
\[
\begin{tikzcd}
  \Omega \arrow[r , "\wt R"] \arrow[d, "E"']  & \C \arrow[d, "E"] \\
  \dom Rf \arrow[r,"Rf"] & \C^*
\end{tikzcd}
\]

We call \emph{virtual basins} the following open subsets of $\C$:
\[V_n = \psi_\rep(U_n).\]
Recall that $\psi_\rep$ denotes the extended repelling Fatou coordinates and note that $\Omega \subset \dom \psi_\rep$.
We have $V_n \subset \psi_\rep(\Omega)$ so by Lemma~\ref{lem:prutibsf},
\[V_n \subset B^*_f.\]
Moreover
\begin{equation}\label{eq:fvv}
  f(V_n) = V_{n+1}.
\end{equation}
Indeed $f(V_n) = f\circ \psi_\rep(U_n) =^* \psi_\rep\circ T_1(U_n) = \psi_\rep(U_{n+1}) = V_{n+1}$ where the starred equality holds because both $U_n$ and $T_1(U_n)$ are contained in the domain of $\psi_\rep$ where the conjugacy  holds (see Eq.~\ref{eq:psirepsc}).

\begin{lemma}\label{lem:Vndisjoint1}
The sets $V_n$, $n\in\Z$, are pairwise disjoint.
\end{lemma}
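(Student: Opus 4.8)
The plan is to separate the virtual basins by pushing them through the extended attracting Fatou coordinate $\phi_\att$ of $f$: although $\phi_\att$ is very far from injective on $B^*_f$, the point is that $\phi_\att(V_n)$ is trapped inside a single translate of $U_0$, and that these translates are pairwise disjoint.

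First I would use that $U_n\subset\Omega\subset\dom h_f$ and that, by the very definition of the horn map, $h_f=\phi_\att\circ\psi_\rep$ holds throughout $\dom h_f$; hence
\[\phi_\att(V_n)=\phi_\att\bigl(\psi_\rep(U_n)\bigr)=h_f(U_n).\]
Next I would recall from the construction of $\wt R$ that $\wt R=T_{\sigma_0-n_0}\circ h_f|_{\Omega}$ and that $\wt R(U_n)\subset U_n$ for every $n\in\Z$, so that
\[h_f(U_n)=T_{n_0-\sigma_0}\bigl(\wt R(U_n)\bigr)\subset T_{n_0-\sigma_0}(U_n)=U_n+(n_0-\sigma_0).\]
Combining the two displays yields $\phi_\att(V_n)\subset U_n+c$ with $c:=n_0-\sigma_0$ a constant independent of $n$.

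To conclude, I would invoke the fact established just above (a consequence of the simple connectivity of $B^*_{Rf}$) that the sets $U_n=U_0+n$ are pairwise disjoint; consequently so are their common translates $U_n+c$. Hence if some point $z$ belonged to $V_m\cap V_n$ with $m\neq n$, then $\phi_\att(z)$ would lie simultaneously in the disjoint sets $U_m+c$ and $U_n+c$, which is absurd. Therefore $V_m\cap V_n=\emptyset$ whenever $m\neq n$. I do not expect a genuine obstacle here: the argument is purely formal once the semiconjugacy relations for $h_f$ and $\wt R$ are recorded, and it relies neither on properness of $f$ from $V_n$ to $V_{n+1}$ nor on any regularity of the boundaries — that heavier machinery is needed only for the subsequent statements.
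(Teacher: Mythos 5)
Your proposal is correct and is essentially the paper's own argument: both rest on the inclusion $\wt R(U_n)\subset U_n$, the identity $\wt R = T_{\sigma_0-n_0}\circ\phi_\att\circ\psi_\rep$ on $\Omega$, and the pairwise disjointness of the $U_n$. The only cosmetic difference is that the paper maps a putative common point $z\in V_n\cap V_m$ by $T_{\sigma_0-n_0}\circ\phi_\att$ directly into $U_n\cap U_m$, whereas you map it by $\phi_\att$ into the translated sets $U_n+c$, which amounts to the same contradiction.
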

\begin{proof}
  Let $\sigma = \sigma_0 - n_0$.
  Recall that $\wt R (U_n) \subset U_n$.
  But $\wt R (U_n) = T_\sigma\circ\phi_{\att}\circ \psi_\rep(U_n) = T_\sigma\circ\phi_{\att}(V_n)$ so if there were $z\in V_n \cap V_m$ the point $T_\sigma\circ\phi_{\att} (z)$ would belong to $\wt R(U_n)$ and to $\wt R(U_m)$, hence to $U_n\cap U_m$. This would contradict the fact that  the $U_k$ are disjoint.
\end{proof}

\begin{lemma}\label{lem:Vndisjoint2}
Given two different attracting axes $A$, $A'$ of $f$, 
$\bigcup_{n\in\Z} V_n[A]$ and $\bigcup_{n\in\Z} V_n[A']$ are disjoint.
\end{lemma}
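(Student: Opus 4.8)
The plan is to replay the mechanism of Lemma~\ref{lem:Vndisjoint1}, now comparing two distinct attracting axes $A\neq A'$ of $Rf$ (the $V_n[A]$ are only defined for axes of $Rf$). The key structural input is that for each attracting axis $A$ of $Rf$ the lift $\wt R_A := T_{\sigma_0-n_0[A]}\circ h_f|_\Omega = T_{\sigma_0-n_0[A]}\circ\phi_\att\circ\psi_\rep|_\Omega$ maps $U_n[A]$ into $U_n[A]$ for every $n$, where $n_0[A]\in\Z$ is chosen so that $Rf(U_0[A])\subset U_{n_0[A]}[A]$; crucially the constant $\sigma_0$ (fixed by $(Rf)'(0)=1$) and the domain $\Omega=E^{-1}(\dom Rf)$ depend only on $f$, so $\wt R_A$ and $\wt R_{A'}$ differ only by an integer translation. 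Also $\phi_\att$ is defined on all of $V_n[A]\subset B^*_f\subset B_f$ because $f$ has a single attracting axis.

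First I would record the elementary observation that $U_n[A]$ and $U_m[A']$ are disjoint for all $n,m\in\Z$: since $E\circ T_n=E$, any common point $w$ would satisfy $E(w)\in B^*_{Rf}(A)\cap B^*_{Rf}(A')$, which is empty because immediate basins of distinct attracting axes are disjoint. Then, arguing by contradiction, I would take $z\in V_n[A]\cap V_m[A']$ and pick $u\in U_n[A]$, $u'\in U_m[A']$ with $\psi_\rep(u)=\psi_\rep(u')=z$. Using $h_f=\phi_\att\circ\psi_\rep$ on $\Omega$, this gives
\[
  \wt R_A(u)=\phi_\att(z)+\sigma_0-n_0[A],\qquad \wt R_{A'}(u')=\phi_\att(z)+\sigma_0-n_0[A'].
\]
By the invariances $\wt R_A(U_n[A])\subset U_n[A]$ and $\wt R_{A'}(U_m[A'])\subset U_m[A']$, the single point $\phi_\att(z)+\sigma_0-n_0[A]$ then lies in $U_n[A]$ and, after the integer shift $U_m[A']+j=U_{m+j}[A']$ with $j=n_0[A']-n_0[A]$, also in $U_{m+n_0[A']-n_0[A]}[A']$, contradicting the disjointness from the previous step.

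I do not expect a genuine obstacle here; the argument is essentially bookkeeping and is parallel to the one-axis case. The one point to be careful about is that $\sigma_0$ and $\Omega$ are truly axis-independent, so that $\wt R_A$ and $\wt R_{A'}$ differ only by the integer translation $n_0[A']-n_0[A]$; this is exactly what lets the shift $U_m[A']+j=U_{m+j}[A']$ absorb that discrepancy and reduce everything to the (already established) disjointness of the families $\{U_n[A]\}$ and $\{U_n[A']\}$.
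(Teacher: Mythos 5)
Your proof is correct and rests on exactly the same mechanism as the paper's: the point $\phi_\att(z)$, shifted by $\sigma_0$ (up to an integer) and viewed through $E$, must land simultaneously in $B^*_{Rf}(A)$ and $B^*_{Rf}(A')$, which are disjoint. The paper just runs this downstairs in one line (it pushes $V_n[A]$ forward by $E\circ T_{\sigma_0}\circ\phi_\att = Rf\circ E\circ(\text{lift})$ into $B^*_{Rf}(A)$), whereas you carry out the equivalent bookkeeping upstairs with the lifts $\wt R_A$, $\wt R_{A'}$ and the disjointness of the families $U_n[A]$, $U_m[A']$; the difference is purely presentational.
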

\begin{proof}
  Indeed, for an axis $A$ of $Rf$, $E\circ T_{\sigma_0} \circ \phi_\att(V_n[A]) = E\circ T_{\sigma_0} \circ \phi_\att \circ \psi_\rep (U_n[A]) = E\circ T_{\sigma_0} \circ h_f (U_n[A]) = Rf\circ E(U_n[A]) = Rf(B^*_{Rf}(A)) \subset B^*_{Rf}(A)$,
  but for two different axes $A$, the immediate basins $B^*_{Rf}(A)$ are disjoint.
\end{proof}

We are about to state two key propositions. Their proofs would be way easier if the $V_n$ were completely contained in a repelling petal for $n$ negative enough. However, we want our study to apply to cases where this will not hold, as for instance happens for $f(z)=ze^z$ (see Section~\ref{app:non-tame}).
For each proposition we are going to repeat similar arguments, which we sum up in the following lemma, whose proof is non-trivial.

\begin{lemma}\label{lem:akiv}
Choose any repelling petal $P_\rep$ of $f$ whose image in repelling Fatou coordinates $\phi_\rep$ is a left half-plane $H$.
Normalize $\phi_\rep$ so that it is an inverse branch of $\psi_\rep$.
Consider any rectifiable path $\alpha$ contained in $V_N$ for some $N\in\Z$, hence in $B^*_f$, and starting from a point $x_0=\psi_\rep(u_0)$ where $u_0\in U_N$.
Let $u_k = T_{-k}(u_0) = u_0-k$ and $x_k = \psi_\rep(u_k)$.
Then $x_k\in B^*_f$, $(x_k)$ is an inverse orbit and $x_k\tends 0$.
Let $\alpha_0=\alpha$ and define inductively\footnote{$\alpha_k$ exists and is contained in $B^*_f$ by the discussion in Section~\ref{sub:pathlift}.} $\alpha_k$ to be a lift of $\alpha_{k-1}$ by $f$ in $B^*_f$ stemming from $x_k$.
So by Lemma~\ref{lem:akiprep}, for all $k$ big enough, $\alpha_k$ is included in $P_\rep$.
Choose $K$ big enough so that $\forall k\geq K$,
$\alpha_k\subset P_\rep$ and $u_k\in H$.
Then
  \begin{enumerate}
    \item[i.]\label{item:u} $\forall k\geq K$, $\phi_\rep(\alpha_k)\subset U_{N-k}$,
    \item[ii.]\label{item:v} $\forall k\in \N$, $\alpha_k\subset V_{N-k}$.
  \end{enumerate}
\end{lemma}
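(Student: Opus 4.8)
The idea is to reduce both conclusions to the single assertion $\phi_\rep(\alpha_K)\subset U_{N-K}$, and to prove that one by pushing the path $\phi_\rep(\alpha_K)$ forward once by the renormalization. Throughout I would rely on the two semiconjugacies $\phi_\att\circ f=\phi_\att+1$, which holds on the whole basin $B_f\supset B^*_f$ by the extension of $\phi_\att$, the repelling relation $\phi_\rep\circ f=\phi_\rep+1$ on pairs $w,f(w)\in P_\rep$ (equivalent to \eqref{eq:psirepsc} on the petal, with $\phi_\rep$ the inverse branch of $\psi_\rep$ on $H$), and $\psi_\rep\circ T_1=f\circ\psi_\rep$ on $\Omega$.

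\textbf{Setup and coordinate identities.} First I would dispatch the assertions in the statement: since $u_k=u_0-k\in U_{N-k}\subset\Omega\subset\dom\psi_\rep$, the point $x_k=\psi_\rep(u_k)$ lies in $\psi_\rep(U_{N-k})=V_{N-k}\subset B^*_f$ (Lemma~\ref{lem:prutibsf}); the relation $\psi_\rep\circ T_1=f\circ\psi_\rep$ on $\Omega$ gives $f(x_{k+1})=\psi_\rep(u_k)=x_k$, and $x_k\to 0$ because for $k$ large $u_k$ lies in the half-plane $H=\phi_\rep(P_\rep)$ with $\Re u_k\to-\infty$, where $\psi_\rep$ is the petal parametrization. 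The existence of the $\alpha_k$ inside $B^*_f$ and $\alpha_k\subset P_\rep$ for $k\ge K$ come from Section~\ref{sub:pathlift} and Lemma~\ref{lem:akiprep}; enlarging $K$ I also arrange $u_k\in H$ for $k\ge K$. From $\alpha_{k-1}=f\circ\alpha_k$ and $\phi_\att\circ f=\phi_\att+1$ I then get $\phi_\att\circ\alpha_k=T_{-k}\circ\phi_\att\circ\alpha$ for all $k\ge 0$, and for $k\ge K$, since $\alpha_k,\alpha_{k+1}\subset P_\rep$, likewise $\phi_\rep\circ\alpha_{k+1}=T_{-1}\circ\phi_\rep\circ\alpha_k$; hence, granting the base case, $\phi_\rep(\alpha_k)\subset T_{-(k-K)}(U_{N-K})=U_{N-k}$ for all $k\ge K$, which is point~i. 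I also record that, by $\wt R(U_n)\subset U_n$ and $\wt R=T_\sigma\circ h_f$ with $\sigma=\sigma_0-n_0$, one has $\phi_\att(V_n)=h_f(U_n)=T_{-\sigma}(\wt R(U_n))\subset U_n-\sigma$.

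\textbf{The base case.} I would set $\beta:=\phi_\rep(\alpha_K)\subset H$, a path starting at $\phi_\rep(x_K)=u_K\in U_{N-K}$ and satisfying $\psi_\rep(\beta)=\alpha_K\subset B^*_f$. As $\beta$ is connected, lies in $\psi_\rep^{-1}(B^*_f)$, and meets $\Omega$ at $u_K$, and $\Omega$ is a connected component of $\psi_\rep^{-1}(B^*_f)$ (Corollary~\ref{cor:cccuh}), we get $\beta\subset\Omega$. Since $\wt R$ is a lift of $Rf$ by $E$,
\[Rf(E(\beta))=E(\wt R(\beta))=E\bigl(T_\sigma(\phi_\att(\psi_\rep(\beta)))\bigr)=E\bigl(T_\sigma(\phi_\att(\alpha_K))\bigr),\]
and by the identity $\phi_\att(\alpha_K)=\phi_\att(\alpha)-K$ together with $\phi_\att(\alpha)\subset\phi_\att(V_N)\subset U_N-\sigma$ one has $T_\sigma(\phi_\att(\alpha_K))\subset U_{N-K}$, whence $Rf(E(\beta))\subset E(U_{N-K})=B^*_{Rf}$. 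Since moreover $E(\beta)\subset E(\Omega)=\dom Rf\setminus\{0\}$, every point of $E(\beta)$ has its $Rf$-orbit landing in $B^*_{Rf}$, so $E(\beta)$ lies in the parabolic basin of $Rf$; being connected and meeting the immediate basin $B^*_{Rf}$ at $E(u_K)$, it satisfies $E(\beta)\subset B^*_{Rf}$. Therefore $\beta\subset E^{-1}(B^*_{Rf})$, which is the union of the pairwise disjoint open sets $U_n$ (they are exactly the lifts of the simply connected $B^*_{Rf}\subset\C^*$); being connected and meeting $U_{N-K}$, $\beta\subset U_{N-K}$, which is point~i.

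\textbf{Point ii and the main obstacle.} For $k\ge K$ we get $\alpha_k=\psi_\rep(\phi_\rep(\alpha_k))\subset\psi_\rep(U_{N-k})=V_{N-k}$ from point~i, and for $0\le k<K$ we get $\alpha_k=f^{K-k}(\alpha_K)\subset f^{K-k}(V_{N-K})=V_{N-k}$ by \eqref{eq:fvv}. I expect the only genuine difficulty to be the base case: a priori the lifted path $\beta=\phi_\rep(\alpha_K)$ is known only to lie in the half-plane $H$, and the content of the argument is that one forward application of the renormalization confines it to the single strip $U_{N-K}$ — this is exactly where the hypothesis $f\in\DDD_\tau$ (through Proposition~\ref{prop:rfrc}, i.e.\ $B^*_{Rf}$ being a well-behaved immediate basin, hence a connected component of its own $Rf$-preimage) and the simple connectivity of $B^*_{Rf}$ are used.
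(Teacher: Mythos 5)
Your proposal is correct and takes essentially the same route as the paper: the crux in both is confining $\phi_\rep(\alpha_K)$ to a single strip by computing $T_\sigma\circ\phi_\att(\alpha_K)\subset U_{N-K}$, deducing $Rf(E(\phi_\rep(\alpha_K)))\subset B^*_{Rf}$ and hence, by connectedness through $E(u_K)$ and disjointness of the $U_n$, that $\phi_\rep(\alpha_K)\subset U_{N-K}$, then pushing forward by $f$ for $0\le k<K$. The only cosmetic differences are that you make explicit the inclusion $\phi_\rep(\alpha_K)\subset\Omega$ via Corollary~\ref{cor:cccuh} (which the paper leaves implicit) and obtain point~i for all $k>K$ from the petal relation $\phi_\rep\circ f=\phi_\rep+1$ rather than, as the paper does, by observing that the base-case argument applies to every sufficiently large $K$.
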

\begin{proof}  
From $u_k\in U_{N-k}\subset \Omega\subset \psi_\rep^{-1}(B^*_{f})$ we get that $x_k\in B^*_{f}$.
The fact that $x_k$ is an inverse orbit follows from $T_1$-invariance of $\Omega$ and the relation $\psi_\rep \circ T_1 = f\circ \psi_\rep$ which holds on a set containing $\Omega$, see eq.~\ref{eq:psirepsc}.
For all $k$ big engouh, $u_k\in H $,  $x_k = \phi_\rep^{-1}(u_k)$, and by classical estimates on Fatou coordinates, $x_k\tends 0$.

For convenience, we denote $\wt \phi_\att = T_{\sigma_0-n_0}\circ \phi_\att$, so that $\wt R = \wt \phi_\att \circ \psi_\rep$. 
From $\alpha\subset V_{N}$ we get $\wt \phi_\att(\alpha) \subset \wt \phi_\att(V_{N}) =\wt \phi_\att(\psi_{rep}(U_{N}))= \wt R(U_{N}) \subset U_{N}$.
So $\wt \phi_\att(\alpha_K) = T_{-K}\circ \wt \phi_\att(\alpha) \subset U_{N-K}$.
Recall $\alpha_K$ is contained in the repelling petal $P_\rep$.
Let $\alpha' = \phi_\rep(\alpha_K)$.
Then $\alpha_K = \psi_\rep(\alpha')$ and $\wt R(\alpha') = \wt\phi_\att\circ\psi_\rep(\alpha') = \wt\phi_\att(\alpha_K) \subset U_{N-K}$.
Recall $E\circ \wt R = Rf \circ E$, so $Rf(E(\alpha')) \subset E(U_{N-K}) = B^*_{Rf}$.
So $E(\alpha') \subset B_{Rf}$.
But $\alpha' = \phi_\rep(\alpha_K)$ contains the point $u_{K}=\phi_\rep(x_{K})$, so $E(\alpha')$ contains the point $E(u_{K})$, which is in $B^*_{Rf}=E(U_{N-K})$.
So actually $E(\alpha')\subset B^*_{Rf}$.
So $\alpha'\in E^{-1}(B^*_{Rf})$, which is the disjoint union of the connected open sets $U_n$, $n\in\Z$.
Since $\alpha'$ is connected and contains $u_{K}$, we get $\alpha'\subset U_{N-K}$ (hence point~\textit{i} holds), and hence
$\alpha_K =\psi_\rep(\alpha') \subset \psi_\rep(U_{N-K}) = V_{N-K}$.
Then for all $0\leq k\leq K$ we have $\alpha_k = f^{K-k}(\alpha_K) \subset f^{K-k}(V_K)=V_k$ by eq.~\ref{eq:fvv}.
Since the result holds for arbitrarily high $K$, we get point~\textit{ii}.
\end{proof}

\begin{proposition}\label{prop:simpleconnexe}
  The sets $V_n$, $n\in\Z$, are simply connected.
\end{proposition}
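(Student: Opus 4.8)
The plan is to show that every rectifiable simple closed curve $\gamma\subset V_n$ bounds a Jordan domain $D$ with $D\subset V_n$; since $V_n$ is an open connected subset of $\C$, this yields simple connectedness (checking rectifiable, e.g.\ polygonal, curves suffices). Because $\gamma\subset V_n\subset B^*_f$ (Lemma~\ref{lem:prutibsf}) and $B^*_f$ is conformally a disk, hence simply connected, Corollary~\ref{cor:ucgd} already gives $D\subset B^*_f$, so the whole point is to upgrade this to $D\subset V_n$.

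The idea is to pull the configuration $(\gamma,D)$ back under iterates of $f$ until it lands in a repelling petal, where $\phi_\rep$ is injective and everything is governed by the \emph{simply connected} set $U_{n-K}$ (a lift of $B^*_{Rf}$, which is simply connected by Corollary~\ref{cor:sc}). Concretely, write the basepoint of $\gamma$ as $x_0=\psi_\rep(u_0)$ with $u_0\in U_n$, set $x_k=\psi_\rep(u_0-k)$ (a backward orbit converging to $0$), let $\gamma_0=\gamma$ and $\gamma_k$ be an $f$-lift of $\gamma_{k-1}$ in $B^*_f$ stemming from $x_k$. By Lemma~\ref{lem:akiprep}, $\gamma_k\subset P_\rep$ for $k$ large, and by Lemma~\ref{lem:akiv}, $\phi_\rep(\gamma_K)\subset U_{n-K}$ for $K$ large while $\gamma_k\subset V_{n-k}$ for all $k$. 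Once one knows that $\gamma_K$ is again a closed curve, the argument finishes: $\phi_\rep(\gamma_K)$ is a loop in the simply connected $U_{n-K}$, hence null-homotopic there; so $\gamma_K=\psi_\rep(\phi_\rep(\gamma_K))$ is null-homotopic in $\psi_\rep(U_{n-K})=V_{n-K}$; and applying $f^K$, which maps $V_{n-K}$ onto $V_n$ by Eq.~\eqref{eq:fvv}, shows that $\gamma$ is null-homotopic in $V_n$, i.e.\ $D\subset V_n$. (One must also carry a point $z_*\in D$ along: join $z_*$ to $x_0$ by a path in $\overline D$ and lift it together with $\gamma$, so that the lifted tail designates a well-defined preimage of $z_*$ landing in $V_{n-K}$.)

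The main obstacle is precisely the italicized assertion: the lift of a loop under the branched covering $f\colon B^*_f\to B^*_f$ need not be a loop, and this is the same phenomenon as a non-properness one must rule out — a priori a sequence of points of $V_n$ could accumulate inside $B^*_f$ at a point outside $V_n$, equivalently $\psi_\rep|_{U_n}\colon U_n\to V_n$ could fail to be proper. This is delicate because, in the generality allowed here, $V_n$ need not lie in a single repelling petal. I expect it to be resolved by pushing $k$ far enough back: once $\gamma_k\subset P_\rep$, the map $f$ acts there as the translation $z\mapsto z-1$ in the coordinate $\phi_\rep$, which is injective, so the closedness of $\gamma_K$ and the inclusion of the lifted Jordan domain can be traced back to the region controlled by $U_{n-K}$, whose simple connectedness (together with the $T_1$-invariance of $\Omega$) forces that lifted domain to sit inside $V_{n-K}$; pushing forward by $f^K$ then gives $D\subset V_n$. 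An alternative bookkeeping is to note that $\phi_\att(V_n)=h_f(U_n)$ is a translate of $U_n$ (using that the lift $\wt R$ of $Rf$ maps $U_n$ onto $U_n$, since $Rf\colon B^*_{Rf}\to B^*_{Rf}$ is a surjective branched covering and $E|_{U_n}$ is an isomorphism), hence simply connected, and then to identify $V_n$ as an entire connected component of $\phi_\att^{-1}(\phi_\att(V_n))\cap B^*_f$ and invoke Corollary~\ref{cor:topo}; but proving that $V_n$ is a full component runs into the same properness point.
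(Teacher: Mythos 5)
Your setup follows the paper's strategy (pull the loop back along the backward orbit $x_k=\psi_\rep(u_0-k)$, use Lemma~\ref{lem:akiprep} and Lemma~\ref{lem:akiv} to land in $P_\rep$ with $\phi_\rep$-image in the simply connected $U_{n-K}$, contract there and push forward by $f^K$), and you correctly single out the crucial point: the $f$-lift of a loop need not be a loop. But that is exactly where your argument stops being a proof. The obstruction to closedness is the monodromy of $\gamma$ (and of its successive lifts) around the critical values of $f|_{B^*_f}$, and this obstruction lives \emph{before} the lifts reach the repelling petal: if already $\gamma_1$ fails to close (e.g.\ a small loop in $V_n$ around a critical value, whose lift starting at $x_1$ ends at a different preimage), then all subsequent $\gamma_k$ are mere arcs, and the fact that for large $k$ they sit in $P_\rep$, where $f$ acts as $z\mapsto z-1$ in the coordinate $\phi_\rep$, in no way forces their two endpoints to coincide. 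So ``pushing $k$ far enough back'' does not resolve the issue, and your alternative route (identifying $V_n$ as a full component of $\phi_\att^{-1}(\phi_\att(V_n))\cap B^*_f$) founders, as you note, on the same properness/closedness point. Without closed lifts you cannot transport a contraction (or the Jordan domain $D$) forward by $f^K$, so the proof is incomplete.

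The paper closes this gap with a genuinely dynamical argument that is absent from your proposal. Since $f|_{B^*_f}$ is conjugate to a \emph{finite-degree} Blaschke product, the monodromy permutation of $\gamma$ has finite order, so the pull-back closes up if one traverses $\gamma$ a finite number $n(k)$ of times; the real work is then to show $n(k)$ stabilizes. This uses the fact that every critical orbit of a parabolic Blaschke product converges to $\partial\D$: for $k$ large, the deep preimage curves $\gamma_k\subset f^{-k}(\gamma)$ cannot separate any critical value from $\partial\D$ (Corollary~\ref{cor:blnsep}, built on Lemma~\ref{lem:blsep}), hence have trivial monodromy and further pull-backs close after a single traversal. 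Only then does Lemma~\ref{lem:akiv} apply to a \emph{fixed} rectifiable path, giving closed curves $\gamma_{K}$ with $\phi_\rep(\gamma_K)\subset U_{n-K}$, which are contracted there and pushed forward; since what one contracts is a power of $\gamma$, one finally uses that in a planar domain a loop with null-homotopic power is null-homotopic (winding numbers). To repair your proof you need to add this multi-traversal bookkeeping and the Corollary~\ref{cor:blnsep} stabilization step, or an argument of comparable strength controlling the monodromy of $\gamma$ in $B^*_f$ minus the critical values.
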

\begin{proof}
Consider a closed loop $\gamma$ in $V_{n_1}$ and let us prove that it is contractible in $V_{n_1}$.
Let us denote by $PC$ the postcritical set of $f|_{B^*_{f}}$.
Since $PC$ is discrete, we can always deform $\gamma$ so that it is $C^1$ (hence rectifiable) and avoids $PC$.

Consider any point $w\in B^*_{Rf}$. The set $E^{-1}(\{w\})$ consists in the points $u_n = u_0+n\in U_n$, $n\in\Z$.
Let $x_n = \psi_\rep(u_n) \in V_n$.
Consider a simple path $\alpha$ in $V_{n_1}$ from $x_{n_1}$ going to any point $y_{n_1}$ on $\gamma$.
By Lemma~\ref{lem:akiv}, we can lift the curve $\alpha$ into a path sequence $\alpha_k$ of successive pull backs by $f$, stemming from $x_{n_1-k}$ and ending at some point that we denote $y_{n_1-k}$ and $\exists K_0\in\N$ such that for all $k\geq K_0$, $\alpha_k$ is included in $P_\rep$. 
By Lemma~\ref{lem:akiv} \textit{ii}, $\alpha_k \subset V_{n_1-k}$ for all $k\in\N$. 
By Lemma~\ref{lem:akiv} \textit{i}, for all $k\geq K_0$, $\phi_\rep(\alpha_k) \subset U_{n_1-k}$.
Recall that $y_{n_1-k}$ is an endpoint of $\alpha_k$, so belongs to the sets above.
For $k\geq K_0$, let $v_{n_1-k} := \phi_\rep(y_{n_1-k})$, so that $y_{n_1-k}= \psi_\rep(v_{n_1-k})$.

We want to iteratively pull back the closed curve $\gamma$ into closed curves passing through $y_{n_1-k}$.
A priori the pull-back may not close ($\gamma$ may surround post-critical points) but, given $k$, if instead we walk along $\gamma$ a finite number of times $n(k)$ depending on $k$, the pull-back will close on itself.
This holds because by hypothesis $f|_{B^*_{f}}$ is conjugated to a finite degree branched covering of $\D$.
We call $\gamma_k$ this pull-back for $n(k)$ minimal.

We claim that $n(k)$ stays bounded.
Indeed the curve $\gamma$ separates only finitely many points of PC from $\infty$: since every critical point of a parabolic Blaschke product is attracted to $\partial \D$, $\exists m\in\N$ such that if $c$ is a critical point of $f|_{B^*_f}$ and $f^k(c)$ is separated from $\infty$ by $\gamma$, then $k\leq m$.
When $k>m$ then by Corollary~\ref{cor:blnsep} applied to the set $L=cv$, where $cv$ denotes the set of critical values of $f|_{B^*_f}$, and to $K=\gamma$, the curve $\gamma_k$, which is contained in $f^{-k}(K)$, cannot separate any critical value of $f|_{B^*_f}$ from $\infty$, and then $n(k+1)=n(k)$, i.e.\ further pull-backs of $\gamma_k$ do not require following it several times (indeed $\gamma_k$ is contractible in the complement of $cv$ and this contraction can be pulled-back which implies that the endpoint of a simple pull-back of $\gamma_k$ ends at its starting point).
This proves the claim.

We can then apply Lemma~\ref{lem:akiv} \textit{i} again and get that $\exists K_1\in\N$ such that $\forall k\geq K_1$, $\gamma_k$ is contained in $P_\rep$ and $\phi_\rep(\gamma_k)\subset U_{n_1-k}$.
Since $U_{n_1-K_1}$ is simply connected, there exists an isotopy of $\phi_\rep(\gamma_{K_1})$ to a point $p$.
This isotopy can be composed by $\psi_{\rep}$
and gives an isotopy in $V_{n_1-K_1}$ of $\gamma_{K_1}$ to a point.
Then we compose by $f^{K_1}$ and get an isotopy in $V_{n_1}$ from $\gamma$ to a point.
\end{proof}

\begin{proposition}\label{prop:proper}
  For all $N\in\Z$, the restriction $f: V_N \to V_{N+1}$ is proper.
\end{proposition}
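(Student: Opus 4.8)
The plan is to reduce properness of $f:V_N\to V_{N+1}$ to properness of $Rf$ on $B^*_{Rf}$, which we already know is a branched covering (Proposition~\ref{prop:rfrc}), via the semiconjugacy $E\circ\wt\phi_\att=Rf\circ E$ restricted to the virtual basins. Concretely, a sequence $z_j\in V_N$ that leaves every compact of $V_N$ and whose images $f(z_j)$ converge to a point $w\in V_{N+1}$ must lead to a contradiction. First I would set up the coordinates as in Lemma~\ref{lem:akiv}: fix a repelling petal $P_\rep$ with $\phi_\rep(P_\rep)$ a left half-plane $H$, normalized so $\phi_\rep$ is an inverse branch of $\psi_\rep$, and write $\wt\phi_\att=T_{\sigma_0-n_0}\circ\phi_\att$, so that $\wt\phi_\att$ maps $V_n$ into $U_n$ and $\wt R=\wt\phi_\att\circ\psi_\rep$. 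The point is that $\wt\phi_\att:V_n\to U_n$ need not be injective (it is a piece of the horn map), but $E\circ\wt\phi_\att:V_n\to B^*_{Rf}$ intertwines $f$ with $Rf$, and $Rf:B^*_{Rf}\to B^*_{Rf}$ is proper once we know it is a branched covering of finite degree — but we do not yet know the degree is finite, only that it is a branched covering. So the genuine strategy must instead be a direct path-pulling argument, analogous to Lemma~\ref{lem:akiv}.

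Here is the line I would actually follow. Suppose $f:V_N\to V_{N+1}$ is not proper. Then there is a point $w\in V_{N+1}$ and a sequence $z_j\in V_N$ with $f(z_j)\to w$ but $z_j$ leaving every compact of $V_N$. Join $w$ to a well-chosen base point $y_{N+1}=\psi_\rep(u_{N+1})$, $u_{N+1}\in U_{N+1}$, by a rectifiable path $\beta$ inside $V_{N+1}$ avoiding the (discrete) postcritical set $PC$ of $f|_{B^*_f}$; by Lemma~\ref{lem:akiv} applied to $\beta$ (with $N+1$ in place of $N$), the iterated $f$-preimages $\beta_k$ of $\beta$ starting from $x_{N+1-k}=\psi_\rep(u_{N+1}-k)$ eventually lie in $P_\rep$ and satisfy $\beta_k\subset V_{N+1-k}$, with $\phi_\rep(\beta_k)\subset U_{N+1-k}$ for $k$ large. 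In particular, taking $k=1$, the path $\beta_1$ is the $f$-lift of $\beta$ inside $B^*_f$ landing at $\psi_\rep(u_{N+1}-1)=x_N\in V_N$; since $f|_{B^*_f}$ is (conjugate to) a finite Blaschke product, $f:B^*_f\to B^*_f$ is proper, so the lift $\beta_1$ stays in a compact subset of $B^*_f$, and the preimages $f^{-1}(w)\cap B^*_f$ form a finite set. Now each $z_j$ lies over $f(z_j)\to w$; since $f:B^*_f\to B^*_f$ is proper, after passing to a subsequence $z_j\to z_\infty\in B^*_f$ with $f(z_\infty)=w$. It remains to show $z_\infty\in V_N$, which contradicts $z_j$ leaving every compact of $V_N$: for this I would push forward by $E\circ\wt\phi_\att$, noting $Rf\big(E\circ\wt\phi_\att(z_j)\big)=E\circ\wt\phi_\att(f(z_j))\to E\circ\wt\phi_\att(w)\in B^*_{Rf}(A)$; since $Rf:B^*_{Rf}\to B^*_{Rf}$ is a branched covering and $E\circ\wt\phi_\att(w)$ has a neighbourhood evenly covered away from finitely many critical values, the points $E\circ\wt\phi_\att(z_j)$ accumulate on a point of $B^*_{Rf}$, hence $z_\infty$ maps into $B^*_{Rf}$ under $E\circ\wt\phi_\att$; combined with $z_\infty\in B^*_f$ and connectedness (the component of $(E\circ\wt\phi_\att)^{-1}(B^*_{Rf})$ in $B^*_f$ containing $V_N$ is exactly $V_N$, by the same argument as in the proof of Corollary~\ref{cor:cccuh} / Lemma~\ref{lem:akiv}\,\textit{ii}), this forces $z_\infty\in V_N$.

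The main obstacle I anticipate is precisely the last point: showing that $V_N$ is a \emph{full} connected component of $(E\circ\wt\phi_\att)^{-1}(B^*_{Rf}(A))$ inside $B^*_f$, so that an accumulation point of the $z_j$ that lands in $B^*_{Rf}(A)$ under $E\circ\wt\phi_\att$ and in $B^*_f$ under the identity is forced back into $V_N$ rather than into some $V_m$ with $m\neq N$ or outside $\bigcup_n V_n$. This is where the disjointness lemmas (Lemma~\ref{lem:Vndisjoint1}, Lemma~\ref{lem:Vndisjoint2}) and the simple connectivity just proved (Proposition~\ref{prop:simpleconnexe}) enter: one argues that a putative escape of the $z_j$ to $\partial V_N$ would, after following $\wt\phi_\att$ and $E$, produce a boundary approach in $B^*_{Rf}$, which by Lemma~\ref{lem:bb} applied to $Rf$ cannot be mapped back inside $B^*_{Rf}$ — contradicting $f(z_j)\to w\in V_{N+1}$ and $\wt\phi_\att(f(z_j))\to \wt\phi_\att(w)\in U_{N+1}$. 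A clean way to package all of this is to prove the equivalent statement that the map $E\circ\wt\phi_\att$ restricts to a \emph{proper} map $V_N\to B^*_{Rf}$ for every $N$ — equivalently, $\wt\phi_\att:V_N\to U_N$ is proper — and then deduce properness of $f:V_N\to V_{N+1}$ by factoring through $B^*_{Rf}$ and using properness of $Rf$ there; I would expect the properness of $\wt\phi_\att|_{V_N}$ itself to follow from the path-lifting estimates of Lemma~\ref{lem:akiv} together with the fact that $f|_{B^*_f}$ is proper.
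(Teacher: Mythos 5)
You have set the problem up correctly (a limit point $z_\infty\in\ov{V_N}\cap B^*_f$ with $z_\infty\notin V_N$ and $f(z_\infty)=w\in V_{N+1}$ is exactly the paper's point $w\in K'\cap\partial V_N$), and you rightly refuse the circular route through properness of $Rf$ on $B^*_{Rf}$. But the step that is supposed to force $z_\infty\in V_N$ — the step you yourself flag as ``the main obstacle'' — is not proved, and the mechanisms you sketch for it do not work. First, the identity $Rf\,(E\circ\wt\phi_\att(z_j))=E\circ\wt\phi_\att(f(z_j))$ is false: since $\wt\phi_\att\circ f=T_1\circ\wt\phi_\att$ on $B^*_f$ and $E\circ T_1=E$, one has $E\circ\wt\phi_\att\circ f=E\circ\wt\phi_\att$, so $E\circ\wt\phi_\att$ semiconjugates $f$ to the identity, not to $Rf$ (your identity would force $Rf=\id$ on $B^*_{Rf}$); the semiconjugacy with $Rf$ passes through the repelling parametrization ($Rf\circ E=E\circ T_{\sigma_0}\circ\phi_\att\circ\psi_\rep$ on $\Omega$), and $\phi_\rep$ is not available on all of $V_N$ in the non-tame situations the paper insists on covering. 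Second, the conclusion you draw from it, $E\circ\wt\phi_\att(z_\infty)\in B^*_{Rf}$, is true but automatic and of no help: $f(z_\infty)=w\in V_{N+1}$ gives $\wt\phi_\att(z_\infty)=\wt\phi_\att(w)-1\in U_N$ directly. The real issue is your next claim, that $V_N$ is a full connected component of $\wt\phi_\att^{-1}(U_N)\cap B^*_f$ (or, in your fallback formulation, that $\wt\phi_\att:V_N\to U_N$ is proper). That is established nowhere at this stage: the $V_n$ are defined by pushing forward by $\psi_\rep$, Corollary~\ref{cor:cccuh} concerns components of $\psi_\rep^{-1}(B^*_f)$ and not $\phi_\att$-pullbacks, and the statement is essentially equivalent to the proposition you are proving. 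The Lemma~\ref{lem:bb} argument fails for the same reason: approaching $\partial V_N$ from inside $V_N$ does not force $E\circ\wt\phi_\att$ to approach $\partial B^*_{Rf}$ — here it converges to a point of $B^*_{Rf}$.

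The paper closes precisely this gap by a concrete path pull-back, i.e.\ by actually running the Lemma~\ref{lem:akiv} machinery you correctly guessed was the right tool, but at the boundary point itself rather than on an auxiliary path to a base point (your $\beta$-paragraph plays no role in your argument). Since $f(w)\in V_{N+1}$ is interior, choose a ball $B\subset V_{N+1}$ around $f(w)$, a ball $B'\ni w$ with $f(B')\subset B$, a point $w^*\in B'\cap V_N$ written $w^*=\psi_\rep(z^*)$ with $z^*\in U_N$, and the segment $\gamma\subset B'$ from $w^*$ to $w$. Pulling $\gamma$ back repeatedly along the backward orbit $\psi_\rep(z^*-n)$ of $w^*$, Lemma~\ref{lem:akiv}~\textit{i} shows that for $n$ large the pull-back $\gamma_n$ lies in the repelling petal with $\phi_\rep(\gamma_n)\subset U_{N-n}$; hence its endpoint over $w$ lies in $V_{N-n}$, and pushing forward by $f^n$ gives $w\in V_N$, contradicting $w\in\partial V_N$. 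This (or an equivalent argument) is what your proposal is missing; without it the proof is incomplete.
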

\begin{proof}
  Let $K\subset V_{N+1}$ be compact. Recall that $V_{N+1}\subset B^*_f$.
  Since $f: B^*_f\to B^*_f$ is conjugated to a Blaschke product, it is proper so $K' := f^{-1}(K) \cap B^*_f$ is compact.
  Proving that $f: V_N \to V_{N+1}$ is proper means proving that  $f^{-1}(K) \cap V_N$ is compact.
  If it were not the case, there would be a point $w\in B^*_f$ that belongs to $K'$ and to $\partial V_N$.
  Since  $f(w)\in V_{N+1}$, there  exists $r>0$ such that $B:=B(f(w),r)\subset V_{N+1}$.
  The set $f^{-1}(B)$ contains a ball $B'$ centered on $w$.
  Choose any $w^*\in B' \cap V_N$.
  By definition of $V_N$ there is some $z^* \in U_N$ such that $\psi_\rep(z^*) = w^*$.
  We choose any rectifiable path $\gamma:[0,1]\to B'$ from $w^*$ to $w$, for instance a line segment.
  Let $w^*_{N-n} = \psi_\rep(z^*-n)$ for all $n\in\Z$.
  Since $f:B^*_f\to B^*_f$ is a finite degree branched covering, the path $\gamma$ has  a sequence of iterated lifts $\gamma_n$ starting from $w^*_{N-n}$, i.e.\ such that $\gamma_n = f\circ \gamma_{n+1}$ and $\gamma_n (0)=w^*_{N-n}$.
  We now apply Lemma~\ref{lem:akiv} \textit{i} and deduce that $\gamma_n\subset P_\rep$ for all $n$ big enough and that $\phi_\rep(\gamma_n)\subset U_{N-n}$.
  Then $\gamma_n(1) \in \psi_\rep(U_{N-n})=V_{N-n}$.
  Using that $f^n\circ\gamma_n = \gamma_0$ and that $f(V_k)= V_{k+1}$, we get $w = \gamma_0(1) = f^n(\gamma_n(1)) \in V_N$, which is a contradiction.
\end{proof}

Let
\[V_\Z=\bigcup_{n\in\Z} V_n.\]

\begin{remark}\label{rem:inj1}
  For any $n$ such that $V_n$ contains no critical point, the map $f:V_{n}\to V_{n+1}$ is an isomorphism. Indeed,  we proved that the $V_n$ are simply connected and that $f:V_{n}\to V_{n+1}$ is proper.
\end{remark} 

\begin{lemma}\label{lem:yadespc}
  There is at least one critical point of $f$ in $V_\Z$.
\end{lemma}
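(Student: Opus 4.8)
The plan is to combine three facts already established: that $B^*_{Rf}$ contains a critical point of $Rf$ (Corollary~\ref{cor:nbax}), the description of the critical points of $Rf$ as images under $E$ of precritical points of $f$ (Proposition~\ref{prop:icph}), and the transport relation $f(V_n)=V_{n+1}$ (Eq.~\eqref{eq:fvv}). The lemma should come out as essentially a bookkeeping consequence of these.

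First I would take, by Corollary~\ref{cor:nbax}, a critical point $z^*$ of the restriction $Rf:B^*_{Rf}\to B^*_{Rf}$; since $B^*_{Rf}$ is open in $\dom Rf$ it is also a critical point of $Rf:\dom Rf\to\wh\C$, and it lies in $E(\Omega)=\dom Rf\setminus\{0\}$ because $0\notin B^*_{Rf}$. By Proposition~\ref{prop:icph} there is then some $u\in\Omega$ with $E(u)=z^*$ such that $w:=\psi_\rep(u)$ is defined and precritical for $f$, i.e.\ $f^k(w)\in\on{Crit}(f)$ for some $k\ge 0$. Next I would locate $u$ among the lifts: since $B^*_{Rf}$ is simply connected (Corollary~\ref{cor:sc}) and avoids $0$ and $\infty$, its full preimage under the universal covering $E:\C\to\C^*$ is the disjoint union of the sheets $U_n=U_0+n$, so $u$ belongs to some $U_{n_1}$. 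Hence $w=\psi_\rep(u)\in\psi_\rep(U_{n_1})=V_{n_1}\subset B^*_f$, and $B^*_f$ being forward invariant, $f^k(w)\in B^*_f\subset\dom f$. Finally, iterating Eq.~\eqref{eq:fvv}, the point $c:=f^k(w)$ lies in $f^k(V_{n_1})=V_{n_1+k}\subset V_\Z$; since $c\in\on{Crit}(f)$, this is exactly the assertion.

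I do not expect a genuine obstacle here. The one point that deserves a little care is the identification $E^{-1}(B^*_{Rf})=\bigsqcup_{n\in\Z}U_n$ — equivalently, that the $E$-fibre of $z^*$ meets exactly one $U_n$ — but this is immediate from $E$ being the universal covering $\C\to\C^*$ with deck group the integer translations together with $B^*_{Rf}$ being simply connected and disjoint from $\{0,\infty\}$, so that each translate $U_0+n$ is a connected component (a sheet) of the preimage.
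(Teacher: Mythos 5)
Your proof is correct and follows essentially the same route as the paper: take a critical point of $Rf$ in $B^*_{Rf}$ from Corollary~\ref{cor:nbax}, lift it by $E$ into a sheet $U_{n_1}\subset\Omega$, use Proposition~\ref{prop:icph} to produce via $\psi_\rep$ a precritical point of $f$ lying in $V_{n_1}$, and push forward with Eq.~\eqref{eq:fvv} to place the critical point in some $V_m\subset V_\Z$. The only difference is that you spell out the final bookkeeping step (which the paper leaves implicit), so no issues.
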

\begin{proof}
By Corollary~\ref{cor:nbax}, there is at least one critical point $c$ of $Rf$ in $B^*_{Rf}$.
Let $u_0$ be the unique preimage of $c$ by $E$ in $U_0$.
It is a critical point of $\wt R$, hence of $h_f$, i.e.\ there exists $n\in\N$ such that $\psi_\rep(u_0-n)$ eventually hits a critical point of $f$ under forward iteration of $f$.
\end{proof}

Let $m_c$ and $n_c$ be the smallest and biggest $n\in\Z$ such that $V_n$ contains a critical point of $f$. We have $-\infty<m_c\leq n_c<\infty$.
\begin{lemma}\label{lem:inj2}
  For all $n\in\Z$ with $n\le m_c$, $\psi_\rep :U_n \to V_n$ is injective.
  For all $n\in\Z$ with $n>n_c$, $\phi_\att :V_n \to U_n$ is injective.
\end{lemma}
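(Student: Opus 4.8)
The plan is to prove the two injectivity statements separately; in each case the strategy is to check that the map in question is a local isomorphism and then to upgrade this to a global isomorphism using that the $V_n$ are simply connected (Proposition~\ref{prop:simpleconnexe}). The two halves are not symmetric: for $\psi_\rep$ on $U_n$ I would go through the path-lifting machinery of Lemma~\ref{lem:akiv}, whereas for $\phi_\att$ on $V_n$ a short direct argument using the attracting dynamics suffices.

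For $\psi_\rep|_{U_n}$ with $n\le m_c$: fix a repelling petal $P_\rep$ with $\phi_\rep(P_\rep)=\H_{<r}$ a left half-plane, $\phi_\rep$ normalised as an inverse branch of $\psi_\rep$, so $\psi_\rep$ is a biholomorphism of $\H_{<r}$ onto $P_\rep$. I would first show $\psi_\rep$ has no critical point on $U_n$: writing $u\in U_n\subset\Omega\subset\Xi_\rep$ as $u=z+m$ with $z\in\H_{<r}$, $m\in\N$, one has $z+i=u-(m-i)\in U_{n-m+i}\subset\Omega$ for $0\le i\le m$, so iterating $\psi_\rep\circ T_1=f\circ\psi_\rep$ (eq.~\ref{eq:psirepsc}, valid on $\Omega$) gives $\psi_\rep(z+i)=f^i(\psi_\rep(z))$ and, by the chain rule, $\psi_\rep'(u)=(f^m)'(\psi_\rep(z))\,\psi_\rep'(z)$ with $\psi_\rep'(z)\neq0$; a zero of $\psi_\rep'$ at $u$ would therefore force $\psi_\rep(u-l)\in\on{Crit}(f)$ for some $l\in\{1,\dots,m\}$, but $\psi_\rep(u-l)\in V_{n-l}$ with $n-l<n\le m_c$, contradicting the minimality of $m_c$. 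Next I would show every rectifiable path $\alpha$ in $V_n$ lifts through $\psi_\rep|_{U_n}$ from any point over $\alpha(0)$: given $u_0\in U_n$ with $\psi_\rep(u_0)=\alpha(0)$, Lemma~\ref{lem:akiv} (with $N=n$) provides, for $k$ large, an iterated $f$-pullback $\alpha_k\subset P_\rep$ with $\phi_\rep(\alpha_k)\subset U_{n-k}$ and starting point $u_0-k$; then $T_k(\phi_\rep(\alpha_k))$ is a path in $U_n$ from $u_0$, and composing with $\psi_\rep$ gives $f^k(\alpha_k)=\alpha$ (using $\psi_\rep\circ T_k=f^k\circ\psi_\rep$ on $U_{n-k}$ and $\alpha_k\subset P_\rep$). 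A surjective local homeomorphism with the path-lifting property, from a connected space onto the locally path-connected, simply connected space $V_n$, is a covering map and hence a homeomorphism; in particular $\psi_\rep|_{U_n}$ is injective.

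For $\phi_\att|_{V_n}$ with $n>n_c$: suppose $w_1,w_2\in V_n$ with $\phi_\att(w_1)=\phi_\att(w_2)$. Iterating $\phi_\att\circ f=\phi_\att+1$ (valid on $B_f\supset V_n$) gives $\phi_\att(f^j(w_1))=\phi_\att(f^j(w_2))$ for all $j$; since $w_1,w_2\in B^*_f$, for $j$ large both $f^j(w_i)$ lie in an attracting petal, on which $\phi_\att$ restricts to the injective attracting Fatou coordinate, so $f^j(w_1)=f^j(w_2)$. As $n>n_c$, none of $V_n,\dots,V_{n+j-1}$ contains a critical point of $f$, so $f^j\colon V_n\to V_{n+j}$ is an isomorphism by Remark~\ref{rem:inj1}; hence $w_1=w_2$.

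The main obstacle is the covering-space step of the first half. In the non-tame situation of Figure~\ref{fig:large} one cannot arrange $U_n$ to lie inside a genuine repelling petal, so the path-lifting really must be routed through $f$-pullbacks as in Lemma~\ref{lem:akiv}; granting this, one uses the standard fact that a local homeomorphism with the path-lifting property over a simply connected, locally path-connected base restricts to an isomorphism on each component of its source. The two ``no critical point'' computations and the whole second half are routine once the preceding results are available.
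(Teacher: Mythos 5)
Your proof is correct. The second half is essentially the paper's own argument, just phrased contrapositively (the paper takes $v\neq v'$ in $V_n$, pushes forward by $f^N$, which is injective on $V_n$ by Remark~\ref{rem:inj1}, into a petal where $\phi_\att$ is injective). For the first half, however, you take a genuinely different route. The paper's proof is a four-line mirror image of the second half: given $u\neq u'$ in $U_n$, translate by $-N$ into a half-plane where $\psi_\rep$ is injective, observe that $\psi_\rep(u-N)\neq\psi_\rep(u'-N)$ both lie in $V_{n-N}$, and apply $f^N$, which is injective from $V_{n-N}$ to $V_n$ by Remark~\ref{rem:inj1} because all intermediate indices are $<m_c$; the relation $\psi_\rep(u)=f^N(\psi_\rep(u-N))$ then separates the images. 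You instead show $\psi_\rep$ is unbranched on $U_n$ (your chain-rule computation, again exploiting $n\le m_c$), establish path lifting for $\psi_\rep:U_n\to V_n$ by re-running Lemma~\ref{lem:akiv}, and conclude by the covering criterion over the simply connected base $V_n$ (Proposition~\ref{prop:simpleconnexe}). Both work: the paper's argument is shorter and leans on Remark~\ref{rem:inj1} (hence on Propositions~\ref{prop:simpleconnexe} and~\ref{prop:proper}) for both claims, whereas yours bypasses properness for the first claim and actually yields the stronger conclusion that $\psi_\rep:U_n\to V_n$ is an isomorphism, at the cost of heavier machinery. One point you should make explicit if you keep your route: Lemma~\ref{lem:akiv} only provides lifts of \emph{rectifiable} paths, while the quoted fact (``local homeomorphism with the path-lifting property over a simply connected, locally path-connected base, with connected source, is a homeomorphism'') is usually stated with lifting of arbitrary continuous paths. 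This is harmless but needs a sentence: for injectivity it suffices to run the monodromy argument for a rectifiable loop $\psi_\rep\circ\beta$, where $\beta$ joins two putative preimages in $U_n$, using a homotopy to the constant path all of whose intermediate paths are rectifiable (e.g.\ transport the straight-line homotopy by a Riemann map of $V_n$), together with existence and uniqueness of lifts of those intermediate paths.
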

\begin{proof}
  For the first claim, let $u\neq u'$ in $U_n$. 
  There is $N\in\N$ such that $u-N$ and $u'-N$ belong to a half-plane where $\psi_\rep$ is injective.
  Then $\psi_\rep(u-N)\neq\psi_\rep(u'-N)$ and both belong to $V_{n-N}$.
  Moreover $\psi_\rep(u) = f^N(\psi_\rep(u-N))$ and $\psi_\rep(u') = f^N(\psi_\rep(u'-N))$ and
  $f^N : V_{n-N} \to V_n$ is injective by Remark~\ref{rem:inj1}.
  
  For the second claim, let $v\neq v'$ in $V_n$.
  There is $N\in\N$ such that $f^N(v)$ and $f^N(v')$ belong to an attracting petal where $\phi_\att$ is injective.
  The map $f^N$ is injective on $V_n$ by Remark~\ref{rem:inj1}.
  We thus have $\phi_\att(v) = \phi_\att(f^N(v))-N \neq \phi_\att(f^N(v'))-N =\phi_\att(v')$.
\end{proof}

The critical portrait of a map $h:U\to V$ is the combinatorial data giving the number of critical points, the multiplicities and which critical points have the same image. Below we use the notion of cover-equivalence from Definition~\ref{def:covereq}.

\begin{corollary}\label{cor:portrait critique}
    The map $Rf:B^*_{Rf}\to B^*_{Rf} $ is equivalent to  $f^{q-p}: V_p\to V_{q}$ for any $p,q\in\Z$ such that $p\le m_c$ and $q>n_c$. In particular they have the same critical portrait.
\end{corollary}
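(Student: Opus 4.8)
The plan is to exhibit an explicit equivalence diagram using the maps $E$, $\psi_\rep$ and $\phi_\att$, and to combine the surjectivity and injectivity statements proved just above. Recall the commuting relation $E\circ\wt R = Rf\circ E$, the relation $\wt R = \wt\phi_\att\circ\psi_\rep$ with $\wt\phi_\att = T_{\sigma_0-n_0}\circ\phi_\att$, and that $\wt R(U_n)\subset U_n$. The key observation is that on the relevant ranges of indices the maps $\psi_\rep|_{U_p}:U_p\to V_p$ and $\phi_\att|_{V_q}:V_q\to U_q$ are bijections: surjectivity of $\psi_\rep$ onto each $V_n$ holds by the very definition $V_n=\psi_\rep(U_n)$, and injectivity on $U_p$ for $p\le m_c$ is Lemma~\ref{lem:inj2}; similarly $\phi_\att:V_q\to U_q$ is injective for $q>n_c$ by Lemma~\ref{lem:inj2}, and it is surjective onto $U_q$ because $\wt R(U_q)=\wt\phi_\att(V_q)\subset U_q$ while one checks $\wt R|_{U_q}$ is onto $U_q$ — more carefully, $\phi_\att(V_q)=\phi_\att(\psi_\rep(U_q))$ and since $\psi_\rep:U_q\to V_q$ is onto, $\phi_\att(V_q)=\wt\phi_\att(V_q)$ translated, equal to $\wt R(U_q)$; that $\wt R$ maps $U_q$ onto $U_q$ follows from $Rf$ mapping $B^*_{Rf}$ onto $B^*_{Rf}$ (a branched covering of a simply connected domain to itself is surjective) together with $E:U_q\to B^*_{Rf}$ being a bijection.

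Next I would assemble the two-square diagram. First, since $E:U_0\to B^*_{Rf}$ is a conformal isomorphism and $\wt R(U_n)\subset U_n$, the map $Rf:B^*_{Rf}\to B^*_{Rf}$ is equivalent to $\wt R:U_q\to U_q$ for any fixed $n$; in particular taking $n=q$ gives the equivalence
\[
\begin{tikzcd}
  U_q \arrow[r, "E"] \arrow[d, "\wt R"'] & B^*_{Rf} \arrow[d, "Rf"]\\
  U_q \arrow[r, "E"'] & B^*_{Rf}
\end{tikzcd}
\]
(using $\wt R(U_q)\subset U_q$ and the surjectivity just discussed, both vertical arrows have the same target and $E$ is bijective on $U_q$). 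Now I factor $\wt R|_{U_p}$: we have $\wt R = \wt\phi_\att\circ\psi_\rep$, and $\psi_\rep$ maps $U_p$ bijectively onto $V_p$ while $\wt\phi_\att = T_{\sigma_0-n_0}\circ\phi_\att$ maps $V_q$ bijectively onto $U_q$. The intermediate dynamics is $f^{q-p}:V_p\to V_q$: indeed iterating $f(V_n)=V_{n+1}$ (equation~\eqref{eq:fvv}) gives $f^{q-p}(V_p)=V_q$, and via the conjugacy relations $\psi_\rep\circ T_1 = f\circ\psi_\rep$ on $\Omega$ and $\phi_\att\circ f = \phi_\att+1$ one gets $\phi_\att\circ f^{q-p}\circ\psi_\rep = \phi_\att\circ\psi_\rep\circ T_{q-p} = h_f\circ T_{q-p}$ restricted appropriately, matching $\wt R$ up to the translation absorbed in $\wt\phi_\att$. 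Concretely, for $u\in U_p$, writing $v=\psi_\rep(u)\in V_p$, one has $\wt\phi_\att(f^{q-p}(v)) = T_{\sigma_0-n_0}(\phi_\att(f^{q-p}(v))) = T_{\sigma_0-n_0}(\phi_\att(v)+(q-p)) = T_{q-p}(\wt\phi_\att(\psi_\rep(u))) = T_{q-p}(\wt R(u))$, and since $\wt R(u)\in U_p$ and $T_{q-p}(U_p)=U_q$, this identifies $\wt\phi_\att\circ f^{q-p}\circ\psi_\rep|_{U_p}$ with $T_{q-p}\circ\wt R|_{U_p}: U_p\to U_q$. Precomposing the equivalence above with the translation isomorphism $T_{q-p}:U_p\to U_q$ then yields that $Rf:B^*_{Rf}\to B^*_{Rf}$ is equivalent to $\wt R:U_p\to U_p$, hence to $f^{q-p}:V_p\to V_q$ via the pair of bijections $(\psi_\rep|_{U_p}^{-1}, \wt\phi_\att|_{V_q})$ suitably composed with $E$ and $T_{q-p}$.

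Putting this together, the full equivalence diagram is
\[
\begin{tikzcd}
  B^*_{Rf} \arrow[r, "E^{-1}|"] \arrow[d, "Rf"'] & U_p \arrow[r, "\psi_\rep"] \arrow[d, "\wt R"] & V_p \arrow[d, "f^{q-p}"]\\
  B^*_{Rf} \arrow[r, "E^{-1}|"'] & U_p \arrow[r, "T_{q-p}\,\text{then}\,\psi_\rep"'] & V_q
\end{tikzcd}
\]
and the composite horizontal maps are bijections by the injectivity/surjectivity discussion, giving the equivalence of $Rf:B^*_{Rf}\to B^*_{Rf}$ with $f^{q-p}:V_p\to V_q$. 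The final sentence, that they have the same critical portrait, is then immediate: equivalent holomorphic maps in the sense of Definition~\ref{def:covereq} have conjugate behaviour near every point, so the number of critical points, their multiplicities, and the coincidence pattern among their images are preserved by $\phi$ and $\psi$.

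I expect the main obstacle to be the careful bookkeeping of which maps are bijective on which index ranges — specifically verifying surjectivity of $\phi_\att:V_q\to U_q$ for $q>n_c$, which is not literally Lemma~\ref{lem:inj2} (that lemma only gives injectivity) but follows from combining $\wt R(U_q)$ being all of $U_q$ (via surjectivity of $Rf$ on $B^*_{Rf}$) with $\psi_\rep:U_q\to V_q$ being onto. Everything else is diagram-chasing with the standard Fatou-coordinate conjugacy relations, which the earlier sections have already established on the relevant domains; no genuinely new estimate is needed.
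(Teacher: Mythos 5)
Your argument is essentially the paper's: the paper proves this corollary by the single (non-commuting) chain $B^*_{Rf}\xrightarrow{E^{-1}}U_p\xrightarrow{\psi_\rep}V_p\xrightarrow{f^{q-p}}V_q\xrightarrow{\phi_\att}U_q\xrightarrow{E}B^*_{Rf}$, asserting that every non-$f$ arrow is an isomorphism (injectivity from Lemma~\ref{lem:inj2}, surjectivity of $\psi_\rep:U_p\to V_p$ from the definition of $V_p$), and your prose reproduces exactly this, with the translation bookkeeping via $\wt\phi_\att=T_{\sigma_0-n_0}\circ\phi_\att$ and $\wt R$ handled correctly. You are in fact more careful than the paper on one point the paper glosses over: the surjectivity of $\phi_\att:V_q\to U_q$, which you correctly reduce to $\wt R(U_q)=U_q$, i.e.\ to surjectivity of the branched covering $Rf:B^*_{Rf}\to B^*_{Rf}$ (connectedness of the target suffices there; simple connectivity is not needed).

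The one genuine defect is your final ``full equivalence diagram'', which contradicts the correct construction you give just before it. As drawn, the right-hand square does not commute: going right then down gives $f^{q-p}\circ\psi_\rep=\psi_\rep\circ T_{q-p}$ on $U_p$, while going down then right gives $\psi_\rep\circ T_{q-p}\circ\wt R$, and these differ unless $\wt R=\mathrm{id}$. Moreover the bottom composite $\psi_\rep\circ T_{q-p}\circ (E|_{U_p})^{-1}:B^*_{Rf}\to V_q$ is not known to be a bijection: Lemma~\ref{lem:inj2} gives injectivity of $\psi_\rep$ only on $U_n$ for $n\le m_c$, and $q>n_c\ge m_c$, so injectivity on $U_q$ is exactly what you cannot use. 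The correct target identification must go through the attracting coordinate, namely $\beta=\bigl(\wt\phi_\att|_{V_q}\bigr)^{-1}\circ (E|_{U_q})^{-1}:B^*_{Rf}\to V_q$, which is a bijection by Lemma~\ref{lem:inj2} together with your surjectivity argument, and then $\beta\circ Rf = f^{q-p}\circ\bigl(\psi_\rep\circ (E|_{U_p})^{-1}\bigr)$ follows from $\wt\phi_\att\circ f^{q-p}\circ\psi_\rep=T_{q-p}\circ\wt R$ on $U_p$ — this is precisely what your second paragraph establishes and what the paper's diagram encodes. So replace the last diagram by this version (or simply delete it); with that correction the proof is complete and coincides with the paper's.
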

\begin{proof}
In the following \emph{non-commuting} diagram, $Rf$ is the composition of all arrows, starting from $B^*_{Rf}$.
Every arrow which is not an $f$ is an isomorphism between the indicated sets.
\[
\begin{tikzcd}
   U_p \arrow[d, "\psi_\rep"']  &  B^*_{Rf} \arrow[l , "E^{-1}"'] &  \arrow[l, "E" '] U_{q} \\
    V_p \arrow[r,"f"] &  \cdots \arrow[r , "f"] & V_{q} \arrow[u,"\phi_\att"']
\end{tikzcd}
\]
\end{proof}

\begin{remark}\label{rem:bp}
Up to now we only knew that $Rf:B^*_{Rf}\to B^*_{Rf} $ was a branched covering with at least one critical point on the hyperbolic simply connected open set $B^*_{Rf}$. Thanks to the statement above, we now know it is conjugated to a Blaschke product: indeed, $f:V_n\to V_{n+1}$ has finite degree.
\end{remark}

By hypothesis, $f:B^*_f\to B^*_f$ is of type $d_m \circ \cdots \circ d_1$, so conjugate to a composition of Blaschke products $G = G_m \circ \cdots \circ G_1$ of degree $\on{deg} G_i = d_i$. 
Let $\phi:B^*_f\to \D$ be a conjugacy. 
We introduce sets indexed by rational numbers, $W_{k/m}$ for $k\in\Z$ as follows.
First, we let
\[\forall n\in\Z, W_{n} = \phi(V_n).\]
Then for all $i\in\{1,m-1\}$ we inductively define
\[W_{n+\frac{i}{m}} = G_{i}(W_{n+\frac{i-1}{m}}).\]
Then actually
\[\forall k\in\Z, W_{\frac{k+1}{m}}=G_{i+1}(W_{\frac{k}{m}})
\]
where $i$ is the remainder of the Euclidean division of $k$ by $m$, since $G_m\circ\cdots\circ G_1(W_n) = G_m\circ\cdots\circ G_1(\phi(V_n)) = G\circ\phi(V_n) = \phi\circ f(V_n) = \phi(V_{n+1}) = W_{n+1}$.
See Figure~\ref{fig:W}.

\begin{figure}[htbp]
\small
\centering
\begin{tikzpicture}
\node at (0,0) {\includegraphics[width=14cm]{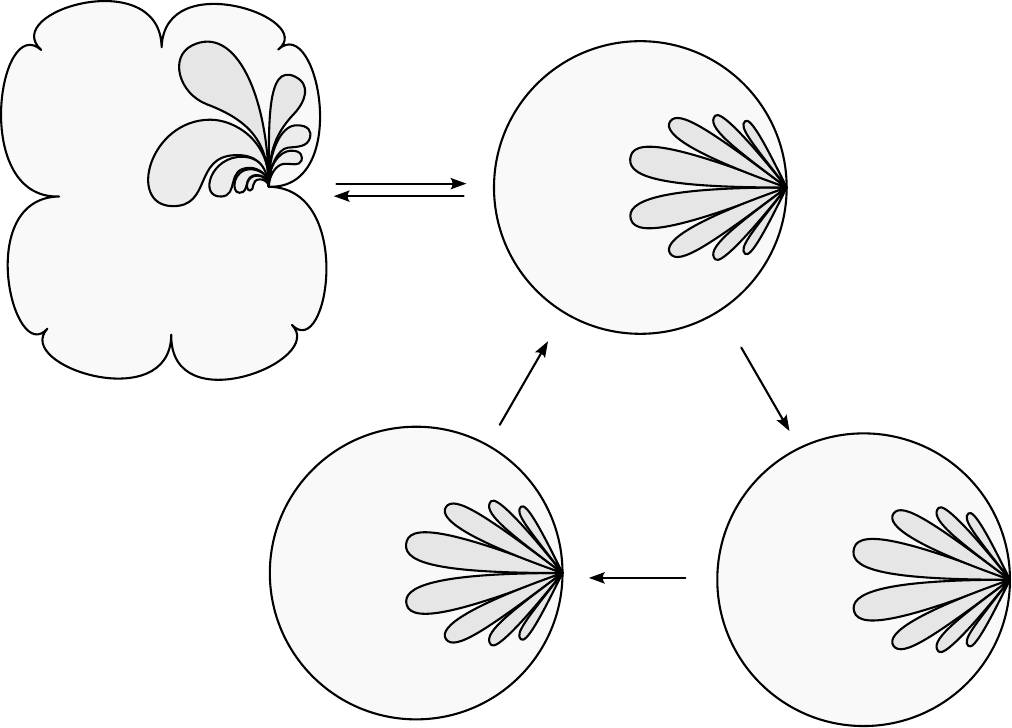}};

\node at (-1.55,2.8) {$\phi$};
\node at (-1.4,2) {$\phi^{-1}$};

\node at (4.2,-0.2) {$G_1$};
\node at (1.9,-3.5) {$G_2$};
\node at (-0.3,-0.2) {$G_3$};

\node at (-5.5,0.2) {$B^*_f$};

\node at (-2.6,4.1) {$V_{-2}$};
\node at (-4,4) {$V_{-1}$};
\node at (-4.55,2.6) {$V_0$};
\node at (-3.9,2) {$V_1$};

\node at (1.9,3.6) {$W_{-2}$};
\node at (1.3,2.8) {$W_{-1}$};
\node at (1.3,2.05) {$W_0$};
\node at (2,1.35) {$W_1$};

\begin{scope}[xshift=2.9cm,yshift=-5.5cm]
\node at (1.95,3.7) {$W_{-5/3}$};
\node at (1.3,2.9) {$W_{-2/3}$};
\node at (1.3,2.05) {$W_{1/3}$};
\node at (2.05,1.3) {$W_{4/3}$};
\end{scope}

\begin{scope}[xshift=-3.3cm,yshift=-5.4cm]
\node at (1.95,3.7) {$W_{-4/3}$};
\node at (1.3,2.9) {$W_{-1/3}$};
\node at (1.3,2.05) {$W_{2/3}$};
\node at (2.05,1.3) {$W_{5/3}$};
\end{scope}
\end{tikzpicture}
\caption{A simplified depiction of the sets $V_n$, and $W_{\frac{k}{m}}$. In reality some of the dark grey sets touch the boundary of the light grey sets in more than one point.}
\label{fig:W}
\end{figure}

We claim that the nice properties of the sequence $V_n$ (simple connectivity) and the restrictions $f:V_{n}\to V_{n+1}$ (properness) still hold for the intermediate $W_{k/m}$ and the restrictions of the $G_i$.
For this we will make use of the following lemma, which will be applied to $X=Y=Z=\D$.

\begin{lemma}\label{lem:topo2}
  Let $f_1:X\to Y$ and $f_2: Y\to Z$ be holomorphic\footnote{We do not seek to determine minimal hypotheses for this statement.} maps between Riemann surfaces.
  Let $A\subset X$ and $C\subset Z$ be non-empty connected open subsets, and assume that $f_2\circ f_1$ restricts to a proper map from $A$ to $C$.
  Let $B=f_1(A)$.
  Then:
  \begin{enumerate}
    \item $f_1 :A\to B$ is proper,
    \item $f_2 :B\to C$ is proper.
    \item $B$ is a connected component of $f_2^{-1}(C)$,
  \end{enumerate}
\end{lemma}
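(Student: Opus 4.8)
The plan is to prove the three assertions in the order (1), (3), then (2), since properness of $f_1$ combined with a connectedness argument gives (3), and (1) together with (3) yields (2) almost formally.

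First I would prove that $f_1:A\to B$ is proper. Let $L\subset B$ be compact; I must show $f_1^{-1}(L)\cap A$ is compact. Since $f_2\circ f_1:A\to C$ is proper and $f_2(L)\subset C$ is compact, the set $(f_2\circ f_1)^{-1}(f_2(L))\cap A$ is compact; call it $K$. Now $f_1^{-1}(L)\cap A\subset K$, because if $a\in A$ and $f_1(a)\in L$ then $f_2(f_1(a))\in f_2(L)$. So $f_1^{-1}(L)\cap A$ is a closed subset (closed in $A$, as $f_1$ is continuous and $L$ is closed in $B$ hence $f_1^{-1}(L)$ is closed in $A$... one must be slightly careful: $L$ compact in $B$ need not be closed in $X$, but it is closed in $B=f_1(A)$, and that is enough to conclude $f_1^{-1}(L)\cap A$ is closed in $A$) of the compact set $K$, hence compact. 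This gives (1).

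Next, (3): $B=f_1(A)$ is open because a proper holomorphic map between Riemann surfaces is open (it is nowhere constant on $A$ — indeed $f_2\circ f_1$ proper forces $f_1$ nowhere constant on $A$, since a constant branch of $f_1$ would make $f_2\circ f_1$ locally constant, contradicting properness onto the open set $C$ — and a nonconstant holomorphic map is open). Clearly $B\subset f_2^{-1}(C)$ since $f_2(f_1(A))=(f_2\circ f_1)(A)\subset C$. Let $B'$ be the connected component of $f_2^{-1}(C)$ containing the connected set $B$; I claim $B'=B$. The map $f_2\circ f_1$ restricted to $(f_2\circ f_1)^{-1}(C)\cap A$... instead, here is the cleaner route: $B$ is open and closed in $f_2^{-1}(C)$ — openness is done; for closedness in $f_2^{-1}(C)$, take $y\in f_2^{-1}(C)$ in the closure of $B$, pick a sequence $b_n\in B$ with $b_n\to y$; since $f_2(y)\in C$ and $C$ open, for large $n$ the $b_n$ lie in $f_1^{-1}$ of a compact neighborhood of... this needs $f_1:A\to B$ proper to pull back. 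Concretely: choose a compact neighborhood $L$ of $f_2(y)$ in $C$; then $f_1(f_1^{-1}(f_2^{-1}(L))\cap A)$ — I would rather argue: the sequence $b_n$, together with $y$, forms a compact subset $L_0$ of $f_2^{-1}(C)$ (as $f_2(L_0)$ is a compact subset of $C$, being $f_2$ of a convergent sequence with its limit, all mapping into $C$). Then by properness of $f_2\circ f_1:A\to C$, the set $(f_2\circ f_1)^{-1}(f_2(L_0))\cap A$ is compact; it contains, for each $n$, some $a_n\in A$ with $f_1(a_n)=b_n$; a subsequence $a_{n_k}\to a\in A$, and by continuity $f_1(a)=y$, so $y\in f_1(A)=B$. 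Hence $B$ is closed in $f_2^{-1}(C)$, so $B=B'$ is a connected component. This is (3).

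Finally (2): $f_2:B\to C$ is proper. Let $L\subset C$ be compact; I must show $f_2^{-1}(L)\cap B$ is compact. We have $f_2^{-1}(L)\cap B = f_1\big((f_2\circ f_1)^{-1}(L)\cap A\big)$: the inclusion $\supseteq$ is clear; for $\subseteq$, if $b\in B$ with $f_2(b)\in L$, write $b=f_1(a)$ with $a\in A$ (possible since $B=f_1(A)$), and then $a\in(f_2\circ f_1)^{-1}(L)\cap A$. Now $(f_2\circ f_1)^{-1}(L)\cap A$ is compact by properness of $f_2\circ f_1$, and $f_1$ is continuous, so its image $f_2^{-1}(L)\cap B$ is compact. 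This completes (2), and the lemma.

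I expect the main obstacle to be the book-keeping in (3) — in particular being careful that "compact in $B$" (with the subspace topology) is the right notion throughout and that one never needs $L$ to be closed in the ambient $X$ or $Z$, only in $B$ or $C$ — together with verifying that $f_1$ is nowhere constant on $A$ so that $B$ is genuinely open. Everything else is a routine chase of preimages and compactness, exactly in the spirit of Lemma~\ref{lem:proper-is-bc}.
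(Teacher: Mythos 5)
Your proof is correct and follows essentially the same route as the paper: points (1) and (2) are the same preimage/compactness chases, and (3) is the same ``no relative boundary point'' argument, the only (minor) difference being that you extract the needed convergent subsequence by lifting the $b_n$ to $A$ via properness of $f_2\circ f_1$, whereas the paper invokes the properness of $f_2:B\to C$ already established in point (2). Your extra care about $f_1$ being nowhere constant on $A$ (so that $B$ is open) is welcome, and the remaining degenerate case it does not rule out ($A$ a compact component) is equally glossed over by the paper and covered by its footnote on hypotheses.
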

\begin{proof}
  Let us call $f : A\to C$ the restriction to $A$ of $f_2\circ f_1$.
  By hypothesis, $f$ is proper.
  We have $f_2(B) = f_2(f_1(A)) = f(A) \subset C$.
  
  1. Let $K$ be a compact subset of $B$. Then $A\cap f_1^{-1}(K)$ is a closed subset of $A$ by continuity of $f_1$, and a subset of $f^{-1}(f_2(K))$, which is a compact subset of $A$ by continuity of $f_2$ and properness of $f$. So $A\cap f_1^{-1}(K)$ is compact.
  
  2. By definition of $B=f_1(A)$, $f_1:A\to B$ is surjective.
  Hence for every compact subset $K$ of $C$, $B\cap f_2^{-1}(K) = f_1(f^{-1}(K))$, which is compact by properness of $f$ and continuity of $f_1$.

  3. As a continuous image of a connected set, $B$ is connected.
  Since $f_1$ is holomorphic, $B$ is an open subset of $Y$.
  We have $f_2(B)\subset C$ i.e.\ $B \subset f_2^{-1}(C)$.
  Let $B'$ be the connected component of $f_2^{-1}(C)$ that contains $B$.
  If $B'$ were not equal to $B$, there would be some $z$ in the boundary of $B$ relative to $B'$.
  Let $z_n\in B$ that tends to $z$.
  Then $f_2(z_n)$ tends to $f_2(z)$, which belongs to $C$.
  By properness of $f_2: B\to C$, this would imply that we can extract a subsequence of $z_n$ that converges in $B$.
  So $z\in B$ but this cannot be since $B$ is an open subset of $B'$ and $z$ is in the relative boundary.
\end{proof}

We know that $G_m\circ \cdots \circ G_1$ is proper from $W_n$ to $W_{n+1}$ (by conjugacy to $f$).
By repeated application of the first two points of Lemma~\ref{lem:topo2}, we get that for $k=1,\ldots, m$, the restriction $G_k: W_{n+\frac{k-1}{m}} \to W_{n+\frac{k}{m}}$ is proper.

From Point 3.\ of Lemma~\ref{lem:topo2} we have that
for $k\in \{1,\ldots,n\}$, $W_{n+(k-1)/m}$ is a connected component of $G_k^{-1}(W_{n+k/m})$ and by Corollary~\ref{cor:topo} applied to $U=\D$ and the fact that $V_{n+1}$ is simply connected, we deduce by decreasing induction on $k$ that the $W_{n+(k-1)/m}$ is simply connected too.

We consider the fibred dynamical system 
\[
\Gamma: \left\{
\begin{array}{rcl}
\Z/m\Z\times\D & \to & \Z/m\Z\times\D
\vspace*{.3cm}\\
([k-1],z) & \mapsto & ([k], G_{k} (z))
\end{array}
\right.
\]
where $k\in\{1,\ldots,m\}$ and for $n\in\Z$, $[n]$ denotes its class of $n$ modulo $m\Z$. The map $\Gamma$ sends $\{[t]\}\times W_{t/m}$ to $\{[t+1]\}\times W_{(t+1)/m}$ properly.

The sets $\{[t]\}\times W_{t/m}$ are disjoint for distinct $t\in\Z$: indeed this is obvious if $t\not\equiv t'\bmod m$, and if $t\equiv t'\bmod m$ then this follows: in the case $t\equiv 0$ from disjointness of the $V_n$, hence of the $W_n$, for different $n$; in the case $t = n + i/m$ with $0<i<m$, from the fact that $G_{m}\circ\cdots\circ G_{i+1} (W_{n+i/m}) = W_{n+1}$ and the disjointess of the $W_n$. 

Let $r_c$ and $s_c$ be be the smallest and biggest $t\in\Z$ such that $\{[t]\}\times W_{t/m}$ contains a critical point of $\Gamma$.
We have $-\infty<r_c\leq s_c<\infty$ and for $t\in\Z$ such that $t<r_c$ or $t>s_c$, the map $\Gamma$ is an isomorphism from $\{[t]\}\times W_{t/m}$ to $\{[t+1]\}\times W_{(t+1)/m}$.
Corollary~\ref{cor:portrait critique} hence extends into the following statement, with the same line of arguments:
\begin{proposition}\label{prop:ce2}
  The restriction $Rf:B^*_{Rf}\to B^*_{Rf} $ is cover-equivalent to  $\Gamma^{s-r}: \{[r]\}\times W_{r/m}\to \{[s]\}\times W_{s/m}$ for any $r,s\in\Z$ such that $r\le r_c$ and $s>s_c$.
  In particular they have the same critical portrait.
\end{proposition}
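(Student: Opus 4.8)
The plan is to re-run the non-commuting-diagram argument behind Corollary~\ref{cor:portrait critique}, now routed through the fibred system $\Gamma$ and through the conjugacy $\phi:B^*_f\to\D$ (which satisfies $\phi\circ f=G\circ\phi$ and $\phi(V_n)=W_n$), and to exploit crucially that outside the window $[r_c,s_c]$ the map $\Gamma$ acts by isomorphisms between the pieces $\{[t]\}\times W_{t/m}$.

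First I would reduce to a single convenient pair $(r,s)$. For $r\le r_c$ the iterate $\Gamma^{r_c-r}:\{[r]\}\times W_{r/m}\to\{[r_c]\}\times W_{r_c/m}$ is a composition of the maps $\Gamma|_{\{[t]\}\times W_{t/m}}$, $r\le t<r_c$, each of which is an isomorphism by the choice of $r_c$, hence $\Gamma^{r_c-r}$ is an isomorphism; symmetrically $\Gamma^{\,s-s_c-1}:\{[s_c+1]\}\times W_{(s_c+1)/m}\to\{[s]\}\times W_{s/m}$ is an isomorphism for $s>s_c$. Since $r\le r_c\le s_c<s_c+1\le s$, all exponents in $\Gamma^{s-r}=\Gamma^{\,s-s_c-1}\circ\Gamma^{\,s_c+1-r_c}\circ\Gamma^{\,r_c-r}$ are nonnegative, so all the maps $\Gamma^{s-r}:\{[r]\}\times W_{r/m}\to\{[s]\}\times W_{s/m}$ with $r\le r_c$, $s>s_c$ are equivalent to one another; it therefore suffices to prove the statement for one good choice of $(r,s)$.

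Next I would take $r=mp$ and $s=mq$, where $p\le m_c$ is chosen with $mp\le r_c$ and $q>n_c$ with $mq>s_c$ (possible because Corollary~\ref{cor:portrait critique} allows $p$ arbitrarily negative and $q$ arbitrarily positive, while $m_c,n_c,r_c,s_c$ are finite), and one checks $r\le r_c$, $s>s_c$. Since $mp\equiv mq\equiv 0\pmod m$, the endpoints are $\{[r]\}\times W_{r/m}=\{[0]\}\times W_p$ and $\{[s]\}\times W_{s/m}=\{[0]\}\times W_q$; by the definition of $\Gamma$, its $m$-th iterate on the fibre over $[0]$ is $([0],z)\mapsto([0],G_m\circ\cdots\circ G_1(z))=([0],G(z))$, so $\Gamma^{s-r}=\Gamma^{m(q-p)}$ restricted there is, up to the bijections $z\mapsto([0],z)$, the map $G^{q-p}:W_p\to W_q$. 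The restriction of $\phi$ then conjugates $f^{q-p}:V_p\to V_q$ (well defined by~\eqref{eq:fvv}) to $G^{q-p}:W_p\to W_q$, while Corollary~\ref{cor:portrait critique}, applicable since $p\le m_c$ and $q>n_c$, identifies $f^{q-p}:V_p\to V_q$ up to equivalence with $Rf:B^*_{Rf}\to B^*_{Rf}$. By transitivity $Rf$ is equivalent (hence, after identifying the targets, cover-equivalent) to $\Gamma^{s-r}:\{[r]\}\times W_{r/m}\to\{[s]\}\times W_{s/m}$; by the first reduction this holds for every admissible $(r,s)$, and equivalent maps have the same critical portrait.

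I do not expect a genuine obstacle here: the argument rests entirely on Corollary~\ref{cor:portrait critique}, on the conjugacy $\phi$, on Lemma~\ref{lem:topo2} (already used to set up the $W_{k/m}$), and on the isomorphy of $\Gamma$ off $[r_c,s_c]$. The delicate part is bookkeeping rather than mathematics: keeping straight the half-integer labels $k/m$ together with the residues $[k]\in\Z/m\Z$, checking that $\Gamma^m$ on the fibre over $[0]$ is exactly $G$, and verifying that the constraints $p\le m_c$, $mp\le r_c$ (and their analogues for $q$) can be met simultaneously.
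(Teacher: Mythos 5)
Your proposal is correct and takes essentially the same route as the paper, whose proof of this proposition is precisely to extend Corollary~\ref{cor:portrait critique} ``with the same line of arguments'': the conjugacy $\phi$ identifying $f^{q-p}:V_p\to V_q$ with the corresponding iterate of $\Gamma$ on the fibre over $[0]$, together with the fact that $\Gamma$ is an isomorphism on the pieces $\{[t]\}\times W_{t/m}$ with $t<r_c$ or $t>s_c$, which lets the window $(r,s)$ be arbitrary. Your reduction to $r=mp$, $s=mq$ and the black-box use of the corollary is simply a clean way of organizing that same argument, and your remark that ``cover-equivalent'' here means equivalent after identifying the targets matches the paper's (slightly loose) usage.
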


Consider now the restriction $\Gamma_t$ of $\Gamma: \{[t]\} \times W_{t/m} \to \{[t+1]\}\times W_{(t+1)/m}$, which as we indicated is proper between simply connected open subsets of copies of $\D$.
These sets are thus isomorphic to $\D$ via some map $\phi_t : \{[t]\} \times W_{t/m} \to \D$ and the composition
\[H_t:= \phi_{t+1} \circ \Gamma_t \circ \phi_t^{-1}:\D\to\D\]
gives a proper self map of $\D$, i.e.\ a Blaschke product.
The degree $d_t$ of $H_t$ is equal to one plus its number $c_t$ of critical points counted with multiplicity:
\[d_t=c_t+1.\]
In particular $H_t$ is an isomorphism when $c_t=0$.
By Proposition~\ref{prop:ce2}, the restriction $Rf:B^*_{Rf}\to B^*_{Rf} $ is cover-equivalent to the composition
\[ H_{s_{c}}\circ H_{s_{c}-1}\circ \cdots \circ H_{r_c+1}\circ H_{r_c}:\D\to \D
.\]
Note that not all those Blaschke product necessarily have degree $>1$.
This composition belongs to the class $\cal B_{d'_p\circ\cdots \circ d'_1}$ where $(d'_p, \ldots,d'_1$) is obtained from the sequence of degrees $(\deg H_{s_c},\deg H_{s_c-1}, \ldots, \deg H_{r_c})$, from which we remove the terms equal to $1$. Hence  $d'_i\ge 2$ for all $i$.

\begin{figure}[htbp]
\centering
\begin{tikzpicture}
\node at (0,0) {\includegraphics[width=15cm]{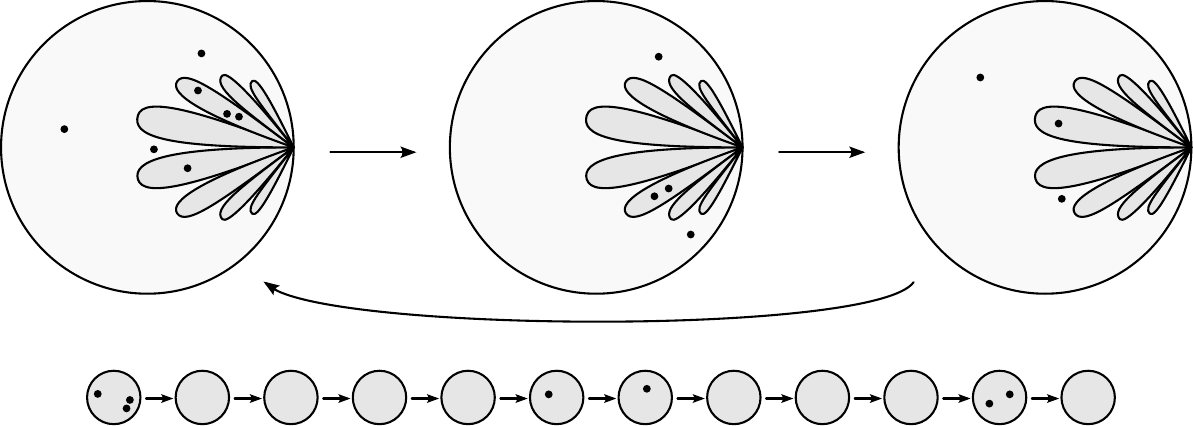}};

\node at (-2.8,1.2) {$G_1$};
\node at (2.8,1.2) {$G_2$};
\node at (2.8,-0.8) {$G_3$};

\begin{scope}[xshift=-6.2cm,yshift=0.8cm]
\node at (0.65,1.1) {\footnotesize $-2$};
\node at (0,0.4) {\footnotesize $-1$};
\node at (0.1,-0.4) {\footnotesize $0$};
\node at (.75,-1.1) {\footnotesize $1$};
\end{scope}

\begin{scope}[xshift=-0.66cm,yshift=0.8cm]
\node at (0.65,1.1) {\footnotesize $-5/3$};
\node at (0,0.4) {\footnotesize $-2/3$};
\node at (0.1,-0.4) {\footnotesize $1/3$};
\node at (.75,-1.1) {\footnotesize $4/3$};
\end{scope}

\begin{scope}[xshift=5cm,yshift=0.8cm]
\node at (0.65,1.1) {\footnotesize $-4/3$};
\node at (0,0.4) {\footnotesize $-1/3$};
\node at (0.1,-0.4) {\footnotesize $2/3$};
\node at (.75,-1.1) {\footnotesize $5/3$};
\end{scope}

\node at (-6.1,-3) {\footnotesize $-2$};
\node at (-5,-3) {\footnotesize $-5/3$};
\node at (-3.9,-3) {\footnotesize $-4/3$};
\node at (-2.71,-3) {\footnotesize $-1$};
\node at (-1.7,-3) {\footnotesize $-2/3$};
\node at (-0.6,-3) {\footnotesize $-1/3$};
\node at (0.62,-3) {\footnotesize $0$};
\node at (1.73,-3) {\footnotesize $1/3$};
\node at (2.86,-3) {\footnotesize $2/3$};
\node at (3.98,-3) {\footnotesize $1$};
\node at (5.10,-3) {\footnotesize $4/3$};
\node at (6.22,-3) {\footnotesize $5/3$};
\end{tikzpicture}
\caption{In this example, all critical points are simple and indicated by black dots.
Initially, the critical point count sequence is $(c_1,c_2,c_3)=(7,4,3)$.
After the gleaning process we get the sequence $(c'_1,\ldots,c'_4) = (3,1,1,2)$.}
\label{fig:W-glean}
\end{figure}

Let us show that $d'_p\circ\cdots \circ d'_1$ is a descendant of $d_m\circ \ldots d_1$.
We check the three points of Definition~\ref{def:glean}:
\\
1.\makebox[2mm]{}By Corollary~\ref{cor:nbax}, $B^*_{Rf}$ contains at least one critical point, so $p\geq 1$.
\\
2.\makebox[2mm]{}By construction $c'_i:=d'_i-1\geq 1$.
\\
3.\makebox[2mm]{}The number $d'_i$ was defined as the degree of $H_{t(i)}$ for some $t(i)\in\Z$.
Let $\beta(i)$ be the representative in $\{1,\ldots, m\}$ of the class $[t(i)]\in\Z/m\Z$.
Then, counting with multiplicity, the sum of $c'_i$ for the $i$ such that $\beta(i)=j$ is at most the number $c_j$ of critical points of $G_{j}$ in $\D$ because $c'_i$ is the number of critical points of $G_{j}$ in $W_{t(i)/m}$, and these sets are disjoint when $i$ varies in $\beta^{-1}(\{j\})$.
In other words:
\[\sum_{i\in\beta^{-1}(\{j\})} c'_i \leq c_j.\]

For the collection of all attracting axes $A$, the corresponding collection of gleanings is disjoint in the sense of Definition~\ref{def:glean2} because the associated $\{[t]\} \times W_{t/m}[A]$ and $\{[t']\} \times W_{t'/m}[A']$ are disjoint if $A\neq A'$ ($t$ may equal $t'$ or not) because they are mapped by a composition of $G_k$ and $\phi^{-1}$ to sets $V_n(A)$ and $V_{n'}(A')$, which are disjoint (Lemma~\ref{lem:Vndisjoint2}).

\section{A generalisation}\label{sec:grl}

There are several ways these results can be generalised, and these generalisations can be mixed. Here we explain what happens for a variation of the renormalization where the maps $f$ and $Rf$ can have any (and not necessarily the same) rational rotation numbers instead of $0$.

We start with the case where $f$ still has only one attracting axis but $Rf=R_{p/q,\pm} f$ has rotation number $p/q$ with $p\wedge q=1$, $q\geq 1$.
For this, we need to expand the definition of $Rf=R_{0,\pm} f$ given in Section~\ref{sec:def}.

In the case $\pm = +$, recall that $R_{0,+}f$ has been defined as a restriction of $\ell_\sigma$ with
\[ \ell_\sigma := \ov E \circ\, T_\sigma\circ h_f\circ \ov E^{-1} .\]
Here instead of requiring that $\ell_{\sigma_0}'(0) = 1$ we choose $\sigma_0$ such that
$\ell_{\sigma_0}'(0) = e^{2\pi ip/q}$ and we set, as before:
\[ R_{p/q,+} f = \ell_{\sigma_0}|_{\dom^*(\ell_{\sigma_0})}\]
completed by $R_{p/q,+} f(0)=0$.
Since we just changed the value of $\sigma_0$, this means we just have
$ R_{p/q,+}f = e^{2\pi ip/q} \times R_{0,+}f$.

In the case $\pm = -$, $\ell_\sigma$ are defined by the same formulae as above but with $E(z)=\exp(2\pi i z)$ replaced by $E(z)=\exp(-2\pi i z)$.
There is some $c_0\in\C^*$ such that for all $\sigma\in\C$, $\ell_\sigma(z) \sim e^{-2\pi i\sigma} c_0 z$ as $z\to 0$.
Again we choose the $\sigma_0\in\Z$, unique modulo $\Z$,
such that the multiplier at $0$ of $R_{p/q,-}f$ is $\exp(2\pi i p/q)$.
As a consequence, $R_{p/q,-}f = e^{2\pi ip/q} \times R_{0,-}f$.

The two cases $\pm\in\{-,+\}$ are summed up as:
\[ R_{p/q,\pm}f = e^{2\pi ip/q} \times R_{0,\pm}f .\]

It is still true that $R_{p/q,\pm} f$ is parabolic at $0$, i.e.\ that $(R_{p/q,\pm} f)^q$ is not the identity near $0$ by Remark~\ref{rem:atLeastOneAxis}.
The attracting/repelling axes of $R_{p/q,\pm}f$ are just those of the map $(R_{p/q,\pm}f)^q$, which is tangent to the identity.
The map $R_{p/q,\pm}f$ now has a number of attracting axes that is a multiple of $q$.
The immediate basin $B^*_{R_{p/q,\pm} f}(A)$ of an axis $A$ is defined as the connected component of the parabolic basin that contains a germ of the axis, or equivalently an attracting petal for this axis.
Note that $R_{p/q,\pm}f$ and $(R_{p/q,\pm}f)^q$ have the same parabolic basin (set of points whose orbit tends to $0$ without falling on $0$).

The sets $B^*_{R_{p/q,\pm}f}(A)$ are open, simply connected and isomorphic to $\D$.
Given an axis $A$, we consider it lives in the tangent space to $\C$ at $0$ and we denote
\[ A'=D_0 (R_{p/q,\pm}f)(A) \]
its image by the differential of $R_{p/q,\pm}f$ at $0$: then $A'$ is another axis.
Of course $A$ is back to itself after exactly $q$ iterations of $D_0(R_{p/q,\pm} f)$.
We replace here the notion of conjugacy by equivalence: the map $R_{p/q,\pm}f: B^*_{R_{p/q,\pm}f}(A) \to B^*_{R_{p/q,\pm}f}(A')$ is equivalent to a Blaschke product (Remark~\ref{rem:bp} generalises to this case), denote it $G(A)$:
\[
\begin{tikzcd}
    B^*_{R_{p/q,\pm}f}(A) \arrow[d, "\phi"] \arrow[r , "R_{p/q,\pm}f"] & B^*_{R_{p/q,\pm}f}(A') \arrow[d, "\psi"]
    \\
    \D \arrow[r , "G(A)"] & \D
\end{tikzcd}
\]
If $q>1$, it does not make sense to try and make $G(A)$ have a  parabolic point because it will not be iterated directly: it is the following composition that is iterated: $G(A_{q-1})\circ \cdots \circ G(A_0)$ along the cycle $A_k$ of $A$ under $D_0 (R_{p/q,\pm}f)$.
Anyway we are interested in the composition type of $G(A)$, i.e.\ of $R_{p/q,\pm} f: B^*_{R_{p/q,\pm}f}(A) \to B^*_{R_{p/q,\pm}f}(A')$, for each $A$.

In the statement below we do not modify the definition of $\DDD_\tau$: we still assume that $f\in \DDD_\tau$ has only one attracting axis.

\begin{theorem}\label{thm:main:grl}
  Let $\tau$ be a composition type.
  If $f\in \DDD_\tau$ then there is a disjoint collection of descendants $\tau[A]$ of $\tau$ for each attracting axis $A$ of $R_{p/q,\pm}f$, such that $G(A)$ has composition type $\tau[A]$.
\end{theorem}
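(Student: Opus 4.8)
The plan is to re-run the argument of Section~\ref{sec:pf-inv} for $R_{p/q,\pm}f$ in place of $Rf$, carrying an extra index that records the cyclic permutation of the axes. The key preliminary observation is that $R_{p/q,\pm}f=e^{2\pi ip/q}\times R_{0,\pm}f$ is merely a rescaling of the tangent‑to‑the‑identity map $R_{0,\pm}f$ treated before: it has the \emph{same} domain $\dom R_{0,\pm}f$, which is simply connected by Proposition~\ref{prop:drfsc}, and post‑composing a finite type map with a Möbius transformation of $\wh\C$ keeps it of finite type, so Proposition~\ref{prop:rfft} applies verbatim and $R_{p/q,\pm}f:\dom R_{p/q,\pm}f\to\wh\C$ is of finite type with asymptotic values $0$ and $\infty$ (both fixed by the scaling) and finitely many critical values. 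Its $q$‑th iterate $(R_{p/q,\pm}f)^q$ is tangent to the identity and has the same attracting/repelling axes and the same parabolic basin as $R_{p/q,\pm}f$; by Corollary~\ref{cor:psc} the domain of this iterate is still simply connected, so by Proposition~\ref{prop:scparabo} each immediate basin $B^*_{R_{p/q,\pm}f}(A)$ is simply connected, and as a strict subset of $\C^*$ (Lemma~\ref{lem:nonc} applied to $(R_{p/q,\pm}f)^q$) it is isomorphic to $\D$.

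Fix a cycle $A_0\to A_1\to\cdots\to A_{q-1}\to A_0$ of attracting axes under $D_0(R_{p/q,\pm}f)$. By the argument of Proposition~\ref{prop:rfrc} (Corollaries~\ref{cor:svbas} and~\ref{cor:ramcov}, using that each $B^*(A_k)$ omits $0$ and $\infty$), every restriction $R_{p/q,\pm}f:B^*(A_k)\to B^*(A_{k+1})$ is a branched covering with no asymptotic values and finitely many critical values; hence so is the first‑return map $(R_{p/q,\pm}f)^q:B^*(A_0)\to B^*(A_0)$, and the Fatou argument of Corollary~\ref{cor:nbax} shows it is not a covering, so the cycle carries at least one critical point of $R_{p/q,\pm}f$. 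Now build virtual basins column by column: lift $B^*(A_k)$ by $E$ to a simply connected $U_0^{(k)}\subset\Omega$, set $U_n^{(k)}=U_0^{(k)}+n$ and $V_n^{(k)}=\psi_\rep(U_n^{(k)})\subset B^*_f$. As in Section~\ref{sub:grl}, $f(V_n^{(k)})=V_{n+1}^{(k)}$, and the transition $R_{p/q,\pm}f:B^*(A_k)\to B^*(A_{k+1})$ now reads, through the Fatou coordinates of $f$, as a passage from column $k$ to column $k+1$ (its lift $\wt R$ commutes with $T_1$, so $\wt R(U_n^{(k)})\subset U_{n+c_k}^{(k+1)}$ for a constant $c_k$). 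All the $V_n^{(k)}$, over all axes $A$ and all $n$, are pairwise disjoint (Lemmas~\ref{lem:Vndisjoint1} and~\ref{lem:Vndisjoint2}).

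The proofs of Propositions~\ref{prop:simpleconnexe} and~\ref{prop:proper} then go through with only the obvious column bookkeeping; their inputs are $V_n^{(k)}\subset B^*_f$, $f(V_n^{(k)})=V_{n+1}^{(k)}$ and the path‑lifting statement of Lemma~\ref{lem:akiv}, whose proof self‑corrects in the cyclic setting because one anchors the lifted curve at the known point $\psi_\rep(u_0-K)\in V_{N-K}^{(k)}$ rather than using the value of $c_k$. Thus each $V_n^{(k)}$ is simply connected and $f:V_n^{(k)}\to V_{n+1}^{(k)}$ is proper. Since $f$ has finitely many critical points in $B^*_f$, only finitely many $V_n^{(k)}$ contain one, and Lemma~\ref{lem:inj2} together with Corollary~\ref{cor:portrait critique} generalize to give that $R_{p/q,\pm}f:B^*(A_k)\to B^*(A_{k+1})$ is equivalent to $f^{M_k}:V_{p_k}^{(k)}\to V_{q_k}^{(k)}$, with $p_k$ below and $q_k$ above the critical indices of column $k$; in particular this map has finite degree, so $G(A_k)$ is a genuine finite Blaschke product (the generalisation of Remark~\ref{rem:bp}). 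If column $k$ contains no critical point, the map is an isomorphism and $G(A_k)$ is a Möbius map, corresponding to the empty gleaning. Decomposing $f|_{B^*_f}$ through its Blaschke factors $G=G_m\circ\cdots\circ G_1$ and inserting the intermediate domains $W^{(k)}_{t/m}$ as in Section~\ref{sub:grl} (their simple connectivity and the properness of the $G_i$ between consecutive ones coming from Lemma~\ref{lem:topo2}), Proposition~\ref{prop:ce2} carries over and exhibits $G(A_k)$ as cover‑equivalent to a composition of Blaschke products; discarding the degree‑$1$ factors yields a composition type $\tau[A_k]$, and the sequence of (degree $-1$)'s is a gleaning of $(d_1-1,\ldots,d_m-1)$ because these are the numbers of critical points of the $G_i$ in the pairwise disjoint domains $W^{(k)}_{t/m}$ of column $k$, so $\tau[A_k]$ is a descendant of $\tau$. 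Disjointness of $\{\tau[A]\}$ over all axes follows because the $W^{(k)}_{t/m}[A]$ for different $A$ are pairwise disjoint, being carried by compositions of the $G_i$ and $\phi^{-1}$ to the disjoint virtual basins $V_n[A]\subset B^*_f$ of Lemma~\ref{lem:Vndisjoint2}.

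The main obstacle is organizational rather than conceptual: one has to re‑run Lemma~\ref{lem:akiv}, Propositions~\ref{prop:simpleconnexe}--\ref{prop:proper}, Lemma~\ref{lem:inj2}, Corollary~\ref{cor:portrait critique} and Proposition~\ref{prop:ce2} with $\wt R$ now shifting column $k$ to column $k+1$ instead of preserving a single column, and check that the relations $E\circ\wt R=R_{p/q,\pm}f\circ E$, $\psi_\rep\circ T_1=f\circ\psi_\rep$ and $T_1\Omega=\Omega$ still thread everything together — which they do, since going once around the cycle still corresponds to $T_q$ and the repelling Fatou parametrization of $f$ is untouched — together with handling the degenerate case $\deg G(A_k)=1$, absent from the $q=1$ setting, which contributes the empty gleaning and is trivially disjoint from the rest.
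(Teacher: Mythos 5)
Your proposal is correct and is essentially the paper's own proof: the paper's argument for Theorem~\ref{thm:main:grl} is precisely ``re-run the proof of Theorem~\ref{thm:invGrl} with collections $U_n[A]$, $V_n[A]$, $\{[t]\}\times W_{t/m}[A]$ indexed by the attracting axes, checking pairwise disjointness with the same arguments'', which is exactly your column-by-column bookkeeping (including the observation that Lemma~\ref{lem:akiv} anchors at $\psi_\rep(u_0-K)$ so the shift constants $c_k$ are harmless). Your explicit treatment of the degenerate case $\deg G(A_k)=1$ (a column with no critical point, e.g.\ $f\in\DDD_2$ and $R_{1/2,\pm}$) is a genuine point the paper's proof does not discuss, and since Definitions~\ref{def:BiGrl} and~\ref{def:glean} exclude empty data, handling it as you do (an empty gleaning, i.e.\ restricting the assignment of composition types to the axes whose $G(A)$ is not an isomorphism) is the right reading of the statement.
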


The proof is the same with the following modifications: there are now collections $U_n[A]$, $V_n[A]$, $\{[t]\}\times W_{t/m}[A]$ for each $A$.
The $U$ sets are still pairwise disjoint: for the same axis the argument is the same and for different axes they are mapped by $E$ to disjoint immediate basins $B^*_{R_{p/q,\pm}f}(A)$.
For the $V$ sets and the $W$ sets, this is also the case, with similar arguments.

\medskip 
We now treat the case where $f$ has rotation number $r/s$, with $r\wedge s=1$ and $s\geq 1$.
We assume that $f$ has exactly $s$ attracting axes, i.e.\ \emph{$f$ has only one cycle of attracting axes.} 
(Indeed, in the case of more than one cycle of attracting axes,  parabolic renormalization is a subtle matter depending on the choice of gate structures \cite{Oudkerk}; we will not treat this case here.)
It has thus $s$ repelling axes.
We label them $\Upsilon_\frac{i}{s}$ in the trigonometric order around $0$.
We label the attracting axes $A_\frac{2i+1}{2s}$ so that  the trigonometric circular ordering of the $2s$ axes corresponds to the circular order of the indices in $\R/\Z$.
Then $D_0f$ sends any axis of index $x$ to an axis of the same nature and of index $x+\frac{1}{s}$.
Let $\psi_{\rep,0}$ be the extended repelling inverse Fatou coordinate associated to $\Upsilon_0$ and  $\phi_{\att,1/2s} $ the attracting Fatou coordinate associted to the following attracting axis : $A_{1/2s}$. 
Let the horn map be:
\[h := \phi_{\att,1/2s} \circ \psi_{\rep,0}.\]
Since we took the attracting axis that follows the repelling axis in trigonometric order, the domain of $h$ contains an upper half-plane.
Let $p/q$ be a rational number (possibly different from $r/s$), written in irreducible form. Now we can define the parabolic renormalization as:
\[R_{p/q,\pm}f := E\circ T_{\sigma_0+\frac{p}{q}} \circ h \circ E^{-1}\]
for some $\sigma_0\in\C$ that we choose so that $(R_{0/1}f)'(0)=1$, hence $(R_{p/q,\pm}f)'(0)=e^{2\pi i p/q}$.
As in Remark~\ref{rem:atLeastOneAxis}, $(R_{p/q,\pm}f)^q$ is different from the identity near $0$, i.e.\ $R_{p/q,\pm}f$ has attracting axes.
For an attracting axis $A$ of $R_{p/q,\pm}f$ we define $G(A)$ the same way as above Theorem~\ref{thm:main:grl}: it is a Blaschke product to which the restriction $R_{p/q,\pm}f: B^*_{R_{p/q,\pm}f}(A) \to B^*_{R_{p/q,\pm}f}(A')$ is equivalent.

\begin{theorem}\label{thm:main:grl:2}
  If $f$ has rotation number $r/s$ and only one cycle of attracting axes, denote by $\tau$ a composition type of $f^s$ on one of its immediate basins.
  Then there is a disjoint collection of descendants $\tau[A]$ of $\tau$ for each attracting axis $A$ of $R_{p/q,\pm}f$, such that $G(A)$ has composition type $\tau[A]$.
\end{theorem}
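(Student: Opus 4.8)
The plan is to reduce the statement to the machinery already developed for Theorem~\ref{thm:invGrl} and Theorem~\ref{thm:main:grl}, by letting the role played there by the tangent-to-the-identity map be played here by the $s$-th iterate $g:=f^s$ restricted to a single immediate basin, and by indexing every auxiliary object by the attracting axes of $R_{p/q,\pm}f$.

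First I would set up the reduction. Since $f$ has exactly one cycle of $s$ attracting axes, $g=f^s$ is tangent to the identity at $0$, has the same parabolic basin as $f$, and each immediate basin $B^*_f(A)$ of $f$ is an immediate basin of $g$; by hypothesis and by $D_0f$-equivariance, $g:B^*_f(A)\to B^*_f(A)$ is, for every $A$ in the cycle, conjugate by a Riemann map to an element $G=G_m\circ\cdots\circ G_1\in\cal B_\tau$ with $\deg G_i=d_i$, so in particular $g$ restricted to any immediate basin is a finite branched covering. The point to settle carefully is the choice of normalisations: the repelling Fatou parametrisation $\psi_{\rep,0}$ of $\Upsilon_0$ and the attracting Fatou coordinate $\phi_{\att,1/2s}$ of the following axis $A_{1/2s}$ must be \emph{extended by means of $g=f^s$} (so that $\psi_{\rep,0}\circ T_1=g\circ\psi_{\rep,0}$ and $\phi_{\att,1/2s}\circ g=\phi_{\att,1/2s}+1$) --- it is exactly this convention that makes $h=\phi_{\att,1/2s}\circ\psi_{\rep,0}$ genuinely commute with $T_1$, its $E$-projection equal to $R_{p/q,\pm}f$ as defined in Section~\ref{sec:grl}, and $\psi_{\rep,0}(\Omega)$ land inside the \emph{single} immediate basin $B^*_f(A_{1/2s})$ rather than wandering through the whole cycle of basins. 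With these conventions fixed, I would check that Sections~\ref{sec:tools}--\ref{sec:pf-inv} apply \emph{verbatim} with the pair $(g,B^*_f(A_{1/2s}))$ in place of $(f,B^*_f)$; the only verification needed is that $g\in\cal E$ is harmless, since none of those arguments --- the path-lifting Lemmas~\ref{lem:akiprep} and~\ref{lem:akiv}, Corollary~\ref{cor:psisurj}, Propositions~\ref{prop:simpleconnexe} and~\ref{prop:proper} --- uses that the tangent-to-the-identity model map has a single petal, only that its restriction to the chosen immediate basin is conjugate to a Blaschke product of type $\tau$.

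Granting this, the proof of Theorem~\ref{thm:main:grl} transcribes with only notational changes. For each attracting axis $A$ of $R_{p/q,\pm}f$ I would lift $B^*_{R_{p/q,\pm}f}(A)$ by $E$ to a pairwise-disjoint simply connected family $U_n[A]=U_0[A]+n\subset\Omega$, set $V_n[A]:=\psi_{\rep,0}(U_n[A])\subset B^*_f(A_{1/2s})$ and $W_{k/m}[A]$ via $G=G_m\circ\cdots\circ G_1$, and then repeat: Proposition~\ref{prop:simpleconnexe} (the $V_n[A]$ and $W_{k/m}[A]$ are simply connected), Proposition~\ref{prop:proper} together with Lemma~\ref{lem:topo2} (properness of $g:V_n[A]\to V_{n+1}[A]$ and of each $G_i$ between consecutive $W$'s), and Corollary~\ref{cor:portrait critique}/Proposition~\ref{prop:ce2} (with $\wt R(U_n[A])\subset U_{n+n_0}[A']$ where $A'=D_0(R_{p/q,\pm}f)(A)$), concluding that the restriction $R_{p/q,\pm}f:B^*_{R_{p/q,\pm}f}(A)\to B^*_{R_{p/q,\pm}f}(A')$ is cover-equivalent to the finite composition of the Blaschke maps $H_t$ attached to those $W$'s that carry critical points. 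The gleaning count at the end of Section~\ref{sub:grl} then produces a descendant $\tau[A]$ of $\tau$ realised by $G(A)$, and disjointness across distinct $A$ follows as in Lemma~\ref{lem:Vndisjoint2}: the image of $V_n[A]$ under $E\circ T_{\sigma_0+p/q}\circ\phi_{\att,1/2s}$ lies in $B^*_{R_{p/q,\pm}f}(A')$, while $D_0(R_{p/q,\pm}f)$ is injective on axes, so distinct $A$ yield disjoint families of $V$'s, hence of $W$'s, hence a disjoint collection of gleanings of $(d_1-1,\ldots,d_m-1)$.

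The main obstacle --- and essentially the only content beyond Theorem~\ref{thm:main:grl} --- is the bookkeeping of the second paragraph: pinning down the Fatou normalisations so that the horn map of $f$ defining $R_{p/q,\pm}f$ coincides with the horn map of $g=f^s$ on the fixed immediate basin, together with the verification that every lemma of Sections~\ref{sec:tools}--\ref{sec:pf-inv} survives the passage from a one-petal model to the many-petal map $g$. Once those points are settled, the combinatorial conclusion is an unchanged transcription of the case treated in Section~\ref{sub:grl}.
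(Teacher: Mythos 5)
Your proposal is correct and follows essentially the same route as the paper, which itself only sketches this case by asserting that the proof of Theorem~\ref{thm:invGrl} carries over with the indicated modifications: one works with the Fatou coordinates of the tangent-to-the-identity iterate $f^s$ attached to the chosen pair of adjacent axes $\Upsilon_0$ and $A_{1/2s}$ (so that the horn map commutes with $T_1$ and $\psi_{\rep,0}(\Omega)$ lies in the single immediate basin $B^*_f(A_{1/2s})$), then reruns the $U_n[A]$, $V_n[A]$, $W_{k/m}[A]$ machinery per attracting axis $A$ of $R_{p/q,\pm}f$ and concludes by the gleaning count, with disjointness across axes obtained exactly as in Lemma~\ref{lem:Vndisjoint2}. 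Your emphasis on pinning down the normalisation of the extended Fatou coordinates via $g=f^s$, and on checking that the lemmas of Sections~\ref{sec:tools}--\ref{sec:pf-inv} never use that the model map has a single petal, is precisely the bookkeeping the paper leaves implicit.
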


The main result rises an interesting and natural question.

\begin{question*}
Let $\tau$ be a composition type.
Assume that $f:U\to\C$ is a holomorphic map, $0\in U\subset\C$, parabolic at the origin, with only one cycle of attracting axes, and that $f$ is cover-equivalent (see Definition~\ref{def:covereq}) to $Rg:\on{Dom} Rg\to\C$  where $g\in\DDD_\tau$.
Is $f$ necessarily in $\DDD_\tau'$ for a descendant $\tau'$ of $\tau$?
\end{question*}

\addvspace{1cm}
\appendix
\noindent{\LARGE\bf Appendix}

\section{Proof of \e{Lemma~\ref{lem:bppp}}{}}\label{app:pflembpp}

Let us recall the statement: we assume a holomorphic map $f:U\subset\C\to \C$ has a parabolic fixed point $p$ with $f'(p)=1$ and one of whose petals has an immediate basin $B^*$ such that:
\begin{itemize}
  \item $B^*$ is simply connected,
  \item $f: B^*\to B^*$ is proper.
\end{itemize}
We consider a conformal map $\phi:B^* \to\D$ and the map $G=\phi\circ f\circ \phi^{-1}$, which is a finite Blaschke product, in particular it is the restriction to $\D$ of a rational map $F:\wh\C \to \wh\C$ that commutes with $z\mapsto 1/\bar z$.
Lemma~\ref{lem:bppp} claims that $F$ has a parabolic fixed point on $\partial \D$, with two repelling axes.

\medskip

We prove the claim by taking the proof of \cite{Orsay2}, Exposé~IX, Section~II.1, Proposition~4.c and modifying the final argument so that it adapts to the situation above.

For any hyperbolic connected open set $V$ denote $d_V$ the distance associated to its hyperbolic infinitesimal metric.
Recall that holomorphic maps are non-expanding for the hyperbolic distance, in particular
\begin{enumerate}
  \item\label{item:hme} $V\subset W\implies \forall x,y\in V,\ d_V(x,y)\geq d_W(x,y)$
  \item isomorphisms are isometries
\end{enumerate}
Concerning $\D$, the hyperbolic coefficient is $|dz|/(1-|z|^2)$ and
\begin{enumerate}[resume]
  \item\label{item:c} the hyperbolic ball of center $z$ and radius
  $r$ has a Euclidean diameter tending to $0$ as $z$ tends to $\partial\D$ while $r$ stays bounded,
  \item\label{item:d} $d_\D(a,b)\geq |a-b|$,
  \item\label{item:e} if $d_\D(a_n,b_n)\to 0$ then $\frac{|a_n-b_n|}{1-|b_n|^2} \to 0$.
\end{enumerate}
Concerning $\H$, the coefficient is $|dz|/2\Im z$ and
\begin{enumerate}[resume]
  \item \label{item:f} $d_\H(a,b) = \sinh^{-1}\frac{|a-b|}{2\sqrt{\Im a\Im b}}$.
\end{enumerate}
Let $(z_n)_{n\in\N}$ be any orbit in $B^*$.

\begin{claim*}
  $d_{B^*}(z_{n+1},z_{n})\to 0$.
\end{claim*}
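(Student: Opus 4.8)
The plan is to prove the claim $d_{B^*}(z_{n+1},z_n)\to 0$ by exploiting the Blaschke-product model $G:\D\to\D$ to which $f|_{B^*}$ is conjugate via $\phi:B^*\to\D$. Since $\phi$ is a hyperbolic isometry, it suffices to prove that $d_\D(G(w_n),G(w_{n+1}))\to 0$ where $w_n=\phi(z_n)$ is an orbit of $G$, i.e.\ $w_{n+1}=G(w_n)$; equivalently $d_\D(w_{n+1},w_{n+2})\to 0$. First I would recall that since the orbit $z_n$ lies in the immediate basin of a petal of the parabolic point $p$, it converges to $p$; hence $w_n=\phi(z_n)$ leaves every compact of $\D$, i.e.\ $|w_n|\to 1$. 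The point $\zeta:=\phi(p)\in\partial\D$ (interpreted as a boundary limit) is the Denjoy--Wolff point of $G$: since $G$ has no fixed point in $\D$ and $w_n\to\zeta$, the Denjoy--Wolff theorem identifies $\zeta$ as the attracting boundary fixed point, and $F(\zeta)=\zeta$ with $F$ the rational extension.

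The core estimate comes from the contraction property of $G$ for the hyperbolic metric: $d_\D(G(a),G(b))\le d_\D(a,b)$ for all $a,b$. Applying this to $a=w_n$, $b=w_{n+1}$ gives that the sequence $\delta_n:=d_\D(w_n,w_{n+1})$ is \emph{non-increasing}, hence converges to some $\delta_\infty\ge 0$. The task is to show $\delta_\infty=0$. Suppose not. Because $G$ is a finite Blaschke product, it extends holomorphically across $\zeta$, and $F'(\zeta)$ is a real number in $(0,1]$ (a boundary fixed point of an inner map approached from inside has derivative in $(0,1]$, and here the point is attracting so $F'(\zeta)\le 1$; it is nonzero since $F$ is a nonconstant rational map). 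In a neighbourhood of $\zeta$ inside $\D$, we can pass to the half-plane picture via a Möbius map $M$ sending $\zeta$ to $\infty$ and $\D$ to $\H$; then $M\circ G\circ M^{-1}$ is, near $\infty$, of the form $\lambda^{-1}\eta + O(1)$ with $\lambda=F'(\zeta)$, and using property~\eqref{item:f} for $d_\H$ one checks that $d_\H(\eta,M\circ G\circ M^{-1}(\eta))$ — while possibly not tending to $0$ if $\lambda=1$ — is controlled; more robustly, I would argue as follows.

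The cleanest route to $\delta_\infty=0$: if $\delta_\infty>0$, then $d_\D(w_n,w_{n+1})\ge\delta_\infty$ for all $n$, so by property~\eqref{item:c} (the Euclidean diameter of a hyperbolic ball of fixed radius around a point tending to $\partial\D$ goes to $0$) one cannot immediately derive a contradiction, since that only shows $|w_n-w_{n+1}|\to 0$, which is consistent. Instead I would use horocyclic convergence: by the Julia--Wolff--Carathéodory / Denjoy--Wolff refinement, the orbit $w_n$ converges to $\zeta$ along a horocycle-respecting trajectory, and in the half-plane coordinate $\eta_n=M(w_n)$ one has $\Im\eta_n\to+\infty$ and $\eta_{n+1}=\lambda^{-1}\eta_n+o(\Im\eta_n)$ with $\lambda\in(0,1]$. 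Feeding this into property~\eqref{item:f},
\[
d_\H(\eta_n,\eta_{n+1})=\sinh^{-1}\frac{|\eta_n-\eta_{n+1}|}{2\sqrt{\Im\eta_n\,\Im\eta_{n+1}}},
\]
and since $|\eta_n-\eta_{n+1}|=(\lambda^{-1}-1)|\eta_n|+o(\Im\eta_n)$ while $\sqrt{\Im\eta_n\Im\eta_{n+1}}$ grows like $\lambda^{-1/2}\Im\eta_n$, if $\lambda=1$ the numerator is $o(\Im\eta_n)$ and the ratio tends to $0$, whereas if $\lambda<1$ one uses that the orbit stays within bounded hyperbolic distance of the geodesic ray to $\zeta$ — but this \emph{contradicts} $\delta_n$ being bounded below unless we are more careful, because for a loxodromic-type boundary point $d_\H(\eta_n,\eta_{n+1})$ genuinely tends to a positive constant.

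So the real content is that $\lambda=F'(\zeta)=1$, i.e.\ the boundary fixed point is parabolic-like from the side of $\D$; but this is essentially what we are trying to prove. Therefore the honest plan is: first establish, by the argument in \cite{Orsay2} Exposé~IX, that $F'(\zeta)=1$ (equivalently, that $G$ has no boundary fixed point with derivative $<1$, which would force an attracting direction and contradict properness / the basin structure — the key point being that $F$ preserves $\partial\D$ and commutes with $z\mapsto 1/\bar z$, so a hyperbolic boundary fixed point would have a symmetric repelling dynamics incompatible with $\D$ being a full basin component), and \emph{then} the half-plane estimate above with $\lambda=1$ gives $d_\H(\eta_n,\eta_{n+1})\to 0$, hence $d_{B^*}(z_n,z_{n+1})=d_\D(w_n,w_{n+1})=d_\H(\eta_n,\eta_{n+1})\to 0$. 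I expect the main obstacle to be precisely this interdependence: proving $\delta_\infty=0$ cleanly seems to require already knowing the boundary fixed point is tangent-to-identity, so the right order is (i) show $G$ has a unique boundary fixed point $\zeta$ which is the Denjoy--Wolff point and $F'(\zeta)\in(0,1]$; (ii) rule out $F'(\zeta)<1$ using the $z\mapsto 1/\bar z$-symmetry and the fact that $\D$ is the \emph{entire} basin of its axis (so there is no room for a second attracting petal on $\partial\D$); (iii) conclude $F'(\zeta)=1$ and run the $\H$-coordinate estimate to get $d_\H(\eta_n,\eta_{n+1})\to 0$. Steps (ii)--(iii) are where the argument of \cite{Orsay2} is invoked and adapted; step (i) is standard Denjoy--Wolff plus the Julia lemma.
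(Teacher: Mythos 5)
Your proposal is circular at the decisive point, and it misses the (much simpler) direct argument. The claim is a statement about the orbit $z_n$ of $f$ in $B^*$, and its whole purpose in the proof of Lemma~\ref{lem:bppp} is to be established \emph{on the $f$-side}, where the hypothesis $f'(p)=1$ is available, and only then transported by the isometry $\phi$ to give $d_\D(w_{n+1},w_n)\to 0$, which is what forces $F'(a)=1$ at the boundary fixed point. You instead try to prove the claim on the disk side, and you yourself observe that the estimate $d_\H(\eta_n,\eta_{n+1})\to 0$ only comes out if $\lambda=F'(\zeta)=1$ --- i.e.\ if the boundary fixed point is already known to be parabolic, which is exactly the conclusion of Lemma~\ref{lem:bppp} that the claim is meant to feed. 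Your attempted repair, step~(ii), is not a valid argument: the symmetry of $F$ under $z\mapsto 1/\bar z$ together with $\D$ being a full basin component does \emph{not} exclude a hyperbolic boundary fixed point. For instance $G(z)=\dfrac{z^2+a}{1+az^2}$ with $\tfrac13<a<1$ is a finite Blaschke product commuting with $z\mapsto 1/\bar z$, its Denjoy--Wolff point $z=1$ has derivative $2(1-a)/(1+a)<1$, and all of $\D$ is attracted to it. What excludes this behaviour in our situation is precisely the conjugacy with the parabolic map $f$, i.e.\ the claim itself; so the claim cannot be deduced from properties of $G$ without circularity. (There is also a minor unsupported step: writing $\zeta=\phi(p)$ presupposes a boundary extension of $\phi$, which is not available since $\partial B^*$ need not be locally connected; this could be patched via radial limits, but it is not the main issue.)

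The paper's proof avoids all of this by working locally at the parabolic point of $f$: writing $f(z)=z+cz^{q+1}+\cdots$ and using that $z_n\to 0$ tangentially to the attracting axis, one gets $z_{n+1}/z_n\to 1$; moreover $B^*$ contains a sector bisected by the attracting axis, so $B_n:=B(z_n,\kappa|z_n|)\subset B^*$ for some $\kappa>0$ and all large $n$. Monotonicity of the hyperbolic metric under inclusion then gives
\[
d_{B^*}(z_{n+1},z_n)\ \le\ d_{B_n}(z_{n+1},z_n)\ =\ d_{B(1,\kappa)}\bigl(z_{n+1}/z_n,\,1\bigr)\ \longrightarrow\ 0 .
\]
No information about $G$, the Denjoy--Wolff point, or boundary derivatives is needed at this stage; those are consequences drawn afterwards from the claim, not ingredients of it.
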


\begin{proof} We can assume that the parabolic point is $p=0$. Since $z_n\to 0$ and $f(z)=z+cz^{q+1}+\ldots$ we get $z_{n+1}/z_n \to 1$.
On the other hand, since $z_n$ is asymptotic to the attracting axis $\Delta_\att$ and since $B^*$ contains a small sector based on $0$ and bisected by $\Delta_\att$, we have $B_n:=B(z_n,\kappa |z_n|)\subset B^*$ for some constant $\kappa>0$ and for all $n$ big enough.
So by item~\ref{item:hme}, $d_{B^*}(z_{n+1},z_{n}) \leq d_{B_n}(z_{n+1},z_{n}) = d_{B(1,\kappa)}(z_{n+1}/z_n,1)\to 0$.
\end{proof}

Denote $w_n = \phi(z_n)$, which is an orbit of $G$.
Since $\phi:B^*\to \D$ is an isometry for the respectives hyperbolic distances, the lemma above implies
\[d_\D(w_{n+1},w_n)\to 0.\]
Since $z_n\to \partial B^*$, the Euclidean distance from $w_n$ to $\partial \D$ tends to $0$.
(For the next argument we may invoke the Denjoy-Wolff theorem but let us stick to \cite{Orsay2} for a while.)
Because $|w_{n+1}-w_n|\leq d_\D(w_{n+1},w_n)$, the accumulation set of $w_n$ in $\D$ is connected.
Moreover by continuity it consists of fixed points of $F$.
The set of fixed points is finite, hence $w_n$ has only one accumulation point and since $\ov\D$ is compact, we get: $w_n$ tends to a point $a\in\partial \D$ such that $F(a)=a$.
We recall a classical and simple argument:

\begin{claim*}
  Every other orbit $w'_n$ in $\D$ tends to $a$.
\end{claim*}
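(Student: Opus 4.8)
The plan is to derive the claim from the non-expansion of holomorphic maps for the hyperbolic metric, together with property~\ref{item:c} recalled above (a hyperbolic ball of bounded radius has Euclidean diameter tending to $0$ as its centre tends to $\partial\D$). This is the classical Denjoy--Wolff contraction argument, and in the present situation it is especially short because the preceding claim has already produced one orbit converging to a boundary fixed point.

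Concretely, I would fix an arbitrary orbit $(w'_n)_{n\in\N}$ of $G$ in $\D$ and set $C := d_\D(w_0,w'_0)<\infty$. Since $G:\D\to\D$ is holomorphic, it is non-expanding for $d_\D$, so $d_\D(w_{n+1},w'_{n+1})=d_\D(G(w_n),G(w'_n))\le d_\D(w_n,w'_n)$, whence by induction $d_\D(w_n,w'_n)\le C$ for all $n$. By the preceding claim, $w_n\to a\in\partial\D$; hence, applying property~\ref{item:c} with the fixed radius $C$ and centres $w_n\to\partial\D$, the hyperbolic ball of centre $w_n$ and radius $C$ has Euclidean diameter tending to $0$. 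As $w'_n$ lies in that ball together with $w_n$, we get $|w'_n-w_n|\to 0$, and therefore $w'_n\to a$.

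I do not anticipate any real difficulty here. The only genuine case that could break the conclusion, namely $G$ being an elliptic automorphism of $\D$, is already excluded by the hypothesis that the particular orbit $(w_n)$ converges to a boundary fixed point. The one point worth keeping in mind is that the argument uses only the \emph{non-strict} Schwarz--Pick inequality, so it applies whether or not $G$ is an automorphism, and in particular needs no information on the degree of $G$.
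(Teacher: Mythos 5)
Your proof is correct and is essentially identical to the paper's own argument: non-expansion of $G$ for $d_\D$ gives $d_\D(w'_n,w_n)\le d_\D(w'_0,w_0)$, and then item~\ref{item:c} applied with $w_n\to a\in\partial\D$ yields $|w'_n-w_n|\to 0$, hence $w'_n\to a$.
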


\begin{proof}
 The map $G$ is non-expanding for the hyperbolic metric of $\D$ so $d_\D(w'_n,w_n)\leq r:=d_\D(w'_0,w_0)$ and $w_n\to a\in\partial\D$ so $|w'_n-w_n|\to 0$ item~\ref{item:c}.
\end{proof}

On one hand $\frac{w_{n+1}-w_n}{w_n-a} =  \frac{F(w_n)-a}{w_n-a}-1 \to F'(a)-1$, on the other hand, $\left|\frac{w_{n+1}-w_n}{w_n-a}\right| \leq \frac{|w_{n+1}-w_n|}{1-|w_n|} \leq 2\frac{|w_{n+1}-w_n|}{1-|w_n|^2} \to 0$ by item~\ref{item:e} since $d_\D (w_{n+1},w_n)\to 0$.
So $F'(a)=1$, hence $a$ is parabolic.

It cannot have more than 2 attracting axes: indeed the basins of each attracting axis is non-empty and they are disjoint open sets, so each is a union of connected components of the basin $B(a)$.
But $B(a)$ already contains $\D$ and since $F$ commutes with $z\mapsto 1/\bar z$, it also contains $\wh{\C}-\ov\D$.
It hence has at most two connected components.

To prove that it has exactly 2 attracting axis and not just 1, we use another argument than the one in \cite{Orsay2} (though we believe the one in \cite{Orsay2} can be adapted).
By contradiction assume there is only one attracting axis.
By the symmetry $z\mapsto -1/\bar z$ this axis is tangent to $\partial \D$ at $a$, for otherwise there would be another attracting axis.
The following claim contradicts that $d_\D(w_{n+1},w_n)\to 0$, which finishes the proof.

\begin{claim*}
For any finite Blaschke product $G$ whose extension $F$ has a parabolic point $a\in\partial \D$ with only one attracting axis, for any orbit $w_n\in \D$ tending to $a$, $d_\D(w_{n+1},w_n)$ tends to a positive value.
\end{claim*}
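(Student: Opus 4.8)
The plan is to reduce the whole statement to a model computation in a half-plane. First I would pick a Möbius transformation $\mu$ with $\mu(a)=\infty$ and $\mu(\D)=\H$, so that $\partial\D$ is sent to $\R\cup\{\infty\}$. Since $G$ is the restriction of a rational map $F$ with $F^{-1}(\partial\D)=\partial\D$, the conjugate $G_u:=\mu\circ F\circ\mu^{-1}$ is a rational map restricting to a proper self-map of $\H$ which satisfies $G_u(\R)=\R$ near $\infty$ and has a parabolic fixed point at $\infty$ inherited from the one of $F$ at $a$. In the chart at $\infty$ this yields, for $|u|$ large, a Laurent expansion
\[ G_u(u)=u+\gamma+\frac b u+O\!\left(\tfrac1{u^2}\right). \]
The hypothesis of a single attracting axis means $F''(a)\neq 0$, which makes the parabolic point of $G_u$ at $\infty$ nondegenerate and forces $\gamma\neq0$; and because $G_u$ is real on $\R$ near $\infty$, the coefficients $\gamma$ and $b$ are real. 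After composing $\mu$ with a real dilation I may assume $\gamma=1$.

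Next I would transport the given orbit. Set $u_n=\mu(w_n)$. Since $w_n\to a$ we get $u_n\in\H$ and $u_n\to\infty$, and $(u_n)$ is an orbit of $G_u$, so $u_{n+1}-u_n=1+b/u_n+O(1/u_n^2)\to 1$; in particular $\Re u_n\to+\infty$, hence $|u_n|\gtrsim n$. The decisive point is then to show that $\Im u_n$ stays bounded away from both $0$ and $\infty$, and in fact converges to a finite positive limit $\kappa$. For the lower bound I would use Schwarz--Pick for $G_u:\H\to\H$, i.e.\ $\Im u_{n+1}\geq|G_u'(u_n)|\,\Im u_n$, together with $G_u'(u_n)=1-b/u_n^2+O(1/u_n^3)$, so $|G_u'(u_n)|\geq 1-C/n^2$ for large $n$ and $\Im u_n\geq\Im u_{n_0}\prod_{k\ge n_0}(1-C/k^2)>0$. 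For the upper bound I would use that $b$ is real: then $\Im u_{n+1}-\Im u_n=-b\,\Im u_n/|u_n|^2+O(1/|u_n|^2)$, which gives a Gronwall-type inequality $|\Im u_{n+1}|+1\leq(|\Im u_n|+1)(1+C'/n^2)$ and hence $\sup_n|\Im u_n|<\infty$. Both estimates together make $\sum_n|\Im u_{n+1}-\Im u_n|$ converge, so $\Im u_n$ converges to some $\kappa\in(0,+\infty)$.

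To conclude, $\mu:\D\to\H$ is a conformal isomorphism, hence an isometry for the hyperbolic distances, so $d_\D(w_{n+1},w_n)=d_\H(u_{n+1},u_n)$, and by the formula for the hyperbolic distance on $\H$ recalled in item~\ref{item:f},
\[ d_\H(u_{n+1},u_n)=\sinh^{-1}\frac{|u_{n+1}-u_n|}{2\sqrt{\Im u_{n+1}\,\Im u_n}}\longrightarrow\sinh^{-1}\frac{1}{2\kappa}>0, \]
which is exactly the claim. Applied to the orbit $w_n=\phi(z_n)$ considered earlier in the appendix, this contradicts the relation $d_\D(w_{n+1},w_n)\to 0$ established there, and therefore finishes the proof of Lemma~\ref{lem:bppp}.

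The step I expect to be the main obstacle is controlling $\Im u_n$ from above and below at once: the orbit tends to $a$ tangentially to $\partial\D$, so a priori the rescaled distance of $w_n$ to $\partial\D$ — which is what $\Im u_n$ measures — could drift to $0$ or to $\infty$, and either scenario would send $d_\D(w_{n+1},w_n)$ to $0$. Ruling both out rests on two quantitative inputs, namely Schwarz--Pick with the precise quadratic rate $|G_u'(u_n)|=1-O(1/n^2)$ and the reality of the ``iterative residue'' coefficient $b$ (a consequence of $G_u$ preserving $\R$). So the real work is establishing $b\in\R$ and exploiting this $O(1/n^2)$ decay; the rest is bookkeeping.
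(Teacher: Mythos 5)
Your proof is correct, and its skeleton is the same as the paper's: pass to the upper half-plane chart sending $a$ to $\infty$, where the extension becomes $u\mapsto u+\gamma+\frac{b}{u}+O(1/u^2)$ with $\gamma,b$ real and $\gamma\neq 0$ precisely because there is a single attracting axis, then bound $d_\H(u_{n+1},u_n)$ below via the explicit $\sinh^{-1}$ formula together with control of $\Im u_n$. The difference lies in how that control is obtained. The paper simply quotes the classical orbit asymptotics $u_n=n+b\log n+\tau+O(1/n)$, which immediately gives $\Im u_n\to\Im\tau$; note it only needs this upper bound, since monotonicity of $d_n$ (Schwarz--Pick) already yields convergence of the distances. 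You instead derive the needed bounds on $\Im u_n$ directly and elementarily: Schwarz--Pick in the form $\Im u_{n+1}\geq |G_u'(u_n)|\,\Im u_n$ with $|G_u'(u_n)|\geq 1-C/n^2$ for the lower bound, and the reality of $b$ plus summability of $|\Im u_{n+1}-\Im u_n|\lesssim 1/n^2$ for the upper bound and for convergence of $\Im u_n$ to some $\kappa>0$. This makes the key analytic step self-contained (no appeal to the refined Fatou-coordinate expansion) and even gives the explicit limit $\sinh^{-1}\bigl(\tfrac{1}{2\kappa}\bigr)$, slightly more than the paper states. One cosmetic point: a positive dilation of $\H$ fixing $\infty$ can only normalize $\gamma$ to $\pm 1$, as its sign is intrinsic (it records which way the attracting direction points along $\R$ in this chart); this is immaterial, since your estimates only use $|u_{n+1}-u_n|\to 1$ and $|u_n|\gtrsim n$, and the paper makes the same implicit normalization.
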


\begin{proof}
Let $d_n := d_\D(w_{n+1},w_{n})$.
By item~\ref{item:hme}, $d_{n+1} \leq d_n$, in particular it converges, and we want to prove that the limit is non-zero.
A change of coordinates by an appropriate homography sends $\D$ to $\H$, the point $a$ to $\infty$, and conjugates $F$ to a rational map $H$ such that
\[H(u) \underset{u\to\infty} = u+1+\frac{b}{u}+\cal O(\frac{1}{u^2})\]
where $b\in\R$.
It follows that any orbit $u_n\in H$ satisfies \[u_n=n+b\log n + \tau + \cal O(1/n)\]
for some $\tau\in\C$ that depends on $u_0$.
It follows that $\Im(u_n)$ converges to $\Im \tau$.
In particular $\Im(u_n)$ is bounded from above, while $u_{n+1}-u_n$ tends to $1$, so $d_\H(u_{n+1},u_n)$ is bounded from below by a positive constant by item~\ref{item:f}.
\end{proof}

\section{Comparison of the A, B, \ldots\ E classification versus the composition type}\label{app:comp}

For any slice $P:=\Per_1(e^{2\pi i p/q})$ in the space of cubic polynomials, for each $f\in P$, the map $Rf$ has either one or two critical values.
For each critical value $v$ of $Rf$, $Rf^{-1}(v)$ contains infinitely many critical points (every point in $\psi_\rep^{-1}(z)$ where $z$ is an iterated preimage of the critical point of $f$ which causes $v$ to be a critical value of $Rf$; knowing that $\psi_\rep$ is surjective).
But as we saw, the immediate basin of $Rf$ only contains finitely many critical points.
It follows that $Rf$ cannot have all its critical points in the immediate basin, so A and B have to be redefined.
Every $Rf$ of type A or B could be, in some sense, considered being of type C too because infinitely many critical points are in the parabolic basin but not in the immediate basin (they map to it in one iteration of $f$).
To avoid that, denote $\cal P$ the pearl necklace of the family $f$, i.e.\ $f$ is of type A or B.
Denote $\cal P'$ the set of parameters for which $Rf$ has two distinct critical values. Such parameters are necessarily in $\cal P$ and $\cal P - \cal P'$ is a set of 
isolated parameters whose accumulation is contained in the boundary of $\cal P$.
We then redefine the types for $f\in \cal P'$ (to avoid complications we do not cover the case $f\in \cal P-\cal P'$):
\begin{itemize}
    \item \textbf{Type A$'$}. (Adjacent, modified) Both critical values of $Rf$ belong to the immediate basin, and both have a critical preimage in a common component of the immediate basin.
    \item \textbf{Type B$'$}. (Bitransitive, modified) Both critical values belong to the immediate basin, and both have a critical preimage in different components of the immediate basin.
    \item \textbf{Type C$'$}. (Capture, modified) Both critical values belong to the parabolic basin of $0$, only one has a critical preimage in the immediate basin.
    \item \textbf{Type D$'$}. (Disjoint, modified) $Rf$ has only one critical value in the parabolic basin of $0$.
\end{itemize}

Recall that the pearl necklace of the family $Rf$ for $f\in \Per_1(e^{2\pi ip/q})$ is defined in Definition~\ref{def:pnRf} by the composition type of $Rf^q$ on any component of its immediate basin not being of composition type $2$ (then it is of type $2\circ 2$ or $3$).
It is contained in $\cal P$.
The intersection of this necklace with $\cal P'$ is also the set of parameters for which $Rf$ is of type A$'$ or B$'$.

Consider first the slice $P:=\Per_1(e^{2\pi i 1/3})$. 
Consider the renormalization $R=R_{0,+}$, which gives rotation number $0/1$ to $Rf$.
The pearl necklace of the family $Rf$ for $f\in P$ has components of type B$'$ and one of type A$'$.
If $Rf$ is not of type E, then it has only one attracting axis, so it can never be of class B$'$.
Every component of the pearl necklace for the family $Rf$, $f\in P$ is thus of type A$'$.
On Figure~\ref{fig:ipe2}, we added to the middle row of Figure~\ref{fig:iteratedParamEnrich}, an indication of the A$'$, \ldots, D$'$ classification of $f$ and of $Rf$.
We also show a sketch the pearl necklace of a nearby slice $\Per_1(e^{2\pi i\theta_n})$, for $\theta_n = [0;3,n]$ for some $n>0$ for comparison.

\begin{figure}
  \centering
  \begin{tikzpicture}
    \node at (0,0) {\includegraphics[width=14cm]{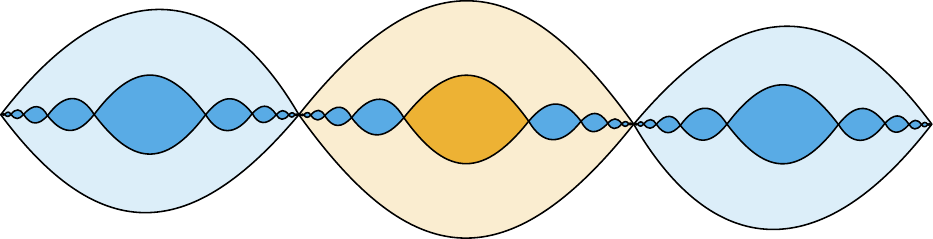}};
    
    \node at (-4.7,-1.75) {B};
    \node at (0,-2.1) {A};
    \node at (4.7,-2.0) {B};

    \node at (-4.7,1.2) {C$'$/D$'$};
    \node at (0,1.25) {C$'$/D$'$};
    \node at (4.7,1) {C$'$/D$'$};
    
    \node at (-3.55,0.075) {A$'$};
    \node at (-4.75,0.05) {A$'$};
    \node at (-5.95,0.075) {A$'$};
    \node at (-1.35,0.05) {A$'$};
    \node at (0,0) {A$'$};
    \node at (1.35,-0.025) {A$'$};
    \node at (3.55,-0.05) {A$'$};
    \node at (4.75,0) {A$'$};
    \node at (5.95,-0.05) {A$'$};
  \end{tikzpicture}
  \vskip1cm
  \centering
  \begin{tikzpicture}
    \node at (0.07,-0.02) {\includegraphics[width=14.2cm]{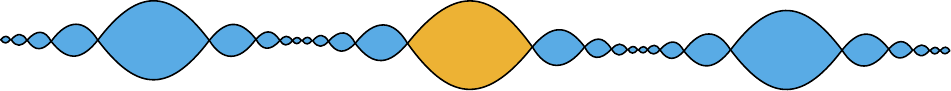}};
    
    \node at (-3.55,0.075) {B};
    \node at (-4.75,0.05) {B};
    \node at (-5.95,0.075) {B};
    \node at (-1.35,0.05) {B};
    \node at (0,0) {A};
    \node at (1.35,-0.025) {B};
    \node at (3.55,-0.05) {B};
    \node at (4.75,0) {B};
    \node at (5.95,-0.05) {B};
  \end{tikzpicture}
  \caption{Top: slice $\Per_1(e^{2\pi i 1/3})$. Primed letters indicate the type of $Rf$ for a parameter in the component. Non-primed letters indicate the type of $f$.
  We did not distinguish type C$'$ from type D$'$.
  Since $Rf$ has rotation number $0/1$, it never can have type B$'$.
  The colour indicates the composition type: blue for $2\circ 2$ and amber for $3$. Light blue means $Rf$ has composition type $2$ and $f$ composition type $2\circ 2$, while light amber means $Rf$ has composition type $2$ and $f$ composition type $3$.
  \\
  Bottom: slice $\Per_1(e^{2\pi i\theta_n})$, for $\theta_n = [0;3,n]$ for some $n>0$. The letters indicate the type of components for $f$. They are all of type B except the amber one. The colour indicates the composition type of $f$: blue for $2\circ 2$ and amber for $3$.}
  \label{fig:ipe2}
\end{figure}

Consider then the slice $P:=\Per_1(e^{2\pi i 0/1})$ and the renormalization $R=R_{1/2,+}$.
Assuming that for some $f\in P$, the gleaning while passing from $f$ to $Rf$ occurs as on Figure~\ref{fig:glanage-3}, we would get a map that has type A$'$ but whose composition type is $2\circ 2$.

\begin{figure}
  \centering
  \begin{tikzpicture}
    \node at (0,0) {\includegraphics[width=14cm]{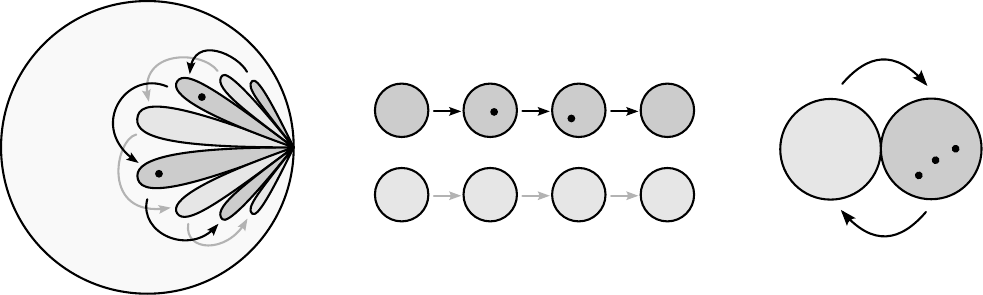}};
    \node at (-5.9,0) {$f$};
    \node at (5.6,1.6) {$Rf$};
    \node at (5.6,-1.6) {$Rf$};
  \end{tikzpicture}
  \caption{Black dots mark critical points.}
  \label{fig:glanage-3}
\end{figure}

The best way to keep information is probably to keep track of the way the gleaning occurred, in the following sense. Instead of looking at properties of $Rf$ independently of $f\in \Per_1(\exp(2\pi i p/q))$, attach to $Rf$ how its restriction to its immediate basin is equivalent to a composition of $f:V_{n}\to V_{n+1}$ where $V_n$ is the sequence of virtual basins.
When iterating $R$, i.e.\ looking at $RRf$, one would look at the next gleaning still in terms of $f$, not of $Rf$, obtaining a double indexed collection of virtual basins $V_{n,m}\subset V_n$ and keeping track of the indexes in which the critical points appear.
For $R^3 f$ one has triple indexed virtual basins, etc.
However, this does not fit well with the point of view of invariant classes for renormalization.
The composition type is a way to keep part of this information and that better fits.

\section{Shishikura's proof of covering properties of the renormalized map}\label{app:Mitsu-proof}

We prove here Proposition~\ref{prop:rfft} by adapting the proof of Proposition~5.3 page~246 of \cite{Shi}, with more detailed arguments and slight differences.
Note that what is called an immediate basin in that article is, in our terms, an immediate basin on which $f$ is a proper self-map.
Let us denote, after \cite{Shi}, $\tilde B^u\subset\C$ the connected component of $\dom(h_f)$ that contains an upper half-plane: it is $T_1$-invariant and $B^u\subset \C/\Z$ denotes its quotient by $T_1$.

The main idea is to prove first that the restrictions of the extended attracting Fatou coordinate $\phi_\att : B^*_f\to \C$ and extended repelling Fatou parametrization $\psi_\rep : \tilde B^u\to B^*_f$ have no asymptotic values, and check that their composition, which has no asymptotic values by Lemma~\ref{lem:noavcomp}, has a set of critical values without accumulation, so by Corollary~\ref{cor:ramcov} it is a branched covering. Then we transfer this to the quotient by $E:z\mapsto \exp(2\pi i z)$ and deal with the two asymptotic values $0$ and $\infty$ of $Rf$ over $\hat \C$.

Denote $g$ the restriction $f:B^*_f\to B^*_f$

\medskip\noindent\textit{1. $\phi_\att : B^*_f\to \C$ has no asymptotic value:} Let $P\subset B^*_f$ be an attracting petal, on which $\phi_\att$ is thus injective and denote $A$ its image.
To simplify one may take $A$ to be a right half-plane.
Denote $\phi_0 $ the restriction of $\phi$ to $P$.
Let $U_n= B^*_f\cap f^{-n}(P)$ and $V_n=T_{-n} A$.
Then $\phi_n := \phi_\att|_{U_n} = T_{-n}\circ \phi_0 \circ g^n:\ U_n \to V_n$.
The map $g$ being proper, $g^n$ is proper and the restriction $g^n:g^{-n}(P)\to P$ is proper, so $\phi_n$ is proper from $U_n$ to $V_n$, and in particular it has no asymptotic values.
By Lemma~\ref{lem:noavcomp}, $\phi_\att : B^*_f\to \C$ has no asymptotic value.

\medskip\noindent\textit{2. $\psi_\rep : \tilde B^u \to B^*_f$ has no asymptotic value:} Let $z\in B^*_f$ and assume by way of contradiction that there is a path $\gamma :[0,1)\to \tilde B^u$ that leaves every compact subset of $\tilde B_u$ and such that $\psi_\rep\circ\gamma$ tends to $z$.
Let $U$, $U'$ be simply connected neighbourhoods of $z$ in $B^*_f$ such that $U$ is compactly contained in $U'$ and $U'$ compactly contained in $B^*_f$.
Cutting away the initial part of the path, we may assume that $\psi_\rep\circ\gamma$ takes values in $U$.
Let $w=\gamma(0)$, $w_n=w-n$ and $z_n=\psi_\rep(w_n)$.
Let $U_n\subset U'_n\subset B^*_f$ be respectively the connected component of $g^{-n}(U)\subset g^{-n}(U')$ that contains $z_n$.
By Corollary~\ref{cor:topo}, $U_n$ and $U'_n$ are simply connected.
Let $C$ be the finite set of critical points of $g$.
Since their orbit tend to $0$, there exist $n_0\in \N$ such that $\forall c\in C$ and $\forall n\in\N$, if $g^n(c)\in U'$ then $n\leq n_0$.
Then for all $n>n_0$, $U'_n$ contains no critical point of $g$.
It follows that $g:U'_{n}\to U'_{n-1}$ is an isomorphism (recall $U_n$ and $U'_n$ are simply connected).
The sequence of inverse isomorphisms $\chi_{n-n_0} : g^{-(n-n_0)}: U'_{n_0} \to U'_n$ takes values in the hyperbolic set $B^*_f$, so is normal as a sequence of maps from $U'_{n_0}$ to $\hat\C$.
It maps $z_{n_0}$ to $z_n$, which tends to $0\in\partial B^*_f$.
Hence it tends to $0$ uniformly on compact subsets of $U'_{n_0}$, hence uniformly on $\ov U_{n_0}$.
Fix a repelling petal $P$ and a repelling Fatou coordinate $\phi$ on $P$ that is an inverse branch of $\psi_\rep$, and denote $A=\phi(P)$.
The local dynamics of $f_0$ near $0$ being conjugate to the translation $z\mapsto z+1$ near $\infty$, we get by a compactness argument on $\ov U_{n_0}$ that $\exists n_1\geq n_0$, $\forall n\geq n_1$, $\ov U_n\subset \chi_{n-n_0}(\ov U_{n_0}) \subset P$.
Now, $\psi_\rep\circ \gamma = g^n\circ \psi_\rep\circ T_{-n}\circ \gamma$ takes values in $U$, and the path $\psi_\rep\circ T_{-n}\circ \gamma$ starts from $z_n$ hence takes values in $U_n$.
There is $n_2$ such that for all $n\geq n_2$ the path $T_{-n}\circ\gamma$ starts from a point of $A$ (since $P$ is a repelling petal).
We may choose $n_2\geq n_1$ and then $\forall n\geq n_2$, $T_{-n}\circ\gamma$ takes values in $\phi(U_n)$ by a connectedness argument.
Since $\gamma$ leaves every compact subset of $\dom(\psi_\rep)$, which is $T_1$-invariant, if follows that $T_{-n}\circ\gamma$ leaves in particular every compact subset of $\phi(U_n)$.
The map $\psi_\rep$ is an isomorphism from $\phi(U_n)$ to $U_n$.
So $\psi_\rep\circ T_{-n}\circ \gamma$ leaves every compact subset of $U_n$.
Since $g^n$ is proper from $U_n$ to $U$ and $\psi_\rep\circ \gamma = g^n\circ \psi_\rep\circ T_{-n}\circ \gamma$, $\psi_\rep\circ \gamma$ leaves every compact subset of $U$, contradicting that it converges.

\medskip\noindent\textit{3. Composing and quotienting:} The composition $h_f = \phi_\att \circ \psi_\rep: \tilde B^u\to \C$ has thus no asymptotic value over $\C$ by Lemma~\ref{lem:noavcomp}.
Let $\hat h_f$ be the quotient map by the canonical projection $\pi :\C\to \C/\Z$ (so $\hat h_f\circ\pi = \pi\circ h_f$).
Then $\hat h_f$ cannot have an asymptotic value either: if a path $\gamma$ in $B^u$ leaves every compact subset of $B^u$ then it lifts by $\pi$ to a path $\tilde \gamma$ leaving every compact of $\tilde B^u$, and $h_f\circ \tilde\gamma$ cannot converge in $\C$, so $\hat h_f\circ \gamma = \pi \circ h_f\circ\tilde\gamma$ cannot converge either.
The set of critical values $cv(\hat h_f)$ is the image of the set of critical values of $g$ by $\pi\circ \phi_\att$, so is finite.
It follows that $\hat h_f$ is a branched covering from $B^u$ to $\C/\Z$.
Recall now the definition $Rf=\ell_\sigma = \ov E \circ\, T_\sigma\circ \hat h_f\circ \ov E^{-1}$, extended at $0$ by $\ell_\sigma(0)=0$ and $\sigma$ is chosen so that $\ell_\sigma'(0)=1$. The set of critical values of $\ell_\sigma$ is $\ov E\circ T_\sigma(cv(\hat h_f))$ so is finite.
Over $\C^*$, $Rf$ has no asymptotic values, so the only possible asymptotic values over $\hat \C$ are $0$ and $\infty$ (see Point~\ref{item:prop:av:2} of Proposition~\ref{prop:av}).

\medskip\noindent\textit{4. $0$ and $\infty$ are indeed asymptotic values of $Rf$:} This did not need to be addressed in \cite{Shi}, and is not essential here either, however we sketch a justification.
Let $P$ be an attracting petal of $f$ mapped to a right half-plane by the attracting Fatou coordinate and containing no critical value of $f$.
The set $g^{-1}(P)$ has $\deg g>1$ components, one contains $P$, call it $P_{-1}$, let $P'$ be any other.
One can draw in $P'$ a path $\gamma$ whose image by $\pi\circ\phi_\att$ tends to the top end of the cylinder (the same holds with the bottom end).
Since $\psi_\rep:\tilde B^u\to B^*_f$ has no asymptotic values, it is surjective and moreover any path lifts by $\psi_\rep$.
Let $\delta$ be a lift of $\gamma$.
Then $\Im\delta(t)$ stays bounded away from $+\infty$, for otherwise this would imply, since $\psi_\rep$ sends some high enough half-plane to a sepal of $f$, that $\gamma$ intersect $P_{-1}$, but $P_{-1}$ and $\gamma$ are disjoint.
Hence $E\circ \delta$ stays bounded away from $0$ and since its image by $Rf$ tends to $0$ or $\infty$, it must leave every compact subset of $\dom Rf$.

\section{Example of non-tame situation}\label{app:non-tame}

Consider the transcendental entire map $f:\C\to\C$, $z\mapsto z e^z$.
It singular values are the asymptotic values\footnote{There cannot be an asymptotic value $a\in\C^*$ because if $z(t)=x(t)+iy(t)$ tends to $\infty$ and $f(z)$ tends to $a$ then $|f(z)| = |z| e^{x} \tends |a|$ so $e^x$ tends to $0$, i.e.\ $x$ tends to $-\infty$. Then there is a continuous lift $\phi(t)$ of $\arg z$ for all $t$ big enough such that $\phi(t)\in [\pi/2,3\pi/2]$ and since $\arg f(z)\tends \arg( a )\bmod 2\pi$, we get that the continuous function $y(t)+\phi(t)$ converges mod $2\pi$ hence it converges in $\R$, hence $y$ converges. Hence $|z|\sim |x|$ and $|f(z)|\sim |x| e^x \tends 0$, leading to a contradiction.} $\infty$ and $0$ and the critical value $f(-1)=-e^{-1}$, where $-1$ is the unique critical point.
The parabolic basin is necessarily simply connected by Proposition~\ref{prop:scparabo}.
Denote $B^*$ the immediate basin. By Fatou's theorem, the restriction $f: B^* \to B^*$ has at least one singular value.

\begin{figure}[htbp]
    \centering
    \includegraphics[width=\textwidth]{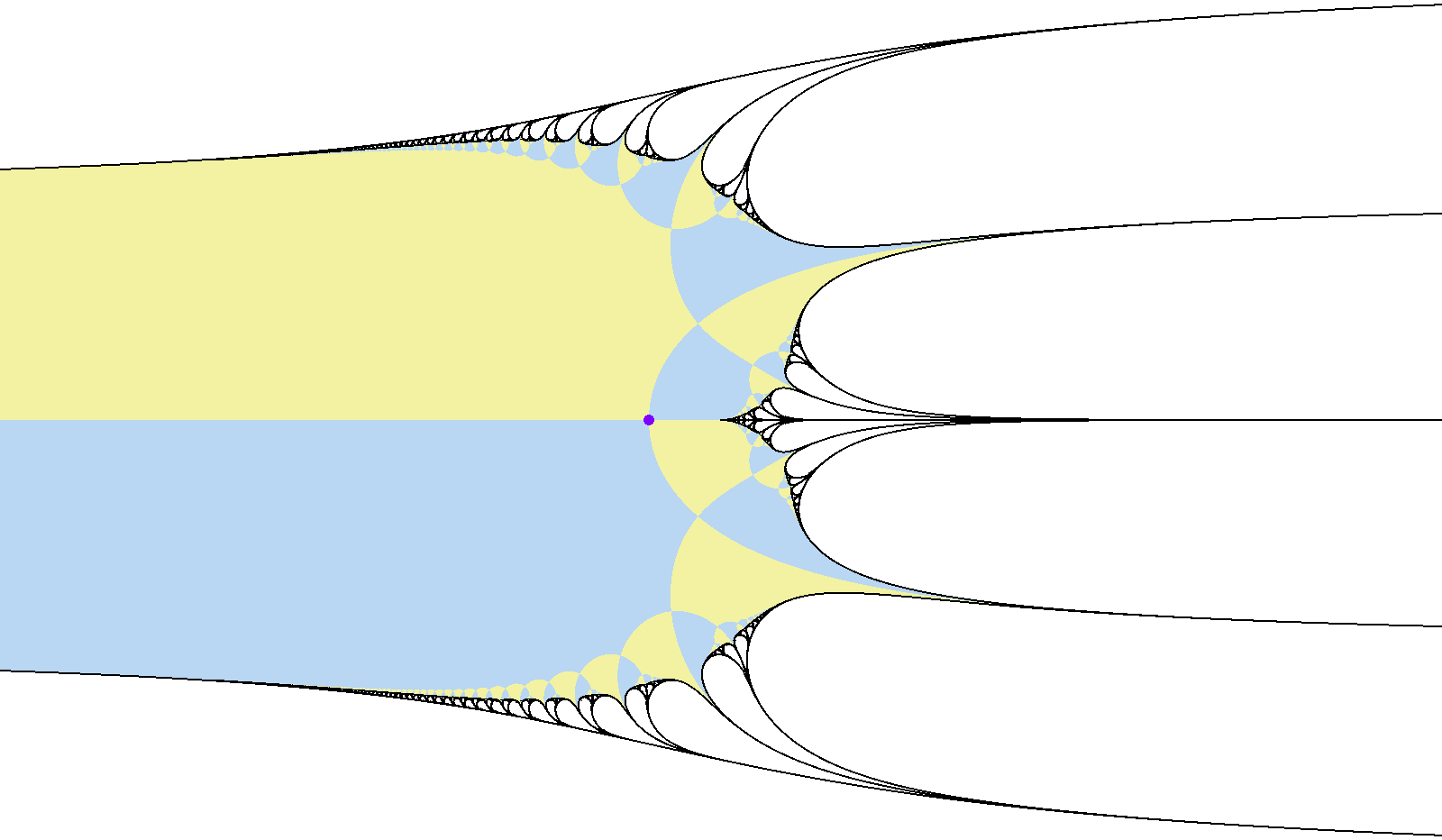}
    \caption{The immediate basin $B^*$ of $0$ for the entire map $f(z)=z e^z$. We coloured the interior with the parabolic chessboard pattern: yellow for points such that $\Im(\phi(z))<\Im(\phi(-1))$, blue for the others. The black part shows the boundary of $B^*$. It is so thin in the strands that we only see black curves, but bear in mind that this set is fractal everywhere, by self-similarity. A purple dot marks the position of the critical point.}
    \label{fig:zez}
\end{figure}

By Corollary~\ref{cor:svbas}, we get that the restriction $f:B^*\to B^*$ has no asymptotic value.
Hence, $f$ has a critical point in $B^*$: by uniqueness of the critical point, $-1\in B^*$.
The set $B^*$ is not the whole plane, since it cannot contain $0$, so it is hyperbolic.
Since $f(z) = z+z^2+\ldots$, there is only one attracting axis.
By Lemma~\ref{lem:rcdocv1}, $f:B^*\to B^*$ is equivalent to $z\mapsto z^2$ from $\D$ to $\D$, so
\[f\in \DDD_2.\]
By Shishikura's theorem: $Rf\in \DDD_2$ too.
We show on Figure~\ref{fig:zez} a computer rendering of $B^*$.
Figure~\ref{fig:zez2} shows an enlargment near the parabolic point.

On Figure~\ref{fig:zez3} we show the preimage of the basin by the extended repelling Fatou coordinates.
On Figure~\ref{fig:zez4} we overlay the preimage by $E$ of the immediate basin of $Rf$.

\begin{figure}[htbp]
    \centering
    \footnotesize
    \begin{tikzpicture}
      \node at (0,0) {\includegraphics[width=13cm]{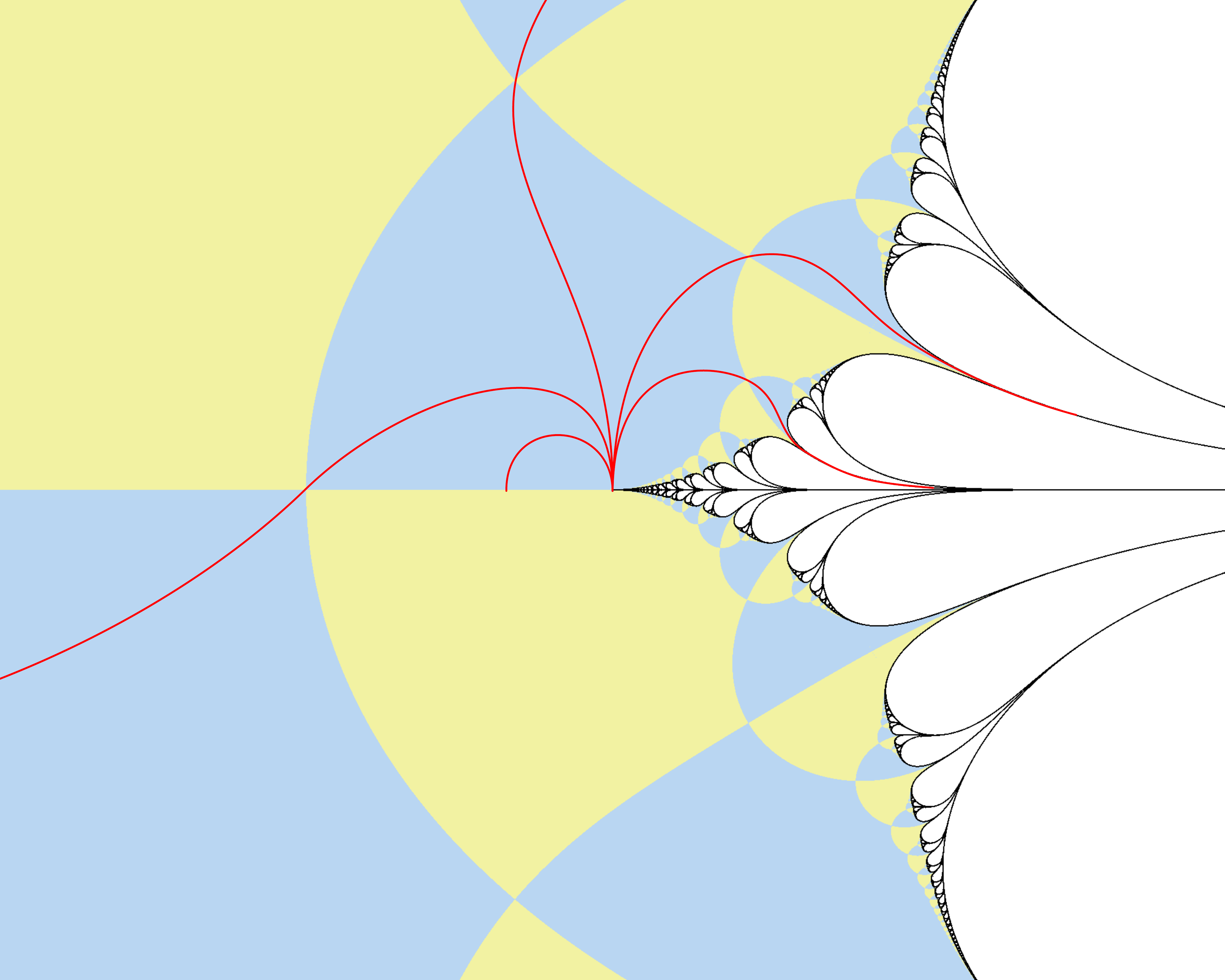}};
      \node at (-2.7,-0.4) {$-1$};
      \node at (-1.2,-0.4) {$f(-1)$};
      \node at (0,-0.4) {$0$};
      \node at (-1.4,0.4) {$\delta_1$};
      \node at (-2.35,1.1) {$\delta_0$};
      \node at (-1.15,2.4) {$\delta_{-1}$};
      \node at (0.4,2.4) {$\delta_{-2}$};
      \node at (0.75,0.8) {$\delta_{-3}$};
    \end{tikzpicture}
    \caption{Here, we zoomed on the parabolic point $0$ of $f$. The curves $\delta_n$ are indicated in red, they were sketched by hand.}
    \label{fig:zez2}
\end{figure}

\begin{figure}[htbp]
    \centering
    \includegraphics[width=\textwidth]{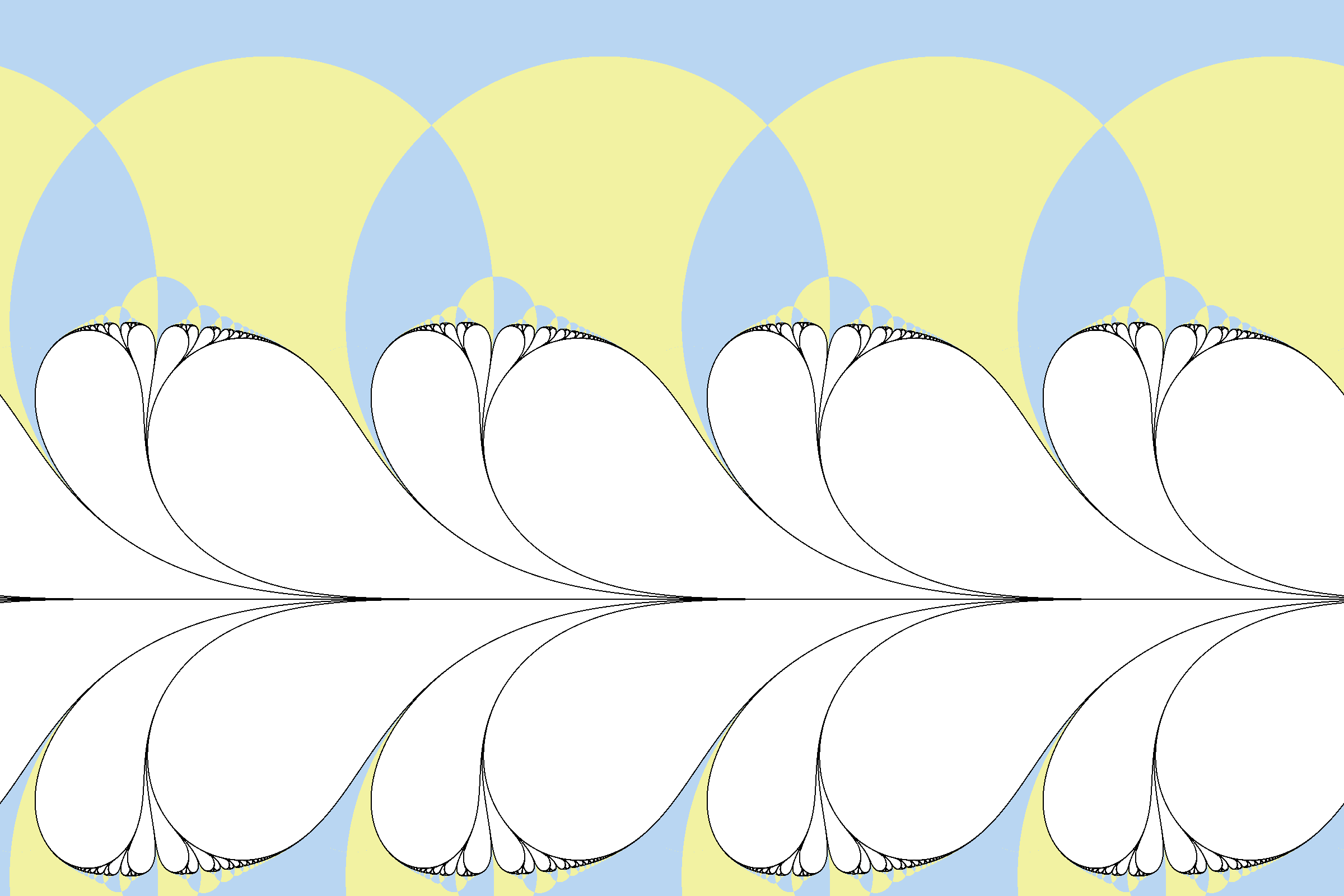}
    \caption{Preimage by the extended repelling Fatou coordinates $\psi_\rep$, of the immediate basin of $f$.}
    \label{fig:zez3}
\end{figure}

\begin{figure}[htbp]
    \centering
    \includegraphics[width=\textwidth]{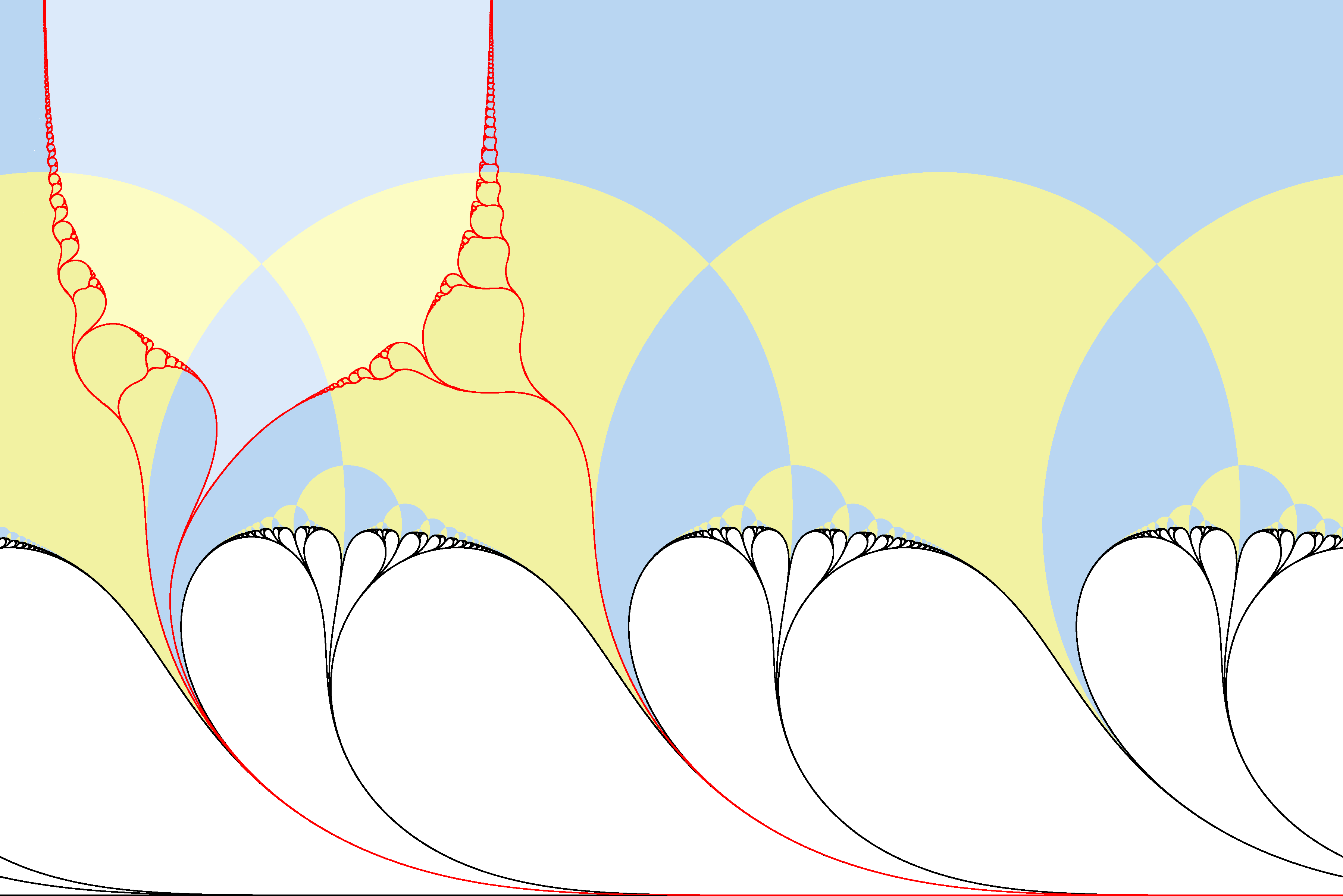}
    \caption{The attracting basin of $Rf$ is overlaid in semi-transparent over an enlargement of the previous picture. Its boundary is indicated by red pixels.}
    \label{fig:zez4}
\end{figure}

It is easy to prove, using that $f(\R)\subset\R$ and an analysis of the real dynamical system $f|_\R$, that $B^*\cap\R = (-\infty,0)$.
So $B^*$ is disjoint from $f^{-1}([0,+\infty])$, which consists in $[0,+\infty]$, a sequence of curves $C_k$ indexed by $k>0$ of parametrized equation $z=x+iy$, $x=(2\pi k-\theta)\on{cotan}(\theta)$, $y=2\pi k-\theta$, $\theta\in(0,\pi)$ and their images $C_{-k}$ by $z\mapsto \bar z$.
It follows that $B^*$ is contained between $C_{-1}$ and $C_{1}$, in particular $|\Im(z)|<2\pi$ for all $z\in B^*$.

Since $f\in\DDD_2$, the conformal mapping $\phi:\D\to B^*$ normalized by $\phi(0)=-1$ and $\phi'(0)>0$ conjugates\footnote{The map $\phi^{-1}\circ f\circ\phi$ is a degree 2 Blaschke product with critical point $0$ and its rational extension has a parabolic point with two attracting axes according to Lemma~\ref{lem:bppp}.
It preserves the real axis and sends $0$ to the right of $0$. There is only one map satisfying these conditions.} $B_2:z\mapsto (z^2+1/3)/(1+z^2/3)$ to $f:B^*\to B^*$.
The map $\phi$ sends $(-1,0]$ to $(-\infty,-1]$, in particular it has a radial limit at $z=-1$ equal to $\infty$.
It follows that the iterated preimages by $B_2$ of $(0,1]$, which form arcs drawn in the parabolic chessboard graph, have their endpoints forming a dense subset of $\partial D$, and these arcs $\gamma$ are sent by $\phi$ to arcs that an iterate of $f$ maps to $(-\infty,-1]$, hence the $\gamma$'s also tend to $\infty$.
As such, $\phi$ cannot have a continuous extension from $\ov\D$ to $\wh\C$, for this extension would be constant and equal to $\infty$, contradicting conformal mapping theory.
By Carathéodory's theorem, the boundary of $B^*$ in $\wh\C$ is not locally connected.

Consider a path $\gamma$ in $B^*_{Rf}$ from the (unique) critical value of $Rf$ and tending to $0$, tangentially to its attracting axis.
Let us focus on the set $(Rf)^{-1}(\gamma)\cap B^*_{Rf}$.
It consists in two curves from the critical point of $Rf$ in $B^*_{Rf}$, one tending to $0$, and another on which we focus now.
Recall $Rf$ is a restriction of the conjugate by $\ov E:\C/\Z\to\C^*$ of the map $T_\sigma \circ \phi_\att \circ \psi_\rep$ taken modulo $\Z$, for some $\sigma\in\C$.
The preimage of $\gamma$ by $\ov E$ is a curve $\tilde \gamma$ tending to the upper end of the cylinder, with a real part that converges.
Denote $U\subset\C$ a lift of $B^*_{Rf}$ by $E:\C\to\C^*$, $z\mapsto \exp(2\pi iz)$ and denote $U_n= T_n(U)$ where $T_n(z)=z+n$. 
The preimage of $\tilde \gamma$ under the canonical projection $\C\to\C/\Z$ is a collection of curves $T_n(\hat\gamma)$, $n\in\Z$.
Taking a further preimage by $T_\sigma$, then $\phi_\att$, we get even more curves, but we are only  interested in the ones that are contained in $V_n=\psi_\rep(U_n)$ for some $n\in\Z$.
Such curves $\delta_n$ will connect to the parabolic point $0$ of $f$ in $B^*$, arriving tangentially to the direction of argument $\pi/2$.
One of them connects to $f(-1)$.
Up to reindexing we may assume it is $\delta_1$.
Then $\delta_0$ is in two parts, one bounded from $z=0$ to $z=-1$, and one unbounded from $z=-1$ and tending to $\infty$ within $B^*$, with real part tending to $-\infty$.
The $\delta_n$ for $n<0$ all are in two parts, one bounded, one with real part tending to $+\infty$.
They are attached to corners of the main upper chessboard box, which belong to any fixed repelling petal for all $n<n_0<0$.
None of the $\delta_n$ can be contained in the petal as they are all unbounded.
It follows that: \emph{none of the $V_n$ can be contained in a bounded repelling petal} either.

Take a petal whose image in repelling Fatou coordinates is a left half-plane. Since every $V_n$ gets out of the petal this implies by connectedness that $U_0$ has points of arbitrarily high real part.

\printbibliography

\end{document}